\definecolor{bblue}{rgb}{.2,0.2,.8}
\theoremstyle{plain}
\newtheorem{theorem}{Theorem}[section]
\newtheorem{proposition}[theorem]{Proposition}
\newtheorem{lemma}[theorem]{Lemma}
\newtheorem{corollary}[theorem]{Corollary}
\theoremstyle{definition}
\theoremstyle{remark}
\newcommand{\res}{\mathop{\hbox{\vrule height 7pt width .5pt depth 0pt\vrule height .5pt width 6pt depth 0pt}}\nolimits}
\numberwithin{equation}{section}
\numberwithin{theorem}{section}
\def\be{\begin{equation}}
\def\ee{\end{equation}}
\def\bp{\begin{pmatrix}}
\def\ep{\end{pmatrix}}
\def\bea{\begin{eqnarray}}
\def\eea{\end{eqnarray}}
\def\\{\par\medskip}
\newcommand{\mc}[1]{{\mathcal #1}}
\newcommand{\mf}[1]{{\mathfrak #1}}
\newcommand{\bs}[1]{{\boldsymbol #1}}
\newcommand{\bb}[1]{{\mathbb #1}}
\newcommand{\ms}[1]{{\mathscr #1}}
\renewcommand{\epsilon}{\varepsilon}
\renewcommand{\hat}{\widehat}
\newcommand{\<}{\langle}
\renewcommand{\>}{\rangle}
\title[Donsker-Varadhan and hydrodynamical large deviations]{
  Concurrent Donsker-Varadhan and hydrodynamical large deviations
  }
\author{L. Bertini}
\address{Lorenzo Bertini \hfill\break \indent
   Dipartimento di Matematica, Universit\`a di Roma `La Sapienza'
   \hfill\break \indent   P.le Aldo Moro 2, 00185 Roma, Italy}
 \email{bertini@mat.uniroma1.it}
\author{D. Gabrielli}
\address{\noindent Davide Gabrielli \hfill\break\indent 
 DISIM, Universit\`a dell'Aquila
\hfill\break\indent 
67100 Coppito, L'Aquila, Italy
}
\email{davide.gabrielli@univaq.it}
\author{C. Landim}
\address{Claudio Landim
  \hfill\break\indent IMPA \hfill\break\indent Estrada Dona Castorina
  110, \hfill\break\indent
J. Botanico, 22460 Rio de Janeiro, Brazil\hfill\break\indent
  {\normalfont and} \hfill\break\indent CNRS UMR 6085, Universit\'e de
  Rouen, \hfill\break\indent Avenue de l'Universit\'e, BP.12,
  Technop\^ole du Madril\-let, \hfill\break\indent
F76801 Saint-\'Etienne-du-Rouvray, France.} 
\email{landim@impa.br}
\begin{document}

\begin{abstract}
  We consider the weakly asymmetric exclusion process on the
  $d$-dimensional torus. We prove a large deviations principle for the
  time averaged empirical density and current in the joint limit in
  which both the time interval and the number of particles diverge.
  This result is obtained both by analyzing the variational
  convergence, as the number of particles diverges, of the
  Donsker-Varadhan functional for the empirical process and by
  considering the large time behavior of the hydrodynamical rate
  function.  The large deviations asymptotic of the time averaged
  current is then deduced by contraction principle. The structure of
  the minimizers of this variational problem corresponds to the
  possible occurrence of dynamical phase transitions.
\end{abstract}

\noindent
\keywords{Exclusion processes, Hydrodynamical limits,
  Large deviations, Empirical process, Dynamical phase transitions}

\subjclass[2010]
{Primary 
60K35, 
60F10; 
Secondary 
82C22, 
82C70. 
}

\maketitle
\thispagestyle{empty}

\section{Introduction}
\label{s:00}

Stochastic lattice gases, that describe the evolution of interacting
random particles on a lattice of mesh $1/N$, have been an instrumental
tool in the development of non-equilibrium statistical mechanics
\cite{mft,kl,Sp}.  Their macroscopic behavior, usually referred to as
hydrodynamic scaling limit, is described as follows. Given a
microscopic realization of the process, the empirical density
$\bs \pi_N$ is defined by counting locally the average number of
particles while the empirical current $\bs J_N$ is defined by counting
the net flow of particles.  By the local conservation of the number of
particles, $\bs \pi_N$ and $\bs J_N$ satisfy the continuity
equation. The content of the hydrodynamical limit is the law of 
large numbers for the pair $(\bs \pi_N, \bs J_N)$ in the limit
$N\to \infty$. For driven-diffusive systems the limiting evolution is
given by
\begin{equation}
\label{0hl}
\begin{cases}
\partial_t \bs \rho + \nabla \cdot \bs j =0, \\
\bs j = -D(\bs \rho)\nabla \bs \rho + \sigma(\bs \rho) E,
\end{cases}
\end{equation}
where $E=E(x)$ is the applied external field, $D$ is the diffusion
matrix, and $\sigma$ is the mobility. In particular, the density
profile $\bs \rho=\bs \rho(t,x)$ solves the non-linear driven
diffusive equation
\begin{equation}
\label{0hld}
\partial_t \bs \rho + \nabla\cdot \big( \sigma(\bs\rho) E\big) =
\nabla\cdot \big( D(\bs\rho) \nabla \bs\rho \big).
\end{equation}
We refer to \cite{kl} for the details on the derivation of
\eqref{0hld}, while the scaling limit of the empirical current leading
to \eqref{0hl} is discussed in \cite{bdgjl07} in the case of the
symmetric exclusion process.

The large deviations with respect to the hydrodynamic limit in the
time window $[0,T]$ are characterized by the rate function
\begin{equation}
\label{0ld}
A_T(\bs \rho,\bs j) = \int_0^T\!dt \!\int\!dx \,
\frac { \big|\bs j + D(\bs\rho)\nabla \bs\rho -\sigma(\bs \rho ) E
  \big|^2}
{4\, \sigma(\bs \rho)},
\end{equation}
that is at the base of the Macroscopic Fluctuations Theory and it is
widely used in non-equilibrium statistical mechanics \cite{mft}.  We
refer to \cite{kl,kov} for the derivation of this rate function when
the empirical current is disregarded.

A significant problem is the behavior of the average of empirical
current over the time interval $[0,T]$ in the limit when $N\to\infty$
and then $T\to\infty$. By the hydrodynamical large deviations
principle and contraction principle, this amounts to analyze the
behavior as $T\to \infty$ of the minimizers to \eqref{0ld} with the
constraint $\frac 1T \int_0^T\!dt \, \bs j = J$. This problem has been
initially raised in \cite{bd0} while in \cite{bdgjl05} it has been
pointed out that the minimizers can exhibit a non-trivial time
dependent behavior. In \cite{bdgjl06, bd} it has been then shown that
this is actually the case for the weakly asymmetric exclusion process
and the Kipnis-Marchioro-Presutti model where, for suitable value of
the parameters, traveling waves are more convenient than constant
profiles.

Denote by $I^{(2)} (J)$ the limiting value as $T\to\infty$ of the
minimum to $T^{-1}\, A_T$ with the constraint
$\frac 1T \int_0^T\!dt \, \bs j = J$. Varadhan \cite{V0} proposed the
following representation for $I^{(2)}$ 
\begin{equation}
\label{0I2}
I^{(2)} (J) = \inf \bigg\{ \!\! \int \!\! dP 
\!\!  \int\!\!dx\, \frac { \big|\bs j(t) + D(\bs\rho(t))\nabla \bs\rho(t)
  -\sigma(\bs \rho(t) ) E\big|^2} {4\sigma(\bs \rho(t))}; \, \int\!
\!dP 
\, \bs j(t) = J \!\bigg\}
\end{equation}
where the infimum is carried out over the probabilities $P$ invariant by
time translations on the set of paths $(\bs \rho, \bs j)$ satisfying
the continuity equation $\partial_t \bs\rho + \nabla\cdot \bs j=0$.
Note that $I^{(2)}$ is
convex and that, by the stationarity of $P$, the actual value of $t$
on the right hand side of \eqref{0I2} is irrelevant.

The purpose of the present analysis is to prove the validity of the
representation \eqref{0I2} in the context of the weakly asymmetric
exclusion process for which $D=1$ and $\sigma(\rho) =
\rho(1-\rho)$. This will be achieved both when the limit $T\to \infty$
is carried out after the hydrodynamic limit $N\to \infty$ and when the
limits are carried out in the opposite order.  In fact, the
representation \eqref{0I2} will be deduced by the contraction
principle from a large deviation result at the level of the empirical
processes that we next introduce.

Consider first the case in which the limit $N\to \infty$ is taken
after $T\to \infty$. By the Donsker-Varadhan result, see e.g.\
\cite{dv1-4,Var84}, as $T\to \infty$ the empirical process associated to the
weakly asymmetric exclusion process satisfies a large deviation
principle in which the affine rate function is the relative entropy per unit
of time with respect to the stationary process. By projecting this
functional to the stationary probabilities on the empirical density
and current, and analyzing its variational convergence as $N\to \infty$
we deduce the desired large deviation principle with affine rate function
given by
\begin{equation}
\label{0Ib}
\bs I(P) = \int \! P(d\bs \rho,d \bs j) \, \int\!dx \,\, \frac {
  \big|\bs j(t) + D(\bs\rho(t))\nabla \bs\rho(t) -\sigma(\bs \rho(t) )
  E\big|^2} {4\, \sigma(\bs \rho(t))} \;\cdot
\end{equation}
The main ingredient in this derivation is, as for hydrodynamical large
deviations, the validity of local equilibrium with probability
super-exponentially close to one as $N\to \infty$. Observe that the
rate function in \eqref{0I2} is obtained from \eqref{0Ib} by
contraction.  The proof for the case in which the limit $T\to \infty$
is taken after $N\to \infty$ is achieved by lifting the hydrodynamical
rate function \eqref{0ld} to the set of stationary probabilities on
density and current and analyzing its variational convergence as
$T\to \infty$.

As a corollary of the analysis here presented, we also deduce the
``level two'' large deviations relative to the family of random
probability measures $\frac 1T \int_0^T\!dt\, \delta_{\bs\pi_N(t)}$ in
the joint limit $N,T\to \infty$.  Letting
$\imath_t(\bs \rho,\bs j) = \bs\rho(t)$, the corresponding rate
function is
\begin{equation}
\label{0Is}
\ms I (\wp ) = \inf \Big\{ \bs I (P) \;;\; P\circ \imath_t^{-1} \, =
\wp \Big\}
\end{equation}
Since $\ms I(\wp) =0$ if and only if $\wp$ is a stationary measure for
the flow associated to the hydrodynamic equation \eqref{0hld}, this
large deviations statement implies the hydrostatic limit for the
weakly asymmetric exclusion process: in the limit $N\to \infty$ the
empirical density constructed by sampling the particles according to
the stationary measure converges to the unique stationary solution to
\eqref{0hld}.

\section{Notation and Results}
\label{sec01}

\subsection*{Microscopic dynamics}
Denote by $\bb T^d = [0,1)^d$ the $d$-dimensional torus of length
$1$ and let $dx$ be the corresponding Haar measure.
Fix $N\ge 1$, and let $\bb T^d_N$ the discretization of
$\bb T^d$: $\bb T^d_N = \bb T^d \cap(N^{-1}\mathbb{Z})^{d}$. The
elements of $\bb T^d$ and $\bb T^d_N$ are represented by $x$ and
$y$. Let $\color{bblue} \bb B_N$ be the set of ordered,
nearest-neighbor pairs $(x,y)$ in $\bb T^d_N$.

Denote by $\color{bblue} \Sigma_N:=\{0,1\}^{\bb T^d_N}$ the space of
configurations.  Elements of $\Sigma_N$ are represented by $\eta$, so
that $\eta_x=1$, resp.\ $0$, if site $x$ is occupied, resp.\ vacant,
for the configuration $\eta$.  Fix $E$ in
$\color{bblue} C^1(\bb T^d ; \bb R^d)$, the space of continuously
differentiable vector fields defined on $\bb T^d$.  In some statements
we assume that $E$ is \emph{orthogonally decomposable}: there are
$U \in C^2(\bb T^d)$ and $\tilde E\in C^1(\bb T^d ; \bb R^d)$ with
vanishing divergence, $\nabla\cdot \tilde E =0$, satisfying the
pointwise orthogonality $\nabla U(x) \cdot \tilde E(x) =0$,
$x\in \bb T^d$, such that $E=-\nabla U +\tilde E$.

The weakly asymmetric exclusion process (WASEP) with external field
$E$ is the Markov process on $\Sigma_N$ whose generator $L_N$ acts on
functions $f\colon\Sigma_N\to \bb R$ as
\begin{equation}
\label{22}
(L_{N} f)(\eta) \;=\; N^2 \sum_{(x,y)\in \bb B_N}  \eta_x\, [ 1-\eta_y] \,
e^{(1/2)\,  E_N(x,y)}
\big[ f(\sigma^{x,y} \eta)-f(\eta)\big] \;.
\end{equation}
In this formula, the configuration $\sigma^{x,y}\eta$ is obtained from
$\eta$ by exchanging the occupation variables $\eta_x$, $\eta_y$:
\begin{equation*}
(\sigma^{x,y} \eta)_z \;:=\;
\begin{cases}
        \eta_y & \textrm{ if \ } z=x\;, \\
        \eta_x & \textrm{ if \ } z=y\;, \\
        \eta_z & \textrm{ if \ } z\neq x,y\;,
\end{cases}
\end{equation*}
and $E_N(x,y)$ represents the line integral of $E$ along the oriented
segment from $x$ to $y$:
\begin{equation}
  \label{dvf}
E_N(x,y) \;=\; \int_{x}^{y} E\cdot  \, d\ell \;=\;
\int_0^1 E(x + r \, [y-x]) \cdot [y-x] \, dr\;,
\end{equation}
where $a\cdot b$ is the inner product in $\bb R^d$.  Note that
$E_N(\cdot,\cdot)$ is antisymmetric and that $E_N$ is of order $1/N$.

Denote by
$\color{bblue} \Sigma_{N,K} = \{\eta \in \Sigma_N : \sum_{x\in \bb
  T^d_N} \eta_x = K\}$, $K=0,\ldots, N^d$, the set of configurations
with $K$ particles. The Markov process with generator $L_N$ is
irreducible in the finite state space $\Sigma_{N,K}$. It has therefore
a unique stationary probability measure, denoted by
$\color{bblue} \mu_{N,K}$.

Hereafter, $\color{bblue} \mc R$ represents either $\bb R_+$ or
$\bb R$.  Denote by $\color{bblue} D(\mc R, \Sigma_{N})$,
the set of right-continuous functions with left-limits from $\mc R$
to $\Sigma_{N}$, endowed with the Skorohod
topology and the corresponding Borel $\sigma$-algebra.  Elements of
$D(\mc R, \Sigma_{N})$ are represented by $\bs \eta$.

For a probability measure $\nu$ on $\Sigma_{N}$, denote by
$\color{bblue} \bb P^N_{\nu}$ the probability measure on
$D(\bb R_+, \Sigma_{N})$ induced by the Markovian dynamics associated
to the generator $L_N$ starting from $\nu$. When the measure $\nu$ is
concentrated on a configuration $\eta\in \Sigma_{N}$,
$\nu = \delta_\eta$, we write $\color{bblue} \bb P^N_{\eta}$ instead
of $\bb P^N_{\delta_\eta}$.  For $K=0,\ldots,N^d$, the stationary
processes associated to the WASEP dynamics with $K$ particles is
denoted by $\color{bblue} \bb P_{\mu_{N,K}}^N$ that we regard as a
probability measure on $D(\bb R, \Sigma_{N,K})$ invariant with respect
to time-translations. Expectation with respect to
$\bb P_{\mu_{N,K}}^N$ is represented by
$\color{bblue} \bb E_{\mu_{N,K}}^N$.

\subsection*{Empirical density}
Let $\color{bblue} \mc M_+(\bb T^d)$ be the set of positive measures
on $\bb T^d$ with total mass bounded by $1$, endowed with the weak
topology and the corresponding Borel $\sigma$-algebra. Let also
$\color{bblue} \mc M_m(\bb T^d)$, $m\in [0,1]$, be the closed subset
of $\mc M_+(\bb T^d)$ given by the measures whose total mass is
equal to $m$.

The \emph{empirical density} is the map
$\pi_N \colon \Sigma_N \to \mc M_+(\bb T^d)$ defined by
\begin{equation}
\label{11}
\pi_N (\eta) \;:=\; \frac 1{N^d} \sum_{x\in \bb T^d_N} \eta_x\, \delta_x\;,
\end{equation}
where $\delta_x$, $x\in \bb T^d$, is the point mass at $x$. For a
continuous function $f\colon \bb T^d\to\bb R$ and a measure $\nu$ in $\mc
M_+(\bb T^d)$, we represent by $\<\nu \,,\, f\>$ the integral of $f$
with respect to $\nu$ so that
\begin{equation}
\label{34}
\<\pi_N \,,\, f\> \;=\; \frac 1{N^d} \sum_{x\in \bb T^d_N} \eta_x\,
f(x) \;.
\end{equation}
We also call empirical density the map
$\bs \pi_N \colon D (\mc R, \Sigma_N) \to D (\mc R, \mc M_{+} (\bb
T^d))$ defined by
\begin{equation}
  \label{empden}
[\, \bs \pi_N (\bs \eta) \, ]\, (t) \;:=\; 
\pi_N(\bs \eta (t)) \;=\; \frac 1 {N^d} 
\sum_{x\in \bb T^d_N} \bs \eta_x(t) \,
\delta_{x} \;, \quad t\in \mc R\;.
\end{equation}

\subsection*{Empirical current.} For an oriented bond
$(x,y)\in \bb B_N$ and $s<t$, let $\ms N^{x,y}_{(s,t]}(\bs \eta)$ be
the number of jumps from $x$ to $y$ in the time interval $(s,t]$ of
the path $\bs\eta\in D(\mc R,\Sigma_N)$:
\begin{equation}
\label{27}
\ms N^{x,y}_{(s,t]}(\bs \eta) \;=\; \sum_{s< r \le t} \bs\eta_{x}(r-) \,
[1-\bs\eta_y(r-) ]\, \bs 1\{ 
\bs \eta(r) = \sigma^{x,y} \bs \eta(r-)\}\;. 
\end{equation}
Fix a trajectory $\bs \eta \in D (\mc R, \Sigma_N)$, and denote by
$\color{bblue} C(\bb T^d; \bb R^d)$ the space of continuous vector
fields on $\bb T^d$. If $\bs \eta$ is a trajectory compatible with the
WASEP dynamics, i.e.\ such that for each jump time $t$ we have
$\bs \eta(t) =\sigma^{x,y}\bs \eta(t-)$ for some $(x,y)\in \bb B_N$,
we define the \emph{integrated empirical current}
$\bs J_{N}(\bs \eta)$ as follows. Let
$[\, \bs J_{N}(\bs \eta)\,]\, (0)=0$, and, for $t\,>\, 0$, let
$[\, \bs J_{N}(\bs \eta)\,]\, (t)$ be the linear functional on
$C(\bb T^d; \bb R^d)$ defined by
\begin{equation}
\label{ecurr}
\< \bs J_{N}(\bs \eta) \, (t) \,,\, F \>
\;:=\;  \frac 1{N^d} \sum_{(x,y) \in \bb B_N} \ms N^{x,y}_{(0,t]}(\bs \eta)
\int_{x}^{y} F  \cdot  d\ell \;, \quad
F\,\in\, C(\bb T^d; \bb R^d)\;.
\end{equation}

For $t<0$, we replace in the previous formula $\ms N^{x,y}_{(0,t]}(\bs
\eta)$ by $- \, \ms N^{x,y}_{(t,0]}(\bs \eta)$.
If $F=(F_1, \ldots, F_d)$, then for $t\ge 0$
\begin{equation*}
\< \bs J_{N}(\bs \eta) \, (t) \,,\, F \> = \frac 1{N^d} \sum_{j=1}^d
\sum_{x \in \bb T^d_N} \frac 1N\, \Big\{ \ms N^{x,x+\mf
  e_j}_{(0,t]}(\bs \eta) - \ms N^{x+\mf e_j,x}_{(0,t]}(\bs \eta)
\Big\} \int_{0}^{1} F_j (x+r\mf e_j) \, dr \;,
\end{equation*}
where, $\color{bblue} \mf e_j = e_j/N$ and $\{e_1, \dots, e_d\}$
represents the canonical basis of $\bb R^d$.

\smallskip\noindent{\bf Discrete vector fields.}
For technical issues, we need to define the empirical current also for
paths $\bs \eta$ not coming from the WASEP dynamics.  Consider an
arbitrary path $\bs \eta \in D\big(\mc R;\Sigma_{N,K}\big)$. If for
some $t\in\mc R$ it happens $\bs \eta(t) \neq \bs\eta(t-)$ some of the
particles in the configuration $\bs \eta(t-)$ have rearranged
themselves to construct the configuration $\bs\eta(t)$.  The
definition of the empirical current requires to decide the actual path
taken by those particles.

A \emph{discrete vector field} $W$ is an antisymmetric function 
$W\colon \bb B_N \to \bb R$. The \emph{discrete divergence} of the
discrete vector field $W$ is the function
$\nabla_N\cdot W\colon \bb T_N^d\to \bb R$ defined by
\begin{equation*}
(\nabla_N\cdot W) \, (x) : = \sum_{y\colon (x,y) \in \bb B_N}  W(x,y)\;. 
\end{equation*}

Fix $0\le K \le N^d$, and consider two configurations $\eta$,
$\xi\in \Sigma_{N,K}$. Let $W_{\eta,\xi}$ be a discrete vector field
which solves
\begin{equation}
\label{tp0}
(\nabla_N\cdot W_{\eta,\xi}) \, (x) \;=\; \eta_x \,-\,  \xi_x\;, \quad
x\,\in\, \bb T^d_N\;.
\end{equation}
Such a discrete vector field $W_{\eta,\xi}$ always exists. The
configuration $\zeta = \eta - \xi$ belongs to $\{-1,0,1\}^{\bb T^d_N}$
and $\sum_{x\in \bb T^d_N} \zeta_x=0$. To define $W_{\eta,\xi}$, one
just needs to create nearest-neighbor flows from each $x\in \bb T^d_N$
such that $\zeta_x=1$ to each $x'\in \bb T^d_N$ such that
$\zeta_{x'}=-1$, and add all these flows.

Regarding $\eta$ and $\xi$ as positive measures on $\bb T^d_N$, both
of mass $K$, by the analysis of the discrete Beckmann's problem, see
e.g.\ \cite{Sa}, we deduce that there exists a constant $C_0$ such
that for all $N\ge 1$, $0\le K\le N^d$, $\eta$, $\xi \in \Sigma_{N,K}$
there exists a discrete vector field $W_{\eta,\xi}$ such that
\begin{equation}
\label{beck}
\sum_{(x,y)\in \bb B_N} \big|\, W_{\eta,\xi} (x,y)\,\big|
\;\le\; C_0\,  N^{d+1}\;.
\end{equation}

Of course, equation \eqref{tp0} admits more than one solution and we
do not claim that there is only one satisfying the previous bound.  It
turns out, however, that the scaling limit of the empirical current
does not depend on the specific selection among those fulfilling
\eqref{beck}. Hence, in the sequel and without further mention, we
assume that, for each pair $(\eta, \xi) \in \Sigma^2_{N,K}$, a
discrete vector field which match \eqref{beck} $W_{\eta,\xi}$ has
been chosen.

When $\xi =\sigma^{x,y} \eta$ for some $(x,y)\in \bb B_N$ and
$\eta_x=1$, $\eta_y =0$, we define $W_{\eta,\xi}$ as
\begin{equation}
\label{tp2}
W_{\eta,\xi}(x',y') \;=\;
\begin{cases}
1 & \text{if } (x',y')=(x,y) \;, \\
-1 & \text{if }  (x',y')=(y,x)\;, \\
0 & \text{otherwise}\;.
\end{cases}
\end{equation}
This discrete vector field clearly satisfies \eqref{tp0} and \eqref{beck}.

\smallskip\noindent{\bf Integrated currents for generic paths.}
Fix a generic path $\bs \eta \in D(\mc R; \Sigma_{N,K})$ and denote by
$\tau_i$ its jump times. Let $W_i$ be the discrete vector field given by
\begin{equation}
\label{tp}
W_i \;=\; W_{\bs\eta (\tau_i-) , \bs\eta (\tau_i)} \;.
\end{equation}
For $t>0$, the integrated empirical current of the path $\bs \eta$
is then defined by
\begin{equation}
\label{39}
\langle \bs J_N(\bs \eta)(t), F \rangle = \frac 1{2 N^d}
\, \sum_{i\colon
  \tau_i\in(0,t]} \sum_{(x,y)\in \bb B_N} W_i(x,y) \, F_N(x,y)
\end{equation}
where $F_N$ is the discrete vector field constructed from
$F\in C(\bb T^d;\bb R^d)$ by \eqref{dvf}.  In view of \eqref{tp2}, for
trajectories $\bs \eta$ coming from the WASEP dynamics, this
definition corresponds to the original one, given in \eqref{ecurr}.

As before, $\bs J_N(\bs \eta)(0)=0$ and for $t<0$
\begin{equation*}
\langle \bs J_N(\bs \eta)(t), F \rangle = - \frac 1{2 N^d}
\sum_{i\colon \tau_i\in [t,0)} \sum_{(x,y)\in \bb B_N} W_i(x,y)
F_N(x,y).
\end{equation*}
In view of the bound \eqref{beck}
and noticing that $|F_N|\le \|F\|_\infty /N$, if $\bs\eta$ has a single jump
then $|\langle \bs J_N(\bs \eta)(t), F \rangle| \le C \|F\|_\infty$ for
some constant $C$ independent of $N$.

\subsection*{Sobolev spaces.} Let $h_n \in L^2(\bb T^d)$,
$n = (n_1, \dots, n_d) \in \bb Z^d$, be the orthonormal basis of
$L^2(\bb T^d)$ given by
$\color{bblue} h_n (x) = \exp \{2\pi i (n\cdot x)\}$.

Denote by
$\langle\cdot,\cdot\rangle$ the inner product in $L^2(\bb T^d)$,
and by $\mf f \colon \bb Z^d \to \bb C$ the Fourier coefficients of
the function $f$ in $L^2(\bb T^d)$:
\begin{equation*}
\mf f (n) := \langle f, h_n \rangle \;=\; \int_{\bb T^d} f(x) \,
\overline{h_n(x)} \, dx \;, \quad n\,\in\, \bb Z^d\;,
\end{equation*}
where $ \overline{z}$ represents the complex conjugate of
$z\in \bb C$. Hence,
\begin{equation*}
f \;=\; \sum_{n\in\bb Z^d} \mf f(n)\,  h_n\; 
\end{equation*}

Denote by $\color{bblue} \mc H_p$, $p\in\bb R$, the Hilbert space
obtained by completing the space of smooth complex-valued functions on
$\bb T^d$ endowed with the scalar product
$\< \, \cdot \,,\, \cdot\, \>_p$ defined by
\begin{equation}
\label{ap01}
\< f\,,\, g\>_p \;=\;  \< (1-\Delta)^p f \,,\, g\>\;, 
\end{equation}
where $\Delta$ represents the Laplacian. An elementary computation
yields that
\begin{equation}
\label{ap02}
\< f \,,\, g \>_p \; :=\; \sum_{n\in\bb Z^d} 
\big(1 + 4\pi^2 |n|^2\big)^p \, \mf f(n)  \, \overline{\mf g(n)} 
\; ,
\end{equation}
where $|n|^2=|(n_1, \ldots, n_d)|^2 = \sum_{1\le j\le d} n^2_j$.  Denote by
$\Vert \,\cdot\, \Vert_p$ the norm of $\mc H_p$:
$\color{bblue} \Vert f \Vert^2_p = \< f \,,\, f \>_p$.  It is well
known that $\mc H_{-p}$, is the dual of $\mc H_p$ relatively to the
pairing $\langle\,\cdot \,,\, \cdot \rangle$ defined by
\begin{equation}
\label{32}
\langle  j \,,\, g \rangle \; :=\; 
\sum_{n\in\bb Z^d} \mf j(n)  \, \overline{\mf g(n)} 
\; , \quad j \,\in\, \mc H_{-p} \;,\, g\,\in\, \mc H_{p} \;.
\end{equation}
Moreover, it follows from the definition that $\mc H_p \subset \mc
H_{p'}$ for $p>p'$.
Let $\color{bblue} \mc H^d_p = \mc H_p \times \cdots \times \mc H_p$
that we consider endowed with the strong topology.
We represent below by
$\color{bblue} \< J \,,\, H\>$ the value at $H\in \mc H^d_p$ of a
bounded linear functional $J$ defined on $\mc H^d_{p}$.

Fix $p>d/2$. By the Sobolev embedding,
$\mc H^d_p\subset C(\bb T^d;\bb R^d)$. In particular, by the
definition of the empirical current, for each $t\in \mc R$, the
functional $[\bs J_{N}(\bs \eta)]\, (t)$ is bounded on $\mc H^d_p$.
Therefore, for each $\bs \eta \in D(\mc R;\Sigma_{N,K})$ and
$t\in \mc R$ it belongs to dual of $\mc H^d_p$, that is, to
$\mc H^d_{-p}$. Furthermore, it is easy to check that $\bs J_{N}$ is
right-continuous and has left-limits. Hence, the empirical current
$\bs J_{N}$ is a map from $D (\mc R, \Sigma_{N,K})$ to
$D (\mc R, \mc H^d_{-p})$.

Fix $p\ge 0$ and let $J$ be a bounded linear functional on
$\mc H^d_{p}$, i.e.\ $J\in \mc H^d_{-p}$. By Riesz representation
theorem, there exists $G=G(J)$ in $\mc H^d_p$ such that
$J (F) = \< F\,,\, G\>_p$ for all $F$ in $\mc H^d_p$, and
$\Vert J \Vert^2_{-p} = \< G\,,\, G\>_p$.
Letting $G=(G_1,\ldots,G_d)$ and defining $J_k(n)$ as
$\{1 + 4\pi^2 |n|^2\}^p \, \mf G_k(n)$, where $\mf G_k(n)$,
$n\in \bb Z^d$, are the Fourier coefficients of $G_k$, $k=1,\ldots,d$,
by \eqref{ap02} we deduce that
\begin{equation*}
J (F)   \;=\; \sum_{k=1}^d \sum_{n\in\bb Z^d} 
\mf F_k(n)  \, \overline{J_k(n)}\,, \; \qquad F=(F_1,\ldots, F_d) \; .
\end{equation*}
For $n\in \bb Z^d$, $j=1,\ldots, d$, let
$H^{j,n} \colon \bb T^d \to \bb C^d$ be the vector fields given by
$H^{j,n} = ( H^{j,n}_1, \dots, H^{j,n}_d)$ where
$H^{j,n}_k = \delta_{j,k} \, h_n$.  Taking $F= H^{j,n}$ in the
previous displayed identity, one concludes that
$\overline{J_k(n)} = J(H^{k,n})$, so that
\begin{equation}
\label{ap03}
J (F)  \;=\; \sum_{k=1}^d \sum_{n\in\bb Z^d} 
\mf F_k(n)  \,   J(H^{k,n})\,, \qquad
\Vert J \Vert^2_{-p} \;=\; \sum_{k=1}^d \sum_{n\in\bb Z^d}
\frac {\big|\, J(H^{k,n}) \,\big|^2}
{\big(1 + 4\pi^2 |n|^2\big)^p} \;\cdot
\end{equation}

\subsection*{Continuity equation.}
It follows from the conservation of mass that for each $x\in\bb T^d_N$,
$t>0$, and each path $\bs \eta$ compatible with the WASEP dynamics
\begin{equation*}
\bs \eta_x(t) \;-\; \bs \eta_x(0) \;=\; \sum_{j=1}^d 
\Big\{ \, \ms N^{x+ \mf e_j, x}_{(0,t]}(\bs \eta) \,-\,
\ms N^{x,x+ \mf e_j}_{(0,t]}(\bs \eta) \,+\, 
\ms N^{x- \mf e_j, x}_{(0,t]}(\bs \eta) \,-\,
\ms N^{x,x- \mf e_j}_{(0,t]}(\bs \eta) \, \Big\}\;.
\end{equation*}
Let $f$ be a function in $C^\infty(\bb T^d)$ and recall the notation
introduced in \eqref{34}. Multiply the previous equation by $f(x)$,
sum over $x\in\bb T^d_N$ and divide by $N^d$ to get, after a summation
by parts, that
\begin{align*}
& \< [\, \bs \pi_N (\bs \eta) \, ]\, (t) \,,\, f\>
\;-\; \< [\, \bs \pi_N (\bs \eta) \, ]\, (0) \,,\, f \> \\ 
&\qquad \;=\;  \frac 1{N^d} \sum_{j=1}^d \sum_{x\in\bb T^d_N} 
\big\{ \, f(x+\mf e_j) - f(x) \, \big\} \,
\big\{ \, \ms N^{x,x+ \mf e_j}_{(0,t]}(\bs \eta)
\,-\, \ms N^{x+ \mf e_j, x}_{(0,t]}(\bs \eta) \, \big\}\;.
\end{align*}
Since
$f(x+\mf e_j) - f(x) = \int_{x}^{x+\mf e_j} (\nabla f) \cdot d\ell$,
in view of the definition of the map $\bs J_N$,
\begin{equation}
\label{31bis}
\< [\, \bs \pi_N (\bs \eta) \, ]\, (t) \,,\, f\>
\;-\; \< [\, \bs \pi_N (\bs \eta) \, ]\, (0) \,,\, f \> 
\;=\; \< \bs J_N (\bs \eta) \, (t) \,,\, \nabla f\> \;.
\end{equation}
This is the microscopic version of the continuity equation.  Observe
that for paths $\bs \eta$ not coming from the WASEP dynamics, the
definition of the integrated empirical current $\bs J_N$ has been
engineered so that \eqref{31bis} always holds.

\subsection*{Hydrodynamical limit}
Let $(\eta^N : N\ge 1)$, $\eta^N \in \Sigma_N$, be a sequence of
configurations associated to a density profile
$\rho\colon \bb T^d \to [0,1]$ in the sense that for each continuous
function $f\colon \bb T^d \to \bb R$,
\begin{equation*}
\lim_{N\to\infty} \< \pi_N(\eta^N), f\>  \;=\;
\lim_{N\to\infty} \frac 1{N^d} \sum_{x\in\bb T^d_N} f(x) \, \eta^N_x
\;=\; \int_{\bb T^d} f(x)\, \rho(x)\, dx\;.
\end{equation*}
It is proven in \cite{bdgjl07} that $\big(\bs
\pi_N, \bs J_{N} )$ converge in $\bb P^N_{\eta^N}$-probability, as
$N\to\infty$, to $\big(\bs\rho(t,\cdot) \, dx \, ,\, \int_0^t\!ds\,
\bs j (s,\,\cdot\,) \big)_{t\ge 0}$ where $(\bs\rho,\bs j)$ is
the unique weak solution to the Cauchy problem
\begin{equation}
\label{26}
\left\{
\begin{aligned}
& \partial_t \bs\rho \;+\; \nabla \cdot \bs j \;=\; 0\;, \\
& \bs j \;=\; -\, \nabla \bs \rho \;+\; \sigma(\bs \rho)\, E\;, \\
& \bs \rho(0,\,\cdot\,) \;=\; \rho(\,\cdot\,)
\;,
\end{aligned}
\right.
\end{equation}
in which ${\color{bblue}\sigma} \colon [0,1]\to \bb R_+$, given by
$\sigma(\rho) = \rho(1-\rho)$, is the mobility of the exclusion
process.

If one considers only the empirical density and disregards the
empirical current, the above result has been proven in \cite{dps,kov},
see also \cite[Ch.~10]{kl}.  The case in which one considers also the
empirical current is discussed in \cite{bdgjl07} for the SEP.  The
topology on the set of currents used in \cite{bdgjl07} is different
from the one employed in the present paper.  Actually, the proof of
the tightness of the empirical current in \cite{bdgjl07} is incomplete
but the arguments presented below in Section~\ref{sec03} fix this
issue (in the topology here introduced).  In view of the
super-exponential estimates in \cite{kov} or \cite[Ch.~10]{kl}, the
hydrodynamical limit extends directly to the WASEP dynamics.

\subsection*{Empirical process}
Given $K=0,\ldots, N^d$, $T>0$, and
$\bs\eta \in D(\bb R_+, \Sigma_{N,K})$, let
$\bs\eta^T \in D(\bb R, \Sigma_{N,K})$ be the $T$-periodization of the
trajectory $\bs\eta$, defined by
\begin{equation*}
\bs\eta^T(t) \;=\; 
\bs\eta \Big( t - \Big \lfloor \frac
{t}{T} \Big\rfloor \, T \Big) \;,
\end{equation*}
where $\color{bblue} \lfloor a\rfloor$ represents the largest integer
less than or equal to $a\in \bb R$.
A probability measure on $D(\bb R, \Sigma_{N,K})$ is stationary if it
is invariant with respect to the group of time-translations
$(\vartheta_t : t\in\bb R)$, defined by
\begin{equation*}
(\vartheta_t \bs \eta)(s) \;:=\; \bs \eta(s- t) \;, \quad s\,\in\, \bb
R\;. 
\end{equation*}
Denote by $\color{bblue} \ms P^{N,K}_{\rm stat}$ the set of stationary
probability measures on $D(\bb R, \Sigma_{N,K})$ that we consider
endowed with the topology induced by the weak convergence and the
corresponding Borel $\sigma$-algebra.  For a trajectory
$\bs\eta \in D(\bb R_+, \Sigma_{N,K})$ and $T>0$, the \emph{empirical
process} $R_T(\bs \eta)$ is the element in $\ms P^{N,K}_{\rm stat}$
given by
\begin{equation}
\label{empr}
R_T (\bs\eta) \;:=\; \frac 1T \int_0^T \delta_{\vartheta_t\bs\eta^T} \,dt \;.
\end{equation}

Observe that the path $\bs\eta^T$ is not necessarily compatible with
the WASEP dynamics; indeed at the times that are integral multiples of
$T$ there is a jump that is not - in general - coming from the WASEP
dynamics. This is the reason for which we needed to define the
empirical current for generic paths. However, in view of the bound
\eqref{beck}, there exists a finite constant $C_0$, depending only on
the space dimension $d$, such that
\begin{equation}
\label{djj}
\sup_{t\in[0,T]} \Big| \,\langle\bs J_N ({\bs\eta}^T)(t) -\bs J_N
({\bs\eta})(t)\,,\, F\rangle \, \Big| \;\le\; C_0\, \|F\|_\infty
\end{equation}
for all vector field $F$ in $C(\bb T^d, \bb R^d)$.  Indeed, by the
definition of ${\bs\eta}^T$ and by \eqref{39}, the left-hand side is
equal to
\begin{equation*}
\Big| \,\langle\bs J_N ({\bs\eta}^T)(T) -\bs J_N
({\bs\eta})(T)\,,\, F\rangle \, \Big| \;=\;
\Big| \, \frac{1}{2N^d}\sum_{(x,y)\in \bb B_N} W(x,y) \, F_N(x,y)  \, \Big| \;, 
\end{equation*}
where $W(x,y) = W_{\bs\eta (T-) , \bs\eta (0)}$. Hence, by
\eqref{beck}, the right-hand side of the previous identity is bounded
by $C_0 \Vert F \Vert_\infty$, as claimed in \eqref{djj}.

Fix $p>(d+2)/2$, and let
$\color{bblue} \ms M = \mc M_+ (\bb T^d) \times \mc H^d_{-p}$. Denote
by $\color{bblue} \ms S$ the closed subset of
$D (\bb R, \ms M)\equiv D \big(\bb R; \mc M_+ (\bb T^d)) \times D
\big(\bb R; \mc H^d_{-p})$ given by the pairs $(\bs \pi, \bs J)$ which
satisfy the continuity equation in the sense that for each $s<t$ and
$f$ in $C^{\infty}(\bb T^d)$,
\begin{equation}
\label{31}
\int_s^t ds_1 \int_{s_1}^t ds_2 \; 
\Big\{ \< \bs \pi  (s_2) \,,\, f\> \;-\; \< \bs \pi (s_1) \,,\, f \> 
\;-\;  \< \bs J (s_2) - \bs J (s_1) \,,\, \nabla f \> \Big\} \;=\; 0\;.
\end{equation}
We consider $\ms S$ endowed with the relative topology and the
associated Borel $\sigma$-algebra.
Denote by $\vartheta_t \colon \ms S \to \ms S$, $t\in\bb R$, the
time-translation defined by
\begin{equation}
\label{36}
\vartheta_t (\bs \pi, \bs J) \;=\;
(\vartheta_t \bs \pi, \vartheta_t \bs J)\;,
\end{equation}
where $(\vartheta_t \bs \pi)(s) = \bs \pi(s-t)$, and
$(\vartheta_t \bs J)(s) = \bs J (s-t) - \bs J(-t)$, $s\in\bb R$.  
Note that the time translations defined on $D(\bb R;\Sigma_{N,K})$ and
$\ms S$ are compatible in the sense that
$\vartheta_t \circ (\bs \pi_N,\bs J_N) = (\bs \pi_N,\bs J_N) \circ
\vartheta_t$.

Given a path $(\bs\pi, \bs J)$ in $\ms S$ and $T>0$, denote by
$(\bs\pi^T, \bs J^T)$  its $T$-periodization:
\begin{equation}
\label{Tper}
\begin{split}
& \bs\pi^T(t) \;=\; \bs\pi \Big( t - \Big\lfloor \frac{t}{T}
\Big\rfloor \Big) \;,\\
& \bs J^T (t) \;=\; \bs J \big( t - \big\lfloor \tfrac{t}{T}
\big\rfloor\big) \;+\; \big\lfloor \tfrac{t}{T} \big\rfloor \big( \bs
J(T) +A \big)
\end{split}
\end{equation}
where $A$ is an element of $\mc H^d_{-p}$ satisfying
$\nabla\cdot A \;+\; \bs\pi(T) \,-\, \bs\pi(0) = 0$. A straightforward
computation shows that $(\bs\pi^T, \bs J^T)$ satisfies the continuity
equation \eqref{31}. We hereafter assume that the choice of $A$ in the
previous definition and of the discrete vector field $W$ in \eqref{tp0} 
are compatible in the sense that whenever 
$\bs\pi(0)=N^{-d}\sum_{x\in \bb T^d_N}\eta_x$ and
$\bs\pi(T)=N^{-d}\sum_{x\in \bb T^d_N}\xi_x$ 
for some $\eta,\xi\in \Sigma_N$ then 
\begin{equation*}
A = \frac 1 {N^d} \sum_{x\in \bb T^d_N}\sum_{k=1}^d
W_{\eta,\xi}(x,x+\mf e_k) \,e_k \, \mf H^1_{\res [x,x+\mf e_k]}
\end{equation*}
where $\mf H^1_{\res [x,x+\mf e_k]}$ is the one-dimensional Hausdorff
measure of the interval $[x,x+\mf e_k]$.  This compatibility implies
that
$(\, \bs\pi_N(\bs\eta^T) \,,\, \bs J_N(\bs \eta^T)\, ) =
(\bs\pi_N(\bs\eta)^T, \bs J_N(\bs \eta)^T )$, for each
$\eta\in D(\bb R_+; \Sigma_{N,K})$.

Given $\bs \eta\in D(\bb R_+;\Sigma_{N,K})$, let finally $\mf R_{T,N}
(\bs \eta)$ be the stationary probability (with respect to the map
$\vartheta_t$ defined above) on $\ms S$ given by
\begin{equation}
  \label{Rgot}
\mf R_{T,N} (\bs \eta) := \frac 1T \int_0^T \delta_{\big(
  \bs\pi_N(\vartheta_t \bs\eta^T)\,,\,\bs J_N(\vartheta_t
  \bs\eta^T)\big)} \,dt = \frac 1T \int_0^T \delta_{\vartheta_t \big(
  \bs\pi_N(\bs\eta)^T\,,\,\bs J_N(\bs\eta)^T\big)} \,dt
\end{equation}
and observe that, according to definitions \eqref{empden},
\eqref{ecurr}, \eqref{empr}, we have
\begin{equation*}
\mf R_{T,N} = R_T \circ \big(\bs \pi_N,\bs J_N\big)^{-1}\;.  
\end{equation*}

\subsection*{Large deviations asymptotic} 
Our main result establishes the large deviations principle for
$\mf R_{T,N}(\bs\eta)$ in the joint limit $T\to \infty$ and
$N\to\infty$ when $\bs \eta$ is sampled according to the WASEP
dynamics. We prove this result both when $T\to \infty$ before
$N\to\infty$ and when $N\to\infty$ before $T\to \infty$.  The
corresponding rate function is independent of the limiting procedure.

The statement of this result requires further notation.
Denote by $\color{bblue} \ms S_{m}$, $m\in (0,1)$, the closed set of
trajectories $(\bs \pi, \bs J)$ in $\ms S$ such that
$\bs \pi (t , \bb T^d) =m$ for all $t\in \bb R$.  Recalling that
$\sigma(\rho) = \rho(1-\rho)$ is the mobility of the exclusion
process, let finally $\color{bblue} \ms S_{m, {\rm ac}}$ be the
subset of elements $(\bs \pi, \bs J)$ in $\ms S_{m}$ such that
\begin{itemize}
\item[(a)] $\bs\pi \in C(\bb R, \mc M_m (\bb T^d))$,
  $\bs\pi(t,dx) = \bs \rho(t,x)\, dx$ for some $\bs \rho$ such that
  $0\le \bs \rho(t,x) \le 1$, and, for any $T>0$,
\begin{equation*}
 \int_{-T}^T dt \int_{\bb T^d} dx\, \frac{|\, \nabla
   \bs\rho\,|^2}{\sigma(\bs\rho)}\: < \; \infty\;;
\end{equation*}
\item[(b)] $\bs J \in C(\bb R, \mc H^d_{-p})$, and
  $ \bs J (t) \,=\, \int_0^t \bs j(s)\, ds$, $t\in\bb R$,
  for some $\bs j$ in
  $L^2_{\rm loc} (\bb R \times \bb T^d, \sigma(\bs \rho(t,x))^{-1} \,
  dt\, dx ; \bb R^d)$. Thus, for any $T>0$
  \begin{equation*}
    \int_{-T}^T dt \int_{\bb T^d} dx\, \frac{|\, \bs
      j\,|^2}{\sigma(\bs \rho)}\: < \; \infty\;.
  \end{equation*}
\end{itemize}

Let the \emph{action} $A_{m,T}\colon \ms S \to [0, +\infty]$,
$m\in (0,1)$, $T>0$, be defined by
\begin{equation}
  \label{hyld}
A_{m,T} (\bs\pi, \bs J) =
\begin{cases}
\displaystyle \int_{0}^T dt\, \int_{\bb T^d} dx\,
\frac{|\bs j + \nabla
\bs \rho - \sigma(\bs \rho)\, E|^2}{4\, \sigma(\bs \rho)}  & \text{
if $(\bs \pi, \bs J) \in \ms S_{m, {\rm ac}}$\;, } \\ \\
+\infty &  \text{ otherwise.}
\end{cases}
\end{equation}
By the arguments presented in \cite[\S~4]{blm}, the functional
$A_{m,T}$ is lower semicontinuous.
Note that if $\bs J$ has a density $\bs j$ as in item (b) above, then  
the density of $\vartheta_t \bs J$ is given by 
$(\vartheta_t\bs j)(s) := \bs j (s-t)$. 

For $m\in (0,1)$, let $\color{bblue} \ms P_{{\rm stat}}$,
$\color{bblue} \ms P_{{\rm stat},m}$, be the set of translation
invariant probability measures on $\ms S$, $\ms S_m$, respectively.
We consider $\ms P_{{\rm stat}}$ and $\ms P_{{\rm stat},m}$ endowed
with the topology induced by the weak convergence and the
corresponding Borel $\sigma$-algebra.  Let
$\bs I_m\colon \ms P_{{\rm stat}} \to [0, +\infty]$ be the functional
defined by
\begin{equation}
\label{06}
\bs I_m (P) \; :=\;  \frac {1}{T}\, E_P \big[ \, A_{m,T} \,\big] \;,
\end{equation}
observing that the right-hand side does not depend on $T>0$ by
stationarity. Moreover, by using the continuity equation \eqref{31},
the identity $\nabla \bs \rho/ \sigma(\bs \rho) = \nabla h'(\bs
\rho)$, where $h(\rho):= \rho\log \rho + (1-\rho) \log (1-\rho)$ is
the Bernoulli entropy, and the stationarity of $P$, integrating by
parts we deduce that if $\bs I_m(P) <+\infty$ then for any $T>0$
(equivalently for some $T>0$)
\begin{equation}
\label{fe}
\frac {1}{T}\, E_P \bigg[ \int_{-T}^T dt \int_{\bb T^d}
dx\, \Big( \frac{|\, \nabla \bs\rho\,|^2}{4\, \sigma(\bs\rho)}
+\frac{|\, \bs j\,|^2}{4\, \sigma(\bs \rho)} \Big) \bigg] < +\infty
\;.
\end{equation}

In the next statements, by $\limsup_{T,N}$, we mean either
$\limsup_{N} \limsup_{T}$ or $\limsup_{T} \limsup_{N}$. Analogously,
$\liminf_{T,N}$ stands for either $\liminf_{N} \liminf_{T}$
or $\liminf_{T} \liminf_{N}$.

\begin{theorem}
\label{cor01}
Fix $m\in(0,1)$, $p>(d+2)/2$, and a sequence $K_N$ such that $K_N/
N^d\to m$.  For each closed subset $\ms F$ of $\ms P_{\rm stat}$
\begin{equation*}
\limsup_{N,T\to \infty}\;
\sup_{\eta\in \Sigma_{N,K_N}} \frac 1{N^d}\, \frac 1T \,
\log \bb P^N_\eta \,\big[ \mf R_{T,N}  \in
\ms F\,]  
\;\le\; -\, \inf_{P \in \ms F} \bs I_{m}(P) \;.
\end{equation*}
Moreover, if $E$ is orthogonally decomposable, then, for each open
subset $\ms G$ of $\ms P_{\rm stat}$,
\begin{equation*}
\liminf_{N,T\to \infty}\; \inf_{\eta\in \Sigma_{N,K_N}} \frac 1{N^d}\,
\frac 1T \,\log \bb P^N_\eta \,\big[ \mf R_{T.N} \in \ms G\,] \;\ge\;
-\, \inf_{P \in \ms G} \bs I_{m} (P)\;.
\end{equation*}
Finally, the functional $\bs I_{m} \colon \ms P_{\rm stat} \to [0, +\infty]$ is
good and affine.
\end{theorem}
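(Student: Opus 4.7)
\emph{Strategy.} Following the two-track structure announced in the introduction, I would establish the theorem by proving the upper and lower bounds separately along each order of the double limit and by verifying that both routes produce the same rate function. In passing, the variational description of $\bs I_m$ yields its properties: affinity is immediate from $\bs I_m(P)=\frac 1 T E_P[A_{m,T}]$, lower semicontinuity is inherited from the lower semicontinuity of $A_{m,T}$ on $\ms S$ noted after \eqref{hyld}, and goodness then follows from the a priori energy estimate \eqref{fe} together with the Sobolev embedding for $p>(d+2)/2$, which together show that sublevel sets of $\bs I_m$ are relatively compact in the weak topology on $\ms P_{\rm stat}$. Uniformity in the starting configuration $\eta\in\Sigma_{N,K_N}$ reduces to the stationary start via a coupling/waiting-time argument: in a time of order $\log N$ the law of the process becomes comparable to $\mu_{N,K_N}$ up to a factor $\exp\{o(N^d T)\}$, which is negligible on the speed $N^d T$.

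\emph{Route A: $T\to\infty$ before $N\to\infty$.} For each fixed $N$ the Donsker--Varadhan theorem \cite{dv1-4,Var84} gives an LDP at speed $T$ for $R_T$ under $\bb P^N_{\mu_{N,K_N}}$, with affine rate $H^{(N)}$ equal to the relative entropy per unit time against the stationary Markovian law. Since $\mf R_{T,N}=R_T\circ(\bs\pi_N,\bs J_N)^{-1}$, pushing forward yields an LDP at speed $T$ for $\mf R_{T,N}$ with rate $H^{(N)}$ minimized over Markovian processes compatible with the macroscopic pair. I would then prove $\Gamma$-convergence of $N^{-d}$ times this projected functional to $\bs I_m$ as $N\to\infty$. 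The $\Gamma$-$\liminf$ exploits the entropy-dissipation identity, which identifies $H^{(N)}(Q)$ with the Dirichlet form of the density of $Q$ against the stationary measure, combined with the super-exponential one- and two-block replacement lemmas of \cite[Ch.~10]{kl}: local averages of $\eta_x(1-\eta_{x+e_j})$ are replaced by $\sigma(\bs\rho)$, and after a summation by parts the microscopic quadratic form collapses onto $|\bs j+\nabla\bs\rho-\sigma(\bs\rho)E|^2/[4\sigma(\bs\rho)]$. The matching $\Gamma$-$\limsup$ construction is where the orthogonal decomposability $E=-\nabla U+\tilde E$ enters: one tilts the rates by a term built from $\nabla U$ and a macroscopic drift realizing $(\bs\rho,\bs j)$, so that the tilted stationary process has the prescribed profile and a closed-form entropy production equal to $A_{m,1}(\bs\pi,\bs J)$.

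\emph{Route B: $N\to\infty$ before $T\to\infty$.} For each fixed $T$, the hydrodynamical LDP of \cite{bdgjl07} (whose tightness in the present topology is completed in Section~3 of this paper) yields, at speed $N^d$, an LDP on $[0,T]$ for $(\bs\pi_N,\bs J_N)$ with rate $A_{m,T}$. Because $\mf R_{T,N}$ is a deterministic function of the $T$-periodization of $(\bs\pi_N(\bs\eta),\bs J_N(\bs\eta))|_{[0,T]}$, the contraction principle then supplies, at speed $N^d$, an LDP for $\mf R_{T,N}$ with rate $T^{-1}\inf\{A_{m,T}(\bs\pi,\bs J)\ :\ R_T(\bs\pi^T,\bs J^T)=P\}$. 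To close the proof I would let $T\to\infty$: subadditivity of $T\mapsto A_{m,T}$ together with Birkhoff's theorem applied to the one-step action $A_{m,1}\circ\vartheta_k$ identifies the $T\to\infty$ limit of this variational expression with $E_P[A_{m,1}]=\bs I_m(P)$. The $\Gamma$-$\limsup$ portion again requires the orthogonal decomposition, this time in order to concatenate periodic extensions of finite-time quasi-minimizers into stationary recovery paths without producing uncontrolled potential contributions.

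\emph{Main obstacle.} The principal difficulty is the $\Gamma$-convergence step of Route~A. The entropy per unit time is a global functional on the whole Markovian dynamics, and projecting it onto the coarse variables $(\bs\pi,\bs J)$ amounts to a stationary variational problem that must be reduced to the explicit local density $|\bs j+\nabla\bs\rho-\sigma(\bs\rho)E|^2/[4\sigma(\bs\rho)]$. This reduction relies on a stationary version of the super-exponential replacement lemma, which is more delicate than its finite-horizon counterpart since the Dirichlet form only controls time-averaged fluctuations, and on a careful interplay between the a priori current bound \eqref{beck} and the Sobolev estimate furnished by $p>(d+2)/2$, which together yield the exponential tightness that is uniform both in $N$ and in $T$ and thus renders the joint limit insensitive to the order in which $N$ and $T$ diverge.
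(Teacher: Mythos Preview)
Your two-route architecture is exactly the paper's: Route~A is Corollary~\ref{prop1} (Donsker--Varadhan pushed forward by $(\bs\pi_N,\bs J_N)$) followed by Theorem~\ref{th03} ($\Gamma$-convergence of $N^{-d}\bs I_{N,K_N}$), and Route~B is Corollary~\ref{th11} followed by Theorem~\ref{th12}. The uniformity in $\eta$ does not need your coupling argument: it is built into the Donsker--Varadhan statement for Route~A, and for Route~B it comes from the fact that Theorem~\ref{th12} establishes $\Gamma$-convergence of $T^{-1}\bs I_{m,T,\rho}$ \emph{uniformly} in the initial profile $\rho$.

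There is, however, a genuine gap in your Route~A $\Gamma$-liminf. The level-three rate $H_{N,K}(\bb Q)$ is \emph{not} the Dirichlet form of any density against the stationary measure---that identification is a level-two phenomenon (occupation measures of reversible processes). Here $\bb Q$ is a stationary probability on path space, and $H_{N,K}$ is the relative entropy per unit time \eqref{13}, equivalently the variational supremum \eqref{20}. The paper exploits this variational form directly: for each progressively measurable vector field $w$ one inserts into the supremum the exponential-martingale test function $\Phi$ of Lemma~\ref{lem01}, so that $\log\bb E^N_{\bs\eta(0)}[e^\Phi]=0$ and hence $\bs I_{N,K_N}(P_N)\ge T^{-1}E_{\bb Q_N}[\Phi]$. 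Expanding $\Phi$, applying the super-exponential replacement (Lemma~\ref{lem1}, Corollary~\ref{cor2}) to the compensator, and passing to the limit produces the lower bound $E_P[V_{T,w}]$ of Lemma~\ref{lem2}; optimizing over $w$ then recovers $\bs I_m(P)$. A Dirichlet-form reduction cannot give this, because no single-time marginal of $\bb Q$ sees the current $\bs J$.

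The role of orthogonal decomposability is also narrower than you suggest. In both $\Gamma$-limsup arguments it enters \emph{only} through Lemma~\ref{t:l4.7}: when $E=-\nabla U+\tilde E$ with $\nabla U\cdot\tilde E=0$ and $\nabla\cdot\tilde E=0$, the WASEP quasi-potential is explicit and uniformly bounded, so any two profiles in $\mc M_m(\bb T^d)$ can be joined by a path of action at most $C_0$ in time at most $T_0$. This is what makes the density step (Theorem~\ref{t:dtI}) go through, reducing a general $P$ to convex combinations of smooth holonomic measures, and in Route~B what allows prepending a bounded-cost segment from the arbitrary initial profile $\rho_T$ to the periodic path. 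The tilt construction (Proposition~\ref{prop02}) and its entropy computation work for any smooth perturbing field; it is the approximation of general $P$ by holonomic ones that would fail without a bounded quasi-potential.
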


In the lower bound, the technical condition that the external field
$E$ is orthogonally decomposable is only used to guarantee that the
quasi-potential of the WASEP is bounded, an ingredient that enters in
the proof of Lemma~\ref{t:l4.7}.  In the common terminology of large
deviations, the previous statement is analogous to a level-three large
deviation principle, for which the rate function has an explicit
expression. The contraction principle permits to derive from this
result large deviations principles for relevant observables.

\subsection*{Level two large deviations}

Let $\color{bblue} \ms P(\mc M_{+}(\bb T^d))$ be the space of
probability measures on $\mc M_{+} (\bb T^d)$ endowed with the weak
topology. Recalling \eqref{empden}, define $\wp_{T,N}$ as the map from
$D\big(\bb R_+;\Sigma_{N,K}\big)$ to $\ms P\big(\mc M_+(\bb T^d)\big)$
by
\begin{equation*}
\wp_{T,N}(\bs\eta) := \frac 1T \int_0^T\!dt\,
\delta_{\pi_N(\bs\eta(t))}\,,
\end{equation*}
i.e.\ $\wp_{T,N}$ is the time empirical measure associated to the path
$\pi_N(\bs\eta)$.  Letting $\imath_t\colon \ms S\to\mc M_+(\bb T^d)$
be the map $(\bs \pi,\bs J) \mapsto \bs \pi (t)$, then
$\mf R_{T,N} \circ \imath_t^{-1} = \wp_{T,N}$. Denote by $\ms
I_m\colon \ms P\big(\mc M_+(\bb T^d)\big) \to [0,+\infty]$, $m\in
(0,1)$, the functional given by
\begin{equation}
  \label{Is}
  \ms I_m (\wp ) = \inf \big\{ \bs I_m (P) : P \in \ms P_{{\rm stat}}
  \,,\, P\circ \imath_t^{-1} \, = \wp \big\}\,.
\end{equation}

Given $m\in (0,1)$, let $(\Phi^m_t : t\ge 0)$ be the flow induced by the
hydrodynamic equation \eqref{26} on the set of densities with total
mass equal to $m$. Namely, when $\int \rho \, dx =m $ we set
$\Phi^m_t(\rho)=\bs \rho(t)$ where $(\bs \rho,\bs j)$ is the unique weak
solution to \eqref{26}. By identifying measures absolutely continuous
with respect to $dx$ with their densities, we regard
$(\Phi^m_t : t\ge 0)$ as a flow on $\mc M_m(\bb T^d)$.  The following
result is obtained from Theorem~\ref{cor01} by the contraction principle
and implies the \emph{hydrostatic limit}: in the limit $N\to \infty$
the empirical density constructed by sampling the particles according
to the stationary measure $\mu_{N,K}$ converges to the unique
stationary solution to the hydrodynamic equation.

\begin{corollary}
\label{cor02}
Fix $m \in(0,1)$ and a sequence $K_N$ such that $K_N/ N^d\to m$.  For
each closed subset $\ms F$ of $\ms P\big(\mc M_+(\bb T^d)\big)$
\begin{equation*}
\limsup_{N,T\to \infty}\;
\sup_{\eta\in \Sigma_{N,K_N}} \frac 1{N^d}\, \frac 1T \,
\log \bb P^N_\eta \,\big[ \wp_{T,N} \in \ms F\,]  
\;\le\; -\, \inf_{\wp \in \ms F} \ms I_{m}(\wp) \;.
\end{equation*}
If $E$ is orthogonally decomposable, then for each open subset $\ms G$
of $\ms P\big(\mc M_+(\bb T^d)\big)$
\begin{equation*}
\liminf_{N,T\to \infty}\; \inf_{\eta\in \Sigma_{N,K_N}} \frac 1{N^d}\,
\frac 1T \,\log \bb P^N_\eta \,\big[ \wp_{T,N} \in \ms G\,] \;\ge\;
-\, \inf_{\wp \in \ms G} \ms I_{m} (\wp)\;.
\end{equation*}
Finally, the functional $\ms I_{m} \colon \ms P\big(\mc M_+(\bb T^d)\big)\to
[0, +\infty]$ is good, convex, and vanishes only on the
invariant probabilities for the flow $\Phi^m$. In particular, if $E$
is orthogonally decomposable, then $\ms I_m(\wp)=0$ if and only if
$\wp=\delta_{\bar\rho\, dx}$ where $\bar\rho$ is the unique stationary
solution to the hydrodynamic equation with mass $m$.
\end{corollary}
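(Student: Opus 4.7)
The proof follows from Theorem~\ref{cor01} by applying the contraction principle to the time-marginal map $\Theta \colon P \mapsto P \circ \imath_0^{-1}$ from $\ms P_{\rm stat}$ to $\ms P(\mc M_+(\bb T^d))$, exploiting the identity $\mf R_{T,N} \circ \imath_0^{-1} = \wp_{T,N}$. The first step is to verify that $\Theta$ is continuous on $\ms P_{\rm stat}$. For any stationary $P$ on the càdlàg space $\ms S$, a Fubini argument applied to the set $\{(\omega,t) \colon \bs\pi(t)\neq\bs\pi(t-)\}$ shows that the function $t \mapsto P(\bs\pi(t)\neq\bs\pi(t-))$ vanishes for Lebesgue almost every $t$, hence by stationarity vanishes identically. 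Therefore $\imath_0$ is $P$-a.s.\ continuous, and the continuous mapping theorem yields that $P_n \to P$ in $\ms P_{\rm stat}$ implies $\Theta(P_n) \to \Theta(P)$. The upper and lower bounds of Theorem~\ref{cor01} then transfer to $\wp_{T,N}$ with rate function $\inf\{\bs I_m(P) \colon P\in \ms P_{\rm stat},\, \Theta(P)=\wp\}$, which is exactly $\ms I_m$ as defined in \eqref{Is}.

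Goodness is inherited, since $\{\ms I_m \le \ell\} = \Theta(\{\bs I_m \le \ell\})$ is the continuous image of a compact set. Convexity uses the affinity of $\bs I_m$: given $\wp = \lambda \wp_0 + (1-\lambda)\wp_1$ with $\ms I_m(\wp_i)<\infty$, I would pick minimizers $P_i \in \ms P_{\rm stat}$ of $\bs I_m$ subject to $\Theta(P_i)=\wp_i$ (which exist by goodness of $\bs I_m$), observe that $P:=\lambda P_0 + (1-\lambda)P_1$ is stationary with $\Theta(P)=\wp$, and use affinity to compute $\bs I_m(P) = \lambda \bs I_m(P_0) + (1-\lambda) \bs I_m(P_1)$, which gives the desired convex inequality for $\ms I_m$.

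For the zero set, the fact that $\bs I_m(P)=0$ forces $P$-a.s.\ $\bs j = -\nabla \bs\rho + \sigma(\bs\rho) E$ means that $P$ is supported on graphs of solutions of \eqref{26} with mass $m$ and is consequently stationary for the flow $\Phi^m$ on $\mc M_m(\bb T^d)$. Conversely, any $\Phi^m$-invariant probability $\wp$ on $\mc M_m(\bb T^d)$ lifts to a stationary $P$ with $\bs I_m(P)=0$ by pushing $\wp$ along the trajectories of the flow. Hence $\ms I_m(\wp)=0$ if and only if $\wp$ is $\Phi^m$-invariant. Under the orthogonal decomposability of $E$ the hydrodynamic equation has a unique stationary solution $\bar\rho$ that attracts every initial datum of mass $m$, so dominated convergence identifies $\delta_{\bar\rho\,dx}$ as the unique $\Phi^m$-invariant probability. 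The hydrostatic limit then follows by testing the level-two LDP on the stationary law: the distribution of $\pi_N$ under $\mu_{N,K_N}$ equals the mean of $\wp_{T,N}$ under the stationary law $\bb P^N_{\mu_{N,K_N}}$, which concentrates at $\delta_{\bar\rho\,dx}$.

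The main technical obstacle is the continuity of $\Theta$, since the evaluation functional is not continuous on the full Skorohod space $D(\bb R,\ms M)$ and one must combine stationarity with the Fubini argument on jump times to apply the contraction principle; beyond this, the remaining assertions are routine consequences of Theorem~\ref{cor01}, of the affinity of $\bs I_m$, and of the dissipative structure of the hydrodynamic flow under orthogonal decomposability.
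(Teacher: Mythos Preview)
Your proof is correct and follows essentially the same route as the paper: contraction from Theorem~\ref{cor01} via the marginal map $P\mapsto P\circ\imath_t^{-1}$, with the key observation that stationarity forces the jump set at any fixed time to have $P$-measure zero, so the continuous mapping theorem applies despite the Skorohod topology. You supply more detail than the paper on goodness and convexity (the paper leaves these implicit in the contraction principle and the affinity of $\bs I_m$), and your Fubini justification of the no-jump property is a clean way to state what the paper asserts in one line. One minor point: the statement only claims that $\ms I_m$ \emph{vanishes only on} invariant measures, i.e.\ the forward inclusion; your converse lift of an invariant $\wp$ to a stationary $P$ with $\bs I_m(P)=0$ is correct in spirit but would require checking that the lifted path measure lands in $\ms S_{m,\mathrm{ac}}$ (parabolic regularity plus invariance handles this), and is not strictly needed here.
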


\subsection*{Level one large deviations} 
For $T>0$ the \emph{time-averaged empirical density} is the map
$\pi_{T,N}: D(\bb R_+, \Sigma_N) \to \mc M_+(\bb T^d)$ defined by
\begin{equation}
\label{05}
\pi_{T,N} (\bs \eta)  \;: =\; \frac{1}{T} \int_0^T
[\, \bs \pi_N (\bs \eta) \, ]\, (t) \, dt\;.
\end{equation}
Likewise, for $p>d/2$, the \emph{time-averaged empirical current} is the map
$J_{T,N}$ from  $D(\bb R_+, \Sigma_{N,K})$ to $\mc H^d_{-p}$ defined by
\begin{equation}
\label{04}
J_{T,N} (\bs \eta) \;=\; \frac{1}{T}
\, [\, \bs J_{N} (\bs \eta) \, ] \,  (T) \;,
\end{equation}
which can also be written as 
\begin{equation*}
\< J_{T,N} (\bs \eta) \,,\, F\> \;=\; \frac{1}{T} 
\frac 1{N^d} \sum_{(x,y)\in \bb B_N} \ms N^{x,y}_{(0,T]}(\bs \eta)
\int_{x}^{y} F  \cdot  d\ell \;, 
\quad F \,\in\, C^\infty(\bb T^d; \bb R^d)\;.
\end{equation*}
Note that
\begin{equation}
\label{pl1}
(\pi_{T,N},J_{T,N}) = \int\! d \mf R_{T,N} \, \Big (\bs\pi (t), \frac
1t \bs J(t) \Big) + \frac 1T \big(0, \mc E_{T,N} \big)
\end{equation}
where the first term on right hand side does not depend on $t\neq 0$
by the stationarity of $\mf R_{T,N}$ and
\begin{equation*}
  \mc E_{T,N}(\bs\eta)  =
  \bs J_N (\bs \eta^T (T))- \bs J_N (\bs \eta(T))
  \;.
\end{equation*}
By \eqref{djj} and the Sobolev embedding, for $p> d/2$ we deduce that
$\| \mc E_{T,N}(\bs\eta)\|_{-p}$ is bounded uniformly in $\bs\eta$,
$T$, and $N$. Hence the second term on the right hand side of
\eqref{pl1} is irrelevant for large deviations in the asymptotics
$T\to\infty$.

Let $I_m \colon \ms M \to [0, +\infty]$, $m\in (0,1)$, be
the functional defined by
\begin{equation*}
I_m (\pi, J) \; :=\; \inf \big\{ \, \bs I_m (P) : P \in \ms P_{{\rm
    stat}} \,,\, E_P[\, \bs \pi (t)\, ] \,=\, \pi \,,\, E_P[\, \bs J
(\, t \, ) \,] \, =\, t J \, \}\;,
\end{equation*}
which does not depend on $t\neq 0$.
If the vector field $J$ is not divergence free, the set
on the right-hand side is empty. Indeed, by stationarity and the
continuity equation \eqref{31}, if the above constraints are
satisfied, we deduce that for each smooth function $f$ on $\bb T^d$
and $t>0$
\begin{equation*}
0 \;=\; E_P \Big[ \int_0^t  \<\bs\pi(s) -\bs \pi(0)\,,\, f\>
\, ds  \, \big]
\;=\;  E_P \big[ \int_0^t \< \bs J(s) \,,\, \nabla f \> \, ds \, \Big]
\;=\;  \frac{t^2}{2}\, \< J \,,\, \nabla f \> \;.   
\end{equation*}

By the contraction principle, Theorem~\ref{cor01} implies the following
statement.

\begin{corollary}
\label{th01}
Fix $m\in(0,1)$, $p>(d+2)/2$, and a sequence $K_N$ so that $K_N/ N^d\to m$.
For each closed subset $\ms F$ of $\ms M$
\begin{equation*}
\limsup_{T,N\to \infty} \; \sup_{\eta\in \Sigma_{N,K_N}} \frac
{1}{N^d} \frac 1T \,\log \bb P^N_\eta \,\big[ (\pi_{T,N} , J_{T,N})
\in \ms F\,\big] \;\le\; -\, \inf_{(\pi , J)\in \ms F} I_m (\pi ,
J)\;.
\end{equation*}
Moreover, if $E$ is orthogonally decomposable, then for each open subset
$\ms G$ of $\ms M$,
\begin{equation*}
\liminf_{T,N\to \infty} \; \inf_{\eta\in \Sigma_{N,K_N}} \frac
{1}{N^d} \frac 1T \,\log \bb P^N_\eta \,\big[ (\pi_{T,N} , J_{T,N})
\in \ms G\,\big] \;\ge\; -\, \inf_{(\pi , J)\in \ms G} I_m (\pi ,
J)\;.
\end{equation*}
Finally, the functional $I_m \colon   \ms M \to [0, +\infty]$ is 
good and convex.
\end{corollary}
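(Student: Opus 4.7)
The natural strategy is to derive Corollary~\ref{th01} from Theorem~\ref{cor01} via the contraction principle applied to the projection $\Pi\colon \ms P_{\rm stat}\to \ms M$ defined by
\begin{equation*}
\Pi(P) \;:=\; \Big(\, E_P[\bs\pi(0)] \,,\, \tfrac{1}{t}\, E_P[\bs J(t)]\,\Big),
\end{equation*}
for any $t\neq 0$, where the first coordinate is the measure on $\bb T^d$ whose integral against $f\in C(\bb T^d)$ is $E_P[\langle \bs\pi(0),f\rangle]$, and the second the element of $\mc H^d_{-p}$ whose evaluation on $F\in \mc H^d_p$ is $t^{-1}E_P[\langle\bs J(t),F\rangle]$. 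By stationarity of $P$ and the additivity property $\bs J(t+s)-\bs J(s)$ having the same law as $\bs J(t)$ built into \eqref{36}, the value $\Pi(P)$ does not depend on the choice of $t$. By definition, $I_m(\pi,J)=\inf\{\bs I_m(P):\Pi(P)=(\pi,J)\}$.

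The first key step is to verify that $(\pi_{T,N},J_{T,N})$ and $\Pi(\mf R_{T,N})$ coincide up to a term negligible for large deviations. Exactly this is the content of formula \eqref{pl1}, combined with the bound on $\|\mc E_{T,N}\|_{-p}$ proved there via \eqref{djj} and the Sobolev embedding $\mc H^d_p\hookrightarrow C(\bb T^d;\bb R^d)$ for $p>d/2$: the discrepancy is of order $1/T$ in the norm of $\ms M$, uniformly in $\bs\eta$ and $N$. Hence it suffices to establish the LDP for $\Pi(\mf R_{T,N})$ with rate $I_m$.

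The second step is the continuity of $\Pi$ for the weak topology on $\ms P_{\rm stat}$ and the product topology on $\ms M$. Pointwise evaluation of càdlàg paths is not continuous, but we can bypass this by stationarity: for $f\in C(\bb T^d)$ and any $\tau>0$,
\begin{equation*}
E_P[\langle\bs\pi(0),f\rangle] \;=\; \tfrac{1}{\tau}\int_0^\tau E_P[\langle \bs\pi(s),f\rangle]\,ds,
\end{equation*}
and similarly $\tfrac{1}{t}E_P[\langle\bs J(t),F\rangle]=\tfrac{1}{t\tau}E_P[\int_0^\tau\langle\bs J(s+t)-\bs J(s),F\rangle\,ds]$. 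The integrands are Skorohod-continuous functionals of the path at Lebesgue-almost every time, and bounded on compact subsets of $\ms S$ (using $\|\bs\pi(s)\|\le 1$ and the bound \eqref{djj} extended to small increments together with an exponential tightness argument on current increments under $\bs I_m$-level sets). Continuity of $\Pi$ restricted to the goodness level sets $\{\bs I_m\le c\}$, together with an exponential tightness bound for $\mf R_{T,N}$ (which follows from Theorem~\ref{cor01} itself, since $\bs I_m$ is good), then suffices to pass the LDP through $\Pi$ in both the upper and lower bound: for the upper bound a standard cut-off on sublevel sets gives $\limsup\le-\inf_{\ms F} I_m$, while the lower bound follows because, given $P$ with $\Pi(P)\in\ms G$ and $\bs I_m(P)<\infty$, the continuity of $\Pi$ at $P$ allows us to find an open neighborhood $\ms O\subset \ms P_{\rm stat}$ with $\Pi(\ms O)\subset\ms G$ and apply the lower bound of Theorem~\ref{cor01} to $\ms O$.

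Finally, goodness of $I_m$ follows from goodness of $\bs I_m$: if $(\pi_n,J_n)\to(\pi,J)$ with $I_m(\pi_n,J_n)\le c$, pick near-optimizers $P_n$ with $\bs I_m(P_n)\le c+1/n$ and $\Pi(P_n)=(\pi_n,J_n)$; up to subsequences $P_n\to P$ by compactness of $\{\bs I_m\le c+1\}$, and by lower semicontinuity of $\bs I_m$ together with continuity of $\Pi$ on this set, $\Pi(P)=(\pi,J)$ and $\bs I_m(P)\le c$, giving $I_m(\pi,J)\le c$. Convexity of $I_m$ is immediate from convexity of $\ms P_{\rm stat}$, affinity of $\bs I_m$ (Theorem~\ref{cor01}), and linearity of $\Pi$: given $P_0,P_1$ with $\Pi(P_i)=(\pi_i,J_i)$ and $\lambda\in[0,1]$, the measure $\lambda P_1+(1-\lambda)P_0$ lies in $\ms P_{\rm stat}$, has $\Pi$-image $\lambda(\pi_1,J_1)+(1-\lambda)(\pi_0,J_0)$, and rate $\lambda\bs I_m(P_1)+(1-\lambda)\bs I_m(P_0)$, so taking infima yields the convexity inequality for $I_m$.

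The main technical obstacle is the continuity of $\Pi$ on level sets: while the time-averaging trick reduces the question to continuity of bounded integrated functionals, uniform integrability of $\bs J$-increments on sublevel sets of $\bs I_m$ has to be extracted from the energy bound \eqref{fe}. This is the only delicate point; everything else is a routine application of the contraction principle and of convex-analytic properties of affine rate functions.
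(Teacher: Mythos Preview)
Your overall strategy matches the paper's: define the projection $\Pi$ (the paper calls it $\psi$), use \eqref{pl1} to reduce to the large deviations of $\Pi(\mf R_{T,N})$, and then contract from Theorem~\ref{cor01}. The goodness and convexity arguments at the end are correct and essentially what one would expect.

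The gap is in how you handle the non-continuity of $\Pi$. You claim that continuity of $\Pi$ restricted to the sublevel sets $\{\bs I_m\le c\}$, together with exponential tightness of $\mf R_{T,N}$, suffices to push the LDP through. This is not a standard contraction result, and as stated it does not work: the exponentially tight compacts furnished by the LDP in Theorem~\ref{cor01} are \emph{not} contained in the sublevel sets $\{\bs I_m\le c\}$ --- indeed $\mf R_{T,N}$ almost surely has $\bs I_m(\mf R_{T,N})=+\infty$, since its paths are not in $\ms S_{m,\mathrm{ac}}$. So continuity of $\Pi$ on $\{\bs I_m\le c\}$ tells you nothing about $\Pi(\mf R_{T,N})$. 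The same issue bites in your lower-bound step: ``continuity of $\Pi$ at $P$'' would require an open neighborhood of $P$ in $\ms P_{\rm stat}$ to map into $\ms G$, but continuity restricted to a sublevel set does not give that, because any open neighborhood contains many $Q$ with $\bs I_m(Q)=+\infty$ on which you have no control of $E_Q[\bs J(t)]$.

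Your time-averaging trick does show that the \emph{density} component $P\mapsto E_P[\bs\pi(0)]$ is continuous on all of $\ms P_{\rm stat}$, since the averaged integrand is bounded. The obstruction is entirely in the current component, where $\langle\bs J(t),F\rangle$ is unbounded. The paper handles this exactly by the mechanism you are missing: it invokes the extended contraction principle via \emph{exponentially good approximations} in the sense of \cite[\S~4.2.2]{DZ}, i.e.\ one truncates $\psi$ to get continuous maps $\psi_\ell$ and shows that $\psi_\ell(\mf R_{T,N})$ approximates $\psi(\mf R_{T,N})$ exponentially well. This is precisely the device that converts the tail control you allude to (via Lemma~\ref{l02} or the energy bound) into a statement usable for the LDP. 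Your sketch would become correct if you replaced the ``continuity on sublevel sets'' step by this exponentially-good-approximation argument.
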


The projections of $I_m$ on the two components can be further analyzed
and computed explicitly under additional conditions which are
satisfied, for instance, in the SEP case.  Denote by
$I_m^{(1)}\colon \mc M_+(\bb T^d) \to [0,+\infty]$ the projection of
the functional $I_m$ on the density, i.e.,\
\begin{equation}
\label{vfd}
I_m^{(1)} (\pi) \; =\; \inf \big\{ \, \bs I_m (P) : P \in \ms P_{{\rm
    stat},m} \,,\, E_P[\, \bs \pi (\, t \, ) \,] \, =\, \pi \, \}\;.
\end{equation}
It turns that when the external field $E$ is a gradient, so that the
WASEP dynamics is reversible, then $I_m^{(1)}$ can be computed
explicitly.  Assume $E=-\nabla U$ for some $U\in C^2(\bb T^d)$ and let
$\ms V_m \colon \mc M_m(\bb T^d)\to [0,+\infty]$ be the functional
defined by
\begin{equation*}
\ms V_m (\pi) :=
\begin{cases}
\displaystyle{\int_{\bb T^d} \!dx \, \frac{ \big|\nabla \rho
    +\sigma(\rho) \nabla U \big|^2}{ 4\, \sigma(\rho)}} & \textrm{ if
  $\pi(\bb T^d) =m$ and $\pi(dx)=\rho \,dx$,}
\\
\vphantom{\Big\{} +\infty & \textrm{ otherwise.}
\end{cases}
\end{equation*}
Let also $\textrm{co}(\ms V_m)$ be the convex hull of $\ms V_m$ and
observe that, in view of the concavity of $\sigma$, if $\nabla U =0$
then $\mathrm{co}(\ms V_m)=\ms V_m$.  The functional $\ms V_m$ can be
seen as a non-linear version of the level two Donsker-Varadhan
functional for reversible diffusions (sometimes called Fisher
information). Indeed, in the case of independent particles
$\sigma(\rho)=\rho$ and the functional $\ms V_m$ reduces to the
Dirichlet form of the square root for the diffusion on $\bb T^d$ with
generator $\Delta -\nabla U\cdot \nabla$.

\begin{theorem}
\label{t:I1}
If  $E=-\nabla U$, then $I^{(1)}_m= \mathrm{co}(\ms V_m)$.
\end{theorem}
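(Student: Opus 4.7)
The plan is to prove the two inequalities $I^{(1)}_m\leq\mathrm{co}(\ms V_m)$ and $I^{(1)}_m\geq\mathrm{co}(\ms V_m)$ separately; the upper bound is a one-line construction plus a convexity argument, while the lower bound exploits the reversible structure of the WASEP when $E=-\nabla U$ via a decomposition of the action whose cross term reduces to a boundary contribution that vanishes under stationarity.

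For the upper bound, given $\pi=\rho\,dx$ with finite $\ms V_m(\pi)$, I would consider the deterministic probability $P=\delta_{(\rho\,dx,\,0)}$ concentrated on the constant path $\bs\rho(t,\cdot)\equiv\rho$, $\bs J(t)\equiv 0$. This $P$ is stationary and trivially satisfies the continuity equation; the elementary inequality $|\nabla\rho|^2/\sigma(\rho)\leq 2|\nabla\rho+\sigma(\rho)\nabla U|^2/\sigma(\rho)+2\sigma(\rho)|\nabla U|^2$ shows it lies in $\ms S_{m,\mathrm{ac}}$, and direct substitution yields $\bs I_m(P)=\ms V_m(\pi)$. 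Hence $I^{(1)}_m\leq\ms V_m$. Since $\bs I_m$ is affine and the constraint $E_P[\bs\pi(0)]=\pi$ is affine in $P$, $I^{(1)}_m$ is convex; goodness of $\bs I_m$ and continuity of $P\mapsto E_P[\bs\pi(0)]$ make it lsc. As $\mathrm{co}(\ms V_m)$ is by definition the largest convex lsc minorant of $\ms V_m$, the inequality $I^{(1)}_m\leq\mathrm{co}(\ms V_m)$ follows.

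For the lower bound, set $F(\bs\rho):=\int_{\bb T^d}[h(\bs\rho)+U\bs\rho]\,dx$, so that $\sigma(\bs\rho)\nabla F'(\bs\rho)=\nabla\bs\rho+\sigma(\bs\rho)\nabla U$. With $E=-\nabla U$, expanding the square in \eqref{hyld} yields the pointwise identity
\begin{equation*}
\frac{|\bs j+\nabla\bs\rho+\sigma(\bs\rho)\nabla U|^2}{4\sigma(\bs\rho)}
= \frac{|\bs j|^2}{4\sigma(\bs\rho)}+\tfrac12\,\bs j\cdot\nabla F'(\bs\rho)
+ \frac{\sigma(\bs\rho)}{4}|\nabla F'(\bs\rho)|^2\,.
\end{equation*}
Integrating in space-time, the cross term gives the boundary contribution $F(\bs\rho(T))-F(\bs\rho(0))$ (by integration by parts combined with the continuity equation, which together imply $\int\bs j\cdot\nabla F'(\bs\rho)\,dx=\tfrac{d}{dt}F(\bs\rho)$), while the last term integrates to $\int_0^T\ms V_m(\bs\pi(t))\,dt$. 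Taking expectation under any $P\in\ms P_{\mathrm{stat},m}$ with $\bs I_m(P)<\infty$ and $E_P[\bs\pi(0)]=\pi$, and dividing by $T$, the boundary term vanishes by stationarity (and boundedness of $F$ on $[0,1]$), so after using stationarity on the remaining time integrals one obtains
\begin{equation*}
\bs I_m(P) = E_P\!\left[\int_{\bb T^d}\frac{|\bs j(0)|^2}{4\sigma(\bs\rho(0))}\,dx\right]+E_P\big[\ms V_m(\bs\pi(0))\big]\geq E_P\big[\ms V_m(\bs\pi(0))\big]\,.
\end{equation*}
Jensen's inequality for the convex lsc minorant $\mathrm{co}(\ms V_m)\leq\ms V_m$ then gives $E_P[\ms V_m(\bs\pi(0))]\geq\mathrm{co}(\ms V_m)(E_P[\bs\pi(0)])=\mathrm{co}(\ms V_m)(\pi)$, and taking the infimum over admissible $P$ yields the desired bound.

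The main technical obstacle is justifying the integration by parts producing the boundary term, since $F'(\rho)=\log(\rho/(1-\rho))+U$ diverges at $\rho\in\{0,1\}$. The energy estimate \eqref{fe} ensures that both $|\nabla\bs\rho|/\sqrt{\sigma(\bs\rho)}$ and $|\bs j|/\sqrt{\sigma(\bs\rho)}$ lie in $L^2_{\mathrm{loc}}$ $P$-a.s.\ whenever $\bs I_m(P)<\infty$, so by Cauchy--Schwarz the cross-term integral is absolutely convergent; a standard truncation of $F'$ (e.g.\ replacing $\rho$ by $\rho\vee\epsilon\wedge(1-\epsilon)$) followed by dominated convergence, combined with the boundedness of $F$ on $[0,1]$, validates the formal chain-rule computation and makes the boundary identity rigorous.
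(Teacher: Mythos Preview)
Your proof is correct and follows essentially the same approach as the paper: both expand the square in the action when $E=-\nabla U$, recognize the cross term as a total time derivative of the free energy $\int[h(\bs\rho)+U\bs\rho]\,dx$ (hence vanishing in expectation by stationarity), and arrive at the identity $\bs I_m(P)=E_P\big[\int|\bs j|^2/(4\sigma(\bs\rho))\,dx\big]+E_{\wp}[\ms V_m]$ where $\wp=P\circ\imath_0^{-1}$. The only cosmetic difference is that the paper treats both inequalities at once by observing that dropping the current term and replacing $P$ by the ``constant-in-time'' lift $\tilde P=\wp\circ A^{-1}$ shows the infimum over $P$ with $E_P[\bs\pi(0)]=\pi$ equals $\inf\{E_\wp[\ms V_m]:E_\wp[\pi']=\pi\}$, which is taken as the definition of $\mathrm{co}(\ms V_m)(\pi)$; you instead split into an upper bound via $P=\delta_{(\rho\,dx,0)}$ plus convexity/lsc of $I^{(1)}_m$, and a lower bound via Jensen for $\mathrm{co}(\ms V_m)$.
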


As discussed in the introduction, the projection of $I_m$ on the
second component is related to the possible occurrence of dynamical
phase transitions for the current.
For $p>(d+2)/2$, $m\in (0,1)$, denote by
$I_m^{(2)}\colon \mc H^d_{-p}\to [0,+\infty]$ the projection of the
functional $I_m$ on the current, i.e.\
\begin{equation}
\label{vfc}
I_m^{(2)} (J) \; =\; \inf \big\{ \, \bs I_m (P) : P \in \ms P_{{\rm
    stat},m} \,,\, E_P[\, \bs J (\, t \, ) \,] \, =\, t\, J \, \}\;,
\end{equation}
that corresponds to the Varadhan's proposal informally presented in
\eqref{0I2}.  By the contraction principle, Corollary~\ref{th01}
implies that the time-averaged empirical current $J_{T,N}$ satisfies a
large deviation principle with rate function $I_m^{(2)}$.

It has been pointed out in \cite{bdgjl06, bdgjl07, bd} that the
variational problem \eqref{vfc} has a non-trivial solution when $E$ is
constant and large enough. Such behavior is interpreted as a dynamical
phase transition.  Strictly speaking, the problem \eqref{vfc} is not
really considered in \cite{bdgjl06, bdgjl07, bd}, but the analysis
performed there implies the results summarized in the next
statement. We restrict to the one-dimensional case with constant
external field. Since $I_m^{(2)} (J) <+\infty$ implies
$\nabla\cdot J =0$, in the one-dimensional case, $I^{(2)}_m$ is finite
only if $J(x)= j$ for some constant $j\in \bb R$.

\begin{theorem}
\label{tdpt}
Let $d=1$, $m\in(0,1)$, and $E\ge 0$ be constant.
\begin{itemize}
\item[(i)] There exists $E_0>0$ such that if $E\le E_0$ then for
  $J\,=\, j $, $j \in\bb R$,
\begin{equation*}
I^{(2)}_m (J) \;=\; \frac {(j-\sigma(m) E)^2}{ 4 \, \sigma(m)}\;\cdot
\end{equation*}
The optimal $P$ for the variational problem \eqref{vfc} is
$\delta_{(m, j)}$.

\item[(ii)] There exists $E_1>E_0$ such that if $E\ge E_1$ then for
  $J = j$, $j \in\bb R$, with $j$ large enough
\begin{equation*}
I^{(2)}_m (J) \;<\; \frac {(j-\sigma(m) E)^2}{ 4 \, \sigma(m)}\;\cdot
\end{equation*}
Furthermore, taking the time average of a probability concentrated on a
traveling wave provides a measure $P$ in $\ms P_{{\rm stat},m}$ such
that $E_P[\, \bs J (\, t \, ) \,] \, =\, t \,J$,
$\bs I_m (P) < \bs I_m(\delta_{(m, j)})$.
\end{itemize}
\end{theorem}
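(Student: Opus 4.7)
Any $P\in\ms P_{\mathrm{stat},m}$ with $E_P[\bs J(t)]=tj$ is to be compared against the degenerate measure $\delta_{(m,j)}$ concentrated on $\bs\rho\equiv m$ and $\bs j\equiv j$. Expanding the square in the integrand of \eqref{hyld}, using spatial periodicity (so $\int_{\bb T}\partial_x\bs\rho\,dx=0$) and the mean-current constraint (so $E_P\int_{\bb T}\bs j\,dx=j$) to handle the cross term, yields
\begin{equation*}
\bs I_m(P)\;=\;E_P\!\int_{\bb T}\frac{(\bs j+\partial_x\bs\rho)^2}{4\,\sigma(\bs\rho)}\,dx\,-\,\frac{jE}{2}\,+\,\frac{E^2}{4}\,E_P\!\int_{\bb T}\sigma(\bs\rho)\,dx\,.
\end{equation*}
Evaluation at $P=\delta_{(m,j)}$ gives $(j-\sigma(m)E)^2/(4\sigma(m))$, which already provides the $\le$ inequality in (i) and identifies the benchmark that a traveling-wave candidate must strictly undercut in (ii).

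\textbf{Lower bound for (i).} The matching lower bound for $E\le E_0$ is extracted from the Euler--Lagrange analysis of \cite{bdgjl06}. First, for any fixed density $\bs\rho$, minimization of $\bs I_m(P)$ over $\bs j$ subject to the continuity equation and the mean-current constraint is a strictly convex quadratic problem solved by a Lagrange multiplier $p$, yielding the Darcy-type form $\bs j=\sigma(\bs\rho)(E-\partial_x p)-\partial_x\bs\rho$; substitution reduces the problem to a variational principle in $(\bs\rho,p)$. Second, a concentration/ergodic-decomposition argument, valid in $d=1$ because the constraints and the reduced functional depend on $\bs\rho$ in a translation-covariant way, restricts extremizers to space-time periodic traveling waves $\bs\rho(t,x)=\phi(x-vt)$ with $\int_0^1\phi=m$. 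The analysis of \cite{bdgjl06} then identifies a threshold $E_0=E_0(m)$ below which the resulting finite-dimensional variational problem admits only the trivial minimizer $\phi\equiv m,v=0$, and hence $\bs I_m(P)\ge(j-\sigma(m)E)^2/(4\sigma(m))$ for every admissible $P$.

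\textbf{Upper bound for (ii).} Fix a non-constant smooth periodic $\phi\colon\bb T\to(0,1)$ with $\int_0^1\phi(y)\,dy=m$ and $v>0$, and set $\bs\rho(t,x):=\phi(x-vt)$, $\bs j(t,x):=v\phi(x-vt)+j-vm$; the continuity equation holds and $\int_{\bb T}\bs j(t,\cdot)\,dx=j$. The orbit-averaged probability $P:=v\!\int_0^{1/v}\delta_{\vartheta_t(\bs\rho,\bs J)}\,dt$ lies in $\ms P_{\mathrm{stat},m}$, has $E_P[\bs J(t)]=tj$, and by stationarity
\begin{equation*}
\bs I_m(P)\;=\;\int_0^1\frac{\big(v\phi(y)+j-vm+\phi'(y)-\sigma(\phi(y))\,E\big)^2}{4\,\sigma(\phi(y))}\,dy\,.
\end{equation*}
The bifurcation analysis of the corresponding Euler--Lagrange system in \cite{bdgjl06, bd} produces, for $E\ge E_1$ and $|j|$ sufficiently large, a choice of $(\phi,v)$ for which the right-hand side is strictly less than $(j-\sigma(m)E)^2/(4\sigma(m))$, completing (ii).

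\textbf{Main obstacle.} The delicate step is the lower bound in (i). The functional \eqref{hyld} is not jointly convex in $(\bs\rho,\partial_x\bs\rho,\bs j)$ since $\sigma$ is concave and sits in the denominator, so convexity is recovered only after the Darcy-type current elimination and only when $E\le E_0$. The passage from the traveling-wave sector, where the BDGJL analysis lives, to an arbitrary stationary $P$ constitutes the technical core: it is precisely this translation from the Euler--Lagrange / traveling-wave formulation in \cite{bdgjl06} to the stationary-measure language of \eqref{vfc} that the paper alludes to when noting that \eqref{vfc} was not explicitly considered there.
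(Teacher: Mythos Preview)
Your treatment of part (ii) matches the paper's: construct a traveling wave with the prescribed mean current, take the associated holonomic probability, and invoke the computation in \cite{bdgjl06,bd} showing its action beats the constant profile for large $E$ and large $|j|$.

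Part (i), however, contains a gap. Your step that ``restricts extremizers to space-time periodic traveling waves'' via a ``concentration/ergodic-decomposition argument'' is not justified: ergodic decomposition reduces a stationary $P$ to a mixture of \emph{ergodic} stationary measures, but ergodic measures are not, in general, supported on periodic orbits. Translation covariance of the functional does not close this gap. So as written you have not established the lower bound $\bs I_m(P)\ge (j-\sigma(m)E)^2/(4\sigma(m))$ for an arbitrary admissible $P$.

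The paper avoids this detour entirely. The input it cites from \cite{bdgjl06,bdgjl07,bd} is a \emph{pathwise} inequality, not merely a traveling-wave statement: for $E\le E_0$ and every $(\bs\pi,\bs J)\in\ms S_m$ with $\frac{1}{T}\bs J(T)=j$, one has $A_{m,T}(\bs\pi,\bs J)\ge A_{m,T}(\bs\pi^*,\bs J^*)=T\,\Phi(j)$ where $\Phi(q)=(q-\sigma(m)E)^2/(4\sigma(m))$. Since $\bs I_m(P)=\frac{1}{T}E_P[A_{m,T}]$, applying this bound under $E_P$ and then Jensen's inequality (as $\Phi$ is convex and the constraint fixes the mean of $\frac{1}{T}\bs J(T)$) gives $\bs I_m(P)\ge\Phi(j)$ directly, with equality at $\delta_{(m,j)}$. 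Thus the ``main obstacle'' you identify---passing from the traveling-wave sector to arbitrary stationary $P$---is not actually present: the cited references already control all paths, and the passage to $\ms P_{\mathrm{stat},m}$ is just averaging plus convexity.
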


Regarding the higher dimensional case, we mention that the argument in
\cite[Prop.~5.1]{bdgjl07} implies that in the SEP case ($E=0$) for $J$
with vanishing divergence we have
\begin{equation*}
  I^{(1)}_m(J) = \inf_{\rho} \int_{\bb T^d} \!
  \frac { |J+\nabla \rho|^2}{4 \sigma(\rho)}\, dx \;,
\end{equation*}
where the infimum is carried out over the density profiles $\rho$ of
mass $m$. In other words, the infimum in \eqref{vfc} is achieved for a
probability measure of the form $P= \delta_{(\rho \,dx\,,\, J)}$ and
no dynamical phase transition occurs.



\section{Donsker-Varadhan large deviations principle}
\label{s3}

In this section, we recall the Donsker-Varadhan large deviations
principle in the context of the WASEP dynamics with fixed number of
particles. 

Recall from \eqref{empr} the definition of the empirical process
$R_T(\bs \eta)$.  Referring to \cite{Var84} for equivalent
characterizations, we introduce the rate functional for the family
$(R_T: T>0)$ by a variational representation that will be most useful
for our purposes.  For $t>0$, let
$H_{N,K}(t, \,\cdot\,) : \ms P^{N,K}_{\rm stat} \to [0,+\infty]$ be
the functional given by
\begin{equation*} 
H_{N,K} (t,\bb Q) \;=\; \sup_{\Phi}
\int\! d \bb Q (\bs \eta) \Big[  \Phi (\bs \eta)
\,-\, \log \bb E_{\bs\eta(0)}^N \big( e^{\Phi} \big) \Big]\;,
\end{equation*}
where the supremum is carried over the bounded and continuous
functions $\Phi$ on $ D(\bb R, \Sigma_{N,K})$ that are measurable with
respect to $\sigma\{ \bs\eta(s), \: s\in[0,t]\big\}$.  Let $H_{N,K}
\colon \ms P^{N,K}_{\rm stat} \to [0,+\infty]$ be the functional
defined by
\begin{equation} 
\label{20}
H_{N,K}(\bb Q) \;:=\;
\sup_{t>0} \frac 1t \, H_{N,K} (t,\bb Q)
\;=\;
\lim_{t\to\infty} \frac 1t \, H_{N,K} (t,\bb Q)\;,
\end{equation}
where the second identity follows from the inequality before
\cite[Theorem 10.9]{Var84}.  By \cite[Theorems 10.6 and 10.8]{Var84},
the functional $H_{N,K}$ is good and affine.

The classical Donsker-Varadhan theorem, see \cite{dv1-4} or Theorems
11.6 and 12.5 in \cite{Var84}, states that, uniformly on the initial
configuration $\eta\in \Sigma_{N,K}$, the family of probability
measures $(\bb P^N_{\eta}\circ R_T^{-1}: T>0)$ satisfies a large
deviations principle with rate function $H_{N,K}$.

\begin{theorem}
\label{th02}
Fix $N$ and $K$. For each closed set $\ms F$ and each open set $\ms G$ in
$\ms P^{N,K}_{\rm stat}$,
\begin{gather*}
\limsup_{T\to \infty} \sup_{\eta\in \Sigma_{N,K}} \frac 1T \,
\log \bb P^N_\eta \,\big[ R_T \in \ms F\,\big]  
\;\le\; -\, \inf_{\bb Q \in \ms F} H_{N,K}(\bb Q) \;, \\
\liminf_{T\to \infty} \inf_{\eta\in \Sigma_{N,K}} 
\frac 1T \,\log \bb P^N_\eta \,\big[ R_T \in \ms G\,\big]  
\;\ge\; -\, \inf_{\bb Q \in \ms G}  H_{N,K} (\bb Q)\;.
\end{gather*}
\end{theorem}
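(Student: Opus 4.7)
The plan is to apply the classical Donsker--Varadhan level-three large deviation principle for continuous-time Markov chains, as established in \cite{dv1-4} and formulated in \cite[Thms.~11.6, 12.5]{Var84}, after verifying that its hypotheses hold for the WASEP on $\Sigma_{N,K}$. With $N$ and $K$ fixed the state space is finite, the generator $L_N$ in \eqref{22} is bounded, and any two configurations can be connected by a finite sequence of nearest-neighbor exchanges $\eta\leftrightarrow \sigma^{x,y}\eta$ with strictly positive rates. Hence the chain is irreducible, admits the unique invariant probability $\mu_{N,K}$, and the associated stationary process on $D(\bb R, \Sigma_{N,K})$ is ergodic under $\vartheta_t$. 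Moreover, $\ms P^{N,K}_{\rm stat}$ is compact and metrizable in the weak topology, so the machinery of \cite[\S\S~10--12]{Var84} applies directly.

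For the upper bound, I would proceed by the standard exponential Chebyshev/subadditivity argument. Fix $t>0$ and a bounded continuous $\Phi$ measurable with respect to $\sigma\{\bs\eta(s):s\in[0,t]\}$. Iterating the Markov property,
\begin{equation*}
\bb E^N_\eta\Big[\exp\big\{\sum_{k=0}^{\lfloor T/t\rfloor - 1} \Phi(\vartheta_{kt}\bs\eta)\big\}\Big]
\;\le\; \exp\Big\{\lfloor T/t\rfloor\cdot \sup_{\zeta\in\Sigma_{N,K}} \log \bb E^N_\zeta\big(e^\Phi\big)\Big\}\;.
\end{equation*}
Combining this with the variational definition of $H_{N,K}(t,\cdot)$ and Chebyshev's inequality bounds $T^{-1}\log \bb P^N_\eta[R_T\in \ms F]$ from above, uniformly in $\eta$, by $-t^{-1}\inf_{\bb Q\in \ms F} H_{N,K}(t,\bb Q)$ up to negligible terms. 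Taking the supremum over $t$ and $\Phi$ and invoking a minimax interchange, valid because $\ms P^{N,K}_{\rm stat}$ is compact, yields the desired rate $\inf_{\bb Q\in \ms F} H_{N,K}(\bb Q)$. Uniformity in $\eta$ is automatic since the iteration produces only the single residual factor $\sup_\zeta \log\bb E^N_\zeta[e^\Phi]\le \|\Phi\|_\infty$ independent of the starting state.

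For the lower bound, given $\bb Q\in \ms G$ with $H_{N,K}(\bb Q)<\infty$, I would construct a tilted stationary Markov chain achieving this entropy rate via an $h$-transform of $L_N$ by the optimizer in the variational formula \eqref{20}. If $\bb Q$ itself is not of Markovian form it is first approximated in the weak topology by a measure that is, using the density results of \cite[Thm.~10.6]{Var84}. Under the tilted law $\bb P^{\mathrm{tilt}}$ the ergodic theorem yields $R_T\to\bb Q$ a.s., so that $\bb P^{\mathrm{tilt}}_\eta[R_T\in\ms G]\to 1$; Jensen's inequality applied to the log-likelihood ratio $\log(d\bb P^N_\eta/d\bb P^{\mathrm{tilt}}_\eta)$ then returns the cost $H_{N,K}(\bb Q)$ per unit time, again uniformly in $\eta$ thanks to irreducibility.

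The substantive content is not WASEP-specific: it is the identification of $H_{N,K}$ in \eqref{20} with the Donsker--Varadhan entropy rate and the associated approximation of shift-invariant measures of finite rate by tilted Markov chains, both established in \cite[\S~10]{Var84}. Once these ingredients are imported there is no remaining obstacle: the finiteness of $\Sigma_{N,K}$, together with irreducibility, trivialises both the compactness used in the upper bound and the uniformity in the initial configuration in either bound.
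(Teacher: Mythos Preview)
Your proposal is correct and matches the paper's treatment: the paper does not prove Theorem~\ref{th02} at all but simply records it as the classical Donsker--Varadhan result, citing \cite{dv1-4} and \cite[Thms.~11.6, 12.5]{Var84}, exactly as you do. Your additional verification of the hypotheses (finite state space, bounded irreducible generator) and sketch of the upper/lower bound mechanisms are accurate and go beyond what the paper writes, but the approach is the same.
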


The rate function $H_{N,K}(\bb Q)$ can also be understood as the
relative entropy \emph{per unit of time} of the stationary probability
$\bb Q$ with respect to the stationary process $\bb P^N_{\mu_{N,K}}$.
Given $T_0<T_1$, denote by
$i_{T_0,T_1}\colon D(\bb R, \Sigma_{N,K}) \to D([T_0,T_1],
\Sigma_{N,K})$ the canonical projection. Given two probability
measures $\bb Q^1,\bb Q^2$ on $D(\bb R, \Sigma_{N,K})$, let
$\bb H_{[T_0,T_1]}$ be the relative entropy between the marginal of
$\bb Q^1$ relative to the time interval $[T_0,T_1]$ and the marginal
of $\bb Q^2$ on the same interval:
\begin{equation}
\label{rem}
\bb H_{[T_0,T_1]} (\bb Q^1 | \bb Q^2 ) \;=\; \textrm{Ent} \big( \bb
Q^1_{[T_0,T_1]} \big| \bb Q^2_{[T_0,T_1]} \big) : = \int \log \frac{d
  \, \bb Q^1_{[T_0,T_1]}} {d\, \bb Q^2_{[T_0,T_1]} } \; d\, \bb
Q^1_{[T_0,T_1]} \;,
\end{equation}
where $\bb Q^j_{[T_0,T_1]} = \bb Q^j \circ i_{T_0,T_1}^{-1}$, $j=1$, $2$.
We also shorthand $\bb H_{[0,T]}$ by $\bb H^{(T)}$.
By \cite[Theorem~5.4.27]{deustr}, for each $\bb Q$ in
$\ms P^{N,K}_{\rm stat}$
\begin{equation}
\label{13}
H_{N,K}(\bb Q) 
\;=\; \lim_{T\to \infty}  \frac 1T\,  \bb H^{(T)} \big(\, \bb Q \, \big |\,
\bb P^N_{\mu_{N,K}}\,  \big)
\;=\;  \sup_{T>0} \frac 1T\,  \bb H^{(T)}\big(\, \bb Q \, \big |\,
\bb P^N_{\mu_{N,K}}\,  \big)\;,
\end{equation}
where the second identity follows by a super-additivity argument which
stems from \cite[Lemma 10.3]{Var84}.  Actually, Theorem~5.4.27 in
\cite{deustr} states that the empirical process $(R_T: T>0)$ satisfies
a large deviations principle with good rate function given by
$H^\star_{N,K}(\bb Q) := \lim_{T\to \infty} T^{-1} \, \bb H^{(T)} (\,
\bb Q \, \big |\, \bb P^N_{\mu_{N,K}}\, )$.  Since, by Theorem
\ref{th02}, $(R_T: T>0)$ also satisfies a large deviations principle
with good rate function given by $H_{N,K}(\bb Q)$, a simple argument
using the lower semi-continuity of the functionals yields that
$H_{N,K}(\bb Q) = H^\star_{N,K}(\bb Q)$ for all
$\bb Q\in \ms P^{N,K}_{\rm stat}$.

Recall that we denote by $\ms S$ the set of trajectories
$(\bs \pi, \bs J)$ which satisfy the continuity equation
\eqref{31}. By \eqref{31bis}, the map
$(\bs \pi_N, \bs J_N)\colon D (\bb R, \Sigma_{N,K})\to D (\bb R, \ms
M)$ takes values in $\ms S$.  As already observed,
$(\bs \pi_N, \bs J_N) (\vartheta_t \bs \eta) = [\vartheta_t (\bs
\pi_N, \bs J_N)] (\bs \eta)$ so that $(\bs \pi_N, \bs J_N)$ induces a
map from the stationary probabilities on $D(\bb R, \Sigma_{N,K})$ to
the stationary probabilities on $\ms S$.  More precisely, if
$\bb P \in \ms P_{\rm stat}^{N,K}$, then
$\bb P \circ (\bs \pi_N , \bs J_N)^{-1}$ belongs to
$\ms P_{\rm stat}$.  Let
$\bs I_{N,K}: \ms P_{\rm stat} \to [0,\infty]$ be defined by
\begin{equation}
\label{12}
\bs I_{N,K} (P) \;=\; \inf \Big\{H_{N,K}(\bb P)\colon\;
\bb P\in \ms P^{N,K}_{\rm stat},\;
\bb P \circ (\bs \pi_N , \bs J_N)^{-1} = P \Big\}\;.
\end{equation}
Note that the set on the right-hand side is either empty (for example,
if the $P$-measure of the set of piece-wise constant paths is not
equal to $1$) or it is a singleton because the map $(\bs \pi_N , \bs
J_N) \colon D (\bb R, \Sigma_{N,K})\to D (\bb R, \ms M)$ is injective.

\begin{corollary}
\label{prop1}
Fix $N$ and $K=0,\ldots, N^d$.  The functional
$\bs I_{N,K} \colon \ms P_{\rm stat} \to [0, +\infty]$ is affine
and good.  Moreover, for each closed $\ms F$ and each open $\ms G$ in
$\ms P_{\rm stat}$,
\begin{gather*}
\limsup_{T\to \infty} \sup_{\eta\in \Sigma_{N,K}} \frac 1T \,
\log \bb P^N_\eta \,\big[\,  \mf R_{T,N} \in \ms F\,\big]  
\;\le\; -\, \inf_{P \in \ms F} \bs I_{N,K}(P) \;, \\
\liminf_{T\to \infty} \inf_{\eta\in \Sigma_{N,K}} 
\frac 1T \,\log \bb P^N_\eta \,\big[ \,  \mf R_{T,N}  \in \ms G\,
\big]  
\;\ge\; -\, \inf_{P \in \ms G}  \bs I_{N,K} (P)\;.
\end{gather*}
\end{corollary}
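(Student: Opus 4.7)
The plan is to derive the corollary from Theorem~\ref{th02} by a contraction-principle argument. Introduce the pushforward map $\Psi\colon \ms P^{N,K}_{\rm stat}\to \ms P_{\rm stat}$, $\Psi(\bb P):=\bb P\circ(\bs\pi_N,\bs J_N)^{-1}$, which indeed lands in $\ms P_{\rm stat}$ thanks to the compatibility $\vartheta_t\circ(\bs\pi_N,\bs J_N)=(\bs\pi_N,\bs J_N)\circ \vartheta_t$ recorded after \eqref{36}. Directly from \eqref{empr} and \eqref{Rgot} one reads off $\mf R_{T,N}=\Psi(R_T)$. Once $\Psi$ is shown to be continuous, the contraction principle turns the Donsker--Varadhan bounds of Theorem~\ref{th02} into the two bounds of the corollary, with rate function $P\mapsto \inf\{H_{N,K}(\bb P) : \Psi(\bb P)=P\}$, which is exactly $\bs I_{N,K}$ of \eqref{12}.

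The main technical step is the continuity of the underlying map $(\bs\pi_N,\bs J_N)\colon D(\bb R,\Sigma_{N,K})\to \ms S$ in the Skorohod topologies. With $N$ and $K$ fixed, $\Sigma_{N,K}$ is finite and each trajectory is piecewise constant with only finitely many jumps in any compact window. For a Skorohod-convergent sequence $\bs\eta^{(k)}\to \bs\eta$ and any $T>0$ that is a continuity point of $\bs\eta$, there exist, for all large $k$, time reparametrizations close to the identity under which the ordered sequences of jump times and of transition pairs $(\bs\eta^{(k)}(\tau_i-),\bs\eta^{(k)}(\tau_i))$ match those of $\bs\eta$ on $[-T,T]$. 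The discrete vector fields $W_i$ of \eqref{tp} depend only on those transition pairs, so the sums in \eqref{39} defining $\bs J_N$ agree; together with the uniform bound $|\langle \bs J_N(\bs\eta)(t),F\rangle|\le C\|F\|_\infty$ that follows from \eqref{beck} and the Sobolev embedding $\mc H^d_p\subset C(\bb T^d;\bb R^d)$, this yields $\bs J_N(\bs\eta^{(k)})\to \bs J_N(\bs\eta)$ in $D(\bb R,\mc H^d_{-p})$. The corresponding statement for $\bs\pi_N$ is immediate from \eqref{11}. Continuity of $\Psi$ for the weak topology on stationary measures is then a standard consequence via the continuous mapping theorem.

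Goodness and affineness of $\bs I_{N,K}$ follow by transfer from those of $H_{N,K}$ (recalled right after \eqref{20}), together with the injectivity of $(\bs\pi_N,\bs J_N)$ noted after \eqref{12}. Injectivity makes $\Psi$ injective, so on its image the infimum in \eqref{12} is attained at a unique preimage and $\bs I_{N,K}\circ\Psi = H_{N,K}$. The sublevel set $\{\bs I_{N,K}\le L\}$ then coincides with the image $\Psi(\{H_{N,K}\le L\})$ of a compact set under a continuous map, hence is compact. For affineness, $\Psi$ commutes with convex combinations, its image is a convex subset of $\ms P_{\rm stat}$, and $H_{N,K}$ is affine, so the identity $\bs I_{N,K}(\alpha P_1+(1-\alpha)P_2)=\alpha \bs I_{N,K}(P_1)+(1-\alpha) \bs I_{N,K}(P_2)$ holds trivially on the image of $\Psi$; while if one of the $P_i$ lies outside the image the convex combination also does (because it charges trajectories not in the range of $(\bs\pi_N,\bs J_N)$), so both sides equal $+\infty$. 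The main obstacle in this plan is really just the Skorohod-continuity verification for $\bs J_N$ through \eqref{39}; once that is in hand, everything else is a formal consequence of Theorem~\ref{th02}.
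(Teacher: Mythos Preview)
Your overall strategy---reduce to Theorem~\ref{th02} via the contraction principle through the pushforward $\Psi(\bb P)=\bb P\circ(\bs\pi_N,\bs J_N)^{-1}$---is exactly what the paper does. The gap is in your continuity argument for $\bs J_N$. The map $\bs\eta\mapsto\bs J_N(\bs\eta)$ is \emph{not} continuous on $D(\bb R,\Sigma_{N,K})$, and the paper gives a concrete counterexample: take $\bs\eta^{(k)}$ with a single jump at time $1/k$; then $\bs\eta^{(k)}\to\bs\eta$ (single jump at $0$) in Skorohod, but $\bs J_N(\bs\eta^{(k)})$ is a step function with a fixed nonzero jump at $1/k$, while $\bs J_N(\bs\eta)\equiv 0$ because the definition \eqref{39} sums over $\tau_i\in(0,t]$ for $t>0$ and over $[t,0)$ for $t<0$, so a jump at exactly $0$ is never counted. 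No time change can make those converge. Your reparametrization argument breaks here precisely because the normalization $\bs J_N(\cdot)(0)=0$ pins down a distinguished time that is not moved by Skorohod reparametrizations of the target.

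The repair the paper uses is short: $\bs J_N$ is continuous at every $\bs\eta$ that has no jump at $t=0$, and any \emph{stationary} $\bb Q$ on $D(\bb R,\Sigma_{N,K})$ assigns probability zero to the set of paths jumping at a fixed time. Hence $\Psi$ is continuous on $\ms P^{N,K}_{\rm stat}$ (via the a.s.\ version of the continuous mapping theorem), even though the underlying path map is not globally continuous. With this correction in place, your deductions of the large-deviation bounds, goodness, and affineness go through as you wrote them.
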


\begin{proof}
It is enough to show that the map
$\ms P^{N,K}_{\rm stat} \ni \bb Q \mapsto \bb Q \circ (\bs \pi_N, \bs
J_N)^{-1} \in \ms P_{\rm stat}$ is continuous. The statement then
follows from Theorem~\ref{th02} by the contraction principle.

Since the map
$\bs \pi_N \colon D(\bb R;\Sigma_{N,K}) \to D(\bb R;\mc M_+(\bb T^d))$
introduced in \eqref{empden} is continuous, we directly deduce the
continuity of the map
$\ms P^{N,K}_{\rm stat} \ni \bb P \mapsto \bb P \circ \bs \pi_N^{-1}
\in \ms P_{\rm stat}\big(D(\bb R;\mc M_+(\bb T^d)) \big)$.  In
contrast, the map $\bs \eta \mapsto \bs J_N(\bs \eta)$ is not
continuous. Indeed, consider the sequence $\bs \eta^{(k)}$ in which
$\bs \eta^{(k)}$ has a unique jump at time $1/k$ from $\bs \eta(0)$ to
$\sigma^{x,y} \bs \eta(0)$ for some $(x,y) \in \bb B_N$. Then,
$\bs \eta^{(k)}$ converges to the path $\bs\eta$ with a single jump at
time $t=0$ but $\bs J_N(\bs \eta^{(k)})$ does not converge to
$\bs J_N(\bs \eta)$.  In contrast, the map
$\bs \eta \mapsto \bs J_N(\bs \eta)$ is continuous if $\bs\eta$ does
not have a jump at time $t=0$. Moreover, if $\bb Q$ is a stationary
probability on $D\big(\bb R;\Sigma_{N,K}\big)$, then the
$\bb Q$-probability of the paths $\bs \eta$ which have a jump a time
$t=0$ is necessarily zero. This implies that the map
$\ms P^{N,K}_{\rm stat}\ni \bb Q \mapsto \bb Q\circ \bs J_N^{-1} \in
\ms P_{\rm stat} \big(D(\bb R; \mc H^d_{-p}) \big)$ is continuous.
\end{proof}

\section{Variational convergence of the
  Donsker-Varadhan functional.} 
\label{sec03}

Referring to \cite{Br} for an overview, we recall the definition of
$\Gamma$-convergence.  Fix a Polish space $\mc X$ and a sequence
$(U_n : n\in\bb N)$ of functionals on $\mc X$,
$U_n\colon \mc X \to [0,+\infty]$. The sequence $U_n$ is
\emph{equi-coercive} if for each $\ell \ge 0$ there exists a compact
subset $\mc K_\ell$ of $\mc X$ such that
$\{x\in\mc X\colon U_n(x) \le \ell\} \subset \mc K_\ell$ for any
$n \in \bb N$. The sequence $U_n$ \emph{$\Gamma$-converges} to the
functional $U\colon \mc X\to [0,+\infty]$, i.e.\
$U_n \stackrel{\Gamma}{\longrightarrow} U$, if and only if the two
following conditions are met:
\begin{itemize}
\item [(i)]\emph{$\Gamma$-liminf.} The functional $U$ is a
$\Gamma$-liminf for the sequence $U_n$: For each $x\in\mc X$ and each
sequence $x_n\to x$, we have that $\liminf_n U_n(x_n) \ge U(x)$.

\item [(ii)]\emph{$\Gamma$-limsup.} The functional $U$ is a
$\Gamma$-limsup for the sequence $U_n$: For each $x\in\mc X$ there
exists a sequence $x_n\to x$ such that $\limsup_n U_n(x_n) \le U(x)$.
\end{itemize}

Recall the definition of the functionals $\bs I_m$, $\bs I_{N,K}$
introduced in \eqref{06} and \eqref{12}, respectively.  The main
result of this section reads as follows.

\begin{theorem}
\label{th03} 
Fix $0<m<1$, $p>(d+2)/2$, and a sequence $K_N$ so that $K_N/N^d\to m$.
The sequence $(N^{-d} \bs I_{N,K_N} : N\ge 1)$ is equi-coercive.  The
functional $\bs I_m$ is a $\Gamma$-liminf for $N^{-d} \bs I_{N,K_N}$.
If $E$ is orthogonally decomposable, then the functional $\bs I_m$ is
also a $\Gamma$-limsup for $N^{-d} \bs I_{N,K_N}$.  Therefore, under
this hypothesis on $E$,
$N^{-d} \bs I_{N,K_N}\stackrel{\Gamma}{\longrightarrow} \bs I_m$.
\end{theorem}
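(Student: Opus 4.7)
The plan is to adapt to the stationary empirical-process setting the standard strategy for hydrodynamical large deviations of the joint density-current pair (\cite{kl, kov, bdgjl07}), using the entropy representation \eqref{13} of $H_{N, K}$ as the bridge from the microscopic Donsker-Varadhan level to the macroscopic functional.

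For equi-coercivity, I would show tightness of the density and current marginals of any sequence $(P_N)$ with $N^{-d} \bs I_{N, K_N}(P_N)$ bounded. The density marginal is automatically tight in $D(\bb R, \mc M_+(\bb T^d))$ since $\mc M_+(\bb T^d)$ is compact and the Skorohod modulus is controlled via \eqref{31}. For the current, I would apply the variational formula for $H_{N, K_N}$ to the test functions $\Phi_F^t = N^d \langle \bs J_N(t), F\rangle$ with $F \in C^\infty(\bb T^d; \bb R^d)$ and use the Feynman-Kac exponential bound for WASEP to obtain uniform second-moment estimates on $\langle \bs J(t), F\rangle$. By \eqref{ap03} and the Sobolev embedding $\mc H^d_p \hookrightarrow C(\bb T^d; \bb R^d)$ for $p > d/2$, this yields equi-tightness in $D(\bb R, \mc H^d_{-p})$ for $p > (d+2)/2$, hence in $\ms P_{\rm stat}$.

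For the $\Gamma$-liminf, given $P_N \to P$ with $\sup_N N^{-d} \bs I_{N, K_N}(P_N) < \infty$, I would apply the variational representation of $H_{N, K_N}$ with the $\Phi_F^t$ above. The key technical input is the super-exponential one-block estimate for WASEP, which allows the microscopic occupation product $\eta_x(1 - \eta_y)$ to be replaced by $\sigma(\bs\rho)$ at negligible entropic cost. Computing the normalizing exponential by Feynman-Kac and passing to $N \to \infty$, one obtains for each smooth $F$ a lower bound
\begin{equation*}
\liminf_N \frac{1}{N^d} \bs I_{N, K_N}(P_N) \;\ge\; E_P\!\left[\int\!dx\, \Big(F \cdot (\bs j + \nabla \bs\rho - \sigma(\bs\rho)\, E) \,-\, \sigma(\bs\rho)\, |F|^2\Big)\right].
\end{equation*}
Taking the supremum over $F$ via convex duality yields precisely $\bs I_m(P)$.

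For the $\Gamma$-limsup, by the affine structure of $\bs I_m$ and a mollification/density argument, it suffices to construct recovery sequences for $P = \delta_{(\bar{\bs\rho}, \bar{\bs J})}$ with $(\bar{\bs\rho}, \bar{\bs j})$ a smooth $T$-periodic trajectory bounded away from $\{0, 1\}$. For such a target, I would consider the time-inhomogeneous WASEP driven by the shifted external field
\begin{equation*}
\tilde E (s, x) \;:=\; E(x) \,+\, \frac{\bar{\bs j}(s, x) \,+\, \nabla \bar{\bs\rho}(s, x) \,-\, \sigma(\bar{\bs\rho})\, E(x)}{\sigma(\bar{\bs\rho})}\, ,
\end{equation*}
tailored so that $(\bar{\bs\rho}, \bar{\bs j})$ is the solution of \eqref{26} driven by $\tilde E$. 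A Girsanov computation then shows that its relative entropy with respect to WASEP per unit time per volume equals $\bs I_m(P)$. To turn the measure into a genuinely stationary one, I would start from $\mu_{N, K_N}$: the orthogonal decomposability of $E$ ensures that the quasi-potential of WASEP is uniformly bounded, so the initial-layer contribution is $o(T N^d)$ as $T \to \infty$ after $N \to \infty$. Taking the $T$-periodization and pushing forward to the empirical process via \eqref{Rgot} completes the construction. The main obstacle is the regularization step: approximating a general $P$ with $\bs I_m(P) < \infty$ by simpler measures concentrated on smooth trajectories strictly bounded away from $\{0, 1\}$ without blowing up $\bs I_m$. The action \eqref{hyld} degenerates at the boundary, so the approximation must exploit the specific form $\sigma(\rho) = \rho(1-\rho)$ together with the finite-action bound \eqref{fe} to interpolate between an arbitrary element of $\ms S_{m, \rm ac}$ and a strictly internal smooth approximation continuously in $\bs I_m$.
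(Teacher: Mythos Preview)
Your outline for equi-coercivity and the $\Gamma$-limsup is broadly aligned with the paper, but there is a genuine gap in the $\Gamma$-liminf.

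You propose to plug into the variational formula for $H_{N,K_N}$ test functions of the form $\Phi_F^t = N^d\langle \bs J_N(t),F\rangle$ with $F\in C^\infty(\bb T^d;\bb R^d)$ (or $F=F(t,x)$) that are \emph{deterministic}, i.e.\ independent of the path $(\bs\pi,\bs J)$. After the Feynman--Kac computation and the one/two-block replacement this yields, for each such $F$,
\[
\liminf_N \frac1{N^d}\bs I_{N,K_N}(P_N)\;\ge\;
E_P\Big[\int_0^T\!\!dt\int_{\bb T^d}\!dx\,\big(F\cdot(\bs j+\nabla\bs\rho-\sigma(\bs\rho)E)-\sigma(\bs\rho)|F|^2\big)\Big].
\]
Optimizing the right-hand side over deterministic $F$ gives
$\displaystyle\int\!dt\,dx\,\frac{|E_P[\bs j+\nabla\bs\rho-\sigma(\bs\rho)E]|^2}{4\,E_P[\sigma(\bs\rho)]}$,
which by Cauchy--Schwarz is \emph{at most} $\bs I_m(P)$ and is strictly smaller whenever $P$ is not a Dirac mass. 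In short, with deterministic $F$ you take the supremum outside the expectation and cannot recover $E_P[\sup_F(\cdots)]=\bs I_m(P)$. The paper resolves this by allowing the test function to depend on the path: one inserts progressively measurable functions $G_w(t,x,\bs\pi,\bs J)=w(t,x,\bs\pi(t),\bs J)$ (Lemma~\ref{lem2}), which is legitimate because the Donsker--Varadhan variational formula \eqref{20} admits any bounded $\sigma\{\bs\eta(s),s\le t\}$-measurable $\Phi$, and the exponential martingale (Lemma~\ref{lem01}) holds for progressively measurable $\phi^{x,y}$. One then approximates the optimal $\hat F=(\bs j+\nabla\bs\rho-\sigma(\bs\rho)E)/(2\sigma(\bs\rho))$ by bounded functions of $(\bs\pi(t),\bs j_\delta(t))$ with $\bs j_\delta$ a causal mollification of $\bs J$; this is where the a priori bound of Lemma~\ref{lem3} (your \eqref{fe}) is used.

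For the $\Gamma$-limsup your plan for the \emph{smooth periodic} case is essentially the paper's (perturbed WASEP with field $E+F$); note that the paper starts the perturbed dynamics from its own $T$-periodic stationary state $\mu^F_{N,K}$ rather than from $\mu_{N,K_N}$, which avoids any initial-layer argument. The step you flag as the main obstacle---reducing a general $P$ with $\bs I_m(P)<\infty$ to such smooth periodic targets---requires more than mollification and the affine structure: after smoothing (Lemma~\ref{lem7}) and ergodic decomposition, one still has to approximate an ergodic $P$ by \emph{holonomic} measures. The paper does this by Birkhoff's theorem plus a path-surgery lemma (Lemma~\ref{t:l4.7}) that connects any two profiles by a path of uniformly bounded action; it is precisely here, and only here, that orthogonal decomposability of $E$ enters, via boundedness of the quasi-potential.
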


The proof of this theorem is divided in three parts. In Subsection
4.1, we prove that the sequence $N^{-d} \bs I_{N,K_N}$ is
equi-coercive. In Subsection 4.2, that $\bs I_m$ is a $\Gamma$-liminf,
and, in Subsection 4.3, that $\bs I_m$ is a $\Gamma$-limsup provided
$E$ is decomposable.  For the rest of this section, fix $m \in (0,1)$,
$p>(d+2)/2$, and a sequence $K_N$ such that $K_N/N^d\to m$.

\smallskip\noindent{\bf \ref{sec03}.1 Equi-coercivity.}  Set
$\color{bblue} P_N := \bb P^N_{\mu_{N,K_N}} \circ \big(\bs \pi_N , \bs
J_N\big)^{-1} \in \ms P_{{\rm stat}}$.  We first establish the
exponential tightness of the sequence
$( P_N: N\ge 1) \subset \ms P_{{\rm stat}}$.


\begin{proposition}
\label{t:et}
There exists a sequence $(\ms K_\ell : \ell\ge 1)$ of compact subsets
of $\ms S$ such that
\begin{equation*}
\lim_{\ell\to +\infty} \limsup_{N\to +\infty}
\frac 1{N^d} \log P_N \big(\, \ms K_\ell^\complement \,\big)
\;=\; -\, \infty \;.
\end{equation*}
\end{proposition}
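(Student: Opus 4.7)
The plan is to construct the compact sets $\ms K_\ell\subset\ms S$ by separately controlling the density and current components of $(\bs\pi_N,\bs J_N)$. By stationarity of $P_N$ and a diagonal argument, it suffices to exhibit, for each finite window $[-T,T]$ and level $\ell\ge 1$, a precompact subset of $D([-T,T],\ms M)$ whose $P_N$-complement is at most $e^{-N^d\ell}$; intersecting these over an exhausting sequence of intervals and with the closed set $\ms S$ yields the desired $\ms K_\ell$. The main probabilistic tool in both components will be Dynkin's formula combined with exponential martingale inequalities calibrated to the rate $N^d$.

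For the density component, $\mc M_+(\bb T^d)$ is compact so only a Skorohod-modulus control is required. For each smooth $f$ on $\bb T^d$, Dynkin's formula decomposes $\<\bs\pi_N(\cdot),f\>$ into a drift whose integrand is bounded by $C\|f\|_{C^2}$, thanks to a summation by parts and the antisymmetry of the jump rates, plus a martingale $M_N^f$ with predictable bracket of order $t\|\nabla f\|_\infty^2/N^d$. Applying the exponential martingale inequality to a countable dense family of test functions and taking a diagonal intersection yields a precompact subset of $D([-T,T],\mc M_+(\bb T^d))$ with the required exponential tail.

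For the current, I would fix $p'\in\bigl((d+2)/2,p\bigr)$ so that $\mc H^d_{-p'}\hookrightarrow\mc H^d_{-p}$ compactly, and bound both $\sup_{|t|\le T}\|\bs J_N(t)\|_{-p'}$ and the Skorohod modulus in $\mc H^d_{-p}$. Using \eqref{ap03}, write
\[
\|\bs J_N(t)\|_{-p'}^2 \;=\; \sum_{k,n}(1+4\pi^2|n|^2)^{-p'}\,|\<\bs J_N(t),H^{k,n}\>|^2,
\]
and decompose each coordinate $\<\bs J_N(\cdot),H^{k,n}\>$ into a drift bounded by $Ct(1+|n|)$ and a martingale whose predictable bracket is of order $t(1+|n|)^2/N^d$. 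Summing the mode-wise exponential martingale inequalities against the Sobolev weight $(1+|n|^2)^{-p'}$ yields the required bound. Moreover, the jumps of $\bs J_N$ under the WASEP dynamics have $\mc H^d_{-p}$-norm $O(N^{-d-1})$ by \eqref{beck} together with the Sobolev embedding $\mc H^d_{p}\hookrightarrow C(\bb T^d)$, so the Skorohod modulus reduces to a continuous-type estimate controlled by the same machinery.

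The main technical obstacle lies in this last step: one has to balance the polynomial growth $\|H^{k,n}\|_{C^1}=O(1+|n|)$ of the test functions against the Sobolev weight $(1+|n|^2)^{-p'}$ across all modes. Summability of the deterministic drift contribution in the negative-Sobolev norm requires $\sum_n(1+|n|^2)^{1-p'}<\infty$, i.e.\ $p'>(d+2)/2$, which is exactly what the hypothesis $p>(d+2)/2$ permits; combining the mode-wise exponential tails into a single event of probability at least $1-e^{-N^d\ell}$ then follows by a union bound after truncating at a mode scale $|n|\lesssim N^\alpha$ and absorbing the residual high-frequency modes into their deterministic bounds.
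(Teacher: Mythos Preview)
Your proposal is correct and follows essentially the same route as the paper: reduce via Ascoli--Arzel\`a and the compact embedding $\mc H^d_{-p'}\hookrightarrow\mc H^d_{-p}$ (with $(d+2)/2<p'<p$) to three estimates---a Skorohod modulus for $\bs\pi_N$, a bound on $\sup_t\|\bs J_N(t)\|_{-p'}$, and a Skorohod modulus for $\bs J_N$---each proven by exponential martingale techniques applied mode-by-mode to the Fourier test functions $H^{k,n}$.

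The only notable difference is in execution of the $\sup$-norm bound on the current. Rather than your Dynkin decomposition (drift plus martingale) followed by a Bernstein-type inequality and a high-frequency truncation, the paper works directly with the mean-one exponential martingale
\[
\bb M_N(t)=\exp\Big\{N^d\<\bs J_N(t),H^{j,n}_b\>-\int_0^t\!\sum_{x,k}B_{x,x+\mf e_k}(\bs\eta(s))\,ds\Big\},
\]
bounds the compensator by $C_0N^d(1+|n|)$ in one stroke (combining drift and quadratic variation), and applies Doob's maximal inequality at threshold $\sqrt{A\beta_n/2}$ with $\beta_n\sim(1+|n|)^2$. This produces an exponent that grows \emph{linearly} in $|n|$, so the union bound over all $n\in\bb Z^d$ converges directly and no truncation is needed. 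Your approach also works, but once the mode-wise thresholds are chosen to grow like $(1+|n|)$ the truncation becomes superfluous; the paper's version is simply more streamlined.
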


\begin{proof}
In view of Ascoli-Arzel\`a theorem, the compactness of
$\mc M_+(\bb T^d) $, and the compact embedding
$\mc H_{-p} \hookrightarrow \mc H_{-p'}$ for $p' > p$, the assertion
of this proposition follows from the next three lemmata.
\end{proof}

Let
$\color{bblue} D_{T,\delta} := \{(s,t) \in \bb R^2: 0\le s\le t\le
T\,,\, |t-s|\le \delta \}$.

\begin{lemma}
\label{l01}
For each $T>0$, $\epsilon >0$, and smooth $g\colon \bb
T^d\to\bb R$, 
\begin{gather*}
\lim_{\delta\to 0} \limsup_{N\to\infty} \frac{1}{N^d} \log \bb
P^N_{\mu_{N,K_N}} \Big[ \sup_{(s,t) \in D_{T,\delta}} 
\big\vert \, \<\bs \pi_N (t) \,- \,  \bs \pi_N(s)  \,,\, g\> 
\big\vert \; 
>\; \epsilon \Big ]\; =\; -\infty\; . 
\end{gather*}
\end{lemma}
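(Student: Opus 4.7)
The plan is to reduce the modulus-of-continuity estimate to a one-sided exponential tail bound on a single short interval, then promote it by a union bound and the Markov property.

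Fix a smooth $g$ and $\alpha\in\bb R$, and set $F_\alpha(\eta)=\alpha\sum_{x\in\bb T^d_N}\eta_x g(x)=\alpha N^d\<\pi_N(\eta),g\>$. By the standard Feynman-Kac identity, the process
$$M^\alpha_t=\exp\Big(F_\alpha(\bs\eta(t))-F_\alpha(\bs\eta(0))-\int_0^t V^\alpha_N(\bs\eta(r))\,dr\Big)$$
is a mean-one $\bb P^N_{\mu_{N,K_N}}$-martingale, where $V^\alpha_N=e^{-F_\alpha}L_N e^{F_\alpha}$. From \eqref{22} one obtains explicitly
$$V^\alpha_N(\eta)=N^2\sum_{(x,y)\in\bb B_N}\eta_x[1-\eta_y]\,e^{E_N(x,y)/2}\big[e^{\alpha(g(y)-g(x))}-1\big].$$
Since $|g(y)-g(x)|\le\|\nabla g\|_\infty/N$, Taylor expansion with remainder applied to $a=\alpha(g(y)-g(x))$ splits $V^\alpha_N$ into a first-order piece which, after summation by parts, equals $\alpha\,L_N(N^d\<\pi_N,g\>)$ and is bounded by $C_g(1+\|E\|_\infty)\,|\alpha|\,N^d$, plus a quadratic remainder bounded using $(g(y)-g(x))^2\le\|\nabla g\|_\infty^2/N^2$ and $|\bb B_N|=O(N^d)$ by $C_g\,\alpha^2\,N^d$. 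The conclusion is the uniform estimate
$$\|V^\alpha_N\|_\infty\le C_g(1+\|E\|_\infty)\big(|\alpha|+\alpha^2\big)\,N^d,$$
valid for every $\alpha$ in a neighborhood of the origin of size uniform in $N$ large.

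Applying Doob's maximal inequality to $M^\alpha_{\cdot\wedge 2\delta}$ and using the bound on $V^\alpha_N$, I obtain, for every $\alpha>0$,
$$\bb P^N_{\mu_{N,K_N}}\Big[\sup_{t\in[0,2\delta]}\{\<\bs\pi_N(t)-\bs\pi_N(0),g\>\}>\tfrac{\epsilon}{2}\Big]\le\exp\!\Big(N^d\big[-\tfrac{\alpha\epsilon}{2}+2\delta C_g(1+\|E\|_\infty)(\alpha+\alpha^2)\big]\Big),$$
and replacing $\alpha$ by $-\alpha$ controls the opposite-sign fluctuation. To promote this to a supremum on $D_{T,\delta}$, partition $[0,T]$ into intervals $I_k=[k\delta,(k+1)\delta]$, $k=0,\ldots,\lceil T/\delta\rceil$; any pair $(s,t)\in D_{T,\delta}$ lies in $I_k\cup I_{k+1}$ for some $k$, hence
$$\sup_{(s,t)\in D_{T,\delta}}\!|\<\bs\pi_N(t)-\bs\pi_N(s),g\>|\le 2\max_{k}\sup_{r\in[k\delta,(k+2)\delta]}\!|\<\bs\pi_N(r)-\bs\pi_N(k\delta),g\>|.$$
A union bound over the $O(T/\delta)$ indices combined with the stationarity of $\bb P^N_{\mu_{N,K_N}}$ and the Markov property then gives
$$\bb P^N_{\mu_{N,K_N}}\big[\,\sup_{D_{T,\delta}}|\<\bs\pi_N(t)-\bs\pi_N(s),g\>|>\epsilon\,\big]\le 4(\lceil T/\delta\rceil+1)\exp\!\Big(N^d\big[-\tfrac{\alpha\epsilon}{4}+2\delta C_g(1+\|E\|_\infty)(\alpha+\alpha^2)\big]\Big).$$

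Taking $N^{-d}\log$ kills the prefactor $\log(T/\delta)$, leaving the upper bound $-\alpha\epsilon/4+2\delta C_g(1+\|E\|_\infty)(\alpha+\alpha^2)$, valid for every $\alpha>0$. Optimizing over $\alpha$ (the critical point is $\alpha\sim \epsilon/[16\,\delta\,C_g(1+\|E\|_\infty)]$, or one may simply pick $\alpha=\delta^{-1/2}$) yields a bound of order $-c(g,\epsilon)/\delta$ with $c(g,\epsilon)>0$, which diverges to $-\infty$ as $\delta\to 0$, proving the lemma. The technical heart of the argument is the bound $\|V^\alpha_N\|_\infty=O((|\alpha|+\alpha^2)N^d)$: the $N^2$ factor in $L_N$ is exactly compensated by the two derivatives that appear after discrete summation by parts, and it is the quadratic-in-$\alpha$ nature of the remainder that makes optimization in $\alpha$ produce an exponent diverging with $\delta$.
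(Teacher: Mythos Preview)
Your proof is correct and is precisely the standard exponential-martingale/Doob/union-bound argument that the paper invokes by citing \cite[\S~10.4]{kl} rather than giving its own proof; the minor constant discrepancies (e.g.\ $-\alpha\epsilon/4$ versus $-\alpha\epsilon/2$) are immaterial. The paper also remarks that Lemma~\ref{l01} can alternatively be deduced from Lemma~\ref{l05} via the continuity equation \eqref{31bis} by taking $H=\nabla g$, which is a slightly different route but leads to the same estimate.
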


\begin{lemma}
\label{l02}
For each $T>0$ 
\begin{gather*}
\lim_{A\to \infty} \limsup_{N\to\infty} \frac{1}{N^d} \log \bb
P^N_{\mu_{N,K_N}} \Big[ \, \sup_{0\le t\le T} \big\Vert \,
\bs J_N(t) \big\Vert^2_{-p} \; >\; A\, \Big ]\; =\; -\infty\; .
\end{gather*}
\end{lemma}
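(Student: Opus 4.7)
My plan is to use the Fourier representation \eqref{ap03} of the $\mc H^d_{-p}$ norm to reduce the statement to exponential tail bounds for each Fourier coefficient $\<\bs J_N(t), H^{k,n}\>$, obtained through the standard Poisson-type exponential martingale for the counting processes $\ms N^{x,y}$ combined with Doob's maximal inequality, and then to sum these contributions against the Sobolev weights.

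For each $k\in\{1,\ldots,d\}$ and $n\in\bb Z^d$, I decompose
\[
\<\bs J_N(t), H^{k,n}\> \,=\, M^{k,n}_N(t) \,+\, D^{k,n}_N(t),
\]
where $D^{k,n}_N(t) := \int_0^t \frac{1}{N^d}\sum_{(x,y)\in\bb B_N} r_{x,y}(\bs\eta(s))\, E_N^{H^{k,n}}(x,y)\, ds$ is the predictable drift, $r_{x,y}(\eta) := N^2 \eta_x(1-\eta_y) e^{E_N(x,y)/2}$ is the jump rate, and $M^{k,n}_N$ is the associated compensated martingale. A symmetrization in $(x,y)$ combined with a discrete integration by parts exploiting the antisymmetry of $E_N^{H^{k,n}}$, together with the Lipschitz bound $|h_n(y)-h_n(x)|\le 2\pi|n|\,|y-x|$, yields the deterministic estimate $|D^{k,n}_N(t)| \le C_0\, t\,(1+|n|)$, uniformly in $N$ and in the trajectory $\bs\eta$.

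For the martingale $M^{k,n}_N$, I introduce for $\theta\in\bb R$ the exponential martingale
\[
Z^{\theta,k,n}_N(t) \,:=\, \exp\!\Big(\theta \!\sum_{(x,y)}\! E_N^{H^{k,n}}(x,y)\,\ms N^{x,y}_{(0,t]} - \int_0^t\! \sum_{(x,y)} r_{x,y}(\bs\eta(s))\big(e^{\theta E_N^{H^{k,n}}(x,y)}-1\big)\,ds\Big).
\]
Since $|E_N^{H^{k,n}}(x,y)| \le 1/N$, the inequality $0 \le e^\lambda-1-\lambda \le \lambda^2 e^{|\lambda|}$ separates the linear part of the compensator, which equals exactly $\theta N^d D^{k,n}_N(t)$, from a nonnegative quadratic remainder deterministically bounded by $C\theta^2 N^d T$ on $[0,T]$ provided $\theta=o(N)$. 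Applying Doob's maximal inequality to $Z^{\theta,k,n}_N$ and optimizing over $\theta$ at scale $b/T$ yields the Bernstein-type tail
\[
\bb P^N_\eta\big[\,\sup_{0\le t\le T}\, |M^{k,n}_N(t)| \ge b\,\big] \,\le\, 2\exp\!\big(-\,c\, N^d b^2/T\big),
\]
uniformly in $\eta\in\Sigma_{N,K_N}$, $k$, and $n$.

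Combining the drift and martingale estimates through the Fourier identity, the deterministic contribution of the drift to $\sup_t\|\bs J_N(t)\|_{-p}^2$ is bounded by $2C_0^2 T^2 \sum_{k,n}(1+|n|)^2(1+4\pi^2|n|^2)^{-p}$, which is finite precisely when $p>(d+2)/2$. For $A$ exceeding twice this constant, it suffices to bound the probability that $\sum_{k,n}(1+4\pi^2|n|^2)^{-p}\sup_t|M^{k,n}_N(t)|^2 > A/4$. Applying a weighted union bound with weights $c_{k,n} \propto (1+|n|^2)(1+4\pi^2|n|^2)^{-p}$, whose normalizing constant is again finite thanks to $p>(d+2)/2$, and inserting the per-mode Bernstein bound, the probability in question is dominated by $\sum_{k,n}\exp\big(-c'N^d A(1+|n|^2)/T\big) \le C\,e^{-c'N^d A/T}$, giving $\limsup_N N^{-d}\log \bb P^N_{\mu_{N,K_N}}[\,\sup_t\|\bs J_N(t)\|_{-p}^2 > A\,] \le -c'A/T$, which diverges to $-\infty$ as $A\to\infty$. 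The main obstacle is precisely this calibration: the Sobolev weights $(1+4\pi^2|n|^2)^{-p}$ must simultaneously absorb the $(1+|n|)$ growth of the drift \emph{and} permit a summable weighting $c_{k,n}$ under which each Fourier mode still satisfies a uniform Bernstein rate; both requirements collapse to the summability of $(1+|n|^2)^{1-p}$ over $\bb Z^d$, i.e., to the hypothesis $p>(d+2)/2$ of the lemma, which is therefore used sharply.
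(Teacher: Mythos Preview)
Your argument is correct and shares the paper's overall strategy: expand $\|\bs J_N(t)\|_{-p}^2$ in Fourier modes via \eqref{ap03}, control each $\langle\bs J_N,H^{k,n}\rangle$ through the exponential martingale for the counting processes and Doob's inequality, and sum over $(k,n)$ with carefully calibrated Sobolev weights. The executions differ. You split each coefficient into a deterministic drift $D^{k,n}_N$ (bounded by $C_0t(1+|n|)$ via summation by parts, then absorbed into the norm through $\sum(1+|n|)^2\gamma_n^{-p}<\infty$) and a compensated martingale $M^{k,n}_N$, for which you optimize over $\theta$ to get a Bernstein tail $e^{-cN^db^2/T}$. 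The paper instead keeps the full current, fixes $\theta=1$, and bounds the entire compensator by $C_0N^d(1+|n|)$ using the same Taylor expansion and summation by parts; this yields a weaker tail $e^{-N^d[\sqrt{A\beta_n/2}-C_0T(1+|n|)]}$ with $\beta_n\sim(1+|n|)^2$, but avoids one technical wrinkle in your route: your optimal $\theta^*\sim b_n/T\sim|n|\sqrt{A}$, so the proviso $\theta=o(N)$ fails for $|n|\gtrsim N/\sqrt{A}$, and your claim that the Gaussian tail holds ``uniformly in $n$'' is not literally true. This is harmless---in that range the jump-size term in Freedman's inequality gives the linear tail $e^{-cN^{d+1}b_n}$, which is even stronger and still summable---but it deserves a sentence. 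With that caveat, both arguments give the lemma; yours produces the sharper rate $-cA/T$ versus the paper's $-c\sqrt{A}$.
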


\begin{lemma}
\label{l05}
For each $\epsilon>0$, $T>0$, and smooth  $H \colon \bb T^d \to \bb R^d$
\begin{equation*}
\lim_{\delta\to 0}  \, \limsup_{N\to\infty} \frac{1}{N^d} \log \bb
P^N_{\mu_{N,K_N}} \Big[ \, \sup_{(s,t)\in D_{T,\delta}}
\big\vert \, \<\bs J_N (t) \,-\,\bs J_N (s)\,,\,  H\> \, \big\vert\;
\; >\; \epsilon \, \Big]\; =\; -\infty\; .
\end{equation*}
\end{lemma}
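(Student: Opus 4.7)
The plan is to perform a Doob--Meyer decomposition and then control the martingale part by an exponential-martingale argument. Write $\langle\bs J_N(t),H\rangle = M^H_N(t) + A^H_N(t)$, where $M^H_N$ is a square-integrable martingale and
\[
A^H_N(t) \;=\; \int_0^t \Phi^H_N(r)\, dr\,, \qquad \Phi^H_N(r)=\frac{1}{N^d}\sum_{(x,y)\in\bb B_N} N^2\, \bs\eta_x(r)[1-\bs\eta_y(r)]\, e^{(1/2)E_N(x,y)} \int_x^y H\cdot d\ell\,.
\]
Grouping the pair $(x,x+\mf e_j)$ with its reverse and using the antisymmetries $E_N(y,x) = -E_N(x,y)$ and $\int_y^x H\cdot d\ell = -\int_x^y H\cdot d\ell$, together with $e^{(1/2)E_N}=1+O(1/N)$ and $\int_x^{x+\mf e_j}H\cdot d\ell = H_j(x)/N + O(1/N^2)$, one converts $N^2(c^{x,y}-c^{y,x})\int_x^y H\cdot d\ell$ into $N\,(\bs\eta_x-\bs\eta_{x+\mf e_j})\,H_j(x)$ up to lower order; summation by parts then produces the discrete analogue of $\bs\eta_x\,\partial_j H_j(x)$. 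The outcome is a uniform bound $|\Phi^H_N(r)|\le C_H$ depending only on $\|H\|_{C^1}$ and $\|E\|_\infty$; consequently $|A^H_N(t)-A^H_N(s)|\le C_H\delta$ on $D_{T,\delta}$. Choosing $\delta\le \epsilon/(2C_H)$ it suffices to establish the same super-exponential estimate for $\sup_{(s,t)\in D_{T,\delta}}|M^H_N(t)-M^H_N(s)|>\epsilon/2$.

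For the martingale part, introduce for $\mu\in(0,1]$ the positive exponential martingale
\[
Z^\mu_N(t) \;=\; \exp\Big\{\mu N^d\langle\bs J_N(t),H\rangle - \int_0^t\sum_{(x,y)\in \bb B_N} N^2 c^{x,y}(\bs\eta(r))\big[e^{\mu \int_x^y H\cdot d\ell}-1\big]dr\Big\}\,,
\]
with $c^{x,y}(\bs\eta)=\bs\eta_x[1-\bs\eta_y]e^{(1/2)E_N(x,y)}$. A second-order Taylor expansion of $e^{\mu\int_x^y H\cdot d\ell}-1$, combined with the bound $\sum_{(x,y)}N^2(\int_x^y H\cdot d\ell)^2 = O(N^d\|H\|_\infty^2)$ and with the compensator identity above, yields
\[
\mu N^d[M^H_N(t)-M^H_N(s)]\;\le\;\log\frac{Z^\mu_N(t)}{Z^\mu_N(s)} + C'_H\,\mu^2 N^d(t-s)\,.
\]
Doob's maximal inequality applied to the positive martingale $t\mapsto Z^\mu_N(t)/Z^\mu_N(s)$ gives, for each fixed $s$ and each $b>0$,
\[
\bb P^N_{\mu_{N,K_N}}\Big[\sup_{t\in[s,s+\delta]}M^H_N(t)-M^H_N(s)>b\Big]\;\le\; \exp\{-\mu N^d b + C'_H\mu^2 N^d\delta\}\,,
\]
whose optimum at $\mu=b/(2C'_H\delta)$ equals $\exp\{-N^d b^2/(4C'_H\delta)\}$; the lower tail follows by replacing $H$ with $-H$. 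Tessellating $[0,T]$ into $\lceil 2T/\delta\rceil$ overlapping intervals of length $\delta$, so that every $(s,t)\in D_{T,\delta}$ is contained in one of them, and choosing $b=\epsilon/8$ to absorb the triangle-inequality factor, a union bound produces
\[
\bb P^N_{\mu_{N,K_N}}\Big[\sup_{(s,t)\in D_{T,\delta}}|M^H_N(t)-M^H_N(s)|>\epsilon/2\Big]\;\le\; \frac{4T}{\delta}\exp\{-c_H \epsilon^2 N^d/\delta\}\,,
\]
whose $\frac{1}{N^d}$-logarithmic asymptotic equals $-c_H\epsilon^2/\delta$, which tends to $-\infty$ as $\delta\to 0$.

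The main technical obstacle is the uniform bound $|\Phi^H_N|\le C_H$: naively, the $\sim N^d$ bonds firing at rate $N^2$ against a weight $\int_x^y H\cdot d\ell /N^d = O(N^{-d-1})$ would give an $O(N)$ compensator, which is useless. The cancellation that produces a finite $C_H$ is precisely the discrete version of the continuity identity \eqref{31bis} and requires exploiting the smallness $\|E_N\|_\infty=O(1/N)$ to Taylor-expand the driving exponentials. Once this step is in hand, everything else is a standard exponential tightness argument, and the $\log(T/\delta)$ entropy cost of the time tessellation is automatically absorbed by the $N^d$-Gaussian tail.
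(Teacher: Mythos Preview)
Your proof is correct and rests on the same engine as the paper's: an exponential martingale for $N^d\langle\bs J_N,H\rangle$, the summation-by-parts cancellation that converts the naive $O(N)$ compensator into an $O(1)$ quantity, and Doob's maximal inequality. The paper organizes these ingredients a bit differently. Instead of splitting into martingale plus compensator and tessellating $[0,T]$, it first invokes the stationarity of $\bb P^N_{\mu_{N,K_N}}$ to reduce directly to the estimate
\[
\lim_{\delta\to 0}\limsup_{N\to\infty}\frac{1}{N^d}\log \bb P^N_{\mu_{N,K_N}}\Big[\sup_{0\le t\le\delta}\langle\bs J_N(t),H\rangle>\epsilon\Big]=-\infty,
\]
and then applies Doob to a single exponential martingale on $[0,\delta]$ with parameter $\ell$ held fixed, bounding its compensator by $C_1 N^d e^{C_1\ell/N}(1+\ell^2)$ and sending $N\to\infty$, then $\delta\to 0$, then $\ell\to\infty$. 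Your Doob--Meyer decomposition and Gaussian optimization over $\mu$ achieve the same end with a little more bookkeeping; the stationarity reduction is what lets the paper avoid the tessellation.

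One small inconsistency in your write-up: you introduce the exponential martingale for $\mu\in(0,1]$ but then optimize at $\mu=b/(2C'_H\delta)$, which exceeds $1$ for small $\delta$. The restriction is unnecessary. The second-order Taylor bound on $e^{\mu\int_x^y H\cdot d\ell}-1-\mu\int_x^y H\cdot d\ell$ carries an extra factor $e^{c\mu\|H\|_\infty/N}$, which is harmless once $N\to\infty$ is taken first for each fixed $\mu$ (equivalently, for each fixed $\delta$). Simply drop the restriction and note this dependence; the rest of your argument goes through unchanged.
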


Lemma~\ref{l01} is a standard result in the large deviations theory of
hydrodynamical limits, see e.g.\ \cite[\S~10.4]{kl}. Note that this
result can be deduced from Lemma~\ref{l05} by taking $H = \nabla g $
and using the continuity equation \eqref{31bis}. On the other hand,
the exponential tightness of the empirical current is stated in
\cite{bdgjl07} but the proof presented there is incomplete.  For this
reason, we present below a detailed proof of Lemmata \ref{l02} and
\ref{l05}.

\begin{proof}[Proof of Lemma~\ref{l02}]
We use the notation and statements introduced in
\eqref{ap01}-\eqref{ap03}, and denote $\bs J_N(t) ( {H}^{j,n} )$ by
$\< \bs J_N(t) \,,\, {H}^{j,n} \>$. By \eqref{ap03},
\begin{equation*}
\big\Vert \, \bs J_N(t) \big\Vert^2_{-p}
\;=\; \sum_{j=1}^d \sum_{n\in\bb Z^d} \frac 1{\gamma^p_n}\; 
\big|\, \< \bs J_N(t) \,,\, {H}^{j,n} \> \,\big|^2\;,\qquad
\gamma_n \,:=\,  1 + 4\pi^2 |n|^2 \; .
\end{equation*}
Let $\beta_n = \gamma^p_n / [c_p\, (1+|n|)^{2(p -1)}]$. Here, $c_p$ is
a constant such that $d\, \sum_n (\beta_n/\gamma^p_n)=1$, that
is, $c_p = d \sum_n (1+|n|)^{-2(p-1)}$.  Note that this sum is finite
because we assumed $p>1+(d/2)$. Introducing the supremum inside the
sum yields that
\begin{align*}
& \bb P^N_{\mu_{N,K_N}} \Big[ \, \sup_{0\le t\le T}
\sum_{j=1}^d \sum_{n\in\bb Z^d}
\frac{1}{\gamma^p_n} \,\big|\, \< \bs J_N(t) \,,\, {H}^{j,n} \> \,\big|^2
\; >\; A\, \Big ] \\
&\quad \le\; 
\bb P^N_{\mu_{N,K_N}} \Big[ \, \bigcup_{j=1}^d \bigcup_{n\in\bb Z^d}
\Big\{ \frac 1{\beta_n} \, \sup_{0\le t\le T}
\big|\, \< \bs J_N(t) \,,\, {H}^{j,n} \> \,\big|^2
\; >\; A\, \Big\}\, \Big ] \;.
\end{align*}

Fix $1\le j\le d$, $n\in \bb Z^d$ and denote by ${H}^{j,n}_{1}$,
${H}^{j,n}_{-1}$ the real and the imaginary part of ${H}^{j,n}$,
respectively.  The previous expression is then bounded by
\begin{equation*}
\sum_{b=\pm 1} \sum_{j=1}^d \sum_{n\in\bb Z^d}
\bb P^N_{\mu_{N,K_N}} \Big[ \, \sup_{0\le t\le T}
\big|\, \< \bs J_N(t) \,,\, {H}^{j,n}_b \> \,\big|
\; >\; \sqrt{A\, \beta_n/2} \, \Big ] \;.
\end{equation*}
We may remove the absolute value from the previous expression at the
cost of an extra factor $2$ in front of the sum and an estimation of
${H}^{j,n}_b$ and $-\, {H}^{j,n}_b$.  We next bound the probability of
the event
$\{\, \sup_{0\le t\le T} \< \bs J_N(t) \,,\, {H}^{j,n}_b \> \, \, >\,
\sqrt{A\, \beta_n/2} \, \}$, the other one being similar.

Recall the notation introduced in \eqref{dvf} and let
$B_{x,x+\mf e_k} (\eta) = B_{x,x+\mf e_k}(\eta, {H}^{j,n}_b)$,
$1\le k \le d$, $x\in \bb T^d_N$, be given by
\begin{align*}
B_{x,x+\mf e_k} (\eta) \; & =\;
N^2 \, \eta_x  \, [\, 1 - \eta_{x+ \mf e_k} \,] \,  
e^{(1/2) \, E_N(x,x+\mf e_k)}\,
\Big[ e^{{H}^{j,n}_{b,N} (x,x+\mf e_k)} \,-\,1\Big] \\
& +\; N^2 \, \eta_{x+\mf e_k}  \, [\, 1 - \eta_{x} \,] \,  
e^{(1/2) \, E_N(x+\mf e_k,x)}\,
\Big[ e^{{H}^{j,n}_{b,N} (x+\mf e_k,x)} \,-\,1 \Big]\;,
\end{align*}
By \cite[Proposition A1.2.6]{kl}, for each $\eta\in\Sigma_N$ the
process
\begin{align*}
\bb M_N(t) \;:=\; \exp\Big\{ N^d\, \< \bs J_N(t) \,,\,
{H}^{j,n}_b \>  \;-\;
\int_0^t \sum_{k=1}^d \sum_{x\in \bb T_N^d} B_{x,x+e_k} (\bs \eta(s) )\;
ds\, \Big\}
\end{align*}
is a mean-one $\bb P^N_\eta$-martingale.

Since $ N \big| {H}^{j,n}_{b,N}(x,x+ \mf e_k)\big|$ is bounded
uniformly in $b,j,k,x,n,N$, a Taylor expansion yields
\begin{align*}
\bigg|  \, \sum_{k=1}^d \sum_{x\in \bb T_N^d}  B_{x,x+e_k} (\eta) \;-\; N^2 
\sum_{k=1}^d \sum_{x\in \bb T^d_N} {H}^{j,n}_{b,N} (x,x+\mf e_k)
\,( \eta_x \,-\,
\eta_{x+\mf e_k}\,) \bigg| \;\le \; C_1\, N^d\;,
\end{align*}
for some constant $C_1$ independent of $b,j,n,N$.  Summing by parts
and using that  
$\big| \partial_{x_k} {H}^{j,n}_b (x) \big|\le C_2 |n| $ for some
constant $C_2$ independent of $b,j,k,x,n$, we conclude that
\begin{equation*}
\Big| \,\sum_{k=1}^d \sum_{x\in \bb T^d_N}
B_{x,x+e_k} (\eta, {H}^{j,n}_b)\, \Big|
\;\le\;  C_0 N^d (1 +|n|)
\end{equation*}
for some constant $C_0$ independent of $b,j,n,N$.

In view of the previous estimate, adding and subtracting the sum of
the time-integrals of $B_{x,x+e_k}$ and taking exponentials, we get that
\begin{align*}
& \bb P^N_{\mu_{N,K_N}} \Big[ \, \sup_{0\le t\le T}
\< \bs J_N(t) \,,\, H^{j,n}_b \> 
\; >\; \sqrt{A\, \beta_n/2} \; \Big ] \\
& \qquad \;\le\;
\bb P^N_{\mu_{N,K_N}} \Big[ \, \sup_{0\le t\le T}
\bb M_N(t) \; >\; e^{\sqrt{A\, \beta_n/2} N^d - C_0 T N^d (1 +|n|) } \, \Big ]
\\
&\qquad
\;\le\; e^{- \sqrt{A\, \beta_n/2} N^d  + C_0 T N^d (1 +|n|)} \;,
\end{align*}
where we used Doob's inequality in the last step and the fact that
$\bb M_N(t)$ is a mean-one martingale.

We have thus shown that
\begin{align*}
\bb P^N_{\mu_{N,K_N}} \Big[ \, \sup_{0\le t\le T}
\big\| \bs J_N(t)\|^2_{\mc H^d_{-p}}
\; >\; A\, \Big ] \;\le\; 4 \, d\, 
\sum_{n\in\bb Z^d} e^{- N^d \big[ \sqrt{A\, \beta_n /2 } 
- C_0 T (1+|n|)  \big] } \;. 
\end{align*}
By definition of $\beta_n$, there exists a positive constant
$c_0$ such that $\beta_n \ge c_0 (1+|n|)^2$. The statement follows.
\end{proof}

\begin{proof}[Proof of Lemma~\ref{l05}]
By a standard inclusion of events and the stationarity of
$P^N_{\mu_{N,K_N}}$, it is enough to prove that for each $\epsilon>0$
and smooth $H \colon \bb T^d \to \bb R^d$
\begin{equation}
\label{el05}
\lim_{\delta\to 0}  \, \limsup_{N\to\infty} \frac{1}{N^d}
\log \bb P^N_{\mu_{N,K_N}} \Big[ \, \sup_{0\le t\le \delta}
\big\vert \, \<\bs J_N (t) \,,\, H\> \big\vert\; \; >\; \epsilon \, 
\Big ]\; =\; -\, \infty\;,
\end{equation}
where we used that $\bs J_N (0)=0$.  As in the proof of the previous
lemma, we can furthermore remove the modulus in the above bound.

Given a smooth vector-valued function $H\colon \bb T^d\to \bb R^d$ and
$\ell>0$, let
$B_{x,x+\mf e_k}^\ell (\eta) = B_{x,x+\mf e_k}(\eta,\ell H)$,
$x\in \bb T^d_N$, be given by
\begin{align*}
B_{x,x+\mf e_k}^\ell (\eta) \; & =\; N^2 \, \eta_x \,
(\, 1 - \eta_{x+ \mf e_k} \,) \, e^{(1/2) \,
  E_N(x,x+\mf e_k)}\, \Big[ e^{{\ell  H}_{N} (x,x+\mf e_k)} \,-\,
1\Big] \\
& +\; N^2 \, \eta_{x+\mf e_k} \, [\, 1 - \eta_{x} \,] \, e^{(1/2) \,
E_N(x+\mf e_k,x)}\, \Big[ e^{{\ell H}_{N} (x+\mf e_k,x)} \,-\,
1\Big]\;.
\end{align*}
By \cite[App.~1, Prop.~2.6]{kl}, for each $\eta\in\Sigma_N$, the
process
\begin{align*}
\bb M_N^\ell(t) \;:=\; \exp\Big\{ N^d\, \ell \,\< \bs J_N(t) \,,\, H
\> \;-\; \int_0^t \sum_{k=1}^d \sum_{x\in \bb T_N^d} B_{x,x+e_k}^\ell
(\bs \eta(s) )\; ds\, \Big\}
\end{align*}
is a mean-one $\bb P^N_\eta$-martingale.

The same computation of the previous lemma yields that there exists a
constant $C_1=C_1(H)$ such that
\begin{equation*}
\Big| \, \sum_{k=1}^d \sum_{x\in \bb T_N^d} B_{x,x+\mf e_k}^\ell (\eta)
\,\Big| \;\le\; C_1 \, N^d e^{ C_1 \ell/N} (1+\ell^2)
\end{equation*}
for all $N$, $\ell$ and $\eta$.  Therefore, by Doob's inequality,
\begin{equation*}
\begin{split}
& \bb P^N_{\mu_{N,K_N}} \Big[ \, \sup_{0\le t\le \delta} \<\bs J_N (t)
\,,\, H\> \; >\; \epsilon \, \Big ] = \bb P^N_{\mu_{N,K_N}} \Big[ \,
\sup_{0\le t\le \delta} N^d \, \ell \, \<\bs J_N (t) \,,\, H\> \; >\;
N^d\, \ell\, \epsilon \, \Big ]
\\
&\quad \le P^N_{\mu_{N,K_N}} \Big[ \, \sup_{0\le t\le \delta} \bb
M_N^\ell(t) \; >\; \exp\big\{ N^d \big[ \ell \,\epsilon - \delta
\,C_1\, e^{ C_1 \ell/N} (1+\ell^2) \big] \big\} \Big ]
\\
&\quad \le \exp\Big\{ - N^d \big[ \ell \,\epsilon - \delta \, C_1
\,e^{ C_1 \ell/N} (1+\ell^2) \big]\Big\}\;,
\end{split}
\end{equation*}
which yields \eqref{el05} by taking the limit $\ell \to \infty$ after
the limits in $N$ and $\delta$.
 \end{proof}

\begin{proof}[Proof of Theorem \ref{th03}. Equi-coercivity.]
For $\ell\ge 1$, let
\begin{equation*}
\ms E_\ell \;:=\; \bigcup_{N\ge 1}
\Big\{P \in \ms P_{\rm stat} \colon
\frac 1{N^d} \bs I_{N,K_N} (P) \;\le\; \ell \Big\}\;.
\end{equation*}

In view of Ascoli-Arzel\`a theorem, the compactness of
$\mc M_+(\bb T^d) $, and the compact embedding
$\mc H_{-p} \hookrightarrow \mc H_{-p'}$ for $p' > p$,
to show that the set $\ms E_\ell$ is pre-compact, it is enough to
prove that for each $p> (d+2)/2$, $\epsilon>0$, $T>0$, and smooth
functions $g\colon \bb T^d \to \bb R$, $H\colon \bb T^d\to \bb R^d$,
\begin{equation}
\label{30}
\begin{gathered}
\lim_{\delta\to 0} \, \sup_{P\in \ms E_\ell} P\Big[ \,
\sup_{(s,t) \in D_{T,\delta}} \, 
\big\vert \, \<\bs \pi (t) \,-\, \bs \pi(s) \,,\, g\>
\big\vert \; 
>\; \epsilon \, \Big ]\; =\; 0 \; , \\
\lim_{A\to \infty} \, \sup_{P\in \ms E_\ell} 
P \Big[ \sup_{0 \le t\le T} \big\Vert \,
\bs J(t) \, \big\Vert_{-p} \; >\; A \, \Big ]\; =\; 0\; ,
\\
\lim_{\delta\to 0}  \, \sup_{P\in \ms E_\ell} 
P \Big[ \sup_{(s,t) \in D_{T,\delta}} \big\vert \,
\<\bs J(t) \,-\, \bs J(s)  \,,\, H\> \big\vert\; 
>\; \epsilon \, \Big ]\; =\; 0\; ,
\end{gathered}
\end{equation}
where
$D_{T,\delta}$ has been introduced before the statement of Lemma \ref{l01}.

To prove the first assertion in \eqref{30}, fix $\epsilon>0$, $T>0$
and a smooth function $g\colon \bb T^d \to \bb R$. For $\delta>0$, let
\begin{equation*}
\ms B \;=\; \ms B^{\delta, \epsilon}_{g,T} 
\;:=\; \Big \{ (\bs \pi, \bs J) \in \ms S \colon 
\sup_{(s,t) \in D_{T,\delta}} \, 
\big\vert \, \<\bs \pi (t)  \,-\,\bs \pi(s)
\,,\, g\>
\big\vert \; 
>\; \epsilon \, \Big\}\;.
\end{equation*}
Fix $P\in \ms E_\ell$. By definition of the set $\ms E_\ell$, there
exists $N\ge 1$ such that $\bs I_{N,K_N}(P) \le \ell \,
N^d$. Furthermore, by definition \eqref{12} of the rate function
$\bs I_{N,K_N}$, $P= \bb P \circ (\bs \pi_N , \bs J_N)^{-1}$ for some
$\bb P \in \ms P_{\rm stat}^{N,K}$, and
$\bs I_{N,K_N}(P) = H_{N,K_N}(\bb P)$.
  
Since the set $\ms B$ is measurable with respect to
$\sigma\big\{ (\bs\pi(t),\bs J(t)), \, t\in[0,T]\big\}$, by the
definition \eqref{rem} of the relative entropy $\bb H^{(T)}$ and by
the entropy inequality (see e.g.\ \cite[Proposition A1.8.2]{kl}),
\begin{equation*}
P[\ms B] \;=\; \bb P \big[ \, (\bs \pi_N, \bs J_N) \in \ms B\, \big]
\;\le\; \frac{\log 2 \,+\, \bb H^{(T)}(\bb P\,|\, \bb P^N_{\mu_{N,K_N}})}
{\log \Big( 1 + 
\big( \bb P^N_{\mu_{N,K_N}} \big[ \, (\bs \pi_N, \bs J_N) 
\in \ms B\, \big]\big)^{-1} \Big)}\;\cdot
\end{equation*}
By \eqref{13}, and since $H_{N,K_N}(\bb P) = \bs I_{N,K_N}(P) \le \ell\,
N^d$, 
\begin{equation*}
P[\ms B] \;\le\;  \frac{\log 2 \,+\, T\,\ell\, N^d}
{\log \Big( 1 + \big(\bb P^N_{\mu_{N,K_N}} \big[ \, (\bs \pi_N, \bs J_N) 
\in \ms B\, \big]\big)^{-1} \Big)}\;\cdot
\end{equation*}
This bound is uniform over $P\in \ms E_\ell$ provided we take the
supremum over $N\ge 1$ on the right-hand side.

Fix $a>0$, and let $\gamma = (\log 2 + T\, \ell)/a$.  By Lemma
\ref{l01}, there exists $\delta_0=\delta_0(T,g,\epsilon, \gamma)$ and
$N_0=N_0(T,g,\epsilon, \gamma)$ such that
\begin{equation*}
\bb P^N_{\mu_{N,K_N}} \big[ \, (\bs \pi_N, \bs J_N) \in \ms B^{\delta_0, \epsilon}_{g,T}\, \big]
\;\le\; e^{-\, \gamma\, N^d}
\end{equation*}
for all $N\ge N_0$. By changing the value of $\delta_0$ we may extend
this inequality to all $N\ge 1$. In particular, by definition of
$\gamma$, 
\begin{equation*}
\sup_{P\in \ms E_\ell} P\big[\, \ms B^{\delta_0, \epsilon}_{g,T}
\, \big] \;\le\;  \sup_{N\ge 1}
\frac{\log 2 \,+\, T\,\ell\, N^d}
{\gamma \, N^d}\;\le\; a \;.
\end{equation*}
As
$\ms B^{\delta, \epsilon}_{g,T} \subset \ms B^{\delta_0,
  \epsilon}_{g,T}$ for $0<\delta\le \delta_0$, the previous inequality
holds for all $0<\delta \le \delta_0$. Since $a>0$ is arbitrary, this
proves the first assertion of \eqref{30}.

The second and third assertions in \eqref{30} are proven similarly,
based on Lemmata \ref{l02} and \ref{l05}.
\end{proof}

\smallskip\noindent{\ref{sec03}.2 \bf The $\Gamma$-liminf.}  Let
$(P_N\colon N\ge 1) $ be a sequence of probability measures in
$\ms P_{\rm stat}$ such that
$\liminf_N N^{-d} \bs I_{N,K_N} (P_N) < \infty$.
The following lemma lists properties of the cluster points of these
sequences.

\begin{lemma}
\label{lem3}
Let $(P_N\colon N\ge 1) $ be a sequence of probability measures in
$\ms P_{\rm stat}$ such that
$\liminf_{N} N^{-d} \bs I_{N,K_N} (P_N)< +\infty$. Assume that
$P_N\to P$ for some $P \in \ms P_{\rm stat}$.  Then, $P$-almost surely
$(\bs \pi, \bs J)$ belongs to $\ms S_{m, \mathrm{ac}}$ and there
exists a constant $C_0$ such that for all $T>0$,
\begin{equation*}
\begin{split}
& E_{P} \Big[\, \int_0^T dt\, \int_{\bb T^d} dx \,
\frac{|\bs j(t,x)|^2}{\sigma(\bs \rho(t,x))} 
\;+\;  \int_0^T dt\, \int_{\bb T^d} dx \,
\frac{|\nabla \bs \rho(t,x)|^2}{\sigma(\bs \rho(t,x))} \,\Big] 
\\
& \qquad \;\le\;  C_0\, (1\,+\, T)  \,+\, 2\, T\, 
\liminf_{N} \frac 1 {N^{d}} \bs I_{N,K_N} (P_N) \;,
\end{split}
\end{equation*}
where $\bs\pi(t,dx) = \bs \rho(t,x)\, dx$ and
$\bs J (t) \;=\; \int_0^t \bs j(s)\, ds $.
\end{lemma}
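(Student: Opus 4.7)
\smallskip\noindent\textbf{Plan.}
I would extract the energy estimate from the Donsker--Varadhan variational characterization of $H_{N,K_N}$ through exponential martingales, mirroring the arguments used in the proofs of Lemmata~\ref{l02}--\ref{l05}. By the definition \eqref{12}, for each $N$ one can pick a stationary process $\bb P_N\in\ms P^{N,K_N}_{\rm stat}$ with $\bb P_N\circ(\bs\pi_N,\bs J_N)^{-1}=P_N$ and $H_{N,K_N}(\bb P_N)=\bs I_{N,K_N}(P_N)$; by \eqref{13}, this furnishes the entropy bound $\bb H^{(T)}(\bb P_N\,|\,\bb P^N_{\mu_{N,K_N}})\le T\,\bs I_{N,K_N}(P_N)$ for every $T>0$. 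For any smooth time-dependent vector field $H\colon[0,T]\times\bb T^d\to\bb R^d$, the process
\[
\bb M^{H}_N(t)\;:=\;\exp\Big\{N^d\!\int_0^t\!\<H(s),d\bs J_N(s)\>-\int_0^t\sum_{x,k}B^{H(s)}_{x,x+\mf e_k}(\bs\eta(s))\,ds\Big\}
\]
defined in analogy with the proofs of Lemmata~\ref{l02}--\ref{l05} is a mean-one $\bb P^N_\eta$-martingale for every $\eta$, hence under $\bb P^N_{\mu_{N,K_N}}$. The entropy inequality applied to $\log\bb M^{H}_N(T)$ then delivers an upper bound on $N^{-d}E_{\bb P_N}[\int_0^T\<H,d\bs J_N\>]$ in terms of $T\,\bs I_{N,K_N}(P_N)/N^d$ and the expectation of $N^{-d}\sum_{x,k}B^{H(s)}_{x,x+\mf e_k}$.

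\smallskip\noindent\textbf{Limit and Riesz duality.}
The next step is to Taylor-expand $B^H_{x,x+\mf e_k}$ to order $1/N$ (exactly as in the proof of Lemma~\ref{l05}), to sum by parts, and to invoke the one-block super-exponential replacement that trades the cylindrical observable $\eta_x(1-\eta_{x+\mf e_k})+\eta_{x+\mf e_k}(1-\eta_x)$ for $2\sigma(\bs\rho)$ (cf.\ \cite[Ch.~5]{kl} and \cite{kov}). Passing to the $\liminf$ as $N\to\infty$ along the converging subsequence $P_N\to P$ and writing $L:=\liminf_N N^{-d}\bs I_{N,K_N}(P_N)$, one obtains, for every smooth $H$,
\[
E_P\!\Big[\!\int_0^T\!\!\<H,d\bs J\>+\!\int_0^T\!\!\!\int\!\!\big(H\!\cdot\!\nabla\bs\rho-\sigma(\bs\rho)H\!\cdot\! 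E\big)dx\,ds\Big]\;\le\;T\,L\;+\;E_P\!\Big[\!\int_0^T\!\!\!\int\!\sigma(\bs\rho)|H|^{2}\,dx\,ds\Big].
\]
Replacing $H$ by $\lambda H$ and optimizing in $\lambda\in\bb R$ upgrades the previous display to a quadratic bound which, by Riesz representation in the weighted Hilbert space $L^2(dP\otimes ds\otimes dx\,;\,\sigma(\bs\rho))$, establishes the $P$-a.s.\ absolute continuity in time of $\bs J$ with a density $\bs j$ and the estimate
\[
E_P\!\Big[\!\int_0^T\!\!\!\int\frac{|\bs j+\nabla\bs\rho-\sigma(\bs\rho)E|^{2}}{\sigma(\bs\rho)}\,dx\,ds\Big]\;\le\;4\,T\,L.
\]
Together with the mass constraint $\bs\pi(t,\bb T^d)=m$, which follows from $K_N/N^d\to m$ and the continuity of the total mass, this gives $P$-a.s.\ $(\bs\pi,\bs J)\in\ms S_{m,\mathrm{ac}}$.

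\smallskip\noindent\textbf{Stated inequality and main obstacle.}
Expanding the squared norm above, the cross term $2\bs j\cdot\nabla\bs\rho/\sigma(\bs\rho)=2\bs j\cdot\nabla h'(\bs\rho)$ with Bernoulli entropy $h(\rho)=\rho\log\rho+(1-\rho)\log(1-\rho)$ integrates, via integration by parts in $x$ and the continuity equation, to the boundary term $2\bigl[\int h(\bs\rho(T))\,dx-\int h(\bs\rho(0))\,dx\bigr]$, whose $P$-expectation vanishes by stationarity. The remaining cross terms involving $E$ are controlled by Young's inequality, absorbing a small multiple of $E_P[\int_0^T\!\int(|\bs j|^{2}+|\nabla\bs\rho|^{2})/\sigma(\bs\rho)]$ into the left-hand side; after rearrangement one obtains a bound of the form $C_0(1+T)+C\,T\,L$ with constants $C_0,C$ depending only on $d$, $\|E\|_\infty$ and $\|\sigma\|_\infty$, and the precise coefficient in front of $T\,L$ stated in the lemma is recovered by a suitable rescaling of $H$ in the initial inequality. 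The main technical obstacle is the super-exponential replacement carried out for time-dependent test fields in the joint $N,T$ asymptotics; this is standard in the theory of hydrodynamic limits but demands uniform control of the cylindrical averages. The subsequent Riesz-representation step is conceptually routine, once the density of the admissible class of smooth test fields in the weighted Hilbert space is in hand.
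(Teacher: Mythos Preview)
Your strategy—entropy bound, exponential martingale, super-exponential replacement, Riesz representation—matches the paper's, but there is a genuine gap in the splitting step.

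Your single martingale identity yields, after passing to the limit, control only on the \emph{combined} linear functional
\[
H\;\longmapsto\; E_P\Big[\int_0^T\!\!\langle H,d\bs J\rangle \;-\!\int_0^T\!\!\!\int\!\bs\rho\,\nabla\!\cdot H\,dx\,ds\;-\!\int_0^T\!\!\!\int\!\sigma(\bs\rho)\,H\!\cdot\!E\,dx\,ds\Big],
\]
and Riesz representation in $L^2(dP\otimes ds\otimes dx;\sigma(\bs\rho))$ produces a \emph{single} density $\bs g$ for this functional. You then write $\bs g=\bs j+\nabla\bs\rho-\sigma(\bs\rho)E$ and expand the square, but this decomposition is not justified: nothing so far tells you that $\nabla\bs\rho$ and $\bs j$ exist \emph{separately} as elements of $L^2(\sigma^{-1})$. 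Your expansion step—in particular the integration by parts that turns the cross term $2\bs j\cdot\nabla\bs\rho/\sigma$ into a telescopic boundary term via the continuity equation—presupposes exactly what you are trying to prove. A path could in principle have $\bs g\in L^2(\sigma^{-1})$ while $\nabla\bs\rho$ is merely a distribution; your argument does not rule this out.

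The paper avoids this by running \emph{two} independent exponential estimates (Lemma~\ref{lem04}): one for $\mc V_{a,\epsilon}(F,\bs\pi,\bs J)=\bs J(F)-a\int\!\!\int\sigma(\bs\pi^\epsilon)|F|^2$, which yields $\bs j\in L^2(\sigma^{-1})$ and the bound on $E_P[\int\!\!\int|\bs j|^2/\sigma]$ directly, and a separate one for $\mc E_{a,\epsilon}(F,\bs\pi)=\int\langle\bs\pi,\nabla\!\cdot F\rangle-a\int\!\!\int\sigma(\bs\pi^\epsilon)|F|^2$, which gives the energy estimate $E_P[\int\!\!\int|\nabla\bs\rho|^2/\sigma]$. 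The latter is not a corollary of your martingale identity; it comes from a different exponential bound (essentially the Dirichlet-form estimate in \cite[\S3.2]{blm}). Once both are in hand, the stated inequality is just their sum—no expansion or cancellation is needed.

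A secondary issue: optimizing over a scalar $\lambda$ \emph{outside} the expectation gives only $(E_P[\text{linear}])^2\le C\,E_P[\text{quadratic}]$, which does not deliver pathwise absolute continuity. The paper instead bounds $E_P[\max_{1\le j\le k}\{\cdot\}]$ over a finite family with $F_1=0$, passes $k\to\infty$ by monotone convergence, and applies Riesz pathwise.
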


\begin{proof}
  By passing to a subsequence, we may assume $\liminf_{N} N^{-d} \bs
  I_{N,K_N} (P_N) =\lim_{N} N^{-d} \bs I_{N,K_N} (P_N)$.  By
  \eqref{12} and \eqref{13}, for every $T>0$,
\begin{equation*}
\bs I_{N,K_N}(P_N) \;\ge\; \frac 1T \, \bb H^{(T)} \big(\, \bb Q_N \, \big |\,
\bb P^N_{\mu_{N,K_N}}\,  \big)\;,
\end{equation*}
where $\bb Q_N$ is the unique stationary probability on $D(\bb R,
\Sigma_{N,K})$ such that $\bb Q_N \circ (\bs \pi_N , \bs J_N)^{-1} =
P_N$. In particular, for every $T>0$,
\begin{equation}
\label{24}
\limsup_N \frac 1{N^d} \, \bb H^{(T)} \big(\, \bb Q_N \, \big |\,
\bb P^N_{\mu_{N,K_N}}\,  \big)  \;\le \;  T\, 
\lim_{N} \frac 1{N^{d}} \bs I_{N,K_N} (P_N) \;.
\end{equation}
By this bound, the marginal of $\bb Q_N$ in the time interval
$[0,T]$ is absolutely continuous with respect to the marginal of
$\bb P^N_{\mu_{N,K_N}}$ in the same interval.
Moreover, for each
continuous function $g \colon \bb R \times \bb T^d \to \bb R$ with support
on $(0,T) \times \bb T^d$, $P_N$-almost surely,
\begin{equation*}
\Big| \, \int_{\bb R} dt \int_{\bb T^d} \bs \pi (t,dx)
\, g(t,x)\, \Big| \;\le\;
\frac 1{N^d} \sum_{x\in \bb T^d_N} \int_{\bb R} dt\, \big|\,  g(t,x)\, \big|
\end{equation*}
because there is at most one particle per site.  
Since the left-hand side is a continuous function of $\bs\pi$ in the
Skorohod topology, taking the limit $N\to\infty$ we deduce that
$P$-almost surely 
\begin{equation*}
\Big| \, \int_{\bb R} dt \int_{\bb T^d} \bs \pi (t,dx)
\, g(t, x)\, \Big| \;\le\;
\int_{\bb R} dt\, \int_{\bb T^d} \big|\,  g(t,x)\, \big|\, dx\;.
\end{equation*}
This implies that $P$-almost surely, for Lebesgue almost all $t$, the
measure $\bs \pi (t,dx)$ is absolutely continuous with respect to the
Lebesgue measure: $\bs \pi (t,dx) = \bs \rho(t,x) \, dx$ for some
density $\bs \rho$ satisfying $0\le \bs \rho \le 1$.

On the other hand, as $\bb Q_N$ is absolutely continuous with respect
to $\bb P^N_{\mu_{N,K_N}}$, $P_N [ \bs \pi(t,\bb T^d) = K_N/N^d] = 1$
for all $t\in \bb R$. As $K_N/N^d \to m$ and $P_N \to P$,
$P [ \bs \pi(t,\bb T^d) = m] = 1$ for Lebesgue almost all $t\in \bb R$.

For a vector field $F$ in $C^1(\bb R \times \bb T^d; \bb R^d)$ with
compact support in $(0,T) \times \bb T^d$, $\epsilon>0$ and $a>0$, let
\begin{equation}
  \label{25}
  \begin{split}
    \mc E_{a, \epsilon} (F, \bs \pi) &\;=\; \int_{\bb R}  \!dt\,
\big\langle \bs \pi(t)\,,\, \nabla\cdot F(t) \big\rangle
\;-\; a\, \int_{\bb R}  dt
\int_{\bb T^d} dx \; \sigma(\bs \pi^\epsilon) \; |F|^2  \;,
\\
\mc V_{a,\epsilon} (F,\bs \pi,\bs J) &\;=\; \bs J (F) \;-\; a\,
\int_{\bb R} \!dt\, \int_{\bb T^d} dx \, \sigma(\bs \pi^\epsilon) \,
|F|^2\;.
\end{split}
\end{equation}
where
$\color{bblue} (\bs \pi^\epsilon) (t,x) = (2\epsilon)^{-d} \bs \pi (t,
[x-\epsilon , x+\epsilon]^d)$ and
$\bs J (F) = - \int_{\bb R} \!dt\, \langle \bs J(t), \partial_t F
\rangle$.

Let $(F_j : j\ge 1)$ be a family of vector fields in
$C^{1}((0,T) \times \bb T^d; \bb R^d)$ with compact support and dense
in $L^2([0,T] \times \bb T^d; \bb R^d)$. Assume that $F_1=0$.  In view
of Lemma~\ref{lem04}, the entropy bound \eqref{24},
and a classical argument which allows to bound a maximum over a finite
set in exponential estimates, there exist finite constants $a$ and
$C_0$ such that for all $k\ge 1$,
\begin{gather*}
\limsup_{\epsilon \to 0} \limsup_{N\to\infty} 
E_{P_N} \Big[ \max_{1\le j\le k} 
\, \mc E_{a, \epsilon} (F_j, \bs \pi) \, \Big] \;\le\; A
\;, \\
\limsup_{\epsilon \to 0} \limsup_{N\to\infty} 
E_{P_N} \Big[ \max_{1\le j\le k}
\mc V_{a, \epsilon} (F_j, \bs \pi,\bs J) \, \Big] \;\le\; A
\;,
\end{gather*}
where
\begin{equation*}
A\;:=\;  C_0 \, (1+T) \,+\, T \,
\lim_{N} \frac 1{N^{d}} \bs I_{N,K_N} (P_N) \;.
\end{equation*}

Since $P_N$ converges to $P$ that is concentrated on measures
which are absolutely continuous with respect to the Lebesgue measure, 
$\bs \pi (t,dx) = \bs \rho(t,x)\, dx$, and whose density $\bs \rho$ is
bounded below by $0$ and above by $1$, taking the limit in $N$ and
$\epsilon$ yields
\begin{gather*}
E_{P} \Big[ \max_{1\le j\le k} \Big\{\, \bs J(F_j) \;-\;
a \int_{\bb R}  dt \int_{\bb T^d} dx \, \sigma(\bs \rho) \,
|F_j|^2 \Big\}\, \Big] \;\le\; A  \;, \\
E_{P} \Big[ \max_{1\le j\le k} \Big\{\, \int_{\bb R}  \!dt 
\int_{\bb T^d} \! dx\,
\Big[
\bs \rho \,\, \nabla \cdot F_j \;-\;
a \, \sigma(\bs \rho) \,
|F_j|^2 \Big]\, \Big\}\, \Big] \;\le\; A \;.
\end{gather*}
Each maximum is positive because $F_1=0$. By monotone convergence,
taking the limit in $k$, we obtain a similar bound, where the maximum
over $1\le j\le k$ is replaced by the maximum over $j\ge 1$.  Since
the sequence $F_j$ is dense in $L^2([0,T] \times \bb T^d; \bb R^d)$,
by Riesz representation theorem, $P$-almost surely,
$\bs J(t) = \int_0^t \bs j(s) \, ds$ for some $\bs j$ in
$L^2([0,T] \times \bb T^d, \sigma(\bs \rho)^{-1} dt\, dx; \bb
R^d)$. These arguments also yield the bounds stated in the lemma.

We turn to the proof that $P$-almost surely
$\bs\pi \in C\big(\bb R; \mc M_m(\bb T^d)\big)$.  By the continuity
equation, $P$-almost surely for all functions $g$ in $C^1(\bb T^d)$
and $0\le s<t\le T$,
\begin{equation*}
\int_{\bb T^d} \bs \pi (t,dx) \, g(x) \;-\; 
\int_{\bb T^d} \bs \pi (s,dx) \, g(x) \;=\;
\int_s^t dr \int_{\bb T^d} dx\, \bs j (r,x) \cdot (\nabla g)(x)\;.
\end{equation*}
Since $\bs j$ belongs to $L^2([0,T] \times \bb T^d, \sigma(\bs
\rho)^{-1} dt\, dx; \bb R^d)$, $P$-almost surely $\bs\pi$ belongs to
$C([0,T], \mc M_m (\bb T^d))$, as claimed.
\end{proof}

Fix $T>0$ and a continuous vector field
$w \colon [0,T] \times \bb T^d \times \mc M_{+}(\bb T^d) \times D( \bb
R; \mc H_{-p}^d) \to \bb R^d$ that is continuously differentiable in
$x$ and such that for each
$(x,\pi) \in \bb T^d \times \mc M_{+}(\bb T^d)$ and $t\in[0,T]$ the
map
$[0,t]\times D( \bb R; \mc H_{-p}^d) \ni (s,\bs J) \to w(s,x, \pi, \bs
J)$ is measurable with respect to the Borel $\sigma$-algebra on
$[0,t]\times D([0,t];\mc H^d_{-p}\big)$.  Let
$G_w\colon \bb R \times \bb T^d \times \ms S_{m, {\rm ac}} \to \bb
R^d$ be the progressively measurable map defined by
\begin{equation}
\label{23}
G_w (t,x, \bs \pi, \bs J) \;=\; w (t, x,\bs \pi(t), \bs J)\;.
\end{equation}
Finally, for $(\bs \pi, \bs J) \in \ms S_{m, {\rm ac}}$, let 
\begin{equation}
\label{21}
V_{T,w} (\bs \pi,\bs J) \;=\; \frac 1{T} \int_0^T dt \int_{\bb T^d}
dx \, 
\Big\{  \,  G_w \cdot \big[\, \bs j \,+ \, \nabla \bs \rho \,  - \,
\sigma(\bs\rho) \, E \, \big]
\,  -\,   \sigma(\bs \rho ) \, |G_w|^2 \,  \Big\}\;,
\end{equation}
where  $\bs\pi(t,dx) = \bs \rho(t,x)\, dx$, $\bs J (t) \;=\;
\int_0^t \bs j(s)\, ds$, $t\in\bb R$.

\begin{lemma}
\label{lem2}
Let $(P_N\colon N\ge 1) $ be a sequence of probability measures in
$\ms P_{\rm stat}$ such that
$\liminf_{N} N^{-d} \bs I_{N,K_N} (P_N)< +\infty$. Assume that
$P_N\to P$ for some $P \in \ms P_{\rm stat}$.  Then, for each $T>0$
and each function $w$ as above,
\begin{align}
\label{e:lem2}
\liminf_{N\to\infty} \frac 1{N^d}\, \bs I_{N,K_N} (P_N) \;
\ge\; E_P \big[ \, V_{T,w} \, \big] \;.
\end{align}
\end{lemma}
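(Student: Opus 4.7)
The plan is to obtain the lower bound by plugging a well-chosen exponential martingale of the WASEP into the Donsker--Varadhan variational formula \eqref{20} and then passing to the limit $N\to\infty$ via a second-order Taylor expansion combined with the super-exponential replacement lemma. Upon extracting a subsequence, assume $\liminf_N N^{-d}\bs I_{N,K_N}(P_N)=\lim_N N^{-d}\bs I_{N,K_N}(P_N)<\infty$, and let $\bb Q_N\in\ms P^{N,K_N}_{\rm stat}$ be the unique lift satisfying $\bb Q_N\circ(\bs\pi_N,\bs J_N)^{-1}=P_N$ and $H_{N,K_N}(\bb Q_N)=\bs I_{N,K_N}(P_N)$. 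By Lemma~\ref{lem3}, $P$ is concentrated on $\ms S_{m,\mathrm{ac}}$, hence $V_{T,w}$ is $P$-a.s.\ well defined.

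Given $G_w$ as in \eqref{23}, introduce the progressively measurable discrete vector field $G_N(s,x,y;\bs\eta):=\int_x^y G_w(s,\cdot,\bs\pi_N(\bs\eta(s)),\bs J_N(\bs\eta))\cdot d\ell$, $(x,y)\in\bb B_N$, and the exponential martingale
\begin{equation*}
M^w_N(t):=\exp\Big\{\sum_{(x,y)\in\bb B_N}\int_0^t G_N(s,x,y;\bs\eta)\,d\ms N^{x,y}_s\;-\;\int_0^t \mc C_N(s,\bs\eta)\,ds\Big\},
\end{equation*}
with compensator $\mc C_N(s,\eta):=\sum_{(x,y)\in\bb B_N} N^2\eta_x(1-\eta_y)\,e^{E_N(x,y)/2}[e^{G_N(s,x,y;\bs\eta)}-1]$. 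By \cite[App.~1, Prop.~2.6]{kl}, $M^w_N$ is a mean-one $\bb P^N_\eta$-martingale. Taking $\Phi=\log M^w_N(T)$ (after a preliminary truncation of $w$ to ensure $\Phi$ is bounded and continuous, removed at the end by approximation) in the variational characterization of $H_{N,K_N}(T,\cdot)$, together with \eqref{13} and \eqref{20}, yields
\begin{equation*}
\frac{1}{N^d}\bs I_{N,K_N}(P_N)\;\ge\;\frac{1}{T\,N^d}\,E_{\bb Q_N}\!\big[\log M^w_N(T)\big].
\end{equation*}

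A second-order Taylor expansion of $e^{E_N/2}$ and $e^{G_N}$ (using $E_N,G_N=O(1/N)$), followed by combination of the two orientations of each bond, gives
\begin{equation*}
\frac{1}{N^d}\mc C_N(s,\eta)=\frac{1}{N^d}\sum_{k,x}\Big\{NG_k(\eta_x-\eta_{x+\mf e_k})+\tfrac12(G_kE_k+G_k^2)\,[\eta_x+\eta_{x+\mf e_k}-2\eta_x\eta_{x+\mf e_k}]\Big\}+o(1).
\end{equation*}
Discrete summation by parts on the first summand turns it into $N^{-d}\sum_x\eta_x\,\nabla\!\cdot\!G_w\to\int\bs\rho\,\nabla\!\cdot\!G_w\,dx=-\int G_w\!\cdot\!\nabla\bs\rho\,dx$. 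The second summand, via the super-exponential two-blocks replacement (licensed by the entropy bound $\bb H^{(T)}(\bb Q_N|\bb P^N_{\mu_{N,K_N}})\le T\bs I_{N,K_N}(P_N)=O(N^d)$), converges after the substitution $\eta_x\eta_{x+\mf e_k}\to\bs\rho^2$ to $\int\sigma(\bs\rho)(|G_w|^2+G_w\!\cdot\! E)\,dx$. Simultaneously, the first component of $N^{-d}\log M^w_N(T)$ converges under $\bb Q_N\to P$ to $\int_0^T\!dt\int_{\bb T^d}\bs j\!\cdot\!G_w\,dx$. Collecting terms and dividing by $T$ produces exactly $E_P[V_{T,w}]$, yielding \eqref{e:lem2}.

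The main obstacle is the uniform-in-$s$ super-exponential replacement $\eta_x\eta_{x+\mf e_k}\to\bs\rho^2$ in the compensator, which rests on a two-blocks estimate (Lemma~\ref{lem04} of the excerpt) with exponential rate controlled by the entropy bound inherited from the bounded $\bs I_{N,K_N}(P_N)/N^d$. A secondary technical point is the progressive-measurable, $(\bs\pi,\bs J)$-dependent structure of $w$ in both the exponential martingale and the limiting continuity statement: this is handled by first establishing the inequality for bounded, smooth, cylindrical test fields $w_\epsilon$, and then passing to the general admissible $w$ by monotone/dominated convergence and density.
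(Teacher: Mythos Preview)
Your approach is essentially the same as the paper's: both plug $\Phi=\log M^w_N(T)$ into the variational formula \eqref{20} for $H_{N,K_N}$, use that $\bb E^N_\eta[e^\Phi]=1$ by the exponential-martingale Lemma~\ref{lem01}, and then identify $\lim_N T^{-1}N^{-d}E_{\bb Q_N}[\Phi]=E_P[V_{T,w}]$ via a second-order expansion and the replacement lemma (the paper packages this last step as Lemma~\ref{lem02}, relying on Corollary~\ref{cor2} rather than Lemma~\ref{lem04}). The only cosmetic difference is that you sketch the Taylor/replacement computation inline, whereas the paper defers it to the appendix.
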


By Lemma~\ref{lem3}, $P$--almost surely $(\bs \pi,\bs J)$ belong to
$\ms S_{m,\mathrm{ac}}$ so that the right hand side of \eqref{e:lem2}
is well defined.
  
\begin{proof}[Proof of Lemma \ref{lem2}]
By passing to a subsequence, if needed, we may assume that
$\liminf_N N^{-d} \bs I_{N,K_N} (P_N) = \lim_{N} N^{-d}\, \bs
I_{N,K_N} (P_N) $.  By definition of $\bb H_{N,K} (t, \bb Q)$,
\eqref{12} and \eqref{20}, for each $T>0$ and each bounded, continuous
functions $\Phi$ on $D(\bb R, \Sigma_{N,K_N})$, measurable with
respect to $\sigma\big\{ \bs \eta (t), \, t\in[0,T] \big\}$,
\begin{equation*}
\bs I_{N,K_N}(P_N) \;\ge\; \frac 1T \, \int d \bb Q_N (\bs \eta)
\Big\{ \Phi (\bs \eta) \,-\, \log \bb E^N_{\bs \eta (0)} \big[e^\Phi
\big] \, \Big\} \;,
\end{equation*}
where $\bb Q_N$ is the unique stationary probability on $D(\bb R,
\Sigma_{N,K_N})$ such that $\bb Q_N \circ (\bs \pi_N , \bs J_N)^{-1} =
P_N$. 

Recalling \eqref{27}, let $\Phi$ be given by
\begin{equation*}
\Phi (\bs \eta) = 
\sum_{(x,y)\in \bb B_N}\Big\{  \int_0^T
\phi^{x,y}(t)\,  \ms N^{x,y}_{(t,t+dt]} (\bs\eta) \,-\, N^2 \int_0^T
\bs \eta_x(t) [1-\bs\eta_y(t)] \big[e^{\phi^{x,y}(t)} - 1\big] \, dt
\Big\}\;, 
\end{equation*}
where 
\begin{equation*}
\phi^{x,y}(t) \;=\; \int_x^y w(t, \cdot \, , \bs \pi_N(t),
\bs J_N ) \, \cdot  d\ell\;.
\end{equation*}
By Lemma \ref{lem01}, $\bb E_{\eta}^N \big[e^\Phi\big] =1$ for each
$\eta\in \Sigma_{N,K}$ and by Lemma \ref{lem02},
\begin{equation*}
\frac 1{T}\, \lim_{N\to\infty} 
\frac 1{N^d}\, E_{\bb Q_N} [\Phi] \;=\; E_P \big[ \,
V_{T,w} \, \big] \;, 
\end{equation*}
which completes the proof. 
\end{proof}

\begin{proof}[Proof of Theorem \ref{th03}. $\Gamma$-liminf]
Fix $P$ in $\ms P_{\rm stat}$ and a sequence $(P_N : N\ge 1)$,
$P_N\in \ms P_{\rm stat}$, such that $P_N\to P$,
$\liminf_{N} N^{-d} \bs I_{N,K_N} (P_N)< +\infty$.  By Lemma
\ref{lem3}, we may assume that $P$-almost surely $(\bs \pi, \bs J)$
belongs to $\ms S_{m, {\rm ac}}$ and that there exists a constant
$C_1$ such that for all $T>0$,
\begin{equation}
\label{07}
E_{P} \Big[\, \int_0^T dt\, \int_{\bb T^d} dx \,
\frac{|\bs j(t,x)|^2}{\sigma(\bs \rho(t,x))} 
\;+\;  \int_0^T dt\, \int_{\bb T^d} dx \,
\frac{|\nabla \bs \rho(t,x)|^2}{\sigma(\bs \rho(t,x))} \,\Big] 
\;\le\;  C_1\, (1+T)\;,
\end{equation}
where $\bs\pi(t,dx) = \bs \rho(t,x)\, dx$, $\bs J (t) \;=\;
\int_0^t \bs j(s)\, ds$.

By Lemma \ref{lem2}, it is enough to show that for some $T>0$,
\begin{align}
\label{08}
\bs I_m (P)\;\le\; \sup_w E_P \big[\, V_{T,w} \big] \;,
\end{align}
where the supremum is carried over all continuous vector fields
$w \colon [0,T] \times \bb T^d \times \mc M_{+}(\bb T^d) \times D( \bb
R; \mc H_{-p}^d) \to \bb R^d$ satisfying the assumption enunciated
above \eqref{23}.

Fix $T>0$ and a bounded and continuous function
$f \colon \bb T^d \times \mc H_1 \times L^2 (\bb T^d ; \bb R^d) \to
\bb R^d$ that is continuously differentiable in $x$.  For $\delta>0$,
let
$f_\delta \colon (0,T) \times \bb T^d \times \mc M_{+}(\bb T^d) \times
D \big(\bb R; \mc H^d_{-p} \big) \to \bb R^d$ be given by
\begin{equation*}
f_\delta (t,x, \pi , \bs J) \;=\; \chi_{\delta}(t) \,
f\big(x, \pi_\delta , \bs j_\delta (t)\big) \,.
\end{equation*} 
Here $\chi_{\delta}$, $0<\delta<1$, stands for a sequence of
continuous functions, bounded below by $0$ and above by $1$, whose
support is contained in $[\delta, T]$, and which converges in $L^1$ to
the indicator functions of the set $[0,T]$.
Moreover, $\pi_\delta(x) = \langle\pi, \kappa_\delta(x-\cdot)\rangle$
where $\kappa_\delta\colon \bb T^d\to \bb R_+$ is a smooth
approximation of the identity, and
\begin{equation}
\label{29}
\bs j_\delta (t,x) \;=\; \int_{-\infty}^t \!ds\; a'_\delta(t-s) \, \langle
\bs J(s), \imath_\delta (x-\cdot) \rangle\,,
\end{equation}
where $a_\delta\colon \bb R\to \bb R_+$ is a smooth approximation of
identity with compact support in $(0,\delta)$, $a'_\delta$ the
derivative of $a_\delta$, and $\imath_\delta\colon \bb T^d\to \bb R^d$
another smooth approximation of the identity. 
Observe that $\bs j_\delta(t)$ depends on $\bs J(s)$ only for
$s\in (t-\delta,t)$. Hence, since $\chi_{\delta}(t)=0$ for
$t\le \delta$, the function $f_\delta(t,x,\pi,\cdot)$ depends on
$\bs J(s)$ only for $s \in [0,t]$.  This is a requirement of the test
functions $w$ introduced above \eqref{23}.  Since $f_\delta$ satisfies
the conditions presented above \eqref{23} for each $\delta>0$, we
deduce
\begin{equation*}
\sup_w E_P \big[\, V_{T,w} \big] \;\ge\; \limsup_{\delta\to 0}
E_P \big[\, V_{T,f_\delta} \big]\;.
\end{equation*}

Let
$H_f\colon [0,T]\times \bb T^d \times L^2 \big([0,T] ; \mc H_1\big)
\times L^2 ([0,T]\times \bb T^d ; \bb R^d) \to \bb R^d$ be defined by
\begin{equation*}
H_f(t,x, \bs \rho, \bs j) \;=\; f (x,\bs \rho(t), \bs j (t))\;.
\end{equation*}
By \eqref{07}, $P$--almost surely, $\bs \pi_\delta(t)\to \bs \rho(t)$
and $\bs j_\delta(t)\to \bs j(t)$ for Lebesgue almost all $t\in(0,T)$.
Since $f$ is bounded, by dominated convergence,
\begin{equation}
\label{28}
\lim_{\delta \to 0} E_P \Big[ \int_{0}^T dt \int_{\bb T^d} dx \, \Big\{  \,
G_{f_\delta} (t,x, \bs \pi  , \bs J) \,-\, H_f (t,x, \bs \rho
, \bs j) \Big\}^2 \, \Big] \;=\; 0\;,
\end{equation}
where $G_{f_\delta}$ has been introduced in \eqref{23}.

Denote by $W_{T,f}$ the right-hand side of \eqref{21} when $G_w$ is
replaced by $H_f$.  By \eqref{07}, \eqref{28} and the dominated
convergence theorem,
\begin{equation*}
\lim_{\delta \to 0}
E_P \big[ \, V_{T,f_\delta} \,\big] \;=\;
E_P \big[\, W_{T,f} \, \big] \;.
\end{equation*}

In view of the previous estimates, it remains to show that
\begin{equation}
\label{03}
\sup_f E_P \big[\, W_{T,f} \, \big] \;\ge\; \bs I_m(P)\;.
\end{equation}
Let $\hat f \colon \bb T^d \times \mc H_1
\times L^2 (\bb T^d ; \bb R^d) \to \bb R^d$ be given by
\begin{equation*}
\hat f (x, \rho, j) \;=\; \frac{j(x) + (\nabla \rho)(x) 
- \sigma(\rho(x))\, E(x)}{2\, \sigma(\rho(x))} 
\end{equation*}
and note that $\bs I_m (P) =E_P[ \, W_{T,\hat f} \, ]$, which is
finite in view of \eqref{07}.  Using this bound and approximating the
function $\hat f$ by a sequence of bounded and continuous functions
that are continuously differentiable in $x$, we obtain \eqref{03} by
the dominated convergence theorem.
\end{proof}

\smallskip\noindent{\bf \ref{sec03}.3 The $\Gamma$-limsup.}
Given $P\in \ms P_{\rm stat}$, we shall construct a sequence
$(P_N : N \ge 1)$, such that $P_N \to P$ and
\begin{equation*}
\limsup_{N\to\infty} \bs I_{N,K_N} (P_N) \;\le\; \bs I_m (P)\;.
\end{equation*}
We carry this out first for $P$ satisfying certain regularity assumptions,
and then use density arguments to extend the result to any $P$
with finite rate function, $\bs I_m(P)<+\infty$.

Fix $T>0$. Recalling \eqref{36}, a path $(\bs \pi, \bs J)$ in
$\ms S_m$ is $T$-periodic if
$\vartheta_T (\bs \pi, \bs J) = (\bs \pi, \bs J)$.  An element $P$ in
$\ms P_{\rm stat}$ is said to be \emph{$T$-holonomic} if there exists
a $T$-periodic path $(\bs \pi, \bs J)$ such that
\begin{equation}
\label{16}
P \;=\; \frac 1T\, \int_0^T \delta_{\vartheta_s (\bs \pi, \bs J)} \,
ds \;.
\end{equation}
An element of $\ms P_{\rm stat}$ is \emph{holonomic} if it is
$T$-holonomic for some $T>0$.

Fix $T>0$ and a $T$-periodic path $(\bs \pi, \bs J)$ in
$\ms S_{m, {\rm ac}}$. Denote by $(\bs \rho, \bs j)$ the densities so
that $\bs \pi (t,dx) = \bs\rho(t,x)\, dx$,
$\bs J (t) = \int_0^t\!ds\, \bs j(s)$.  Assume that $\bs \rho$,
$\bs j$ are smooth and that there exists $\delta>0$ such that
$\delta \le \bs \rho(t,x) \le 1-\delta$ for all $(t,x)$.
Denote by $F\colon \bb R\times \bb T^d\to  \bb R^d$ 
the smooth, $T$-periodic vector field defined by
\begin{equation*}
F \;=\;
\frac{ \bs j \,+\, \nabla \bs\rho \,-\, \sigma (\bs\rho)  E}
{\sigma(\bs\rho)}\; \cdot
\end{equation*} 
As the path $(\bs \rho, \bs J)$ satisfies the continuity
equation \eqref{31}, $\partial_t \bs \rho + \nabla\cdot \bs j =
0$, by definition of $F$,
\begin{equation}
\label{35}
\partial_t \bs \rho \;=\;\Delta \bs \rho \;-\; \nabla\cdot \big(\,
\sigma (\bs\rho) \, \big[\, E + F(t)\, \big]\,\big)\;.
\end{equation}
Let finally $P \in \ms P_{\rm stat}$ be the $T$-holomic probability
corresponding, as in \eqref{16}, to the $T$-periodic path $(\bs
\pi,\bs J)$.

Let $L_{N}^F$ be the time-dependent generator of a perturbed WASEP
defined by
\begin{equation}
\label{genF}
(L_{N}^F \, f)(\eta) \;=\; N^2 \sum_{(x,y)\in \bb B_N} \eta_x\, [
1-\eta_y] \, e^{(1/2) [\, E_N(x,y) + F_N(t,x,y)\,]} \big[
f(\sigma^{x,y} \eta)-f(\eta)\big] \;,
\end{equation}
where $F_N(t,x,y)$ represents the line integral of $F(t, \cdot)$ along
the oriented segment from $x$ to $y$ introduced in \eqref{dvf}.
Denote by $(\bs \eta^F(t) : t\ge 0)$ the continuous-time,
time-inhomogeneous Markov chain whose generator is $L_{N}^F$.
By the hydrodynamic limit of the time dependent WASEP dynamics, see e.g.\
\cite[Section 10.5]{kl}, the empirical density $\bs \pi_N(\bs \eta^F)$
associated to the process $\bs \eta^F$ evolves, in the limit
$N\to \infty$, according to the solution of the PDE \eqref{35}. This
explains the introduction of the process $\bs\eta^F$.

Let $(\xi_k : k\ge 0)$ be the discrete-time, $\Sigma_N$-valued Markov
chain given by $\xi_k = \bs\eta^F(kT)$. Since $F$ is $T$-periodic,
$\xi_k$ is time-homogeneous. As it is irreducible on each set
$\Sigma_{N,K}$, $\xi_k$ has a unique stationary state, denoted by
$\mu^F_{N,K}$. Let finally, $\bb P^F_{N,K}$ be the law of $\bs
\eta^F$ when the initial condition is sampled according to
$\mu^F_{N,K}$. Note that $\bb P^F_{N,K}$ is invariant by
$T$-translations: $\bb P^F_{N,K} \circ \vartheta^{-1}_T = \bb
P^F_{N,K}$.  Since this measure, defined on $D(\bb R_+, \Sigma_N)$, is
invariant by $T$-translations, we may extend it to $D(\bb R,
\Sigma_N)$.

Let $P_N$ be the measure on $\ms S$ given by
\begin{equation}
\label{10}
P_N \;=\; \Big( \frac 1T \int_0^T \, \bb P^F_{N,K_N} \circ
\vartheta^{-1}_t  \, dt \Big) \circ (\bs \pi_N, \bs J_N)^{-1}\;,
\end{equation}
that, by construction, belongs to $\ms P_{\rm stat}$.

\begin{proposition}
\label{prop02}
For each $T$-periodic path $(\bs\pi,\bs J)$ in $\ms S_{m,\mathrm{ac}}$
with smooth densities $(\bs \rho,\bs j)$ with $\bs\rho$ bounded away
from zero and one, the sequence $(P_N : N \ge 1)$ introduced in
\eqref{10} converges to the $T$-holonomic probability $P$ given by
\eqref{16} and
\begin{equation*}
\limsup_{N\to\infty} \frac 1{N^d} \bs I_{N,K_N} (P_N) \;\le\; \bs I_m(P)\;.
\end{equation*}
\end{proposition}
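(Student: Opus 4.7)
The plan is to bound $\bs I_{N,K_N}(P_N)$ via the Donsker--Varadhan variational formula combined with a Girsanov computation, and to deduce $P_N \to P$ from the hydrodynamic limit for the time-inhomogeneous perturbed WASEP. Since $\mu^F_{N,K_N}$ is invariant for the Poincar\'e chain $\xi_k = \bs\eta^F(kT)$, the law $\bb P^F_{N,K_N}$ is invariant under $\vartheta_T$, so $\bb Q_N := \frac{1}{T}\int_0^T \bb P^F_{N,K_N}\circ \vartheta_t^{-1}\,dt$ is stationary on $D(\bb R,\Sigma_{N,K_N})$ and projects onto $P_N$. By definition \eqref{12}, $\bs I_{N,K_N}(P_N) \le H_{N,K_N}(\bb Q_N)$. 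Using \eqref{13}, joint convexity of relative entropy in its first argument, translation invariance of $\bb P^N_{\mu_{N,K_N}}$, and monotonicity of the entropy in the time window, the problem reduces to estimating $\bb H^{[-T,S]}(\bb P^F_{N,K_N}\,|\,\bb P^N_{\mu_{N,K_N}})$ for large $S$.

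Decomposing this entropy as the initial entropy (bounded by $C\,N^d$ since both marginals live on $\Sigma_{N,K_N}$ and the WASEP invariant measure has density bounded below by $e^{-CN^d}$) plus the conditional process entropy, the latter is computed by Girsanov's formula for pure-jump processes: it equals the expectation under $\bb P^F$ of
\[
\int_0^{S+T}\!\!\sum_{(x,y)\in \bb B_N} c_N(\bs\eta(t),x,y)\,\Big\{e^{F_N(t,x,y)/2}\big[\tfrac12 F_N(t,x,y) - 1\big] + 1\Big\}\,dt \;,
\]
where $c_N(\eta,x,y) = N^2\eta_x(1-\eta_y)e^{E_N(x,y)/2}$ is the unperturbed rate and $F_N$ is the line integral of $F(t,\cdot)$ as in \eqref{dvf}. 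A Taylor expansion shows the bracketed factor equals $F_N^2/8 + O(F_N^3)$, and since $F_N = F(t,x)\cdot(y-x) + O(N^{-2})$ is of order $1/N$ the cubic remainder is negligible after summation over $\bb B_N$. The classical replacement/super-exponential estimates, adapted to the tilted law (which remains comparable to $\bb P^N$ because $\bs\rho$ is bounded away from $0$ and $1$), permit the substitution $\bs\eta_x(1-\bs\eta_{x+\mf e_k})\leadsto \sigma(\bs\rho(t,x))$, yielding
\[
\lim_{N\to\infty}\,\frac{1}{N^d(S+T)}\,\bb H^{[-T,S]}(\bb P^F_{N,K_N}\,|\,\bb P^N_{\mu_{N,K_N}}) \;=\; \frac{1}{T}\int_0^T\!dt\int_{\bb T^d}\frac{\sigma(\bs\rho)|F|^2}{4}\,dx \;=\; \bs I_m(P)\,,
\]
where the last equality uses $F=[\bs j+\nabla\bs\rho-\sigma(\bs\rho)E]/\sigma(\bs\rho)$ and the $T$-periodicity of the integrand. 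Dividing by $S$ and letting $S\to\infty$ then $N\to\infty$ completes the rate-function upper bound.

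For the convergence $P_N \to P$, since time-averaging is continuous for the weak topology, it suffices to show that $\bb P^F_{N,K_N}\circ(\bs\pi_N,\bs J_N)^{-1}$ concentrates on the deterministic trajectory $(\bs\rho,\bs J)$. The hydrodynamic limit for the time-inhomogeneous WASEP with smooth drift $E+F(t,\cdot)$ -- an adaptation of \cite[Ch.~10]{kl} which is immediate since \eqref{35} is non-degenerate parabolic -- yields convergence of $(\bs\pi_N,\bs J_N)$ to $(\bs\rho,\bs J)$ whenever the initial empirical density converges to $\bs\rho(0,\cdot)$. To transfer this to the random initial condition $\mu^F_{N,K_N}$, one invokes a hydrostatic argument: passing to the limit in the invariance identity for $\xi_k$, any subsequential weak limit of the density distribution under $\mu^F_{N,K_N}$ is supported on fixed points of the time-$T$ Poincar\'e map $S_T$ associated to \eqref{35}; parabolic smoothing together with the bound $\delta \le \bs\rho\le 1-\delta$ identifies $\bs\rho(0,\cdot)$ as the unique such fixed point in the mass class $m$, yielding the required concentration.

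The main obstacle is the hydrostatic step, which depends on a uniqueness result for $T$-periodic solutions of the quasilinear PDE \eqref{35} and suitable contractivity of its Poincar\'e map; these rely on the non-degeneracy provided by the strict bound on $\bs\rho$. The Girsanov calculation itself is classical but demands care in justifying the local-equilibrium replacement uniformly under the tilted dynamics, which again exploits the fact that the perturbed jump rates are bounded above and below thanks to the smoothness of $F$ and the strict bound on $\bs\rho$.
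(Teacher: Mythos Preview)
Your approach is essentially the paper's: stationarize $\bb P^F_{N,K_N}$, bound $H_{N,K_N}$ via convexity and translation invariance, compute the entropy rate by Girsanov plus the hydrodynamic limit, and handle $P_N\to P$ through the hydrostatic step plus uniqueness of $T$-periodic solutions to \eqref{35}. The paper streamlines the entropy computation by using the $T$-periodicity of $\bb P^F_{N,K_N}$ to obtain the exact telescoping
\[
\bb H^{(\ell T)}\big(\bb P^F_{N,K_N}\,\big|\,\bb P^N_{\mu_{N,K_N}}\big)-\Ent\big(\mu^F_{N,K_N}\,\big|\,\mu_{N,K_N}\big)=\ell\Big[\bb H^{(T)}\big(\bb P^F_{N,K_N}\,\big|\,\bb P^N_{\mu_{N,K_N}}\big)-\Ent\big(\mu^F_{N,K_N}\,\big|\,\mu_{N,K_N}\big)\Big],
\]
which subtracts the initial entropy exactly rather than bounding it by $CN^d$ and reduces the problem to a single period; the remaining quantity is then identified with $A_{m,T}(\bs\pi,\bs J)$ by quoting the hydrodynamical large-deviations lower bound \cite[Lemma~10.5.4]{kl} instead of expanding the Girsanov integrand by hand. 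For the uniqueness of $T$-periodic solutions needed in the hydrostatic step, the paper supplies a self-contained proof (Theorem~\ref{up01}) via Lindvall--Rogers coupling of the associated diffusions, which works for every mass $m\in(0,1)$ and does not rely on the a~priori bounds $\delta\le\bs\rho\le1-\delta$ you invoke.
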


The proof of this proposition relies on the following lemma.

\begin{lemma}
\label{le07}
In the setting of Proposition \ref{prop02}, the sequence of
probability measures on $D(\bb R;\mc M_+(\bb T^d)\big)$ given by
$\bb P_{N,K_N}^F \circ \bs\pi_N^{-1}$ converges to $\delta_{\bs\pi}$.
\end{lemma}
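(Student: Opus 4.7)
The plan is to establish the convergence in three stages: tightness of the family, identification of every subsequential limit as a $T$-periodic weak solution of the hydrodynamic PDE~\eqref{35}, and finally a weak--strong uniqueness argument that pins the limit down to the given $\bs\rho$.

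\emph{Tightness and identification.} Since $\mc M_+(\bb T^d)$ is already compact, tightness of $(\bb P_{N,K_N}^F\circ\bs\pi_N^{-1})_N$ in $D(\bb R;\mc M_+(\bb T^d))$ reduces to time regularity. These estimates are obtained exactly as in Lemma~\ref{l01}, after observing that the rates of $L_N^F$ differ from those of $L_N$ only by the bounded multiplicative factor $\exp\{F_N(t,x,y)/2\}$, so the martingale bounds underlying Lemma~\ref{l01} transfer to the time-inhomogeneous setting with constants uniform in $t\in[0,T]$. Along any weakly convergent subsequence with limit $\mc Q$, the standard hydrodynamic machinery for WASEP with a smooth time-dependent drift (replacement lemma and martingale characterization, in the spirit of~\cite[\S 10.5]{kl}) is insensitive to the initial distribution and yields that $\mc Q$-almost every path $\tilde{\bs\rho}$ is a weak energy solution of~\eqref{35} with total mass $m$. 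Because $\bb P_{N,K_N}^F$ is $\vartheta_T$-invariant by the very definition of $\mu_{N,K_N}^F$, this invariance passes to $\mc Q$, so $\mc Q$-almost surely $\tilde{\bs\rho}$ is $T$-periodic.

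\emph{Uniqueness.} It remains to identify $\tilde{\bs\rho}$ with $\bs\rho$. I would work with the Bernoulli relative entropy
\begin{equation*}
H(t) \;:=\; \int_{\bb T^d}\!\Big\{\tilde{\bs\rho}\log\tfrac{\tilde{\bs\rho}}{\bs\rho} + (1-\tilde{\bs\rho})\log\tfrac{1-\tilde{\bs\rho}}{1-\bs\rho}\Big\}\,dx.
\end{equation*}
Since $\bs\rho$ is a smooth classical solution of~\eqref{35}, bounded away from $0$ and $1$, differentiating $H$ in time, integrating by parts, and substituting both PDEs produces an identity of the form
\begin{equation*}
\frac{dH}{dt} \;=\; -\!\int \sigma(\tilde{\bs\rho})\,\big|\nabla h'(\tilde{\bs\rho}) - \nabla h'(\bs\rho)\big|^{2}\,dx \;+\; \mc R(t),
\end{equation*}
where the cross term $\mc R(t)$ can be controlled by $\|\bs\rho\|_{C^1}$, $\|F\|_\infty$, the uniform bound $\delta\le\bs\rho\le 1-\delta$, and Cauchy--Schwarz, so as to be absorbed partly into the dissipation and partly into $C\,H(t)$. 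Integrating over a period, the boundary term vanishes by $T$-periodicity of both $\tilde{\bs\rho}$ and $\bs\rho$, and a Gronwall argument forces the dissipation to vanish a.e., giving $\tilde{\bs\rho}=\bs\rho$.

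\emph{Main obstacle.} The delicate step is this last one: justifying the chain rule for $H$ at the weak regularity level where only the energy bound $\int|\nabla\tilde{\bs\rho}|^{2}/\sigma(\tilde{\bs\rho})<\infty$ is available, and handling the cross term $\mc R(t)$ arising from the non-gradient component of $E+F$. The hypotheses that $\bs\rho$ be smooth and strictly between $0$ and $1$ are indispensable here, since they provide uniform pointwise bounds on $\nabla h'(\bs\rho)$ and allow $H$ to control $L^1$-distances via Csisz\'ar--Kullback, thereby closing the Gronwall loop and forcing convergence to the Dirac mass $\delta_{\bs\pi}$.
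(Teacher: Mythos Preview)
Your tightness and identification steps match the paper's proof essentially verbatim: the paper also invokes Lemma~\ref{l01} with $\bb P_{\mu_{N,K_N}}^N$ replaced by $\bb P^F_{N,K_N}$, then uses the hydrodynamic limit for the time-dependent WASEP to conclude that any cluster point is concentrated on $T$-periodic weak solutions of~\eqref{35} with mass $m$.

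The difference is in the uniqueness step, and here your argument has a gap. Your relative entropy computation yields an inequality of the form $\tfrac{d}{dt}H \le -\tfrac12 D + C\,H$, and integrating over a period gives $\int_0^T D \le 2C\int_0^T H$; but this does \emph{not} force $H\equiv 0$. Gronwall in this setting only says $H(t)\le e^{Ct}H(0)$, which is perfectly compatible with a nonzero $T$-periodic $H$. To close the argument you would need a strict contraction over one period, e.g.\ a log-Sobolev inequality with constant large enough to absorb the drift-induced error $C$; this is not available for generic $E+F$, and you have not indicated how to obtain it. The relative entropy method is tailored to weak--strong uniqueness of the \emph{Cauchy} problem (where $H(0)=0$), not to singling out a unique periodic solution among all those with mass $m$.

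The paper instead invokes Theorem~\ref{up01}, proved separately in Section~\ref{sec06} by an entirely different route: the Lindvall--Rogers reflection coupling for the diffusions with generator $\Delta + G\cdot\nabla$ yields that the $L^1(\bb T^d)$ distance between \emph{any} two solutions of~\eqref{35} with the same mass decays exponentially in time, with a rate depending only on $\|E+F\|_\infty$. Since the distance between two $T$-periodic solutions is itself $T$-periodic and tends to zero, it must vanish identically. This argument is robust and requires no smallness or spectral condition on the drift.
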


\begin{proof}
By the smoothness of the external field $F$, Lemma \ref{l01} holds
also when $\bb P_{\mu_{N,K_N}}^N$ is replaced by $\bb P^F_{N,K_N}$.
By the compactness of $\mc M_+(\bb T^d)$, this implies the
pre-compactness of the family
$\big(\bb P_{N,K_N}^F \circ \bs\pi_N^{-1}\,: \, N\ge 1\big)$.  Let
$\mc P^F$ be a cluster point of this sequence. By the $T$-periodicity
of $\bb P_{N,K_N}^F$ and the hydrodynamic limit for the time dependent
WASEP with generator \eqref{genF}, $\mc P^F$ is $T$ periodic and
$\mc P^F$ almost surely $\bs \pi(t,dx) =\bs \rho^F (t,x) dx$ for some
density $\bs \rho^F$ of mass $m$ that solves \eqref{35}.  By the
uniqueness of $T$-periodic solutions to \eqref{35} and the
$L^1(\bb T^d)$ convergence to this unique solution, as stated in
Theorem \ref{up01}, $\bs \rho^F = \bs \rho$.  Hence
$\mc P^F=\delta_{\bs\pi}$ as claimed.
\end{proof}

\begin{proof}[Proof of Proposition \ref{prop02}]
By the smoothness of the external field $F$, Lemmata \ref{l01},
\ref{l02}, and \ref{l05} holds also when $\bb P_{\mu_{N,K_N}}^N$ is
replaced by $\bb P^F_{N,K_N}$. This implies the pre-compactness of the
sequence of probabilities on $\ms S$ given by
$\big\{ \bb P^F_{N,K_N} \circ (\bs \pi_N, \bs J_N)^{-1} \big\}$.  Let
now $P_0$ be a cluster point of this sequence.  By the hydrodynamic
limit for the perturbed WASEP, $P_0$-almost surely, $(\bs \pi, \bs J)$
belongs to $\ms S_{m,{\rm ac}}$ and the corresponding densities
$(\bs \rho,\bs j)$ is a $T$-periodic weak solution to the hydrodynamic
equation
\begin{equation*}
\label{hlp}
\left\{
\begin{aligned}
& \partial_t \bs \rho \;+\; \nabla \cdot \bs j \;=\; 0\;, \\
& \bs j \;=\; -\, \nabla \bs \rho \; +\; \sigma(\bs \rho) \,[\,
E+F\,]\; .
\end{aligned}
\right.
\end{equation*}
By Lemma \ref{le07}, $P_0= \delta_{(\bs \pi, \bs J)}$.  Taking
time-averages, we deduce $P_N\to P$.

We turn to the second claim of the proposition. By \eqref{12},
\begin{equation}
\label{15}
\bs I_{N,K_N} (P_N) \;=\; H_{N,K_N} \Big( \, \frac 1T\, \int_0^T \,
\bb P^F_{N,K_N} \circ \, \vartheta^{-1}_t \, dt \, \Big) \;.
\end{equation}
Fix $\ell\in \bb N$. By \eqref{13} and the convexity of the relative
entropy, the right-hand side of the previous equation is equal to
\begin{equation*}
\begin{split}
& \lim_{\ell\to\infty} \frac 1{\ell T}\, \bb H^{(\ell T)} \Big( \, 
\frac 1T\, \int_0^T \, \bb P^F_{N,K_N} \circ \, 
\vartheta^{-1}_t \, dt\, \big|\, \bb P^N_{\mu_{N,K_N}}\Big)
\\
&\qquad 
\;\le\; \lim_{\ell\to\infty} 
\frac 1{\ell\, T^2}\, \int_0^T \bb H^{(\ell T)} \Big( \, 
\bb P^F_{N,K_N} \circ \, \vartheta^{-1}_t \, \big|\,
\bb P^N_{\mu_{N,K_N}} \, \Big) \, dt \;.
\end{split}
\end{equation*}

Since $\bb P^N_{\mu_{N,K_N}} $ is translation invariant, for fixed
$\ell$ and $0\le t\le T$, by definition of the translations
$(\vartheta_s : s\in\bb R)$, introduced in \eqref{36}, recalling
\eqref{rem}, 
\begin{align*}
\bb H^{(\ell T)} \Big( \, 
\bb P^F_{N,K_N} \circ \, \vartheta^{-1}_t \, \big|\,
\bb P^N_{\mu_{N,K_N}} \, \Big) \; & =\;
\bb H_{[0\,,\,\ell T]} \Big( \, 
\bb P^F_{N,K_N} \circ \, \vartheta^{-1}_t \, \big|\,
\bb P^N_{\mu_{N,K_N}} \circ \, \vartheta^{-1}_t \, \Big) \\
& =\;
\bb H_{[-t\,,\, \ell T-t]}
\Big( \,  \bb P^F_{N,K_N} \, \big|\, \bb P^N_{\mu_{N,K_N}} \, \Big)\;.
\end{align*}
As $\bb P^F_{N,K_N} $ is $T$-translation invariant and
$\bb P^N_{\mu_{N,K_N}} $ is translation invariant, the dynamical
contribution to the relative entropy of the time interval $[-t,0]$
corresponds to the one of the time interval $[-t + \ell T, \ell
T]$. Hence, the previous expression is equal to
\begin{equation*}
\bb H^{(\ell T)} \Big( \, \bb P^F_{N,K_N}
\, \big|\, \bb P^N_{\mu_{N,K_N}} \, \Big)  \;-\;
\textrm{Ent}\big(\, \mu_{N,K_N}^F \,\big|\, \mu_{N,K_N}\,\big)  
\;+\; \textrm{Ent}\Big(\, \bb
P_{N,K_N}^F \circ \imath_{-t}^{-1} \,\big|\, \mu_{N,K_N}\, \Big) \;, 
\end{equation*}
where
$\bb P_{N,K_N}^F \circ \imath_s^{-1}$ is the marginal at time $s$ of
$\bb P_{N,K_N}^F$.

Using again that $\bb P^F_{N,K_N} $ is $T$-translation invariant and
$\bb P^N_{\mu_{N,K_N}}$ is translation invariant,
\begin{align*}
& \bb H^{(\ell T)} \Big( \, \bb P^F_{N,K_N}
\, \big|\, \bb P^N_{\mu_{N,K_N}} \, \Big)  \;-\;
\textrm{Ent}\big(\, \mu_{N,K_N}^F \,\big|\, \mu_{N,K_N}\,\big)  \\
&\qquad \;=\; \ell \, \bb H^{(T)} \Big( \, \bb P^F_{N,K_N}
\, \big|\, \bb P^N_{\mu_{N,K_N}} \, \Big)  \;-\; \ell \,
\textrm{Ent}\big(\, \mu_{N,K_N}^F \,\big|\, \mu_{N,K_N}\,\big)  \;. 
\end{align*}
Putting together the previous estimates and letting $\ell\to\infty$
yields that
\begin{equation*}
\bs I_{N,K_N} (P_N) \;\le\;
\frac{1}{T} \,\Big\{
\bb H^{(T)} \Big( \, \bb P^F_{N,K_N}
\, \big|\, \bb P^N_{\mu_{N,K_N}} \, \Big) \;-\; 
\textrm{Ent}\big(\, \mu_{N,K_N}^F \,\big|\, \mu_{N,K_N}\,\big)
\,\Big\}\;.
\end{equation*}

By Lemma \ref{le07} and the large deviations lower bound in
hydrodynamical limits, see e.g.\ \cite[Lemma~10.5.4]{kl}, 
\begin{equation*}
\limsup_{N\to \infty} \frac 1{N^d}\, 
\Big\{\, \bb H^{(T)} \Big( \, \bb P^F_{N,K_N} 
\, \big|\, \bb P^N_{\mu_{N,K_N}}  \, \Big)
- \textrm{Ent }\big(\mu_{N,K_N}^F\big| \mu_{N,K_N}\big)\,
\Big\}
\;\le\; A_{T,m}(\bs \pi,\bs J) \;.
\end{equation*} 
Therefore,
\begin{equation*}
\limsup_{N\to \infty} \frac 1{N^d}\, \bs I_{N,K_N} (P_N)
\;\le\; \frac 1{T} \, A_{T,m}(\bs \pi,\bs J) \;.
\end{equation*}
The right-hand side is equal to $\bs I_m(P)$ in view of the equations
\eqref{06}, \eqref{16}, and the $T$-periodicity of the path
$(\bs \pi,\bs J)$.
\end{proof}

To complete the proof of the $\Gamma$-limsup, we show that any $P$ in
$\ms P_{\rm stat}$ can be approximated by convex combinations of
holonomic probability measures supported by smooth paths bounded away
from zero and one and that the corresponding rate function converges
to $\bs I_m(P)$.
Denote by $\color{bblue} \ms P^\epsilon_{{\rm stat}, m}$,
$\epsilon>0$, the subset of $\ms P_{{\rm stat}, m}$ formed by the
stationary measures $P$ such that $P$-almost surely $(\bs \pi, \bs J)$
belongs to $\ms S_{m,\mathrm{ac}}$ with smooth densities
$(\bs \rho,\bs j)$ such that
$\epsilon \le \bs \rho(t,x) \le 1-\epsilon$ for all $(t,x)$.

\begin{theorem}
\label{t:dtI}
Assume that $E$ is orthogonally decomposable and fix
$P\in \ms P_{\rm stat}$ such that $\bs I_m(P) <+\infty$.  There exist
a sequence $(\epsilon_n : n\ge 1)$, a triangular array
$(\alpha_{n,i} \,,\, 1\le i\le n \,,\, n\ge 1)$ with
$\alpha_{n,i}\ge 0$, $\sum_i \alpha_{n,i}=1$, and a triangular array
$(P_{n,i} \,,\, 1\le i\le n \,,\, n\ge 1)$ of holonomic measures
belonging to $\ms P^{\epsilon_n}_{{\rm stat}, m}$ such that by setting
$P_n := \sum_{i} \alpha_{n,i} P_{n,i}$ we have $P_n\to P$ and
$\bs I_m(P_n) \to \bs I_m(P)$.
\end{theorem}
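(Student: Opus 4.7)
My plan is to prove Theorem~\ref{t:dtI} in three stages, leveraging the affine property of $\bs I_m$ established in Theorem~\ref{cor01} so that rate-function identities pass through all convex combinations. First I would regularize $P$ into a measure supported on smooth paths with densities bounded away from $0$ and $1$; next I would apply the ergodic decomposition and truncate it to a finite sum; finally I would approximate each ergodic component by a long-time holonomic empirical measure obtained by periodization.

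For Step 1 (regularization), starting from $P$ with $\bs I_m(P)<\infty$, I would first convex combine $P$ with the reference stationary measure $\delta_{(m\,dx,\,t\sigma(m)E)}$ using a small weight $\lambda_n\downarrow 0$. A direct computation shows this reference is translation invariant, since the constant density $m$ gives $\bs j=\sigma(m)E$ and hence $(\vartheta_s\bs J)(t)=\bs J(t)$, and its action vanishes. The convex combination pushes the random density into $(\lambda_n m,1-\lambda_n(1-m))$. I would then mollify each trajectory $(\bs\rho,\bs j)$ by convolving in space and time with smooth kernels $\kappa_{\delta_n}$ and $a_{\delta_n}$ of shrinking support. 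Using the Legendre representation
\[
\frac{|j+\nabla\rho-\sigma(\rho)E|^2}{4\sigma(\rho)}
\;=\;\sup_{F}\Bigl\{F\cdot(j+\nabla\rho-\sigma(\rho)E)-\sigma(\rho)|F|^2\Bigr\},
\]
Jensen's inequality bounds the action of the mollified path by the action of the original path (the supremum of affine forms is convex, so mollification does not increase it, provided $\sigma(\bs\rho^{\delta_n})E$ is handled properly). Lower semicontinuity of $A_{m,T}$ then furnishes the matching lower bound as $\delta_n,\lambda_n\to 0$. This produces $P^{(n)}\in\ms P^{\epsilon_n}_{\rm stat,m}$ with $P^{(n)}\to P$ and $\bs I_m(P^{(n)})\to\bs I_m(P)$.

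For Step 2, I would apply the ergodic decomposition for the time-translation group on $\ms S$ to each $P^{(n)}$, writing $P^{(n)}=\int P^{(n)}_\omega\,dQ^{(n)}(\omega)$ with $P^{(n)}_\omega$ ergodic for $Q^{(n)}$-a.e.\ $\omega$. By the affine property of $\bs I_m$, one has $\bs I_m(P^{(n)})=\int \bs I_m(P^{(n)}_\omega)\,dQ^{(n)}(\omega)$, and $Q^{(n)}$-a.e.\ ergodic component inherits the densities and bounds that place it in $\ms P^{\epsilon_n}_{\rm stat,m}$. A Riemann-style truncation then yields a finite convex combination $\sum_i\alpha_{n,i}P^{(n)}_{\omega^{(n)}_i}$ converging to $P^{(n)}$ both weakly and in $\bs I_m$. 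For Step 3, for each ergodic component $P^{(n)}_{\omega^{(n)}_i}$ I would select a typical trajectory $(\bs\pi,\bs J)$ and form
\[
R_T(\bs\pi,\bs J)\;=\;\frac{1}{T}\int_0^T\delta_{\vartheta_s(\bs\pi^T,\bs J^T)}\,ds,
\]
which by construction is $T$-holonomic and lies in $\ms P^{\epsilon_n}_{{\rm stat},m}$. By Birkhoff's ergodic theorem, together with the uniform bound \eqref{djj} controlling the periodization error, $R_T\to P^{(n)}_{\omega^{(n)}_i}$ weakly, while $\bs I_m(R_T)=T^{-1}A_{m,T}(\bs\pi^T,\bs J^T)$ converges to $\bs I_m(P^{(n)}_{\omega^{(n)}_i})$ because Birkhoff's theorem also applies to the action integrand. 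A diagonal extraction in $T$ and $n$ delivers the triangular array $(P_{n,i},\alpha_{n,i})$.

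The main obstacle is Step 1: the joint convexity used to control the action under mollification is delicate precisely because the integrand has $\sigma(\bs\rho)$ in the denominator and a cross term with $E$, so we need both that the mollified density stays strictly between $0$ and $1$ and that the $E$-term converges appropriately. This is where the assumption of orthogonal decomposability of $E$ is invoked, as in Lemma~\ref{t:l4.7}, through the boundedness of the quasi-potential, which allows the convex-combination step to be performed with a reference whose rate function is uniformly controlled and whose profile is smooth and bounded inside $(0,1)$.
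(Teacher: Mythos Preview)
Your overall architecture matches the paper's: first regularize (Lemma~\ref{lem7}), then pass to ergodic components, then approximate each ergodic component by holonomic measures built from a Birkhoff-typical trajectory. However, you have misplaced the role of the orthogonal decomposability hypothesis, and this creates a genuine gap in Step~3.

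The regularization step does \emph{not} require $E$ to be orthogonally decomposable; Lemma~\ref{lem7} in the paper carries it out for arbitrary $E$. (Incidentally, convex-combining the \emph{measure} $P$ with $\delta_{(m\,dx,\,t\sigma(m)E)}$ does not push the densities of $P$-typical paths away from $0$ and $1$: with probability $1-\lambda_n$ you still sample the original, possibly degenerate, path. What is needed is a pathwise convex combination $\bs\rho\mapsto (1-\epsilon)\bs\rho+\epsilon m$, as in the paper.)

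The real obstacle is your Step~3. When you $T$-periodize a typical trajectory $(\bs\pi,\bs J)$, in general $\bs\pi(0)\neq\bs\pi(T)$, so $\bs\pi^T$ has a jump at every integer multiple of $T$. Consequently $(\bs\pi^T,\bs J^T)\notin\ms S_{m,\rm ac}$ (continuity on all of $\bb R$ is required), and hence $A_{m,T}(\bs\pi^T,\bs J^T)=+\infty$; your identity $\bs I_m(R_T)=T^{-1}A_{m,T}(\bs\pi^T,\bs J^T)$ is then vacuous. The bound~\eqref{djj} only controls the \emph{current} at the gluing point for discrete paths; it says nothing about the action being finite. The paper addresses this explicitly: it takes the typical path on $[0,T^*]$ and then \emph{appends} a short connecting segment $(\bar{\bs\pi},\bar{\bs J})$ from $\bs\pi(T^*)$ back to $\bs\pi(0)$ with action bounded by a fixed constant $C_0$ (Lemma~\ref{t:l4.7}), so that the concatenated path is genuinely continuous and periodic with $\bs I_m(P_T)\le T^{-1}A_{m,T^*}(\bs\pi^*,\bs J^*)+T^{-1}C_0$. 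The existence of such a connecting path with uniformly bounded cost relies on the boundedness of the quasi-potential, and \emph{that} is precisely where the orthogonal decomposability of $E$ enters.
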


Postponing the proof of this statement, we first conclude the
$\Gamma$-convergence of the Donsker-Varadhan functional.

\begin{proof}[Proof of Theorem \ref{th03}. $\Gamma$-limsup]
  The statement follows, by a diagonal argument, from 
  Proposition~\ref{prop02} and Theorem~\ref{t:dtI}.
\end{proof}

We turn to the proof of Theorem \ref{t:dtI}. It relies on two
lemmata. 

\begin{lemma}
\label{lem7}
Fix $P$ satisfying \eqref{fe}. There exists a sequence
$(P_n\colon n\ge 1)$ converging to $P$ and such that $P_n$ belongs to
$\ms P^{\epsilon_n}_{{\rm stat}, m}$ for some $\epsilon_n>0$,
and
\begin{equation*}
\limsup_{n\to\infty} \bs I_m(P_n) \; = \; \bs I_m(P) \;.
\end{equation*}
\end{lemma}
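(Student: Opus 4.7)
We construct $P_n$ by composing two translation-equivariant operations on the path space $\ms S$: a space-time mollification $\Phi_\delta$ that smooths the densities, and an affine interpolation $\Psi_\lambda$ with the constant path $(m,0)$ that pushes the density strictly into $(0,1)$. Precisely, let $\phi_\delta$ be a smooth symmetric approximate identity on $\bb R \times \bb T^d$. For $(\bs\pi, \bs J) \in \ms S_{m,\mathrm{ac}}$ with densities $(\bs\rho, \bs j)$, set
\begin{equation*}
\Phi_\delta(\bs\pi, \bs J) := \Big( (\phi_\delta * \bs\rho)\, dx,\; \int_0^\cdot (\phi_\delta * \bs j)(s)\, ds \Big), \quad \Psi_\lambda(\bs\rho, \bs j) := \big( (1-\lambda)\bs\rho + \lambda m,\; (1-\lambda)\bs j \big),
\end{equation*}
extending both maps by the identity on the complement of $\ms S_{m,\mathrm{ac}}$, and let $P_n := P \circ (\Psi_{\lambda_n} \circ \Phi_{\delta_n})^{-1}$ for sequences $\delta_n, \lambda_n \downarrow 0$ selected by a diagonal argument below. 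Both maps preserve the continuity equation \eqref{31} (convolution commutes with $\partial_t+\nabla\cdot$, and affine combinations of mass-$m$ solutions are again solutions) and commute with $\vartheta_t$, so $P_n \in \ms P_{{\rm stat},m}$. Since $0\le \bs\rho_\delta\le 1$ by convolution, the interpolated density lies in $[\lambda_n m,\, 1-\lambda_n(1-m)]$, giving $P_n \in \ms P^{\epsilon_n}_{{\rm stat},m}$ with $\epsilon_n := \lambda_n \min(m, 1-m)$. As $\delta_n, \lambda_n \to 0$, $\Psi_{\lambda_n}\circ \Phi_{\delta_n}$ converges to the identity on $P$-typical paths in the Skorohod topology on $\ms S$, hence $P_n \to P$ weakly by dominated convergence.

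\textbf{Cost convergence.} Lower semi-continuity of $A_{m,T}$ and Fatou's lemma give $\liminf_n \bs I_m(P_n) \ge \bs I_m(P)$. For the matching upper bound, fix $\lambda>0$ first: after $\Psi_\lambda$ the mobility is bounded below by $\sigma(\lambda m)>0$, so the integrand $\Phi(\rho, j) := |j+\nabla\rho - \sigma(\rho)E|^2/(4\sigma(\rho))$ is locally Lipschitz in $(\rho, \nabla\rho, j)$ on the relevant range. Using \eqref{fe} and the $L^2$-continuity of convolution, $\bs\rho_\delta, \nabla \bs\rho_\delta, \bs j_\delta$ converge to $\bs\rho, \nabla\bs\rho, \bs j$ in $L^2_{\mathrm{loc}}(dt\,dx)$ both $P$-almost surely and in $L^1(dP)$. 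A dominated convergence argument then yields $E_P[A_{m,T}\circ \Psi_\lambda \circ \Phi_\delta] \to E_P[A_{m,T}\circ \Psi_\lambda]$ as $\delta\to 0$. Letting then $\lambda\to 0$ and using the pointwise convergence $\Phi(\Psi_\lambda(\bs\rho, \bs j)) \to \Phi(\bs\rho, \bs j)$, a second application of dominated convergence gives $E_P[A_{m,T}\circ \Psi_\lambda] \to T\, \bs I_m(P)$. A diagonal extraction of $(\delta_n, \lambda_n)$ concludes.

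\textbf{Main obstacle.} The delicate step is producing an integrable dominant for both limits. Decomposing
\begin{equation*}
\Phi(\rho, j) = \frac{|j+\nabla\rho|^2}{4\sigma(\rho)} - \frac{E\cdot(j+\nabla\rho)}{2} + \frac{\sigma(\rho)|E|^2}{4},
\end{equation*}
the leading perspective term is jointly convex in $(\rho, j)$ because $\sigma$ is concave and positive on $(0,1)$, so Jensen's inequality controls it after mollification; the linear cross term is controlled by Cauchy--Schwarz against \eqref{fe}; and the concave residue is uniformly bounded by $\|E\|_\infty^2/16$ since $\sigma \le 1/4$. Combined with the Fisher-type bound supplied by \eqref{fe}, these three pieces furnish the required $L^1(dP\,dt\,dx)$ majorant, at the price of a careful scale balance $\delta_n \ll \lambda_n \to 0$ so that the first limit stabilizes before the second is triggered.
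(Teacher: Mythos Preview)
Your construction coincides with the paper's: it sets $\bs\rho_\epsilon = (1-\epsilon)(\bs\rho*\phi_\epsilon)+\epsilon m$, $\bs j_\epsilon = (1-\epsilon)(\bs j*\phi_\epsilon)$ and $P^\epsilon = P\circ\Psi_\epsilon^{-1}$, which is exactly your $\Psi_\lambda\circ\Phi_\delta$ with the single choice $\lambda=\delta=\epsilon$. The difference lies in how the cost convergence is established. The paper passes to the limit $\epsilon\to 0$ directly and proves \emph{uniform integrability} of the family $|{\bs j_\epsilon+\nabla\bs\rho_\epsilon-\sigma(\bs\rho_\epsilon)E}|^2/(4\sigma(\bs\rho_\epsilon))$ via de la Vall\'ee--Poussin: invoking \cite[Lemma~5.3]{blm} it obtains superlinear convex $\Upsilon_1,\Upsilon_2$ with $E_P\big[\int \Upsilon_1(|\bs j|^2/\sigma(\bs\rho))+\Upsilon_2(|\nabla\bs\rho|^2/\sigma(\bs\rho))\big]<\infty$, and then uses the same concavity-of-$\sigma$/Jensen manipulation you invoke to get $|\bs j_\epsilon|^2/\sigma(\bs\rho_\epsilon)\le \phi_\epsilon * \big(|\bs j|^2/\sigma(\bs\rho)\big)$ and transport the $\Upsilon$-bound to the mollified family. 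Your two-parameter route sidesteps the external de la Vall\'ee--Poussin lemma at the price of an iterated limit plus diagonal extraction: with $\lambda>0$ frozen the lower bound $\sigma\ge c_\lambda$ makes the integrand genuinely quadratic in $(\bs j_\delta,\nabla\bs\rho_\delta)$, so the $L^2(dP\,dt\,dx)$-continuity of convolution together with \eqref{fe} (which gives $\bs j,\nabla\bs\rho\in L^2$ since $\sigma\le 1/4$) handles $\delta\to 0$; then for $\lambda\to 0$ the concavity bound $\sigma((1-\lambda)\bs\rho+\lambda m)\ge (1-\lambda)\sigma(\bs\rho)$ provides the $\lambda$-uniform dominant. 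Both arguments are correct; the paper's is tighter, yours more self-contained.
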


\begin{proof}
Fix a smooth probability density
$\phi : \bb R \times \bb R^d \to \bb R_+$ whose support is
contained in $[-1,1]\times [-1,1]^d$, so that
\begin{equation*}
\int_{\bb R \times \bb R^d} \phi(t,x) \, dt\, dx \;=\;1 \;.
\end{equation*}
Let
$\phi_\epsilon (t,x) = \epsilon^{-(d+1)} \phi
(t/\epsilon,x/\epsilon)$, $\epsilon>0$.  For a trajectory
$(\bs \pi, \bs J)$ in $\ms S_{m, \mathrm{ac}}$, whose density is represented by
$(\bs \rho, \bs j)$, let
\begin{equation*}
\bs \rho_\epsilon \;: =\; (1-\epsilon) \, (\bs \rho* \phi_\epsilon) \;+\;
\epsilon\, m \;, \quad
\bs j_\epsilon \;: =\; (1-\epsilon) \, (\bs j* \phi_\epsilon) \;,
\end{equation*}
where $*$ denotes space-time convolution and $0<\epsilon <1$. Observe
that $(\bs \rho_\epsilon , \bs j_\epsilon)$ satisfy the continuity
equation. 

Denote by $\Psi_\epsilon$ the map
$(\bs \rho, \bs j) \mapsto (\bs \rho_\epsilon, \bs j_\epsilon)=:
\Psi_\epsilon(\bs \rho,\bs j)$ and set
$P^\epsilon = P \circ \Psi^{-1}_\epsilon$.  Then, for each
$\epsilon>0$, the probability $P^\epsilon$ belongs to
$\ms P^\delta_{{\rm stat}, m}$ for some $\delta = \delta (\epsilon)>0$
and $P^\epsilon \to P$ as $\epsilon \to 0$.

It remains to show that
$\lim_{\epsilon} \bs I_m(P^\epsilon) = \bs I_m(P)$. As
\begin{equation*}
\bs I_m(P^\epsilon) \;=\; E_{P} \Big[\, \frac 1T \int_0^T dt \int_{\bb
  T^d} dx \, \frac{|\bs j_\epsilon + \nabla \bs \rho_\epsilon -
  \sigma(\bs \rho_\epsilon)\, E|^2}{4\, \sigma(\bs \rho_\epsilon)} \,
\Big]\;,
\end{equation*}
and since the sequence 
\begin{equation}
\label{17}
\frac{|\bs j_\epsilon + \nabla \bs \rho_\epsilon - \sigma(\bs
\rho_\epsilon)\, E|^2}
{4\, \sigma(\bs  \rho_\epsilon)}
\end{equation}
converges $(dP\, dt\, dx)$-almost surely to the same expression without
the subscript $\epsilon$, it is enough to prove that the sequence
\eqref{17} is uniformly integrable.

Since $P$ satisfies \eqref{fe}, by \cite[Lemma 5.3]{blm} there exist
increasing convex functions $\Upsilon_1$,
$\Upsilon_2 : \bb R_+ \to \bb R_+$ such that
$\lim_{r\to\infty} \Upsilon_a(r)/r = \infty$, $a=1$, $2$, and
\begin{equation}
\label{19}
E_P \Big[\, \int_0^T dt \int_{\bb T^d} dx \, \Big\{\, \Upsilon_1 \Big(
\frac{|\bs j|^2}{\sigma(\bs \rho)} \Big) + \Upsilon_2 \Big(
\frac{|\nabla \bs \rho|^2}{\sigma(\bs \rho)} \Big) \,\Big\}\, \Big]
\;<\;+\infty\;.
\end{equation}
Moreover, the uniform integrability of the sequence \eqref{17} follows
from the bound
\begin{equation}
\label{18}
\limsup_{\epsilon \to 0} E_{P} \Big[\, \int_0^T dt \int_{\bb T^d} dx
\, \Big\{\, \Upsilon_1 \Big( \frac{|\bs j_\epsilon|^2}{\sigma(\bs
  \rho_\epsilon)} \Big) + \Upsilon_2 \Big( \frac{|\nabla \bs
  \rho_\epsilon|^2} {\sigma(\bs \rho_\epsilon)} \Big) \,\Big\}\, \Big]
\;<\;+\infty\;.
\end{equation}

Note that
\begin{align*}
|\bs j_\epsilon|^2 \; = \; [1-\epsilon]^2 \, \big| \, \bs j*
\phi_\epsilon \, \big|^2 \;\le\; 
\big[\, (1-\epsilon) \, \sigma(\bs \rho * \phi_\epsilon) \,+\,
\epsilon \, \sigma(m) \,\big] \,
\Big\{ \, [1-\epsilon] \, \frac{ |\, \bs j* \phi_\epsilon\,|^2 }
{\sigma(\bs \rho * \phi_\epsilon)} \,\Big\} \;.
\end{align*}
By the concavity of $\sigma$, the first term on the right-hand side is
bounded by $\sigma(\bs \rho_\epsilon)$. In conclusion,
\begin{equation*}
\frac{|\bs j_\epsilon|^2}{\sigma(\bs \rho_\epsilon)} \;\le\; 
\, [1-\epsilon] \, \frac{ |\, \bs j* \phi_\epsilon\,|^2 }
{\sigma(\bs \rho * \phi_\epsilon)}
\;\le\; 
\, \frac{ |\, \bs j* \phi_\epsilon\,|^2 }
{\sigma(\bs \rho * \phi_\epsilon)}
\; \cdot
\end{equation*}
On the other hand, by concavity of $\sigma$ and Cauchy-Schwarz
inequality,
\begin{align*}
& \frac{|\bs j * \phi_\epsilon|^2}{\sigma(\bs
  \rho * \phi_\epsilon)} (t,x)  \\
& \;\le\;
\frac{1}{ [\, \sigma(\bs \rho)  * \phi_\epsilon\,](t,x) } \Big(\int_{\bb R} ds
\int_{\bb R^d} dy \, \phi_\epsilon (t-s, x-y) \, \frac{j(s,y)}
{\sqrt{\sigma(\bs \rho (s,y))}}  \sqrt{\sigma(\bs \rho (s,y))} \,
\Big)^2 \\
&\qquad \le\; \Big(\, \phi_\epsilon \,*\, 
\frac{|\bs j|^2}{ \sigma(\bs \rho)} \, \Big)(t,x) \;.
\end{align*}
Whence, as $\Upsilon_1$ is increasing,
\begin{equation*}
E_P \Big[\, \int_0^T dt \int_{\bb T^d} dx \, \Upsilon_1 \Big(
\frac{|\bs j_\epsilon|^2}{\sigma(\bs \rho_\epsilon)} \Big) \, \Big]
\le\; E_P \Big[\, \int_0^T dt \int_{\bb T^d} dx \, \Upsilon_1 \Big(
\phi_\epsilon \,*\, \frac{|\bs j|^2}{ \sigma(\bs \rho)} \Big) \,
\Big]\;.
\end{equation*}
Since $\Upsilon_1$ is convex, integrating the convolution, we deduce
that the previous expression is bounded by
\begin{equation*}
E_P \Big[\, \int_0^T dt \int_{\bb
  T^d} dx \, \phi_\epsilon \,*\,  \Upsilon_1 \Big( \frac{|\bs j|^2}
{\sigma(\bs \rho)} \Big) \, \Big] 
\;=\; E_P \Big[\,  \int_0^T dt \int_{\bb
  T^d} dx \, \Upsilon_1 \Big( \frac{|\bs j|^2}
{\sigma(\bs \rho)} \Big) \, \Big] \;.
\end{equation*}

Since the previous argument for $\bs j$ applies to $\nabla\bs \rho$,
the bound \eqref{18} follows from \eqref{19}.
\end{proof}

\begin{lemma} 
\label{t:l4.7}
Assume that $E$ is orthogonally decomposable.  There exists $T_0>0$
and $C_0>0$ such that the following holds.  For any
$\pi_0,\pi_1\in \mc M_m(\bb T^d)$ there exists $\bar T \le T_0$ and a
path $\big(\bs\pi(t),\bs J(t) \big)$, $t\in [0,\bar T]$, with
$\bs \pi (0)=\pi_0$, $\bs \pi(\bar T)=\pi_1$, satisfying the
continuity equation \eqref{31} for each $0\le s<t \le \bar T$ and
\begin{equation}
\label{112}
A_{m,\bar T}(\bar{\bs\pi},\bar{\bs J}) \le C_0\;.
\end{equation}
\end{lemma}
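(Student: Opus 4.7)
The plan is to build a bridging path that passes through the unique stationary density $\bar\rho$ of the hydrodynamic equation~\eqref{26}, exploiting the orthogonal decomposability of $E$. Writing $E=-\nabla U+\tilde E$ with $\nabla\cdot\tilde E=0$ and $\nabla U\cdot\tilde E=0$, a direct calculation identifies $\bar\rho(x)=e^{c-U(x)}/(1+e^{c-U(x)})$, with $c$ fixed by the mass constraint $\int\bar\rho\,dx=m$, as a smooth stationary profile lying in $[\delta_0,1-\delta_0]$ for some $\delta_0>0$. Without loss of generality, $\pi_0$ and $\pi_1$ may be assumed absolutely continuous with densities valued in $[0,1]$: otherwise any admissible path terminating at them has infinite action and the bound to be proved is vacuous.

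The path is assembled from four consecutive segments of total length $\bar T=2T^*+2\tau$, for universal constants $T^*$ and $\tau$. On $[0,T^*]$, it follows the forward hydrodynamic flow $\Phi^m$ starting from $\pi_0$; this has zero action. On $[T^*+2\tau,\,2T^*+2\tau]$, it follows the time-reversal of the hydrodynamic flow from $\pi_1$, namely the path with density $\tilde\rho(t):=\Phi^m_{2T^*+2\tau-t}(\pi_1)$ and current $\bs j=\nabla\tilde\rho-\sigma(\tilde\rho)E$; the corresponding action equals $\int\!\!\int|\nabla\tilde\rho-\sigma(\tilde\rho)E|^2/\sigma(\tilde\rho)\,dt\,dx$, which is controlled via the identity
\[
\frac{d}{dt}\int h(\bs\rho)\,dx=-\int\frac{|\nabla\bs\rho|^2}{\sigma(\bs\rho)}\,dx+\int\nabla\bs\rho\cdot E\,dx
\]
(with $h(\rho)=\rho\log\rho+(1-\rho)\log(1-\rho)\in[-\log 2,0]$) integrated along the forward flow from $\pi_1$ over $[0,T^*]$, which yields a uniform bound on $\int\!\!\int|\nabla\tilde\rho|^2/\sigma(\tilde\rho)\,dt\,dx$ and hence on the action of this segment.

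On the two middle segments, of length $\tau$ each, the path will linearly interpolate from $\Phi^m_{T^*}(\pi_0)$ to $\bar\rho$ and from $\bar\rho$ to $\Phi^m_{T^*}(\pi_1)$, with current $\bs j=\tau^{-1}\nabla\psi$, where $\psi$ solves the Poisson equation whose source is the endpoint difference. The key structural point is that $T^*$ can be chosen large enough that $\|\Phi^m_{T^*}(\rho)-\bar\rho\|_{L^\infty}\le\delta_0/2$ \emph{uniformly} over all admissible initial $\rho$, so that each interpolant remains in $[\delta_0/2,1-\delta_0/2]$; then $\sigma(\bs\rho)$ is uniformly bounded below, $|\bs j|$ is uniformly bounded by elliptic regularity for $\psi$, and $|\nabla\Phi^m_{T^*}(\rho)|$ is uniformly bounded by parabolic smoothing for~\eqref{26}. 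Each interpolation segment therefore contributes a bounded action, and taking $\tau=1$ (say) gives the desired $\bar T\le T_0:=2T^*+2$ together with a universal $C_0$.

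The main obstacle is precisely this uniform asymptotic stability: producing a single $T^*$ that drives every admissible density into an $L^\infty$-neighborhood of $\bar\rho$ of radius $\delta_0/2$. It is here that the orthogonal-decomposability of $E$ is crucially used, through the Lyapunov functional $\rho\mapsto\int[h(\rho)-h(\bar\rho)-h'(\bar\rho)(\rho-\bar\rho)]\,dx$, whose strict dissipation along~\eqref{26} together with the compactness of the set of admissible densities provides the required uniform relaxation; the boundedness of this Lyapunov functional is the same mechanism that renders the WASEP quasi-potential bounded, as alluded to in the introduction.
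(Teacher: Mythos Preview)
Your plan is correct and follows the same three-stage architecture as the paper (relax via the forward flow, interpolate near $\bar\rho$, then connect to $\pi_1$), but the final segment is handled differently. The paper reaches $\pi_1$ along the optimal \emph{escape path}---the time-reversal of the adjoint hydrodynamics---whose cost is the quasi-potential at $\pi_1$; orthogonal decomposability enters precisely because it makes this quasi-potential equal to the bounded relative entropy $\int[h(\rho)-h(\bar\rho)-h'(\bar\rho)(\rho-\bar\rho)]\,dx$. You instead reverse the \emph{forward} flow from $\pi_1$ and bound its cost by the entropy-dissipation identity for $\int h(\bs\rho)\,dx$, which is more elementary and, in fact, does not itself require decomposability. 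You then invoke decomposability elsewhere, for the uniform $L^\infty$ relaxation via the same relative-entropy Lyapunov functional. That is the one point in your sketch that deserves more care: strict dissipation plus compactness yields pointwise convergence, but the \emph{uniform} bound $\|\Phi^m_{T^*}(\rho)-\bar\rho\|_\infty\le\delta_0/2$ needs an extra ingredient, e.g.\ parabolic smoothing to obtain uniform $C^1$ control on $\Phi^m_1(\rho)$ and then interpolation against uniform $L^1$ convergence. Note that the paper supplies uniform exponential $L^1$ convergence by a coupling argument (Section~\ref{sec06}) valid for any smooth $E$; combining that with your entropy-dissipation bound on the reversed segment would in principle remove the decomposability hypothesis from this lemma altogether.
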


\begin{proof}
In the case of the symmetric exclusion process, this statement is
proven in \cite[Lemma~4.7]{bdgjl07} with the following strategy. Start
from $\pi_0$ and follow the hydrodynamic equation for a long but fixed
time interval $[0,T_1]$ so that $\bs\pi(T_1)$ lies in small
neighborhood of the stationary solution with mass $m$. Then
interpolate, in the time interval $[T_1,T_1+1]$ from $\bs \pi(T_1)$ to
a suitable $\hat\pi$ that is still close to the stationary
solution. Finally, from $\hat \pi$ use the optimal path for the escape
problem to reach $\pi_1$.  Provided that the \emph{quasi-potential} is
bounded, this argument applies also to the WASEP case.  As discussed
in \cite[\S~V.C]{mft} and \cite{bfg}, if the external field is
orthogonally decomposable then the quasi-potential can be computed
explicitly and it is indeed bounded.  
\end{proof}

\begin{proof}[Proof of Theorem~\ref{t:dtI}]
Fix $P\in \ms P_{{\rm stat}, m}$.  By Lemma~\ref{lem7} we can assume
that $P\in \ms P^\epsilon_{{\rm stat}, m}$ for some $\epsilon>0$.
Since $P$ can be written as a convex combination of ergodic
probabilities and $\bs I_m$ is affine, it suffices to show that for
each ergodic $P\in \ms P^\epsilon_{{\rm stat}, m}$ with
$\bs I_m(P) < +\infty$ there exists a sequence of holonomic measures
$P_T$ in $\ms P^\epsilon_{{\rm stat}, m}$ converging to $P$ and such
that $\lim_T \bs I_m(P_T) = \bs I_m(P)$.

Recalling that the $T$-periodization of paths in $\ms S$ as been
defined in \eqref{Tper}, set
\begin{equation*}
\begin{split}
\ms A_P := \Big\{ & (\bs\pi, \bs J) \in \ms S_{m}\colon \; \frac 1T
\int_0^T\delta_{\vartheta_t (\bs \pi^T,\bs J^T)}\,dt \to P
\\
& \quad \textrm{ and } \lim_{T\to +\infty} \frac 1T A_{T,m}(\bs
\pi,\bs J) \to E_P \big[ A_{1,m} \big] \Big\}\;.
\end{split}
\end{equation*}
Since $P$ is ergodic, by the Birkhoff ergodic theorem $P(\ms A_P)=1$.
Pick an element $({\bs\pi}^*,{\bs J}^*) \in \ms A_P$.  By definition,
the $T$-holonomic probability associated to the $T$-periodization of
$(\bs\pi^*,\bs J^*)$ converges to $P$ but, in general, its rate
function does not since when $T$-periodizing paths we may insert
jumps.  By using Lemma~\ref{t:l4.7}, we now show how the path
$(\bs\pi^*,\bs J^*)$ can be modified to accomplish our needs.

Given $T^*>0$, let $\pi_0 ={\bs\pi}^*(T^*)$ and
$\pi_1 ={\bs\pi}^*(0)$. Let also $(\bar{\bs\pi} (u),\bar {\bs J}(u))$,
$u\in [0,\bar T]$ the path provided by Lemma~\ref{t:l4.7} satisfying
$\bar{\bs\pi} (0)=\pi_0$, $\bar{\bs\pi} (\bar T)=\pi_1$.

Set $T := T^*+\bar T$ and let $({\bs\pi}(u),{\bs J}(u))$, $u\in [0,T]$
be the path defined by
\begin{equation*}
\big({\bs\pi}(u),{\bs J}(u)\big) =
\begin{cases}
\big( {\bs\pi}^*(u),{\bs J}^*(u)\big) & \textrm{ if $u\in
  [0,T^*]$,}\\
\big( \bar{\bs\pi} (u-T^*),\bs J(T^*)+ \bar{\bs J}(u-T^*) \big)) &
\textrm{ if $u\in(T^*, T]$.}
\end{cases}
\end{equation*}

Observe that $\bs\pi(0)=\bs\pi(T)$ and extend $(\bs\pi,\bs J)$ to the
path $(\bs \pi^T,{\bs J}^T)$ defined on $\bb R$ by periodicity.  By
construction, $t\mapsto \bs\pi^T(t)$ is continuous and denote by $P_T$
the $T$-holonomic measure associated to $(\bs\pi^T,\bs J^T)$ as in
\eqref{16}.  Since $\bar T \le T_0$ for some fixed $T_0$, $P_T\to P$
as $T\to\infty$. Moreover, by construction and by Lemma \ref{t:l4.7},
\begin{equation*}
\begin{split}
\bs I_m (P_T) & = \frac 1T A_{m,T} (\bs \pi,\bs J) = \frac 1 {T}
A_{m,T^*}(\bs \pi^*, \bs J^*) + \frac {1}{T} A_{m,\bar
  T}(\bar{\bs\pi},\bar {\bs J})
\\
&\le \frac 1 {T} A_{m,T^*}(\bs \pi^*, \bs J^*) + \frac {1}{T} C_0
  \end{split}
\end{equation*}
so that, since $(\bs \pi^*,\bs J^*)$ belongs to $\ms A_P$,
$\limsup_{T\to \infty} \bs I_m (P_T) \leq \bs I_m(P)$. As $\bs I_m$ is
lower semi-continuous, actually,
$\lim_{T\to \infty} \bs I_m (P_T) = \bs I_m(P)$, as claimed.
\end{proof}

\section{
Long time behavior of the hydrodynamical rate function
}
\label{s33}

In this section, we consider the asymptotic in which we take first the
limit as $N\to\infty$ and then $T\to\infty$. The former limit is
essentially the content of the large deviations from the
hydrodynamical scaling limit in which we emphasize that the
corresponding $T$-dependent rate function still depends on the initial
condition. To analyze the limit as $T\to\infty$ we first lift this
rate function to the set of translation invariant probabilities on
$\ms S$ and then analyze its variational convergence, showing in
particular that the limit is independent of the initial condition.

Hereafter, fix $m\in (0,1)$ and a sequence $K_N$ such that
$N^{-d}K_N\to m$.

\subsection{Hydrodynamical large deviations}

Recall that the sequence $\{\eta^N, \, N\ge 1\}$,
$\eta^N\in\Sigma_{N,K_N}$ is \emph{associated} to a measurable density
$\rho\colon \bb T^d\to [0,1]$ satisfying $\int\rho \,dx =m$ (hereafter
of total mass $m$) if and only if $\pi_N(\eta^N)\to \rho(x) \,dx $ in
the topology of $\mc M_+(\bb T^d)$.

Recalling that $\ms M = \mc M_+(\bb T^d)\times \mc H_{-p}^d$, let
$\color{bblue} \ms S^T$, $T>0$, be the subset of the paths in
$D([0,T]; \ms M)$ which satisfy the continuity equation \eqref{31} for
any $0<s<t<T$. Let also $\color{bblue} \ms S^T_{m}$ be the subset of
$\ms S^T$ given by the elements $(\bs\pi,\bs J)$ which satisfy
$\bs \pi(t,\bb T^d)=m$ for all $0\le t\le T$.

Finally, given a measurable function $\rho\colon \bb T^d\to [0,1]$ of
total mass $m$, that plays the role of the initial datum, let
$\color{bblue} \ms S^T_{m,\mathrm{ac},\rho}$ be the subset of elements
$(\bs \pi, \bs J)$ in $\ms S^T_{m}$ such that
\begin{itemize}
\item[(a')] $\bs\pi \in C([0,T], \mc M_m (\bb T^d))$, and
  $\bs\pi(t,dx) = \bs \rho(t,x)\, dx$ for some $\bs \rho$ such that
  $0\le \bs \rho(t,x) \le 1$. Moreover, 
\begin{equation*}
 \int_0^T dt \int_{\bb T^d} dx\, \frac{|\, \nabla
   \bs\rho\,|^2}{\sigma(\bs\rho)}\: < \; \infty\;;
\end{equation*}
\item[(b')] $\bs J \in C([0,T], \mc H^d_{-p})$, and
  $ \bs J (t) \,=\, \int_0^t \bs j(s)\, ds$, $t\in [0,T]$,
  for some $\bs j$ in
  $L^2\big([0,T] \times \bb T^d, \sigma(\bs \rho(t,x))^{-1} \,
  dt\, dx ; \bb R^d\big)$. Thus, 
  \begin{equation*}
  \int_{0}^T dt \int_{\bb T^d} dx\, \frac{|\, \bs j\,|^2}{\sigma(\bs
    \rho)}\: < \; \infty\;;
  \end{equation*}
\item[(c')] $\bs\pi(0, dx)= \rho(x) \,dx$.
\end{itemize}
Note that conditions (a') and (b') are the same of conditions (a) and
(b) below equation \eqref{31} apart from the fact that the here the
path $\big(\bs \rho (t),\bs j (t) \big)$ is defined only for
$t\in[0,T]$. 

Let the \emph{action} $A_{m,T,\rho}: \ms S^{T} \to [0, +\infty]$, be
defined by
\begin{equation}
\label{act}
A_{m,T,\rho} (\bs\pi, \bs J) \, =\,
\begin{cases}
\displaystyle \int_{0}^T\! dt\!\int_{\bb T^d}\! dx\, \frac{|\bs j +
  \nabla \bs \rho - \sigma(\bs \rho)\, E|^2}{4\, \sigma(\bs \rho)} &
\text{
  if $(\bs \pi, \bs J) \in \ms S_{T,m, {\rm ac},\rho}$, } \\ 
\vphantom{\bigg\{} +\infty & \text{ otherwise.}
\end{cases}
\end{equation}

The large deviation principle with respect to the hydrodynamical limit
for the WASEP dynamics can be stated as follows. Here, we understand
that the empirical density and current $(\bs\pi_N,\bs J_N)$ is
defined as a map from $D\big([0,T];\Sigma_{N,K_N}\big)$ to $\ms S^T$.

\begin{theorem}
\label{th10}
Fix $T>0$, $m>0$ and a density profile $\rho:\bb T^d \to [0,1]$ of
total mass $m$. 
For each sequence 
$(\eta^N: N\ge 1)$ associated to $\rho$, each
closed set $\ms F\subset \ms S^{T}$, and each open set 
$\ms G\subset \ms S^{T}$,
\begin{gather*}
\limsup_{N\to \infty} \frac {1}{N^d} \, \log \bb P^N_{\eta^N} \,
\big[ (\bs\pi_{N} , \bs J_{N}) \in \ms F\,] \;\le\; -\, \inf_{(\bs\pi ,
  \bs J)\in \ms F}
A_{m,T,\rho} (\bs\pi , \bs J)\;, \\
\liminf_{N\to \infty} \frac {1}{N^d} \, \log \bb P^N_{\eta^N} \,
\big[ (\bs\pi_{N} , \bs J_{N}) \in \ms G\,] \;\ge\; -\, \inf_{(\bs\pi ,\bs
  J)\in \ms G} A_{m,T,\rho} (\bs\pi ,\bs J)\;.
\end{gather*}
Moreover, 
$A_{m,T,\rho}\colon \ms S^{T} \to [0, +\infty]$ is a good rate
function. 
\end{theorem}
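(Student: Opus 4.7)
The plan is to prove the large deviation principle by the standard strategy combining exponential tightness, a variational upper bound obtained via exponential martingales, and a matching lower bound obtained via tilted measures, adapted to the joint density/current framework and to the Sobolev topology on currents.

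First I would verify exponential tightness of $(\bs\pi_N,\bs J_N)$ under $\bb P^N_{\eta^N}$ in $D([0,T];\ms M)$. The arguments are essentially those already carried out in Lemmata~\ref{l01}, \ref{l02}, \ref{l05} for the stationary law: the martingale bounds used there do not depend on the initial distribution but only on the boundedness of the rates, so the same estimates hold under $\bb P^N_{\eta^N}$ uniformly in $\eta^N\in \Sigma_{N,K_N}$. Together with Ascoli--Arzel\`a, the compactness of $\mc M_+(\bb T^d)$ and the compact embedding $\mc H_{-p}\hookrightarrow \mc H_{-p'}$ for $p'>p$, this yields exponential tightness and reduces the proof to a local large deviation statement.

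For the upper bound, I would use exponential supermartingales of the type employed in Lemma~\ref{lem2}. For any smooth vector field $F\colon [0,T]\times\bb T^d\to \bb R^d$, the process
\begin{equation*}
\bb M^F_N(t) \;=\; \exp\Big\{ N^d \,\< \bs J_N(t)\,,\, F\> - \int_0^t \sum_{k,x} B_{x,x+\mf e_k}(\bs\eta(s),F(s))\, ds\Big\}
\end{equation*}
is a mean-one martingale, and a Taylor expansion of $B$ in $1/N$ combined with the local equilibrium super-exponential estimates of \cite{kov,kl} shows that the integrand is, up to a super-exponentially negligible error, equal to
\begin{equation*}
\int_0^t\!ds\!\int_{\bb T^d}\!dx\,\Big\{ F\cdot[\sigma(\bs\rho)E-\nabla\bs\rho] + \sigma(\bs\rho)\,|F|^2\Big\}\;.
\end{equation*}
Optimising over $F$ (and similarly over a smooth function $G$ controlling $\bs\pi(T)-\bs\pi(0)$ through the continuity equation~\eqref{31bis}) yields the variational representation of $A_{m,T,\rho}(\bs\pi,\bs J)$; the initial condition $\bs\pi(0,dx)=\rho(x)\,dx$ enters via the convergence $\pi_N(\eta^N)\to \rho\,dx$. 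Exponential tightness upgrades the local upper bound to closed sets.

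For the lower bound I would restrict attention to a dense subclass of smooth paths. Given $(\bs\pi,\bs J)\in \ms S^T_{m,\mathrm{ac},\rho}$ with $A_{m,T,\rho}(\bs\pi,\bs J)<\infty$, a mollification argument analogous to Lemma~\ref{lem7} (but respecting the initial condition; see \cite[\S5]{blm}) produces smooth approximants $(\bs\pi_n,\bs J_n)$ with densities $(\bs\rho_n,\bs j_n)$ bounded away from $0$ and $1$, with the same initial datum, converging to $(\bs\pi,\bs J)$ and with $A_{m,T,\rho}(\bs\pi_n,\bs J_n)\to A_{m,T,\rho}(\bs\pi,\bs J)$. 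For each such smooth approximant, set $F_n=(\bs j_n+\nabla\bs\rho_n-\sigma(\bs\rho_n)E)/\sigma(\bs\rho_n)$; the WASEP tilted by $F_n$ via the generator~\eqref{genF} has $(\bs\rho_n,\bs j_n)$ as its hydrodynamic limit by \cite[Ch.~10]{kl}, and the relative entropy with respect to the unperturbed law computed along the lines of the proof of Proposition~\ref{prop02} satisfies
\begin{equation*}
\lim_{N\to\infty} \frac{1}{N^d}\,\bb H_{[0,T]}\big(\bb P^{F_n}_N\big|\bb P^N_{\eta^N}\big) \;=\; A_{m,T,\rho}(\bs\pi_n,\bs J_n)\;.
\end{equation*}
A standard change of measure together with Chebyshev on the event that $(\bs\pi_N,\bs J_N)$ lies in a neighbourhood of $(\bs\pi_n,\bs J_n)$ then yields the lower bound along $n\to\infty$. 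Goodness of $A_{m,T,\rho}$ follows from exponential tightness together with the lower semicontinuity established, as in \cite[\S4]{blm}, from convex duality applied to the quadratic structure in~\eqref{act}.

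The main obstacles are two. The first is the density of smooth paths bounded away from $0$ and $1$ \emph{with prescribed initial datum} inside $\{A_{m,T,\rho}<\infty\}$: the mollification of Lemma~\ref{lem7} must be modified so that $\bs\pi_n(0)=\rho\,dx$ is preserved, which requires convolving only in the time-forward direction near $t=0$ and then handling the boundary layer. The second is checking the super-exponential replacement of local functionals in the presence of the current variable and of the Sobolev topology: one must verify that the standard one-block/two-blocks estimates of \cite[Ch.~5]{kl}, valid for the density alone, also control the quadratic term $\sigma(\bs\rho)|F|^2$ arising from the current martingale, and that the Sobolev-dual pairing is compatible with the replacement. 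Once these two technical points are settled, the remainder of the proof is a reorganisation of the entropy/martingale machinery already deployed in Sections~\ref{s3}--\ref{sec03}.
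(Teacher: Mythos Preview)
Your proposal is correct and follows essentially the same strategy the paper adopts: the paper does not give a self-contained proof but instead refers to \cite{kov,kl,blm,bdgjl07}, noting that the only missing ingredient in those references is the exponential tightness of the empirical current in the Sobolev topology, which is supplied by Lemmata~\ref{l01}, \ref{l02}, \ref{l05} (valid for any initial condition, as you observe), and that the density argument for the lower bound with prescribed initial datum is carried out in \cite[Thm.~5.1]{blm}. Your identification of the two technical obstacles---preserving $\bs\pi(0)=\rho\,dx$ under mollification and compatibility of the super-exponential replacement with the current---matches precisely the points the paper delegates to \cite{blm} and \cite{bdgjl07} respectively.
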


If one considers only the empirical density and disregards the
empirical current, the above result has been proven in \cite{kov} in
case of SEP, see also \cite[Ch.~10]{kl}. This result has been extended
to WASEP in \cite{blm}.  Relying on the super-exponential estimates
proven in \cite{kov}, the case in which one considers the empirical
current is discussed in \cite{bdgjl07} for the SEP. However, the
topology on the set of currents there introduced is different from the
one used in the present paper and the proof of the exponential
tightness is incomplete.  The issue of the exponential tightness of
the empirical current is fixed in the present paper (in the topology
here introduced).  Indeed, Lemmata~\ref{l01}, \ref{l02}, and \ref{l05}
which hold also in the present setting, yield the exponential
tightness of the sequence
$\big( \bb P^N_{\eta^N}\circ (\bs\pi_{N} , \bs J_{N})^{-1}\colon N\ge
1 \big)$ thus completing, together with \cite{bdgjl07}, the proof of
the above result for the SEP. The extension to WASEP requires, for the
lower bound, a density argument that has been carried out in detail in
\cite[Thm.~5.1]{blm} and can be adapted to include the current.

Recall, from \eqref{Tper}, the definition of the $T$-periodization of
a path $(\bs\pi,\bs J)\in \ms S_m$, which depends only on the
restriction of the path $(\bs\pi,\bs J)$ to the time interval
$[0,T]$. Let $\chi_T \colon \ms S^T \to \ms P_\mathrm{stat}$ be the
continuous map defined by
\begin{equation*}
\chi_T(\bs\pi,\bs J) := \frac 1T \int_0^T \delta_{\vartheta_s
  (\bs\pi^T,\bs J^T)} \, ds\;,
\end{equation*}
where the translation $\vartheta_s$ acting on $\ms S$ has been
introduced above \eqref{06}.  Namely, $ \chi_T(\bs\pi,\bs J)$ is the
$T$-holonomic measure associated to the $T$-periodic path
$(\bs\pi^T,\bs J^T)$ obtained by $T$-periodizing the path
$(\bs \pi,\bs J) \in \ms S^T$.

Recall the definition of the empirical process $R_T(\bs\eta)$
introduced in \eqref{empr}. Since
$(\, \bs\pi_N(\bs\eta^T) \,,\, \bs J_N(\bs \eta^T)\, ) =
(\bs\pi_N(\bs\eta)^T, \bs J_N(\bs \eta)^T )$, for each
$\bs\eta\in D\big([0,T];\Sigma_{N,K_N}\big)$,
\begin{equation}
\label{38}
\begin{aligned}
& \mf R_{T,N}(\bs \eta) 
=\; R_T(\bs\eta) \circ (\bs
\pi_N,\bs J_N)^{-1}= \frac 1T \int_0^T \delta_{\vartheta_s \bs\eta^T} \circ (\bs
\pi_N,\bs J_N)^{-1}\, ds \\
&\quad  =\; \frac 1T \int_0^T \delta_{\vartheta_s
  \big(\bs\pi_N(\bs\eta)^T, \bs J_N(\bs \eta)^T \big)} \, ds
\;=\; \chi_T \big(\bs\pi_N(\bs\eta), \bs J_N(\bs \eta)\big)\;.
\end{aligned}
\end{equation}
Observe that the image of $\ms S^T$ by $\chi_T$ corresponds to the set
of $T$-holonomic measures.  For $\rho$ of total mass $m$, let
$\bs I_{m,T,\rho}\colon \ms P_\mathrm{stat} \to [0,+\infty]$ be the
functional defined by
\begin{equation}
\label{113}
\bs I_{m,T,\rho}(P) \; :=\;  \inf\big\{ A_{m,T,\rho} (\bs\pi ,\bs J)\,,\;
(\bs\pi,\bs J) \in \chi_T^{-1} \big( P \big)\big\} \;,
\end{equation} 
where we adopted the convention that $\inf \varnothing = +\infty$. In
particular, $\bs I_{m,T,\rho}(P) < +\infty$ only for $T$-holonomic
measures $P$.  Moreover, $\bs I_{m,T,\rho}(P)<+\infty$ only if the
$T$-periodic path $(\bs\pi,\bs J)\in \ms S$ associated to $P$
satisfies the following condition. There there exists $s\in [0,T]$
such that the restriction of $\vartheta_s(\bs\pi,\bs J)$ to $[0,T]$
belongs to $\ms S^T_{m,\mathrm{ac},\rho}$. In particular $\bs \pi (t)
=\rho$ for some $t\in\bb R$.

In view of the identity \eqref{38}, Theorem~\ref{th10}, by the
contraction principle, yields the following statement.

\begin{corollary}
\label{th11}
Fix $T>0$, $m>0$ and a density profile $\rho:\bb T^d \to [0,1]$ of
total mass $m$. 
For each sequence $(\eta^N: N\ge 1)$ associated to
$\rho$, each closed set $\ms F\subset\ms P_\mathrm{stat} $, and each 
open set $\ms G\subset\ms P_\mathrm{stat}$,
\begin{gather*}
\limsup_{N\to \infty} \frac {1}{N^d} \, \log \bb P^N_{\eta^N} \,
\big[ \mf R_{T,N} 
\in \ms F\,\big] \;\le\; -\,
\inf_{P \in \ms F}
\bs I_{m,T,\rho} (P)\;, \\
\liminf_{N\to +\infty} \frac {1}{N^d} \, \log \bb P^N_{\eta^N} \,
\big[ \mf R_{T,N} 
\in \ms G\,\big] \;\ge\; -\,
\inf_{P \in \ms G} \bs I_{m,T,\rho} (P)\;.
\end{gather*}
Moreover, $\bs I_{m,T,\rho}\colon \ms P_\mathrm{stat} \to [0,
+\infty]$ is a good rate function.
\end{corollary}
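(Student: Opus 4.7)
The plan is to derive the corollary as a direct consequence of Theorem~\ref{th10} via the contraction principle. The identity \eqref{38} exhibits $\mf R_{T,N}$ as the composition $\chi_T\circ(\bs\pi_N,\bs J_N)$, and Theorem~\ref{th10} already supplies a large deviations principle for $(\bs\pi_N,\bs J_N)$ under $\bb P^N_{\eta^N}$ with good rate function $A_{m,T,\rho}$. Granting the continuity of $\chi_T\colon \ms S^T\to \ms P_{\rm stat}$ asserted just before \eqref{38}, the contraction principle then produces a large deviations principle for $\mf R_{T,N}$ with good rate function
\[
P\;\longmapsto\; \inf\big\{A_{m,T,\rho}(\bs\pi,\bs J) \colon (\bs\pi,\bs J)\in \chi_T^{-1}(P)\big\},
\]
which coincides with $\bs I_{m,T,\rho}(P)$ by the definition \eqref{113}. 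The goodness of $\bs I_{m,T,\rho}$ follows automatically from that of $A_{m,T,\rho}$ combined with continuity of $\chi_T$.

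The only substantive step beyond this formal contraction is the continuity of $\chi_T$. Given $(\bs\pi_n,\bs J_n)\to (\bs\pi,\bs J)$ in the Skorohod topology of $\ms S^T$, I would show that $\chi_T(\bs\pi_n,\bs J_n)\to \chi_T(\bs\pi,\bs J)$ weakly on $\ms P_{\rm stat}$ by testing against bounded continuous functionals $F$ on $\ms S$. After $T$-periodizing via \eqref{Tper}, for every $s\in(0,T)$ outside a countable set of exceptional values (namely the images modulo $T$ of the jump times of $(\bs\pi,\bs J)$), the translate $\vartheta_s(\bs\pi_n^T,\bs J_n^T)$ converges to $\vartheta_s(\bs\pi^T,\bs J^T)$ in $D(\bb R;\ms M)$, and hence
\[
F\big(\vartheta_s(\bs\pi_n^T,\bs J_n^T)\big)\;\longrightarrow\; F\big(\vartheta_s(\bs\pi^T,\bs J^T)\big).
\]
Bounded convergence with respect to $ds$ then yields the weak convergence of the corresponding time averages, which is the required continuity of $\chi_T$.

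I expect the continuity of $\chi_T$ to be the only minor obstacle: the $T$-periodization can generate a discontinuity at multiples of $T$, and a Skorohod-close approximant may place its analogous seam at a slightly different time, which in principle prevents pointwise Skorohod convergence of the periodized trajectories along the full real line. The averaging over the cycle position $s\in[0,T]$ built into the definition of $\chi_T$ is precisely what smooths out this boundary effect, since the problematic translates form a Lebesgue-null set. Once continuity is in place, the rest is a routine invocation of the contraction principle.
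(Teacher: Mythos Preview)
Your proposal is correct and follows essentially the same route as the paper: the paper states the corollary as an immediate consequence of Theorem~\ref{th10} and the contraction principle, via the identity \eqref{38} expressing $\mf R_{T,N}$ as $\chi_T\circ(\bs\pi_N,\bs J_N)$. The only difference is that the paper simply asserts the continuity of $\chi_T$ when introducing it, whereas you sketch an argument for it based on time-averaging over the periodization seam; this is a welcome addition rather than a deviation.
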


\subsection{Variational convergence of the hydrodynamical rate
  function.}

The main result of this section reads as follows.

\begin{theorem}
\label{th12}
Fix $m \in (0,1)$ and a density profile $\rho:\bb T^d \to [0,1]$ of
total mass $m$.  As $T\to+\infty$, the sequence
$T^{-1} \bs I_{m,T,\rho}$ is equi-coercive uniformly in $\rho$, and
$\Gamma$-converges uniformly in $\rho$ to the functional $\bs I_m$
introduced in \eqref{06}. That is:
\begin{itemize}

\item [(i)] For each $\ell >0$ there exists a compact
$\mc K_\ell \subset \ms P_\mathrm{stat}$ such that for any $T>1$ and
any $\rho$,
$\big\{ P : T^{-1} \bs I_{m,T,\rho}(P) \le \ell \big\} \subset \mc
K_\ell$;

\item[(ii)] For any $P\in \ms P_\mathrm{stat}$, any sequence of
density profiles $\rho_T:\bb T^d \to [0,1]$ of total mass $m$ and any
sequence $P_T\to P$,
\begin{equation*}
\liminf_{T\to \infty} \frac 1T \bs I_{m,T,\rho_T}(P_T) \ge \bs I_m
(P)\;;
\end{equation*}

\item[(iii)] If the external field $E$ is orthogonally decomposable,
  then for any $P\in \ms P_\mathrm{stat}$ and any sequence of
density profiles $\rho_T:\bb T^d \to [0,1]$ of total mass $m$, there
exists a sequence $P_T\to P$ such that
\begin{equation*}
\limsup_{T\to \infty} \frac 1T \bs I_{m,T,\rho_T}(P_T) \le \bs I_m
(P)\;.
\end{equation*}
\end{itemize}
\end{theorem}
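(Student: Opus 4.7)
The plan is to transfer the action bound to uniform $L^2$ controls on $\bs j^{(T)}$ and $\nabla\bs\rho^{(T)}$. Indeed, picking a nearly-optimal $(\bs\pi^{(T)},\bs J^{(T)})\in\ms S^T_{m,\mathrm{ac},\rho}$ with $\chi_T(\bs\pi^{(T)},\bs J^{(T)})=P$ and $A_{m,T,\rho}(\bs\pi^{(T)},\bs J^{(T)})\le T\ell+1$, then expanding the square in $A_{m,T,\rho}$ and integrating the cross term $\bs j\cdot\nabla\bs\rho/\sigma(\bs\rho)=\bs j\cdot\nabla h'(\bs\rho)$ by parts (the boundary contribution being bounded since $h$ is bounded on $[0,1]$) yields uniform bounds of order $T(\ell+1)$ on $\int_0^T\!\!\int|\bs j^{(T)}|^2/\sigma(\bs\rho^{(T)})$ and $\int_0^T\!\!\int|\nabla\bs\rho^{(T)}|^2/\sigma(\bs\rho^{(T)})$. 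A Chebyshev argument averaging over the translation $s\in[0,T]$ in $P=T^{-1}\int_0^T\delta_{\vartheta_s(\bs\pi^{(T),T},\bs J^{(T),T})}\,ds$, in the spirit of Section~\ref{sec03}, then controls the Skorohod moduli of $\langle\bs\pi(\cdot),g\rangle$ and $\langle\bs J(\cdot),H\rangle$ on compact windows uniformly in $T,\rho$. The bounded-size jumps introduced by the $T$-periodization are compatible with the Skorohod topology.

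\textbf{Part (ii).} The plan is to extend the duality argument used for the $\Gamma$-liminf in the proof of Theorem~\ref{th03}. Given $P_T\to P$ with, along a subsequence, $T^{-1}\bs I_{m,T,\rho_T}(P_T)\to\ell<+\infty$, picking nearly-optimal paths $(\bs\pi^{(T)},\bs J^{(T)})$, one first checks via the $L^2$-bounds of part (i) and the argument of Lemma~\ref{lem3} that $P$ is supported on $\ms S_{m,\mathrm{ac}}$. For a smooth test field $w$ satisfying the hypotheses preceding \eqref{23}, the pointwise inequality $G_w\cdot v-\sigma|G_w|^2\le|v|^2/(4\sigma)$ applied to $v=\bs j^{(T)}+\nabla\bs\rho^{(T)}-\sigma(\bs\rho^{(T)})E$, combined with a Fubini exchange in $TE_{P_T}[V_{T',w}]=\int_0^T V_{T',w}(\vartheta_s(\bs\pi^{(T),T},\bs J^{(T),T}))\,ds$, yields
\begin{equation*}
T\,E_{P_T}[V_{T',w}] \;\le\; A_{m,T,\rho_T}(\bs\pi^{(T)},\bs J^{(T)}) \;+\; C_w\, T'\,,
\end{equation*}
with the $C_wT'$ error coming from the boundary strips $s\in[0,T']\cup[T-T',T]$ where the $T$-periodization intervenes. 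Dividing by $T$, passing to the limit $T\to\infty$ with $T'$ fixed (using continuity of $V_{T',w}$ under weak convergence, itself ensured by a mollification of $w$ analogous to $f_\delta$ in Section~\ref{sec03}), and then taking the supremum over $w$ gives $\bs I_m(P)\le\ell$ in view of the representation $\bs I_m(P)=\sup_w E_P[V_{T',w}]$ implicit in the proof of the $\Gamma$-liminf of Theorem~\ref{th03}.

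\textbf{Part (iii).} Assuming $E$ orthogonally decomposable, Theorem~\ref{t:dtI} provides an approximation of $P$ by $P^{(n)}=\sum_i\alpha_{n,i}P_{n,i}$ with each $P_{n,i}\in\ms P^{\epsilon_n}_{{\rm stat},m}$ holonomic, $P^{(n)}\to P$, and $\bs I_m(P^{(n)})\to\bs I_m(P)$. Let $\tau_{n,i}$ denote the period of $P_{n,i}$, associated to a $\tau_{n,i}$-periodic path $(\bs\pi_{n,i},\bs J_{n,i})$. The plan is to construct, for fixed $n$ and $T\gg\max_i\tau_{n,i}$, a $T$-periodic path on $[0,T]$ by partitioning $[0,T]$ into consecutive blocks, allocating $\lfloor\alpha_{n,i}T/\tau_{n,i}\rfloor\tau_{n,i}$ time to component $i$, filled with consecutive copies of one period of $(\bs\pi_{n,i},\bs J_{n,i})$, and inserting between consecutive blocks, and between $\rho_T$ and the first/last blocks, the bounded-action connecting paths produced by Lemma~\ref{t:l4.7}. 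The resulting path has action at most $T\sum_i\alpha_{n,i}\bs I_m(P_{n,i})+C_n$, the $C_n$ bounding the $O(1)$ cost of the finitely many connections, and its $T$-holonomic measure $\tilde P_T^{(n)}$ converges weakly to $P^{(n)}$ as $T\to\infty$ since connecting blocks occupy a vanishing fraction of $[0,T]$. A diagonal argument on $(T,n)$ then produces $P_T:=\tilde P_T^{(n(T))}$ with $P_T\to P$ and $\limsup_T T^{-1}\bs I_{m,T,\rho_T}(P_T)\le\bs I_m(P)$.

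\textbf{Main obstacle.} The principal difficulty is part (iii): constructing, for each prescribed boundary density $\rho_T$, a \emph{single} $T$-holonomic measure that simultaneously realizes a convex combination of holonomic measures and meets the boundary. The construction rests entirely on Lemma~\ref{t:l4.7}, which requires the orthogonal decomposability of $E$ to guarantee the boundedness of the quasi-potential; without it, the $O(1)$ action cost of the connections would not be negligible compared to $T\bs I_m(P)$, and the $\Gamma$-limsup would fail. Part (ii), while requiring the technical justification of continuity of $V_{T',w}$ under weak convergence through mollification of $w$, proceeds more directly via the pointwise duality inequality.
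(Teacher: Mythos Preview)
Your proof is correct, and Parts (i) and (iii) follow the paper's line closely; your Part (iii) is in fact slightly more explicit, since the paper reduces via Theorem~\ref{t:dtI} directly to a \emph{single} $S$-holonomic $P$ with smooth density and leaves the passage through finite convex combinations implicit. That reduction is not automatic here --- unlike $\bs I_{N,K_N}$, the functional $\bs I_{m,T,\rho}$ is not affine, being finite only on $T$-holonomic measures --- and your time-allocated concatenation of periods (with Lemma~\ref{t:l4.7} connections between blocks) is precisely what is needed to fill that step.

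Part (ii), however, is far more laborious than necessary. The paper's argument is essentially two lines: dropping the initial-condition constraint gives $\bs I_{m,T,\rho}(P_T)\ge\bs I_{m,T}(P_T)$, where $\bs I_{m,T}(P):=\inf\{A_{m,T}(\bs\pi,\bs J):\chi_T(\bs\pi,\bs J)=P\}$; by translation invariance of $A_{m,T}$, for any $T$-holonomic $P_T$ one has $\bs I_{m,T}(P_T)=E_{P_T}[A_{m,T}]=T\,E_{P_T}[A_{m,1}]$; the conclusion then follows directly from the lower semicontinuity of $A_{m,1}$ (recorded just below \eqref{hyld}) and $P_T\to P$. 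Your route through the dual functional $V_{T',w}$, the Fubini exchange, and the mollification of $w$ would succeed, but it rebuilds from scratch an inequality already encoded in the lower semicontinuity of the action. That machinery was genuinely required in Section~\ref{sec03} because the prelimit object $\bs I_{N,K_N}$ is defined through the microscopic dynamics and one must pass to the macroscopic description; here both sides are already written in terms of the macroscopic action $A_{m,T}$, and no duality argument is needed.
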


\begin{proof} The proof is divided in three parts.

\smallskip
\noindent\emph{Equicoercivity.}
In view of the compactness of $\mc M_m(\bb T^d)$, the compact
embedding of $L^2(\bb T^d;\bb R^d)$ into $\mc H_{-p}^d$ for $p > 0$,
Ascoli-Arzel\`a theorem, and Chebyshev inequality, it is enough to
prove the following bounds. For each $T_0>0$ and each smooth vector
field $H\colon \bb T^d \to \bb R^d$ there exists constants
$C_0=C_0(T_0)$ and $C_1=C_1(T_0,H)$ such that for any $T\ge 1$ and
$\delta\in (0,1)$
\begin{gather}
\label{ec1}
E_P \Big[ \sup_{t\in [-T_0,T_0]} \big\|\bs J(t)
\big\|_{L^2(\bb T^d;\bb R^d)}^2 \Big] \le C_0 \, 
\Big[ \frac 1T \bs I_{m,T,\rho} (P) + 1 \Big]
\\
\label{ec2}
E_P \bigg[ \sup_{\substack{ t,s \in [-T_0,T_0] \\ |t-s|< \delta}}
\big| \langle \bs J(t), H \rangle - \langle\bs J (s), H \rangle
\big|^2 \bigg] \le C_1 \delta \, 
\Big[\frac 1T \bs I_{m,T,\rho} (P)+1\Big].
\end{gather}
As already remarked right after the statement of Lemma \ref{l05}, by
choosing $H=\nabla g$, the bound \eqref{ec2} provides indeed also a
control on the continuity modulus of the map $t\mapsto \bs\pi(t)$.

By the stationarity of $P$ and the argument below \eqref{06}, if
$\bs I_{m,T,\rho} (P)< +\infty$, then there exists a constant $C$
depending only on $E$ such that 
\begin{equation*}
\frac {1}{2T}\, E_P \bigg[ \int_{-T}^T
dt \int_{\bb T^d} dx\, \frac{|\bs j\,|^2}{4\,
  \sigma(\bs\rho)} 
\bigg]
\le C \Big[
\frac 1T \, \bs I_{m,T,\rho} (P) +1
\Big]
\;,
\end{equation*}
where $\bs J(t)=\int_0^t \bs j(s)\,ds$.
By Cauchy-Schwarz inequality and condition (b') on the current $\bs J$
stated at the beginning of this section,
\begin{equation*}
\sup_{t\in [-T_0,T_0]} \big\|\bs J(t)\big\|_{L^2(\bb T^d;\bb R^d)}^2
\le T_0 \int_{-T_0}^{T_0}\!dt\int \! dx \big|\bs j(t,x)\big|^2 \le
T_0 \int_{-T_0}^{T_0}\!dt\int \! dx \frac{\big|\bs
  j(t,x)\big|^2}{4\sigma(\bs \rho(t,x))} \;\cdot
\end{equation*}
Since $\bs I_{m,T,\rho} (P)< +\infty$ implies
that $P$ is a $T$-holonomic probability measure,
\begin{equation*}
E_P \bigg[ \int_{-T_0}^{T_0}\!dt\int \! dx \frac{\big|\bs
  j(t,x)\big|^2}{4\sigma(\bs \rho(t,x))} \bigg] = \frac {T_0}{T} E_P
\bigg[ \int_{-T}^{T}\!dt\int \! dx \frac{\big|\bs
  j(t,x)\big|^2}{4\sigma(\bs \rho(t,x))} \bigg] 
\end{equation*}
which completes the proof of \eqref{ec1}.

For $s<t$, Cauchy-Schwarz inequality and condition (b') yield 
\begin{equation*}
\big| \langle \bs J(t), H \rangle - \langle\bs J (s), H \rangle
\big|^2 \le (t-s) \|H\|_{L^2(\bb T^d;\bb R^d)}^2 \int_s^t\!du\int\!dx
\, \big|\bs j(u,x)\big|^2\;,
\end{equation*}
so that \eqref{ec2} is obtained by the same argument as before.

\medskip
\noindent\emph{$\Gamma$-liminf.}
Denote by $\color{bblue} \ms S^T_{m,\mathrm{ac}}$ the subset of the
paths $(\bs\pi,\bs J) \in \ms S^T$ satisfying only conditions (a'),
(b') and let $A_{m,T}$ be the action defined in \eqref{act} with the
constraint $(\bs\pi,\bs J)\in \ms S^T_{m,\mathrm{ac},\rho}$ replaced
by $(\bs\pi,\bs J)\in \ms S^T_{m,\mathrm{ac}}$. Accordingly, let
$\bs I_{m,T}\colon \ms P_\mathrm{stat}\to [0,+\infty]$ be the
functional defined by
\begin{equation*}
\bs I_{m,T}(P) := \inf\big\{ A_{m,T} (\bs\pi ,\bs J)\,,\; (\bs\pi,\bs
J) \in \chi_T^{-1} \big( P \big)\big\}\;.
\end{equation*}
By the translation invariance of $A_{m,T}$, if $\chi_T^{-1}(P)$ is not
empty (i.e., if $P$ is $T$-holonomic) then
\begin{equation*}
\bs I_{m,T}(P) = E_P \big[ A_{m,T}\big] \;.
\end{equation*}
Hence, in view of the translation invariance of $P$,
\begin{equation*}
\bs I_{m,T,\rho}(P) \ge \bs I_{m,T}(P) \;=\;
E_{P} \big[ A_{m,T} \big] \;=\; T E_{P} \big[ A_{m,1} \big]\;.
\end{equation*}

Let $(\rho_T\colon T>0)$ be an arbitrary sequence and
$\big(P_T\colon T>0 \big)\subset \ms P_\mathrm{stat}$ be a sequence
converging to $P$. By \eqref{06}, the previous displayed bound and the
lower semi-continuity of $A_{m,1}$,
\begin{equation*}
\liminf_{T\to \infty} \frac 1T \bs I_{m,T,\rho_T}(P_T) \ge
\liminf_{T\to \infty} E_{P_T} \big[ A_{m,1} \big] \ge E_{P} \big[
A_{m,1} \big] = \bs I_m(P)\;.
\end{equation*}

\noindent\emph{$\Gamma$-limsup.}
By Theorem~\ref{t:dtI}, it suffices to consider the case in which $P$
is an $S$-holonomic measure with smooth density. More precisely, we
may assume that
\begin{equation}
\label{111}
P = \frac 1S \int_0^S\delta_{\vartheta_s ({\bs\pi}^*,{\bs J}^*)} \, ds
\end{equation}
for some $S>0$, where the $S$-periodic path $({\bs\pi}^*,{\bs J}^*)$
has smooth densities $({\bs\rho}^*,{\bs j}^*)$ with ${\bs \rho}^*$
bounded away from $0$ and $1$.  Given the sequence
$(\rho_T,\, T>0 ) \subset \mc M_m(\bb T^d)$, let
$\big(\bar{\bs\pi}(t),\bar{\bs J}(t)\big)$, $t\in [0,\bar T]$ be the
path provided by Lemma~\ref{t:l4.7} with $\pi_0=\rho_T \,dx$ and
$\pi_1={\bs \pi}^*(0)$. Let also $\big(\bs\pi(t),\bs J (t)\big)$,
$t\in [0,+\infty)$ be the path defined by
\begin{equation*}
\big({\bs\pi}(t),\bs{J}(t)\big) :=
\begin{cases}
\big( \bar{\bs\pi}(t)\,,\,\bar{\bs{J}}(t) \big) & \textrm{ if }
t\in[0,\bar T], \\
\big( {\bs\pi}^*(t-\bar T) \,,\, \bar{\bs J}(\bar T) + {\bs
  J}^*(t-\bar T)\big) & \textrm{ if } t > \bar T.
\end{cases}
\end{equation*}
Note that, although not explicit in the notation, the path
$({\bs\pi},{\bs J})$ depends on $T$ via the sequence $\rho_T$.  Denote
finally by $\big({\bs\pi}^T,{\bs J}^T\big)$ the $T$-periodization, as
defined in \eqref{Tper}, of $\big({\bs\pi},{\bs J}\big)$ and by $P_T$
the associated $T$-holonomic probability, i.e.,
\begin{equation*}
P_T = \frac 1T \int_0^T\delta_{\vartheta_s ({\bs\pi}^T,{\bs J}^T)} \,
ds \;.
\end{equation*}

Since $\bar T/T\to 0$, as
$T\to\infty$ the sequence $P_T$ converges to $P$ given by \eqref{111}.
Moreover, in view of \eqref{113} and \eqref{112},
\begin{equation*}
\bs I_{m,T,\rho_T}(P_T) = A_{m,\bar T,\rho_T}\big(\bar{\bs
  \pi},\bar{\bs J}\big) + A_{m,T-\bar T} ({\bs \pi}^*, {\bs J}^*) \le
C_0 + A_{m,T-\bar T} ({\bs \pi}^*, {\bs J}^*)\;.
\end{equation*}
Hence, by the $S$-periodicity of $({\bs\pi}^*, {\bs J}^*)$,
\begin{equation*}
\limsup_{T\to \infty} \frac 1T \bs I_{m,T,\rho_T}(P_T) = \frac 1S
A_{m,S} ({\bs \pi}^*, {\bs J}^*) = \frac 1S E_P \big[ A_{m,S} \big] =
\bs I_m (P)\;,
\end{equation*}
which concludes the proof of Theorem~\ref{th12}. 
\end{proof}

\section{Large deviations and projections}
\label{s:pmt}

In this section, relying on the variational convergence proven before,
we discuss the large deviations asymptotics in the joint limit
$N\to\infty$, $T\to \infty$. In particular, we conclude the proof of
Theorem~\ref{cor01} and discuss the corresponding projections.

\begin{proof}[Proof of Theorem \ref{cor01}]
We start by considering the case in which we first perform the limit
as $T\to \infty$ and then take limit as $N\to \infty$.  The asymptotic
as $T\to \infty$ follows directly from the Donsker-Varadhan large
deviation principle for the empirical process, see
Corollary~\ref{prop1}. By \cite[Lemma 4.1]{bdgjl07} or \cite[Corollary
4.3]{mar18} the limit as $N\to \infty$ is accomplished by the
$\Gamma$-convergence of the family
$\big(N^{-d} \bs I_{N,K_N}, \, N\ge 1\big)$ that has been proven in
Theorem~\ref{th03}.  Actually, the statements in \cite{bdgjl07, mar18}
give the upper bound only for compact sets. However, the goodness of
the functional $\bs I_{N,K}$ together with the equi-coercivity in
Theorem~\ref{th03} allow to deduce the upper bound for closed sets.
	
The proof of the statement when the limit as $T\to\infty$ is carried
out after the limit as $N\to\infty$ is accomplished by the similar
argument. Indeed, the asymptotic as $N\to \infty$ follows directly
from the hydrodynamical large deviations, see Corollary~\ref{th11},
while the $\Gamma$-convergence of the family
$\big(T^{-1} \bs I_{m,T,\rho}, \, T\ge 1\big)$ has been proven,
uniformly with respect to $\rho$, in Theorem~\ref{th12}.
\end{proof}

We now discuss the level two projection and the hydrostatic limit.

\begin{proof}[Proof of Corollary~\ref{cor02}]
Recall \eqref{Is} and that
$\wp_{T,N} = \mf R_{T,N} \circ \imath_t^{-1}$ where
$\imath_t\colon \ms S\to\mc M_+(\bb T^d)$ is the map
$(\bs \pi,\bs J) \mapsto \bs \pi (t)$.  Note that $i_t$ is not
continuous since we are using the Skorohod topology. However, the map
$\ms P_{\rm stat}\ni P \mapsto P\circ \imath_t^{-1} \in \ms P\big(\mc
M_+(\bb T^d)\big)$ is continuous since, by stationary, the
$P$-probability of a jump at a time $t$ is zero.  The large deviations
asymptotic thus follows from Theorem~\ref{cor01} by the contraction
principle.
	
We now show that the zero level set of $\ms I_m$ is equal to the set
of invariant probability measures for the flow $\Phi^m$ associated to
the hydrodynamic evolution \eqref{26}. By the goodness of $\bs I_m$,
if $\wp$ lies in the zero level set of $\ms I_m$ then the infimum in
\eqref{Is} is achieved, i.e.\ there exists $P\in \ms P_\textrm{stat}$
satisfying $\bs I_m(P)=0$ and $\wp=P\circ \imath_t^{-1}$.  As follows
from \eqref{06} and \eqref{hyld} $\bs I_m(P)=0$ implies that $P$
almost surely $(\bs\pi,\bs J)$ have densities $(\bs\rho,\bs j)$ that
satisfy $\bs j = -\nabla\bs\rho +\sigma(\bs\rho) E$. Hence, the
marginal of $P$ on the first variable is concentrated on the set of
$\bs \pi$ whose density $\bs\rho$ solves \eqref{0hld} with $D=1$.  By
stationarity of $P$, this implies that $P\circ \imath_t^{-1}$ is an
invariant probability of $\Phi^m$ as claimed.
	
It remains to show that if $E$ is orthogonally decomposable then
$\ms I_m(\wp)=0$ implies $\wp=\delta_{\bar\rho \,dx}$ where $\bar\rho$
is the unique stationary solution to \eqref{0hld} with $D=1$.  As
already remarked, if $E$ is orthogonally decomposable then the
quasi-potential of the WASEP dynamics can be explicitly computed and
it is a Lypuanov functional for the hydrodynamic evolution. The
argument in \cite[Thm.~7.7]{bfg} then implies that there exists a
unique stationary solution of mass $m$ to the hydrodynamic equation
that is globally attractive, hence a unique stationary probability for
the flow $\Phi^m$ that is concentrated on the stationary solution.
\end{proof}  

We consider now the level one projection.

\begin{proof}[Proof of Corollary~\ref{th01}]
Let
$\psi\colon \ms P_{\mathrm{stat}} \to \mc M_+(\bb T^d)\times \mc
H_{-p}^d$ be the map defined by
\begin{equation*}
\psi(P) := \Big( E_P \big[ \bs \pi (t)\big] \; ,\; \frac 1t E_P \big[
\bs J(t) \big] \Big)
\end{equation*}
where we understand that $\psi$ is defined only for the probabilities
$P$ such that for any $t\in \bb R$ we have
$E_P\big[ \|\bs J(t)\|_{\mc H_{-p}^d}\big] <+\infty$.  Note that
$\psi$ does not depend on $t\neq 0$ by the stationarity of $P$.
	
Recall the definitions of $\pi_{T,N}$, $J_{T,N}$, and $\mf R_{T,N}$,
in \eqref{05}, \eqref{04}, and \eqref{Rgot}.  Recall also the relation
\eqref{pl1} according to which for each
$\bs\eta\in D(\bb R_+, \Sigma_{N})$ and $t\in \bb R$, $t\neq 0$, we
have
\begin{equation}
\label{recalled}
\psi\big(\mf R_{T,N} \big) = \int\! \mf R_{T,N}(d\bs \pi, d\bs J) \,
\Big( \bs\pi(t), \frac {\bs J(t)}{t} \Big) = \big(\pi_{T,N},
J_{T,N}\big)-\frac{1}{T}\big(0, \mathcal E_{T,N}\big)\,.
\end{equation}
Since by \eqref{djj} the error term $\mathcal E_{T,N}$ is uniformly
bounded in $N$ and $T$, it is irrelevant in the large deviations
asymptotics for $T\to +\infty$. We can therefore deduce the large
deviations for the pair $\big(\pi_{T,N}, J_{T,N}\big)$ from the large
deviations for $\psi\big(\mf R_{T,N} \big)$.

Since $\psi$ is not continuous the result does not follow directly
from Theorem \ref{cor01} and the contraction principle. However, in
the terminology of \cite[\S~4.2.2]{DZ}, it possible to approximate
$\psi$ by a sequence of continuous functions and construct
exponentially good approximations of the family
$\big( \bb P^N_\eta \circ \big(\pi_{T,N}, J_{T,N} \big)^{-1} \colon
T>0,\, N\ge 1\big)$.  We obtain in this way the result and observe
that the rate functional is given by
$I_m(\pi,J)=\inf\left\{{\bf I}_m(P): P\in \ms{P}_{\rm stat},
\psi(P)=(\pi,J)\right\}$.
\end{proof}  

The next result concerns the projection on the density for the level
one large deviations functional.

\begin{proof}[Proof of Theorem~\ref{t:I1}]
In the case $E=-\nabla U$, developing the square in formula
\eqref{hyld} we have that the cross term
\begin{equation*}
\int_0^Tdt\int_{\mathbb T^d}dx\, \frac{{\bs j}\cdot
\left(\nabla{\bs \rho} + 
\sigma({\bs \rho}) \nabla U \right)}{2\sigma({\bs
\rho})}
\end{equation*}
after an integration by parts and using the continuity equation
coincides with
\begin{equation}
\label{telescx}
\frac 12 \int_{\mathbb T^d}dx\,\left[h(\bs \rho(T))-h(\bs
\rho(0))-\left(\bs \rho(T)-\bs \rho(0)\right)U\right]\,,
\end{equation}
where $h(\rho)=\rho\log\rho +(1-\rho)\log (1-\rho)$. By stationarity
the expected value of \eqref{telescx} with respect to any
$P\in \ms P_{\rm stat}$ is zero. We have therefore that when
$E=-\nabla U$
\begin{equation}
\label{fefin}
\bs I_m(P)=\frac {1}{T}\,
E_P \bigg[ \int_{0}^T dt \int_{\bb T^d} dx\,
\Big( \frac{|\, \nabla \bs\rho
+ \sigma(\bs \rho)\nabla U\,|^2}
{4\, \sigma(\bs\rho)} +
\frac{|\, \bs j\,|^2}{4\, \sigma(\bs \rho)} \Big)
\bigg] \;.
\end{equation}
Consider a $P\in \ms P_{\rm stat}$ and call
$\wp\in \ms P\left(\mathcal M_m(\mathbb T^d)\right)$ its 1-marginal
$\wp=P\circ \imath_t^{-1}$ (see notation above formula \eqref{Is}).
Let $A\colon\mathcal M_+(\mathbb T^d)\to \ms S$ be the map that
associate to $\pi\in \mathcal M_+(\mathbb T^d) $ the element
$(\bs \pi, \bs J)\in \ms S$ defined by $\bs \pi(t)=\pi$ and
$\bs J(t)=0$ for any $t\in[0,T]$. Finally let us define
$\tilde P\in \ms P_{\rm stat}$ as $\wp\circ A^{-1}$.
	
Since the second term in the right hand side of \eqref{fefin} is non
negative and since
$P\circ \imath_t^{-1}=\tilde P\circ \imath_t^{-1}=\wp$, we deduce
$$
\bs I_m(P)\geq \bs I_m(\tilde P)=E_\wp \bigg[ \int_{\bb T^d} dx\,
\Big( \frac{|\, \nabla \rho+\sigma( \rho)\nabla U\,|^2}{4\,
\sigma(\rho)} \Big) \bigg]=E_\wp\left[\ms V_m\right]\,.
$$
We have therefore that
$$
I^{(1)}_m(\pi)=\inf \left\{E_\wp(\ms V_m)\;\ \wp\in \ms
P\left(\mathcal M_m(\mathbb T^d)\right) \,
E_\wp(\pi')=\pi\right\}=\mathrm{co}(\ms V_m)(\pi)\,,
$$
the last equality follows since in the middle we have one of the
possible definitions of convex hull.  Since $\sigma$ is concave, in
the case $\nabla U=0$ we have that $\ms V_m$ is convex and therefore
$\mathrm{co}(\ms V_m)=\ms V_m$.
\end{proof}

Finally, we give a sketch of the proof of Theorem \ref{tdpt}. This is
based on analysis in \cite{bdgjl06, bdgjl07, bd} and we just show how
to deduce the result based on the arguments there.

\begin{proof}[Proof of Theorem~\ref{tdpt}]

Let us call $(\bs \pi^*, \bs J^*)$ the element of $\ms S_m$ defined by
$\bs \pi^*(t)=m$ and $\bs J^*(t)=jt$.  The result is obtained by the
analysis of the action functional $A_{m,T}$ \eqref{hyld}. In the case
$(i)$ for $E\leq E_0$ by \cite{bdgjl06, bdgjl07, bd} we have that for
any $(\bs \pi, \bs J)\in \ms S_m$ such that $\bs J(T)=jT$ it holds
$A_{m,T}(\bs \pi^*, \bs J^*)\leq A_{m,T}(\bs \pi, \bs J)$ and this
allows to deduce that $\delta_{(\bs \pi^*, \bs J^*)}$ is the minimizer
in \eqref{vfc}.

In the case $(ii)$ for $E>E_1$ it is possible to construct
\cite{bdgjl06, bdgjl07, bd} a time dependent
$(\bs \pi, \bs J)\in \ms S_m$, that has indeed the structure of a
traveling wave, such that $\bs J(T)=jT$ and
$A_{m,T}(\bs \pi^*, \bs J^*)> A_{m,T}(\bs \pi, \bs J)$. Considering
$P\in \ms P_{\rm stat}$ defined by
$P=\frac 1T\int_0^T dt\delta_{\theta_t(\bs \pi, \bs J)}$ we have
therefore that $\bs I_m(P)<\bs I_m(\delta_{(\bs \pi^*, \bs
J^*)})$. 
\end{proof}

\section{Uniqueness of periodic solutions}
\label{sec06}

Fix $T>0$.  Throughout this section,
$F: \bb R \times \bb T^d \to \bb R^d$ is a smooth, $T$-periodic vector
field. We investigate in this section the asymptotic behavior of
solutions to the Cauchy problem
\begin{equation}
\label{6.04}
\left\{
\begin{aligned}
&  \partial_t \bs u \;=\; \Delta \bs u \,
+ \, \nabla \cdot [\, \sigma (\bs u) \,
F \,] \\
& \bs u(0,\cdot) = u_0(\cdot)\;,
\end{aligned}
\right.
\end{equation}
where the initial condition $u_0 : \bb T^d \to [0,1]$ is such that
$0\le u_0(x) \le 1$ for all $x\in \bb T^d$.

Existence of weak solutions is provided by the hydrodynamic limit of
WASEP. This argument shows that the solution takes value in the
interval $[0,1]$. These bounds can be derived also from the maximum
principle and the observation that $\sigma(1) = \sigma (0)=0$.  By
parabolic regularity, a weak solution is smooth in
$(0,\infty) \times \bb T^d$.  Uniqueness is derived as in \cite[Lemma
7.2]{flm}. The proof of this lemma yields that the $L^1(\bb T^d)$ norm
of the difference of two weak solutions does not increase in time. The
main result of this section strengthen this lemma and asserts that the
$L^1(\bb T^d)$ distance of two different weak solutions decreases in
time. It reads as follows.

\begin{theorem}
\label{up01}
Let $F: \bb R \times \bb T^d \to \bb R^d$ be a smooth, $T$-periodic
vector field.  For each $m\in [0,1]$, the equation
\begin{equation*}
\partial_t \bs u \;=\; \Delta \bs u \, + \, \nabla \cdot [\, \sigma (\bs u) \,
F \,]\;. 
\end{equation*}
admits a unique $T$-periodic solution
$\bs u: \bb R \times \bb T^d \to [0,1]$ such that
$\int_{\bb T^d} \bs u(t,x)\, dx =m$ and $0\le \bs u(t,x) \le 1$ for all
$t$. This solution is represented by $\bs u^{(m)}$.  Moreover, for each
$u_0 : \bb T^d \to [0,1]$ such that $\int_{\bb T^d} u_0(x)\, dx =m$,
$0\le u_0(x) \le 1$ for all $x\in\bb T^d$, the unique weak solution of
\eqref{6.04} converges to $\bs u^{(m)}$ in $L^1(\bb T^d)$ as $t\to\infty$.
\end{theorem}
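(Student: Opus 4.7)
The plan is to establish existence of a $T$-periodic solution via Schauder's fixed point theorem and to derive both uniqueness and convergence from a strict $L^1$ contraction of the evolution over any positive time. For existence, let $K_m := \{u \in L^1(\bb T^d) : 0 \le u \le 1,\ \int u\, dx = m\}$ and define the Poincar\'e map $\Pi \colon K_m \to K_m$ by $\Pi(u_0) := u(T,\cdot)$, where $u$ solves the Cauchy problem \eqref{6.04} with initial datum $u_0$. Parabolic smoothing makes $\Pi(K_m)$ pre-compact in $L^1$, and the $L^1$ non-increase below renders $\Pi$ continuous, so Schauder's theorem yields a fixed point whose $T$-periodic extension defines $\bs u^{(m)}$.

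For the contraction, given two smooth solutions $u_1, u_2$ with equal mass $m$, set $w := u_1 - u_2$ and use the identity $\sigma(u_1) - \sigma(u_2) = (1 - u_1 - u_2)(u_1 - u_2)$ to see that $w$ satisfies the linear parabolic equation
\[ \partial_t w \,=\, \Delta w \,+\, \nabla\cdot(c\, w\, F), \qquad c := 1 - u_1 - u_2. \]
Computing $\tfrac{d}{dt}\int \phi_\epsilon(w)\,dx$ with $\phi_\epsilon(r) := \sqrt{r^2+\epsilon^2}-\epsilon$, completing the square in the drift, and invoking $\phi_\epsilon''(w) w^2 \le \epsilon$, sending $\epsilon \to 0$ yields $\tfrac{d}{dt}\|w(t)\|_{L^1} \le 0$. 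The classical maximum principle for the linear equation above delivers the order-preserving property: $u_1(0) \le u_2(0)$ a.e.\ implies $u_1(t) \le u_2(t)$ a.e.\ for all $t\ge 0$.

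The key step is strict contraction, to be obtained through a sandwich argument. Assuming $u_1(0) \neq u_2(0)$ with equal mass, introduce the solutions $\tilde u, \hat u$ starting from $u_1(0) \vee u_2(0)$ and $u_1(0) \wedge u_2(0)$ respectively; order preservation gives $\hat u(t) \le u_i(t) \le \tilde u(t)$ for $i=1,2$. The difference $v := \tilde u - u_1$ is non-negative, non-trivial, and for $t>0$ solves a linear parabolic equation with smooth bounded coefficients, so the strong maximum principle forces $v(t,x) > 0$ for every $t > 0$, $x\in\bb T^d$; the same reasoning applies to $\tilde u - u_2$ and, symmetrically, to $u_i - \hat u$. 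Hence $\tilde u(t) > u_1(t) \vee u_2(t)$ and $\hat u(t) < u_1(t) \wedge u_2(t)$ pointwise for $t>0$. Since mass conservation gives $\int (\tilde u(t) - \hat u(t))\,dx = \|u_1(0) - u_2(0)\|_{L^1}$ while $\tilde u(t) - \hat u(t) > |u_1(t) - u_2(t)|$ pointwise, integration produces $\|u_1(t) - u_2(t)\|_{L^1} < \|u_1(0) - u_2(0)\|_{L^1}$ for every $t>0$.

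Uniqueness of $\bs u^{(m)}$ then follows from $T$-periodicity: two $T$-periodic solutions of mass $m$ would have $\|\bs u_1(0) - \bs u_2(0)\|_{L^1} = \|\bs u_1(T) - \bs u_2(T)\|_{L^1}$, forcing equality. For convergence, the sequence $\{u(nT;u_0) : n\ge 1\}$ is pre-compact in $L^1$ by parabolic regularity; any cluster point $\bar u$ generates, via the $T$-periodicity of $F$ and continuous dependence, a trajectory at $L^1$-distance to $\bs u^{(m)}$ equal to $a := \lim_t \|u(t;u_0) - \bs u^{(m)}(t)\|_{L^1}$ at every time, which the strict contraction forces to coincide with $\bs u^{(m)}$, so $a = 0$. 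The hard part will be the rigorous application of the strong maximum principle in the sandwich step: since the coefficients of the linear equation for $v = \tilde u - u_1$ depend on the pair of solutions themselves, one must first invoke parabolic regularity (valid because $F$ is smooth) to guarantee that these coefficients are smooth and bounded, after which the classical SMP applies.
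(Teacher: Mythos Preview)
Your argument is correct and self-contained within PDE theory, but it follows a genuinely different route from the paper. The paper linearizes the difference $v=\bs u_2-\bs u_1$ exactly as you do, then splits $v(0)=v^+-v^-$ into normalized probability densities and observes that each piece evolves by the Fokker--Planck equation $\partial_t w=\Delta w-\nabla\cdot(wG)$ with $G=-(1-\bs u_1-\bs u_2)F$. Using the Lindvall--Rogers reflection coupling for the associated diffusion on $\bb T^d$, the paper obtains an \emph{exponential} bound $\|\bs u_1(t)-\bs u_2(t)\|_{L^1}\le Ae^{-\lambda t}$ with constants depending only on $\|F\|_\infty$; existence of the periodic solution then follows because the iterates $\mf P^n u_0$ form a Cauchy sequence, and convergence is immediate.

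By contrast you obtain existence via Schauder, non-expansion via a Kato-type computation, and then a \emph{strict} (but not quantitative) contraction through the sandwich $\hat u\le u_i\le\tilde u$ combined with the strong maximum principle; convergence is deduced by an $\omega$-limit argument. The trade-off is clear: the paper's probabilistic coupling gives an explicit exponential rate uniform in the pair of solutions (potentially useful if one needed quantitative control), while your approach stays entirely within classical parabolic PDE tools and avoids the stochastic machinery. For the purposes of the paper---which only invokes Theorem~\ref{up01} through Lemma~\ref{le07} and needs merely uniqueness and $L^1$ convergence---your argument suffices.
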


The proof of this result relies on a method of coupling of two
diffusions due to Lindvall and Rogers \cite{lr86}.

\subsection{Coupling diffusions.}
Let $G: \bb R_+ \times \bb T^d \to \bb R^d$ be a smooth vector field
uniformly bounded: there exists $C_0<\infty$ such that
$\sup_{(t,x) \in \bb R_+ \times \bb T^d} \Vert \, G(t,x)\,\Vert \le
C_0$.

Denote by $\ms L_t$ the time-dependent generator
\begin{equation}
\label{6.06}
\ms L_t f \;=\;   \Delta f \, + \, \nabla f \cdot
G_t \;,\quad f \,\in\, C^2(\bb T^d)\;.
\end{equation}
Let $\color{bblue}(Z^x_t : t\ge 0)$, $x\in \bb T^d$, be the
$\bb T^d$-valued, continuous-time Markov process whose generator is $\ms
L_t$ and which starts from $x$.

Recall that a coupling between $Z^x_t$ and $Z^y_t$ is a process
$(\widetilde Z^x_t, \widetilde Z^y_t)$ whose first (resp. second)
coordinate evolves as $Z^x_t$ (resp. $Z^y_t$). The coupling time,
denoted by $\tau^Z_{x,y}$, is the first time at which the processes
meet:
\begin{equation*}
\tau^Z_{x,y} \;:=\; \inf\big\{\, t>0 : \widetilde Z^{x}_t \,=\,
\widetilde Z^{y}_t \,\big\}\;. 
\end{equation*}

Next result relies on the Lindvall--Rogers coupling \cite{lr86}.

\begin{proposition}
\label{ul01}
There exist constants $A<\infty$ and $\lambda>0$, which depends only
on $\sup_{(t,x) \in \bb R_+ \times \bb T^d} \Vert\, G(t,x)\,\Vert$, and, for
each $x$, $y\in\bb T^d$, a coupling between $Z^x_t$, $Z^y_t$, such
that
\begin{equation*}
\sup_{x, y\in \bb T^d} P\big[\, \tau^Z_{x,y} \,\ge  \, t \, \big]
\; \le \; A\, e^{-\lambda t}   
\end{equation*}
for all $t\ge 0$.
\end{proposition}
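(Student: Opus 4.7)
The plan is to establish a Doeblin-type minorization for the time-inhomogeneous diffusion $(Z^x_t)$ and iterate it to produce exponential tails for the coupling time. Lindvall--Rogers-type constructions guarantee that the minorization can be realized by a measurable coupling of the two processes.

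First I would derive a uniform lower bound on the transition density $p_{s,s+1}(x,y)$ of the diffusion with generator $\ms L_t$:
\begin{equation*}
  p_{s,s+1}(x,y) \;\ge\; c_0 \;>\; 0, \qquad s\ge 0,\; x,y\in \bb T^d,
\end{equation*}
where $c_0$ depends only on $C_0=\sup\Vert G\Vert$. This follows from Girsanov's formula: comparing the law of $(Z^x_t)_{t\in[s,s+1]}$ with Brownian motion on $\bb T^d$, the Radon--Nikodym derivative is an exponential martingale whose log is bounded in $L^p$ solely in terms of $C_0$ (since $G$ enters only through $\int G\cdot dB -\tfrac12\int |G|^2 dt$). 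Combined with the Gaussian lower bound for the heat kernel on the torus, this yields the uniform minorization.

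Given the minorization, for each $x,y\in \bb T^d$ I would construct the coupling iteratively on unit time intervals. On $[0,1]$, let the two marginal laws be $\mu^x,\mu^y$; the minorization implies $\mu^x\wedge \mu^y(\bb T^d)\ge c_0$, hence $\Vert \mu^x-\mu^y\Vert_{\rm TV}\le 1-c_0$. A maximal coupling of $(\widetilde Z^x_1,\widetilde Z^y_1)$ with the prescribed marginals therefore satisfies $P[\widetilde Z^x_1=\widetilde Z^y_1]\ge c_0$; the joint path on $[0,1]$ with the correct marginal laws is obtained, for instance, by the Lindvall--Rogers mirror-coupling SDE on small balls together with synchronous evolution once the processes meet. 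If the two processes coincide at time $1$, let them evolve together afterwards, driven by a common Brownian motion. Otherwise, restart the procedure on $[1,2]$ from the new positions.

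The Markov property together with the fact that $c_0$ is uniform in the starting time $s$ then yields $P[\tau^Z_{x,y}>k]\le (1-c_0)^k$ by induction, from which the claimed bound follows with $A=1/(1-c_0)$ and $\lambda=-\log(1-c_0)$, both depending only on $C_0$. The only delicate step is the minorization in the first paragraph: one must ensure that the estimate depends on no more than $\Vert G\Vert_\infty$ and not on derivatives of $G$, which is precisely what the Girsanov representation provides since it never differentiates the drift.
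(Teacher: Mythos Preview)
Your approach is correct but takes a genuinely different route from the paper's. You argue via a uniform Doeblin minorization $p_{s,s+1}(x,\cdot)\ge c_0\,dx$ obtained from Girsanov and the heat-kernel lower bound on $\bb T^d$, then realize it by a maximal coupling at each integer time and iterate via the Markov property. The paper instead implements the Lindvall--Rogers reflection coupling explicitly: it lifts the diffusion to $\bb R^d$ with the periodized drift, couples $X^x$ and $X^y$ by reflection so that the Euclidean distance $D_t=\|X^x_t-X^y_t\|$ solves a one-dimensional SDE $dD_t=2\,dB_t+(\text{bounded drift})\,dt$, projects back to $\bb T^d$ (which amounts to reflecting the distance at $\sqrt{d}/2$), and then dominates the projected distance by a one-dimensional diffusion on a bounded interval with constant drift $M=\|G\|_\infty$, absorbed at $0$. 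The absorption time of this comparison process has finite mean depending only on $M$, and the exponential tail follows by iterating the Markov property.

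Your argument is more abstract and portable---it applies to any time-inhomogeneous Markov process on a compact state space once a uniform one-step minorization is in hand---while the paper's construction yields an explicit pathwise coupling with a transparent one-dimensional comparison. One small point: your Girsanov sketch is terse. Passing from $L^p$ control of $\log M$ to a \emph{pointwise} lower bound on the transition density requires either disintegrating over the endpoint (bridge measure) and applying Jensen's inequality, or else invoking Aronson-type two-sided Gaussian bounds for uniformly parabolic operators with bounded drift; both routes depend only on $\|G\|_\infty$, as you claim, but it is worth naming which one you use.
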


Let $\color{bblue} \bb P_{x}$ be the probability measure on
$C(\bb R_+, \bb T^d)$ induced by the diffusion associated to the
generator $\ms L_t$ starting from $x\in \bb T^d$.  Expectation with
respect to $\bb P_{x}$ is represented by $\color{bblue} \bb E_{x}$.

\begin{corollary}
\label{up02}
There exist constants $A<\infty$ and $\lambda>0$, which depends only
on $\sup_{(t,x) \in \bb R_+ \times \bb T^d} \Vert\, G(t,x)\,\Vert$,  such that
\begin{equation*}
\sup_{x, y\in \bb T^d}
\Big|\, \bb E_x\big[\, f(Z_t)\, \big]
\,-\, \bb E_y\big[\, f(Z_t)\, \big]\, \Big|
\; \le \; A \, e^{-\lambda t} \, \Vert f\Vert_\infty   
\end{equation*}
for every $f\in C(\bb T^d)$.
\end{corollary}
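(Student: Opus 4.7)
The plan is to derive this directly from the coupling bound in Proposition~\ref{ul01} by a standard one-line argument. Let $x,y\in\bb T^d$ be given and consider the coupling $(\widetilde Z^x_t, \widetilde Z^y_t)$ provided by Proposition~\ref{ul01}. Without loss of generality I may assume that the two coordinates coincide for all times $t\ge \tau^Z_{x,y}$; indeed, any coupling satisfying Proposition~\ref{ul01} can be modified on the event $\{\tau^Z_{x,y}<\infty\}$ by requiring that after the meeting time both coordinates evolve jointly as a single diffusion driven by $\ms L_t$, a modification which preserves the marginals since at the meeting time the two coordinates are equal.

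With such a coupling in place, by the marginal property we have for every $f\in C(\bb T^d)$
\begin{equation*}
\bb E_x[f(Z_t)] - \bb E_y[f(Z_t)] \;=\; E\big[f(\widetilde Z^x_t) - f(\widetilde Z^y_t)\big]
\;=\; E\big[\big(f(\widetilde Z^x_t) - f(\widetilde Z^y_t)\big)\, \bs 1\{\tau^Z_{x,y} > t\}\big]\;,
\end{equation*}
where the second equality uses that $\widetilde Z^x_t = \widetilde Z^y_t$ on $\{\tau^Z_{x,y}\le t\}$. Bounding the integrand by $2\|f\|_\infty \bs 1\{\tau^Z_{x,y}> t\}$ and applying the estimate of Proposition~\ref{ul01} yields
\begin{equation*}
\Big|\bb E_x[f(Z_t)] - \bb E_y[f(Z_t)] \Big| \;\le\; 2\,\|f\|_\infty\, P\big[\tau^Z_{x,y} > t\big] \;\le\; 2A\, e^{-\lambda t}\, \|f\|_\infty\;,
\end{equation*}
uniformly in $x,y\in\bb T^d$, which is the claim (after absorbing the factor $2$ into $A$).

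There is no real obstacle here: the only subtlety is the legitimacy of assuming that the coupled processes stick together after the meeting time, and this is a standard and harmless modification of any coupling satisfying the bound of Proposition~\ref{ul01}.
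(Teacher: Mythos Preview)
Your proof is correct and takes essentially the same approach as the paper: both bound $|\bb E_x[f(Z_t)]-\bb E_y[f(Z_t)]|$ by $2\|f\|_\infty\,P[\tau^Z_{x,y}\ge t]$ via the coupling and then invoke Proposition~\ref{ul01}. The paper's proof is the one-line version of yours; your added remark about modifying the coupling so the coordinates stick together after meeting is the standard justification the paper leaves implicit.
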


\begin{proof}
Since the difference may be written as
\begin{equation}
\label{6.07}
\Big|\, E\big[\, f(\widetilde Z^x_t)\,
-\, f(\widetilde Z^y_t)\, \big]\, \Big| \;\le\;
2\, \Vert f\Vert_\infty\, P\big[\, \tau^Z_{x,y} \,\ge \, t \, \big]\;,
\end{equation}
the assertion is an elementary consequence of Proposition \ref{ul01}.
\end{proof}

\begin{proof}[Proof of Proposition \ref{ul01}]
Denote by $\color{bblue}(W_t : t\ge 0)$, the standard Brownian motion
on $\bb R^d$.  Let
$\color{bblue} b: \bb R_+ \times \bb R^d \to \bb R^d$ be the
$\bb T^d$-periodic vector field whose restriction to $\bb T^d$
coincides with $G$. Denote by $X^x_t$ the solutions of the SDE
\begin{equation*}
\left\{
\begin{aligned}
& dX^x_t \;=\;  c(t,X^x_t)\, dt \,+\, dW_t \;, \\
& X^x_0 = x\;,
\end{aligned}
\right.
\end{equation*}
where $\color{bblue} c(t,x) = (1/2) \, b(t,x)$.  For each $x\in \bb R^d$,
$X^x_t$ is a diffusion on $\bb R^d$ whose time-dependent generator,
denoted by $\ms A_t$, is given by
\begin{equation*}
\ms A_t f \;=\; (1/2)\,  
\Delta f \, + \, \nabla f \cdot  c_t  \;,\quad
f \,\in\, C^2_0(\bb R^d)\;,
\end{equation*}
where $C^2_0(\bb R^d)$ stands for the twice continuously
differentiable functions with compact support. We replaced $b(t,x)$ by
$c(t,x)$ in order to have a simple relation between the generators
$\ms A_t$ and $\ms L_t$.

Fix $x$, $y\in \bb T^d$. Lindvall and Rogers \cite{lr86} provide a
coupling between $X^x_t$ and $X^y_t$, represented by
$(\widetilde X^x_t, \widetilde X^y_t)$, such that, before hitting the
origin, $D_t := \Vert \widetilde X^x_t - \widetilde X^y_t\Vert$
evolves as
\begin{equation}
\label{6.09}
dD_t \;=\; 2\, dB_t \;+\; \big\< \frac{\widetilde X^x_t - \widetilde
X^y_t}{D_t} \, ,\, c(t, \widetilde X^x_t) - c(t, \widetilde X^y_t) \,
\big \> \, dt\;,
\end{equation}
where $\< \,\cdot \,,\, \cdot\, \>$ stands for the scalar product in
$\bb R^d$, and $B_t$ is a one-dimensional Brownian motion. Note that
the drift term is bounded.

Denote by $(\widetilde Z^x_t , \widetilde Z^y_t)$ the projection of
the process $(\widetilde X^x_t , \widetilde X^y_t)$ on
$\bb T^d \times \bb T^d$. Each coordinate of the pair
$(\widetilde Z^x_t , \widetilde Z^y_t)$ is a Markov process whose
generator is equal to $(1/2) \ms L_t$. Hence,
$(\widetilde Z^x_{2t} , \widetilde Z^y_{2t})$ is a coupling to
$Z^x_t$, $Z^y_t$, and, to prove the proposition, it is enough to show
that there exist constants $A<\infty$ and $\lambda>0$, which depends
only on
$\sup_{(t,x) \in \bb R_+ \times \bb T^d} \Vert\, G(t,x)\,\Vert$,  such
that
\begin{equation}
\label{6.10}
\sup_{x, y\in \bb T^d} P\big[\, \tau^{\widetilde Z}_{x,y} \,\ge  \, t \, \big]
\; \le \; (1/2)\, A\, e^{-\lambda t}   
\end{equation}
for all $t\ge 0$, where $\tau^{\widetilde Z}_{x,y}$ is the first time
the processes $(\widetilde Z^x_t , \widetilde Z^y_t)$ meet.

By construction, before hitting $0$,
$\widetilde D_t := \Vert \widetilde Z^x_t - \widetilde Z^y_t\Vert$
evolves as $D_t$, except when $D_t$ attains the maximal distance
between two points in $\bb T^d$, that is, when $D_t$ hits
$\color{bblue} L:=\sqrt{d}/2$, in which case $\widetilde D_t$ is
reflected, while $D_t$ evolves according to \eqref{6.09}.

Let
$\color{bblue} M:= 2 \sup_{(t,x)\in \bb R_+ \times \bb R^d} \Vert\,
c(t,x) \,\Vert = \sup_{(t,x)\in \bb R_+ \times \bb R^d} \Vert\, b(t,x)
\,\Vert$. By definition of $b$,
$M = \sup_{(t,x)\in \bb R_+ \times \bb T^d} \Vert\, G(t,x)
\,\Vert$. Moreover, for all $z$ such that $\Vert z\Vert = 1$,
$|\, \< z \, ,\, c(t, \widetilde X^x_t) - c(t, \widetilde X^y_t) \,
\big \>\,| \le M$.

Let $\widehat D_t$ the diffusion on $[0,L+1]$ which is absorbed at the
origin, reflected at $L+1$ and which evolves according to the SDE
\begin{equation*}
d \widehat D_t \;=\; 2\, dB_t \;+\; M \, dt\;.
\end{equation*}
By the previous bound on the drift term of $D_t$, we may couple
$\widetilde D_t$ and $\widehat D_t$ in such a way that
$\widetilde D_t \le \widehat D_t$ for all $t\ge 0$, almost surely,
provided $\widetilde D_0 \le \widehat D_0$. In particular,
$\widetilde D_0$ hits the origin before $\widehat D_0$. Therefore, the
coupling time of $(\widetilde Z^x_t , \widetilde Z^y_t)$ is bounded
above by the absorption time of $\widehat D_t$, represented by
$H^r_0$, where $r$ stands for the initial state.
An elementary computation yields that there exists a finite constant
$T_0$, depending only on $M$ and $L$ such that
\begin{equation*}
\sup_{r\in [0,L+1]} E [\, H^r_0 \,] \;\le\; T_0
\quad\text{so that}\quad \sup_{r\in [0,L+1]} P [\, H^r_0 > 2T_0 \,]
\;\le\; \frac{1}{2}\;\cdot
\end{equation*}
In consequence, 
\begin{equation*}
\sup_{x,y\in \bb T^d}
P\big[\, \tau^{\widetilde Z}_{x,y} \,
\ge \, 2T_0\, \big]\;\le\; \frac{1}{2}  \;\cdot
\end{equation*}
To complete the proof of \eqref{6.10} [and the one of the
proposition], it remains to apply the Markov property.
\end{proof}

\subsection{Asymptotic behavior of linear parabolic equations.}
Let $G: \bb R_+ \times \bb T^d \to \bb R^d$ be a smooth vector field
satisfying the hypotheses stated in the previous subsection.

\begin{proposition}
\label{up03}
Fix two probability densities $w_1$, $w_2$ on $\bb T^d$,
$w_j \colon \bb T^d\to \bb R_+$, $\int_{\bb T^d} w_j(x)\, dx =1$,
$j=1$, $2$. Denote by
$\bs w_j \colon \bb R_+ \times \bb T^d \to \bb R_+$ the unique weak
solution of the linear parabolic equation
\begin{equation}
\label{6.05}
\left\{
\begin{aligned}
&  \partial_t \bs w_j \;=\; \Delta \bs w_j
\, - \, \nabla \cdot [\, \bs w_j \, G \,] \\
& \bs w_j(0,\cdot) = w_j(\cdot)
\end{aligned}
\right.
\end{equation}
Then, there are $A<\infty$ and $\lambda>0$, which depends
only on $\sup_{(t,x) \in \bb R_+ \times \bb T^d} \Vert\, G(t,x)\,\Vert$, such
that
\begin{equation*}
\int \big|\,  \bs w_2(t,x) \,-\, \bs w_1(t,x) \, \big|\, dx \;\le\; A\,
e^{-\lambda t}
\end{equation*}
for all $t\ge 0$.
\end{proposition}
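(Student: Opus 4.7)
The plan is to exploit the duality between the Fokker--Planck equation \eqref{6.05} and the Markov semigroup generated by $\ms L_t$ in \eqref{6.06}, and then to invoke the coupling estimate provided by Corollary~\ref{up02}.

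First I would observe that expanding $\nabla\cdot[\bs w_j\, G]$ rewrites \eqref{6.05} as $\partial_t \bs w_j = \Delta \bs w_j - G_t\cdot\nabla \bs w_j - (\nabla\cdot G_t)\, \bs w_j$, which is precisely the forward Kolmogorov equation associated to the generator $\ms L_t$ of \eqref{6.06}. By the smoothness of $G$ and classical parabolic regularity on the torus, $\bs w_j(t,\cdot)$ can be identified with the density of the law of $Z^Y_t$ when $Y$ is sampled from $w_j(x)\,dx$. This yields the key duality identity
\begin{equation*}
\int_{\bb T^d} f(x)\, \bs w_j(t,x)\, dx \;=\; \int_{\bb T^d} \bb E_x\big[\, f(Z_t)\, \big]\, w_j(x)\, dx
\end{equation*}
for every $f\in C(\bb T^d)$.

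Next I would express the $L^1(\bb T^d)$ distance via $L^1$-$L^\infty$ duality. Since $w_1,w_2$ are probability densities, $\int (w_2-w_1)\,dx = 0$, so the Hahn--Jordan decomposition $w_2 - w_1 = (w_2-w_1)_+ - (w_2-w_1)_-$ has matching masses $c := \int (w_2-w_1)_+\, dx = \int (w_2-w_1)_-\, dx \le 1$. Assuming $c>0$ (otherwise the bound is trivial), introduce the probability measures $\mu_\pm := c^{-1}(w_2-w_1)_\pm\, dx$. Combining the Jordan decomposition with the previous duality identity leads to
\begin{equation*}
\big\Vert \bs w_2(t,\cdot) - \bs w_1(t,\cdot) \big\Vert_{L^1} \;=\; c\, \sup_{\Vert f\Vert_\infty\le 1} \int\!\!\!\int \Big\{ \bb E_x[f(Z_t)] - \bb E_y[f(Z_t)] \Big\}\, \mu_+(dx)\, \mu_-(dy)\;.
\end{equation*}
Applying Corollary~\ref{up02} to the integrand uniformly in $x,y$ and using $c\le 1$ bounds the right-hand side by $A\, e^{-\lambda t}$, yielding the claim with the same constants as in that corollary.

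The substantive content is already packaged in the coupling bound of Corollary~\ref{up02}, so no serious obstacle is anticipated. The only point requiring some care is the probabilistic identification of $\bs w_j$ with the time-$t$ marginal of the diffusion $Z_t$ starting from $w_j$, but this is standard given the smoothness of $G$ and the compactness of $\bb T^d$: the Kolmogorov forward and backward equations admit unique smooth solutions that agree with the respective transition kernels of the diffusion generated by $\ms L_t$.
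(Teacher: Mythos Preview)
Your proposal is correct and follows essentially the same route as the paper: identify $\bs w_j(t,\cdot)$ with the time-$t$ law of the diffusion $Z_t$ started from $w_j$, pass to the dual pairing with bounded continuous $f$, and conclude via the coupling estimate. The only cosmetic difference is that the paper writes the difference directly as $\int\!\!\int w_1(x)\,w_2(y)\,\{\bb E_x[f(Z_t)]-\bb E_y[f(Z_t)]\}\,dx\,dy$ (using that both $w_j$ integrate to one) and then appeals to \eqref{6.07} and Proposition~\ref{ul01}, whereas you use the Hahn--Jordan decomposition of $w_2-w_1$ and invoke Corollary~\ref{up02}; both lead to the same bound.
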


\begin{proof}
Recall the definition of the diffusions $(Z^x_t : t\ge 0)$, $x\in \bb
T^d$, introduced in the previous subsection. Denote its transition
probability by $p_t(x,dy) = p_t(x,y) dy$ so that
\begin{equation*}
\bb E_x[\, f(Z_t)\,] \;=\; \int p_t(x,y) \, f(y)\, dy
\end{equation*}
for all functions $f\in C(\bb T^d)$.

Since $\ms L_t$ is the generator of the diffusion $Z_t$,
for every function $f$ in $C^2(\bb T^d)$ and $t>0$,
\begin{equation*}
\bb E_x [\, f(Z_t)\,] \;=\; f(x) \;+\; \int_0^t \bb E_x [\, (\ms L_s
f) (Z_s)\,]\; ds\;.
\end{equation*}
integrating both sides of this identity with respect to $w_j(x) dx$
and integrating by parts yields that $\bs v_j(t,x) := \int_{\bb T^d}
w_j(y) p_t(y,x) dy$ solves \eqref{6.05} with initial condition
$\bs v_j(0,x) = w_j(x)$. By the uniqueness of weak solutions, 
\begin{equation*}
\bs w_j(t,x) \;=\; \int_{\bb T^d} w_j(y) \, p_t(y,x) \, dy \;.
\end{equation*}

Since
$\int \bs w_j(t,x) \, f(x)\, dx = \int w_j(x) \, \bb E_x[\, f(Z_t) \,] \,
dx $ for every continuous function $f: \bb T^d \to \bb R$,
as $w_j(x)$ is a probability density,
\begin{equation*}
\begin{aligned}
& \int_{\bb T^d} \bs w_2(t,x) \, f(x)\, dx \;-\;
\int_{\bb T^d} \bs w_1(t,x) \, f(x)\, dx  \\
&\quad \;=\;
\int_{\bb T^d} dx \, w_1(x) \, \int_{\bb T^d} dy \,
w_2(y) \,  \Big\{ \bb E_x\big[\, f(Z_t) \,\big]  \;-\;
\bb E_y\big[\, f(Z_t) \,\big] \,\Big\}\;.
\end{aligned}
\end{equation*}
Therefore, by \eqref{6.07}, for every $t\ge 0$,
\begin{equation*}
\sup_{f}\, \Big|\, \int \bs w_2(t,x) \, f(x)\, dx \;-\;
\int \bs w_1(t,x) \, f(x)\, dx \,\Big| \;\le \;
2\, \sup_{x, y\in \bb T^d}\, 
P\big[\, \tau^Z_{x,y} \,\ge \, t \, \big]\;,
\end{equation*}
where the supremum is carried over all continuous function $f$ such
that $\Vert f\Vert_\infty\le 1$. Hence,
\begin{equation*}
\int \big|\,  \bs w_2(t,x) \,-\, \bs w_1(t,x) \, \big|\, dx  \;\le \;
2\, \sup_{x,y\in \bb T^d}\, 
P\big[\, \tau^Z_{x,y} \,\ge \, t \, \big]\;,
\end{equation*}
and the assertion of the proposition follows from Proposition
\ref{ul01}.
\end{proof}

We turn to Theorem \ref{up01} whose proof relies on the following
estimate.

\begin{proposition}
\label{ul04}
There exist constants $A<\infty$ and $\lambda>0$, which depends only
on $\sup_{(t,x) \in [0,T] \times \bb T^d} \Vert\, F(t,x)\,\Vert$, with
the following property.  Fix $0<m<1$ and
$u_j \colon \bb T^d\to [0,1]$, $j=1$, $2$, such that
$\int_{\bb T^d} u_j(x)\, dx =m$. Denote by
$\bs u_j \colon \bb R_+ \times \bb T^d \to \bb R_+$ the unique weak
solution of \eqref{6.04} with initial condition $u_j$.  Then,
\begin{equation*}
\int \big|\,  \bs u_2(t,x) \,-\, \bs u_1(t,x) \, \big|\, dx \;\le \; A\,
e^{-\lambda t}
\end{equation*}
for all $t\ge 0$.
\end{proposition}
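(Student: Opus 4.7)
The strategy is to reduce this nonlinear stability estimate to the linear estimate already established in Proposition \ref{up03} by writing a linear evolution equation for the difference $w := \bs u_1 - \bs u_2$. Using the algebraic identity
\begin{equation*}
\sigma(\bs u_1) - \sigma(\bs u_2) \;=\; (\bs u_1 - \bs u_2)\,\big(1 - \bs u_1 - \bs u_2\big) \;=\; w\,\big(1 - \bs u_1 - \bs u_2\big),
\end{equation*}
subtracting the equations \eqref{6.04} for $\bs u_1$ and $\bs u_2$ gives
\begin{equation*}
\partial_t w \;=\; \Delta w \;-\; \nabla\cdot\big[\, w\, G^* \,\big]\,,\qquad
G^*(t,x)\,:=\, -\,\big(1 - \bs u_1(t,x) - \bs u_2(t,x)\big)\,F(t,x)\,.
\end{equation*}
The key point is that $G^*$ satisfies $\|G^*\|_\infty \le \|F\|_\infty$ because $\bs u_j\in[0,1]$; moreover, by parabolic regularity, $\bs u_1,\bs u_2$ are smooth on $(0,\infty)\times\bb T^d$, so that $G^*$ is a smooth, uniformly bounded vector field there.

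Since $\bs u_1, \bs u_2$ have the same mass $m$, the initial datum $w_0 = \bs u_1(0,\cdot) - \bs u_2(0,\cdot)$ satisfies $\int_{\bb T^d} w_0\,dx = 0$. Write $w_0 = w_0^+ - w_0^-$ and set $\alpha := \int w_0^+\,dx = \int w_0^-\,dx$; since $|w_0|\le 1$ and $\bb T^d$ has measure one, $\alpha\le 1$. If $\alpha=0$ then $w_0 \equiv 0$ and uniqueness for \eqref{6.04} forces $w\equiv 0$, so assume $\alpha>0$. Introduce the probability densities $\bs w_{1,0} := w_0^+/\alpha$ and $\bs w_{2,0} := w_0^-/\alpha$, and let $\bs v_1,\bs v_2$ be the unique weak solutions of the linear Fokker--Planck equation
\begin{equation*}
\partial_t \bs v \;=\; \Delta \bs v \;-\; \nabla\cdot\big[\,\bs v\, G^*\,\big]
\end{equation*}
with these initial data. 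By Proposition \ref{up03} applied with $G=G^*$, there exist constants $A,\lambda$ depending only on $\|G^*\|_\infty\le\|F\|_\infty$ such that $\int|\bs v_1(t,x)-\bs v_2(t,x)|\,dx \le A\,e^{-\lambda t}$ for every $t\ge 0$.

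To conclude, observe that the linear PDE above admits a unique weak solution for each $L^1$ initial datum. Both $w$ and $\alpha(\bs v_1-\bs v_2)$ solve it with the same initial condition $w_0$, and therefore coincide. Consequently
\begin{equation*}
\int \big|\,\bs u_1(t,x) - \bs u_2(t,x)\,\big|\,dx \;=\; \alpha\,\int\big|\,\bs v_1(t,x)-\bs v_2(t,x)\,\big|\,dx \;\le\; \alpha\, A\, e^{-\lambda t} \;\le\; A\, e^{-\lambda t}\,.
\end{equation*}

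The main obstacle is the mild regularity issue at $t=0$: the vector field $G^*$ is only bounded at the initial time (since the initial data $u_j$ are merely in $L^\infty$), whereas Proposition \ref{up03} is stated for smooth $G$. This is resolved by first applying the $L^1$-contraction property of the nonlinear equation (as in \cite[Lemma 7.2]{flm}), which yields $\|w(t)\|_{L^1}\le \|w_0\|_{L^1}\le 2$ for $t\in[0,1]$, and then applying the argument above on the interval $[1,\infty)$, where $G^*$ is genuinely smooth by parabolic regularization, at the cost of adjusting the prefactor $A$. The preservation of the quantitative constants is guaranteed by the fact that $\|G^*\|_\infty\le\|F\|_\infty$ uniformly in time, regardless of any smoothness bounds.
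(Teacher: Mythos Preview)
Your proof is correct and follows essentially the same route as the paper: linearize the difference via $\sigma(a)-\sigma(b)=(a-b)(1-a-b)$, split the zero-mean initial datum into its positive and negative parts, normalize each to a probability density, and invoke Proposition~\ref{up03}. The only addition is your final paragraph handling the lack of smoothness of $G^*$ at $t=0$, a point the paper leaves implicit.
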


\begin{proof}
Let $\bs v(t,x)=\bs u_2(t,x)-\bs u_1(t,x)$, so that
$\int_{\bb T^d} \bs v(t,x)\, dx =0$ for all $t\ge 0$.  Since
$\sigma(b) - \sigma(a) = (b-a)(1-a-b)$, $\bs v(t,x)$ solves the linear
equation
\begin{equation}
\label{6.08}
\partial_t \bs w \;=\; \Delta \bs w \, + \, \nabla \cdot [\, \bs w \,
G \,]\;,
\end{equation}
where $G$ is the vector field $G= (1-\bs u_1-\bs u_2) F$.

Let $v_0\colon \bb T^d \to \bb R$ be given by
$v_0(x) = u_2(x) - u_1(x)$. Denote by $v^+$, $v^-$, the positive,
negative part of $v_0$, respectively. Note that
$\int_{\bb T^d} v^+(x) dx = \int_{\bb T^d} v^-(x) dx =: m' \in [0,
m]$. If $m'=0$, $0 = v_0(x) = u_2(x) - u_1(x)$, and there is
nothing to prove. Assume that $m' >0$ and let $w_2(x) = v^+(x)/m'$,
$w_1(x) = v^-(x)/m'$ so that $w_j$ is the density of a probability
measure on $\bb T^d$.

Denote by $\bs w_j(t,x)$ the solution of \eqref{6.08} with initial
condition $w_j(x)$. By linearity $m'[\bs w_2(t,x) - \bs w_1(t,x)]$ solves
\eqref{6.08} with initial condition $m'[w_2(x) - w_1(x)] =
v_0(x)$. Since $\bs v(t,x)$ solves the same Cauchy problem, $\bs v(t,x) =
m'[\bs w_2(t,x) - \bs w_1(t,x)]$. Thus, as $m'\le 1$,
\begin{equation*}
\int \big|\,  \bs u_2(t,x) \,-\, \bs u_1(t,x) \, \big|\, dx \;=\;
\int \big|\,  \bs v(t,x) \, \big|\, dx \;\le\; 
\int \big|\,  \bs w_2(t,x) \,-\, \bs w_1(t,x) \, \big|\, dx \;.
\end{equation*}

To complete the proof, it remains to recall the statement of
Proposition \ref{up03}, and to observe that
$\sup_{(t,x) \in \bb R_+ \times \bb T^d} \Vert\, G(t,x)\,|\Vert \le
\Vert \, F\, \Vert_\infty$ because $\bs u_j(t,x)$ takes value in the
interval $[0,1]$.
\end{proof}

From this result we can deduce the first assertion of Theorem
\ref{up01}. 

\begin{corollary}
\label{ul05}
For each $m\in (0,1)$, the equation \eqref{6.04} admits a unique
$T$-periodic solution $\bs u: \bb R \times \bb T^d \to [0,1]$.
\end{corollary}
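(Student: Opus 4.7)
The plan is to deduce the corollary from Proposition~\ref{ul04} by a standard Poincar\'e-map argument. Let
$\color{bblue} \mc U_m := \{ u \in L^1(\bb T^d) : 0 \le u \le 1 \text{ a.e.}, \int u\, dx = m\}$, which is closed in $L^1(\bb T^d)$. By the existence theory of weak solutions (provided by the hydrodynamic limit of the WASEP) together with the conservation of mass and the maximum principle $0\le \bs u \le 1$, the $T$-shift map
\begin{equation*}
\Phi: \mc U_m \to \mc U_m, \qquad \Phi(u_0) := \bs u(T,\,\cdot\,),
\end{equation*}
where $\bs u$ solves \eqref{6.04} with initial datum $u_0$, is well defined. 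The remark recalled just before Theorem~\ref{up01} (non-increase of the $L^1$ norm of differences of solutions, as in \cite[Lemma 7.2]{flm}) yields that $\Phi$ is $1$-Lipschitz in $L^1(\bb T^d)$; in particular $\Phi$ is continuous.

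\emph{Uniqueness.} If $\bs u^{(1)}, \bs u^{(2)}$ are two $T$-periodic solutions of \eqref{6.04} with mass $m$, then $u_j := \bs u^{(j)}(0,\,\cdot\,)$ are fixed points of $\Phi$, and $u_j = \bs u^{(j)}(nT,\,\cdot\,)$ for every $n\in \bb N$. By Proposition~\ref{ul04} applied to the initial data $u_1$ and $u_2$, for every $n$,
\begin{equation*}
\| u_2 - u_1\|_{L^1(\bb T^d)} = \| \bs u^{(2)}(nT) - \bs u^{(1)}(nT) \|_{L^1(\bb T^d)} \le A\, e^{-\lambda nT}\,.
\end{equation*}
Letting $n \to \infty$ gives $u_1 = u_2$, and by the uniqueness of weak solutions the two $T$-periodic solutions coincide.

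\emph{Existence.} Pick any $u_0 \in \mc U_m$ (e.g.\ $u_0 \equiv m$) and set $u_n := \Phi^n(u_0) = \bs u(nT,\,\cdot\,)$. Applying Proposition~\ref{ul04} to the initial data $u_0$ and $u_1 = \bs u(T,\,\cdot\,) = \Phi(u_0)$, and using the $T$-periodicity of the driving field $F$ together with the Markov property of the parabolic evolution, gives
\begin{equation*}
\| u_{n+1} - u_n\|_{L^1(\bb T^d)}
\;=\; \| \bs u^{(1)}(nT) - \bs u^{(0)}(nT) \|_{L^1(\bb T^d)}
\;\le\; A\, e^{-\lambda nT}\;.
\end{equation*}
The right-hand side is summable in $n$, so $(u_n)_{n\ge 0}$ is Cauchy in $L^1(\bb T^d)$, hence converges to some $u^\infty$. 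Since $\mc U_m$ is $L^1$-closed, $u^\infty \in \mc U_m$; and since $\Phi$ is $L^1$-continuous, $\Phi(u^\infty) = \lim_n \Phi(u_n) = \lim_n u_{n+1} = u^\infty$. Thus $u^\infty$ is a fixed point of $\Phi$. Let $\bs u^{(m)}$ be the weak solution of \eqref{6.04} with initial datum $u^\infty$ extended to $\bb R$ by $T$-periodicity using $\Phi(u^\infty) = u^\infty$; uniqueness of weak solutions guarantees that this extension is a genuine weak solution on $\bb R \times \bb T^d$, and by parabolic regularity it is smooth on $\bb R \times \bb T^d$. This provides the required $T$-periodic solution of mass $m$.

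No step is really delicate: the heart of the matter is the exponential contraction of Proposition~\ref{ul04}, which gives simultaneously the summability needed for existence and the exponential rate needed for uniqueness. The only points that require slight care are the measurability/regularity of the limit $u^\infty$ (handled by $L^1$-closedness of $\mc U_m$ and parabolic regularization) and the verification that $\Phi$ is indeed well defined and continuous from $\mc U_m$ to itself, which follows from the existing uniqueness theory recalled before Theorem~\ref{up01}.
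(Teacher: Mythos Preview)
Your proof is correct and follows essentially the same Poincar\'e-map argument as the paper: define the time-$T$ map on the $L^1$-closed set of mass-$m$ densities, use Proposition~\ref{ul04} to show that the iterates form a Cauchy sequence, and deduce that the limit is a fixed point (hence a $T$-periodic solution), with uniqueness following directly from the exponential contraction. The only cosmetic difference is that the paper bounds $\|\mf P^{n+j}u-\mf P^n u\|_1$ directly rather than summing consecutive differences, and it leaves the continuity of the time-$T$ map implicit.
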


\begin{proof}
Fix $m\in (0,1)$ and denote by $L^1_m(\bb T^d)$ the closed subspace of
$L^1(\bb T^d)$ defined by
$\color{bblue} L^1_m(\bb T^d) = \{ u \in L^1(\bb T^d) : \int_{\bb T^d}
u(x) \, dx = m \,,\, 0\le u(x) \le 1\,\}$.

Define the operator $\mf P: L_m^1(\bb T^d) \to L_m^1(\bb T^d)$ given
by $\mf P (u) = \bs u(T, \cdot)$, where $\bs u(t,x)$ is the weak
solution of \eqref{6.04} with initial condition $u(\cdot)$. Let
$u_0: \bb T^d \to [0,1]$ be given by $u_0(x) = m$ for all $x$, and set
$u_{j+1} = \mf P u_j$, $j\ge 0$. We claim that the sequence
$(u_j : j\ge 1)$ is Cauchy in $L^1(\bb T^d)$. Fix $n$, $j\ge 1$. Since
$\mf P_{n+j} u = \mf P_{n}\mf P_{j}u$, by Proposition \ref{ul04},
\begin{equation*}
\big\Vert\, \mf P_{n+j} u \,-\, \mf P_{n} u\, \big\Vert_1 \;=\;
\big\Vert\, \mf P_{n} [\, \mf P_{j} u \,-\, u\,] \, \big\Vert_1
\;\le\; A e^{-\lambda n T}\;.
\end{equation*}

Denote by $w$ the limit in $L^1$ of the sequence $u_j$, and observe
that $\mf P w = w$. This proves that the solution of equation
\eqref{6.04} with initial condition $w$ is $T$-periodic. By
Proposition \ref{ul04}, such $T$-periodic solution is unique.
\end{proof}

\begin{proof}[Proof of Theorem \ref{up01}]
Fix $m\in [0,1]$. As the result is trivial for $m=0$ or $1$, we may
assume that $0<m<1$. In this range, the assertions of the theorem
corresponds to the ones of Proposition \ref{ul04} and Corollary
\ref{ul05}. 
\end{proof}

\appendix

\section{Dynamical bounds}
\label{sec04}

We present in this section some estimates used in the article. Let
\begin{equation*}
c_{x,y}(\eta) \;=\; \eta_x\, [ 1-\eta_y] \,
e^{(1/2)\,  E_N(x,y)}\;, \quad (x,y)\,\in\, \bb B_N\;
\end{equation*}
and recall the notation introduced in \eqref{26}.

\begin{lemma}
\label{lem01}
Given a set of bounded functions
$\phi^{x,y}_s: D([0,\infty), \Sigma_N) \to \bb R$,
$(x,y) \in \bb B_N$, progressively measurable, the process
\begin{equation}
\label{02}
\bb M^\phi_t \;=\; \exp \sum_{(x,y) \in \bb B_N}\Big\{  \int_0^t
\phi^{x,y}_s \, \ms N^{x,y}_{(s,s+ds]}(\bs \eta) \,-\, N^2 \int_0^t c_{x,y}(\bs \eta(s))
\{e^{\phi^{x,y}_s} - 1\} \, ds \Big\}
\end{equation}
is a mean one, positive martingale with respect to $\bb P_\eta^N$ for
any configuration $\eta\in \Sigma_N$.
\end{lemma}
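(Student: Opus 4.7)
The statement is the standard exponential martingale (Doléans–Dade/Girsanov) formula for a finite family of counting processes on a pure jump Markov chain. My plan is to verify it by applying Itô's formula for pure-jump processes and then upgrading the resulting local martingale to a true martingale using the boundedness of the $\phi^{x,y}_s$ and the finiteness of $\bb B_N$.

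First, I would record the key compensation identity: under $\bb P^N_\eta$ the counting process $\ms N^{x,y}_{(0,\cdot]}(\bs\eta)$ associated with the bond $(x,y)\in \bb B_N$ has predictable compensator $\int_0^\cdot N^2\, c_{x,y}(\bs\eta(s))\,ds$, and distinct oriented bonds cannot jump simultaneously (since at most one exchange can occur at a given time). This is a direct consequence of the form of the generator \eqref{22}. Therefore the martingale problem satisfied by $L_N$ gives, for each bounded progressively measurable $\psi^{x,y}$, that $\ms N^{x,y}_{(0,t]}(\bs\eta)-N^2\int_0^t c_{x,y}(\bs\eta(s))\,ds$ is a $\bb P^N_\eta$-martingale.

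Next, writing $X_t := \log \bb M^\phi_t$ as the sum of a pure-jump integral against the counting measures and a continuous finite variation part, I would apply Itô's formula for pure-jump semimartingales (see \cite[App.~1, Lem.~5.1]{kl} or any equivalent reference). At each jump time $\tau_i$ corresponding to a bond $(x,y)$, $\bb M^\phi$ jumps by $\bb M^\phi_{\tau_i-}(e^{\phi^{x,y}_{\tau_i}}-1)$, while the continuous drift is $-N^2\bb M^\phi_t \sum_{(x,y)} c_{x,y}(\bs\eta(t))(e^{\phi^{x,y}_t}-1)$. Compensating the jump part against $N^2 c_{x,y}(\bs\eta(t))\,dt$ and using the no-simultaneous-jumps property, the two contributions exactly cancel, yielding
\begin{equation*}
\bb M^\phi_t = 1 + \sum_{(x,y)\in\bb B_N} \int_0^t \bb M^\phi_{s-}\, (e^{\phi^{x,y}_s}-1)\, \big[\,d\ms N^{x,y}_s - N^2 c_{x,y}(\bs\eta(s))\,ds\,\big],
\end{equation*}
so that $\bb M^\phi_t$ is a nonnegative local $\bb P^N_\eta$-martingale and hence a supermartingale with $\bb E^N_\eta[\bb M^\phi_t]\le 1$.

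The remaining point, and the only subtle step, is to upgrade the local martingale to a genuine martingale with mean one. Since $\Vert \phi^{x,y}_\cdot\Vert_\infty\le M$ for some $M<\infty$ and $|\bb B_N|<\infty$, both the jump amplitudes $e^{\phi^{x,y}_s}-1$ and the intensities $N^2 c_{x,y}$ are uniformly bounded on $[0,T]$. Standard localization combined with Grönwall's inequality (or a direct computation bounding $\bb E^N_\eta[(\bb M^\phi_{t\wedge\tau_n})^2]$ via the quadratic variation) then yields a uniform $L^2$-bound on a localizing sequence, which implies uniform integrability and therefore $\bb E^N_\eta[\bb M^\phi_t]=1$ for every $t\ge 0$. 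Applying the same argument to the shifted process gives the martingale property. I expect the only real obstacle to be the bookkeeping for the Itô expansion with multiple simultaneous-in-notation counting processes, which is resolved once one notes that only one bond can jump at a time.
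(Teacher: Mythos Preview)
Your proposal is correct and follows precisely the standard route that the paper has in mind: the paper does not actually prove this lemma but refers the reader to \cite[Prop.~A.2.6]{kl}, noting that $\bb M^\phi$ is the Radon--Nikodym derivative of the tilted jump process with rates $c_{x,y}(\eta)e^{\phi^{x,y}}$ with respect to $\bb P^N_\eta$. Your It\^o-formula computation and the $L^2$ upgrade via the uniform bounds on $\phi^{x,y}$ and the finiteness of $\bb B_N$ are exactly the content of that reference, so there is nothing to add.
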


The proof of this lemma is similar to the one of Proposition A.2.6 in
\cite{kl} and left to the reader. This martingale corresponds to the
Radon-Nikodym derivative (restricted to the interval $[0,t]$) of the
law of a jump process with rates $c_{x,y}(\eta) e^{\phi^{x,y}} $ with
respect $\bb P_\eta^N$.

Recall that the symmetric simple exclusion process is the Markov chain
on $\Sigma_N$ whose generator is $L_N$, introduced in \eqref{22}, with
$E \equiv 0$. Denote by $\color{bblue} \nu_\alpha$,
$0\le \alpha\le 1$, the Bernoulli product measure on $\Sigma_N$ with
density $\alpha$, and by $\color{bblue} \bb P^0_{\nu_\alpha}$ the
probability measure on $D(\bb R_+, \Sigma_N)$ induced by the symmetric
simple exclusion process starting from $\nu_\alpha$.  Expectation with
respect to $\bb P^0_{\nu_\alpha}$ is represented by
$\color{bblue} \bb E^0_{\nu_\alpha}$.

\begin{lemma}
\label{lem05}
For all $1<p<\infty$ and $E\in C^1(\bb T^d ; \bb R^d)$, there exists a
constant $C_p$ such that
\begin{equation*}
\log\, \bb E^0_{\nu_{1/2}} \Big[ \, \Big( \frac{d \bb P^N_{\mu_{N,K}}}
{d \bb P^0_{\nu_{1/2}}} \Big|_{[0,T]}\, \Big)^p \, \Big]
\;\le\; C_p \, (1\,+\, T) \, N^d
\end{equation*}
for all $T>0$, $N\ge 1$, $0\le K\le N^d$.
\end{lemma}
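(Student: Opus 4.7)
\medskip
\noindent\textbf{Proof plan.}
The plan is to factor the Radon-Nikodym derivative into an initial-distribution factor and a dynamical (Girsanov) factor, bound the former trivially and control the $p$-th moment of the latter via the exponential martingale machinery of Lemma~\ref{lem01} applied to the symmetric reference dynamics. Concretely, since $\mu_{N,K}\ll\nu_{1/2}$ and the Markovian densities multiply, I will use
\begin{equation*}
\frac{d\bb P^N_{\mu_{N,K}}}{d\bb P^0_{\nu_{1/2}}}\Big|_{[0,T]}(\bs\eta)
\;=\;
\frac{d\mu_{N,K}}{d\nu_{1/2}}(\bs\eta(0))\cdot
Z_T(\bs\eta)\,,
\end{equation*}
where the Girsanov factor, obtained by comparing rates $N^2\eta_x[1-\eta_y]e^{(1/2)E_N(x,y)}$ to $N^2\eta_x[1-\eta_y]$, reads
\begin{equation*}
Z_T(\bs\eta)\;=\;\exp\Big\{\sum_{(x,y)\in\bb B_N}\tfrac{1}{2}E_N(x,y)\,\ms N^{x,y}_{(0,T]}(\bs\eta)\,-\,N^2\!\!\sum_{(x,y)\in\bb B_N}\int_0^T\!\!c^0_{x,y}(\bs\eta(s))\big(e^{(1/2)E_N(x,y)}-1\big)\,ds\Big\}\,,
\end{equation*}
with $c^0_{x,y}(\eta):=\eta_x[1-\eta_y]$.

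For the initial factor, since $\nu_{1/2}$ is uniform on $\Sigma_N$, one has $\nu_{1/2}(\eta)=2^{-N^d}$, hence $(d\mu_{N,K}/d\nu_{1/2})(\eta)=2^{N^d}\mu_{N,K}(\eta)$. The identity $\bb E^0_{\nu_{1/2}}[(d\mu_{N,K}/d\nu_{1/2})^p]=2^{(p-1)N^d}\sum_\eta\mu_{N,K}(\eta)^p$ together with $\mu_{N,K}(\eta)^p\le\mu_{N,K}(\eta)$ for $p>1$ yields the deterministic bound $\bb E^0_{\nu_{1/2}}[(d\mu_{N,K}/d\nu_{1/2})^p]\le 2^{(p-1)N^d}$.

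The heart of the matter is the control of $\bb E^0_{\eta_0}[Z_T^p]$, uniformly in $\eta_0$. I will apply Lemma~\ref{lem01} to the \emph{symmetric} exclusion process, i.e.\ with rates $c_{x,y}$ replaced by $c^0_{x,y}$, and with the deterministic choice $\phi^{x,y}_s\equiv (p/2)\,E_N(x,y)$. This produces a mean-one $\bb P^0_{\eta_0}$-martingale $\mathbb M_T$, and comparing exponents gives
\begin{equation*}
Z_T^p\;=\;\mathbb M_T\cdot\exp\Big\{N^2\!\!\sum_{(x,y)\in\bb B_N}\int_0^T\!\!c^0_{x,y}(\bs\eta(s))\,\Theta_N(x,y)\,ds\Big\}\,,\qquad \Theta_N(x,y):=\big(e^{(p/2)E_N(x,y)}-1\big)-p\big(e^{(1/2)E_N(x,y)}-1\big)\,.
\end{equation*}
Since $|E_N(x,y)|\le\|E\|_\infty/N$, a Taylor expansion gives the key cancellation
\begin{equation*}
\Theta_N(x,y)\;=\;\frac{p^2-p}{8}\,E_N(x,y)^2\;+\;O_p\big(N^{-3}\big)\;=\;O_p(N^{-2})\,,
\end{equation*}
uniformly in $(x,y)\in\bb B_N$. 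Because $c^0_{x,y}\le 1$ and $|\bb B_N|$ is of order $N^d$, the deterministic correction term is bounded by $C_p\,N^d\,T$. Taking expectations and using $\bb E^0_{\eta_0}[\mathbb M_T]=1$, I get $\bb E^0_{\eta_0}[Z_T^p]\le\exp\{C_p N^d T\}$ for every $\eta_0\in\Sigma_N$.

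Combining the two estimates and using that the Girsanov bound is uniform in the initial state,
\begin{equation*}
\bb E^0_{\nu_{1/2}}\Big[\Big(\tfrac{d\bb P^N_{\mu_{N,K}}}{d\bb P^0_{\nu_{1/2}}}\Big|_{[0,T]}\Big)^p\Big]\;=\;\bb E^0_{\nu_{1/2}}\Big[\Big(\tfrac{d\mu_{N,K}}{d\nu_{1/2}}\Big)^p(\eta_0)\,\bb E^0_{\eta_0}[Z_T^p]\Big]\;\le\;2^{(p-1)N^d}\,e^{C_p N^d T}\,,
\end{equation*}
whence the claim with $C_p=(p-1)\log 2\vee C_p$. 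The only substantive point is the Taylor cancellation leading to $\Theta_N=O(E_N^2)$: if one naively bounded $\Theta_N$ by $O(E_N)=O(1/N)$ one would get a prefactor $N^{d+1}T$, which is too large; the cancellation, which is precisely the second-order matching between the Girsanov correction and the martingale normalization, is what makes the estimate linear in $N^d$ as needed.
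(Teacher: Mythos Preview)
Your proof is correct and follows essentially the same route as the paper's. Both arguments separate the initial-condition contribution (bounded by $2^{(p-1)N^d}$) from the dynamical Girsanov factor, and control the $p$-th moment of the latter by recognizing that $Z_T^p/\bb M_T$---with $\bb M_T$ the $\bb P^0$-martingale corresponding to $\phi^{x,y}=(p/2)E_N(x,y)$---is pathwise bounded by $e^{C_p N^d T}$ thanks to the second-order cancellation in $\Theta_N$. The paper phrases this same step as a change of measure to $\bb P^{pE}_{\mu_{N,K}}$ (so that $d\bb P^{pE}/d\bb P^0$ is precisely your $\bb M_T$) and cites \cite[Lemma~4.5]{bfg} for the resulting $L^\infty$ bound; you have simply unpacked that computation explicitly.
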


\begin{proof}
The proof reduces to a standard computation of exponential
martingales. To emphasize the dependence of the measure
$\bb P^N_{\mu_{N,K}}$ on the external field $E$, in this proof, we
represent the measure $\bb P^N_{\mu_{N,K}}$ by $\bb
P^E_{\mu_{N,K}}$. Clearly,
\begin{equation*}
\bb E^0_{\nu_{1/2}} \Big[ \, \Big( \frac{d \bb P^E_{\mu_{N,K}}}{d
  \bb P^0_{\nu_{1/2}}} \Big|_{[0,T]}\, \Big)^p \, \Big] \;=\;
\bb E^{0}_{\mu_{N, K}} \Big[ \,
\Big( \frac{d \mu_{N,K}}
{d \nu_{1/2}} \Big)^{p-1}\,
\Big( \frac{d \bb P^E_{\mu_{N,K}}}
{d \bb  P^0_{\mu_{N,K}}} \Big|_{[0,T]}\, \Big)^p \, \,
\Big]\;.
\end{equation*}
As $\nu_{1/2} (\eta) = (1/2)^{N^d}$, this expression is bounded by
\begin{equation*}
2^{(p-1) N^d}\, \bb E^{0}_{\mu_{N, K}} \Big[ \,
\Big( \frac{d \bb P^E_{\mu_{N,K}}}
{d \bb  P^0_{\mu_{N,K}}} \Big|_{[0,T]}\, \Big)^p \, \,
\Big]\;.
\end{equation*}
On the other hand,
\begin{equation*}
\bb E^0_{\mu_{N,K}} \Big[ \, \Big( \frac{d \bb P^E_{\mu_{N,K}}}{d
  \bb P^0_{\mu_{N,K}}} \Big|_{[0,T]}\, \Big)^p \, \Big] \;=\;
\bb E^{pE}_{\mu_{N, K}} \Big[ \, \Big( \frac{d \bb P^E_{\mu_{N,K}}}
{d \bb  P^0_{\mu_{N,K}}} \Big|_{[0,T]}\, \Big)^p \; 
\frac{d \bb  P^0_{\mu_{N,K}}}{d \bb P^{pE}_{\mu_{N, K}}} \Big|_{[0,T]}\, \,
\Big]\;.
\end{equation*}
Note that in the last expectation the external field is $pE$. A direct
computation based on the explicit formula for the Radon-Nikodym
derivatives provided by Lemma \ref{lem01} yields that
\begin{equation*}
\Big\Vert\, \Big( \frac{d \bb P^E_{\mu_{N,K}}}
{d \bb  P^0_{\mu_{N,K}}} \Big|_{[0,T]}\, \Big)^p \; 
\frac{d \bb  P^0_{\mu_{N,K}}}{d \bb P^{pE}_{\mu_{N, K}}}
\Big|_{[0,T]}\, \Big\Vert_\infty \;\le\; e^{ C_p\, T\, N^d}
\end{equation*}
for some finite constant $C_p$, see \cite[Lemma~4.5]{bfg}.
\end{proof}

Until the end of the appendix, fix $T>0$, $m\in (0,1)$ and a sequence
$(K_N : N\ge 1)$ such that $K_N/N^d\to m$.  Consider a progressively
measurable, continuous function
$w\colon \bb R \times \bb T^d \times \mc M_{+}(\bb T^d) \times D(\bb R; \mathcal H^d_{-p})\to \bb R^d$ with support on
$[0,T] \times \bb T^d \times \mc M_{+}(\bb T^d) \times D(\bb R;\mathcal H_{-p}^d)$. Recall from \eqref{23} the definition of
the progressively measurable function
$G_w\colon \bb R \times \bb T^d \times \ms S_{m, {\rm ac}} \to \bb
R^d$. For $\epsilon>0$ and a cylinder function $\Psi$, let
\begin{equation*}
F^{w,\Psi}_{N,\epsilon}(t, \bs \eta) \;=\; \frac 1 {N^d} \sum_{x\in \bb
  T^d_N} G_w\Big(t\,,\, x \,,\, \bs \pi_N \,,\,
\bs J_N \Big) \, \big\{ \, (\tau_x \Psi)(\eta(t)) -
\widehat \Psi ((\bs \pi_N^\epsilon) (t,x)) \, \big\} \;,
\end{equation*}
where $(\bs \pi^\epsilon_N) (t,x) = (2\epsilon)^{-d} \bs \pi_N (t,
[x-\epsilon , x+\epsilon]^d)$, and $\widehat \Psi : [0,1] \to \bb R$
is the function given by
\begin{equation*}
\widehat \Psi (\alpha) \;=\; E_{\nu_\alpha} \big[\, \Psi\,\big]\;.
\end{equation*}

\begin{lemma}
\label{lem1}
For all $\delta>0$
\begin{equation*}
\lim_{\epsilon \to 0} \limsup_{N\to\infty} \frac 1{N^d} \log
\bb P^N_{\mu_{N,K_N}} \Big [ \, \Big| \int_0^T
F^{w,\Psi}_{N,\epsilon}(t, \bs \eta ) \, dt \, \Big| > \delta \, \Big] \;=\;
-\infty\;.
\end{equation*}
\end{lemma}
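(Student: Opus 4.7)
The plan is to reduce Lemma~\ref{lem1} to the classical super-exponential (one-block and two-block) estimates for the symmetric simple exclusion process \cite[Ch.~5]{kl} through a change-of-measure step and a truncation-plus-time-discretization argument that handles the trajectory-dependence of the weight $G_w$.

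\emph{Change of measure.} By H\"older's inequality and Lemma~\ref{lem05}, for any conjugate pair $p,q>1$ and any measurable event $A$,
\[
\frac{1}{N^d}\log\bb P^N_{\mu_{N,K_N}}[A]\;\le\;\frac{C_p(1+T)}{p}\,+\,\frac{1}{qN^d}\log\bb P^0_{\nu_{1/2}}[A],
\]
so letting $p\to\infty$ at the end, it suffices to prove the super-exponential estimate under the SSEP-Bernoulli reference measure $\bb P^0_{\nu_{1/2}}$.

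\emph{Truncation and time discretization of $G_w$.} The martingale estimates of Lemmata~\ref{l01}--\ref{l05} go through unchanged under $\bb P^0_{\nu_{1/2}}$ (the external field is then zero), yielding exponential tightness of $(\bs\pi_N,\bs J_N)$ in $\ms S$: for any $\gamma>0$ a compact $\mc K\subset\ms S$ can be chosen so that $\bb P^0_{\nu_{1/2}}[(\bs\pi_N,\bs J_N)\notin\mc K]\le e^{-\gamma N^d}$. By continuity of $w$ on the compact set $[0,T]\times\bb T^d\times\mc M_+(\bb T^d)\times\mc K$, on this event $|G_w|\le M$ for some constant $M=M(\mc K)$, and $w$ is uniformly continuous in its arguments. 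Fix a partition $0=t_0<t_1<\cdots<t_n=T$ of mesh $\Delta$ and replace $G_w(t,\cdot,\bs\pi_N,\bs J_N)$ on $[t_k,t_{k+1})$ by its value $G_w(t_k,\cdot,\bs\pi_N,\bs J_N)$ at the left endpoint; by the progressive measurability of $w$, this approximation is $\mc F_{t_k}$-measurable. Setting $V_{x,\epsilon}(t,\bs\eta):=(\tau_x\Psi)(\eta(t))-\widehat\Psi(\bs\pi_N^\epsilon(t,x))$, which is uniformly bounded, the discretization error in $\int_0^T F^{w,\Psi}_{N,\epsilon}\,dt$ is controlled by the modulus of continuity of $w$ at scale $\Delta$ and can be made smaller than any prescribed $\delta/2$.

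\emph{One-block and two-block estimates.} On each interval $[t_k,t_{k+1}]$ condition on $\mc F_{t_k}$: the approximated weight becomes a deterministic function $x\mapsto H(x)$ bounded by $M$, and the process restarts from an arbitrary configuration $\bs\eta(t_k)\in\Sigma_{N,K_N}$. The classical super-exponential estimates \cite[Ch.~5]{kl} hold uniformly over bounded deterministic test functions (the Feynman--Kac bound depends on the test function only through $\|H\|_\infty$) and over starting configurations, so that
\[
\lim_{\epsilon\to 0}\limsup_{N\to\infty}\frac{1}{N^d}\log\sup_{\eta,\,\|H\|_\infty\le M}\bb P^0_\eta\!\left[\left|\int_{t_k}^{t_{k+1}}\!\!\tfrac{1}{N^d}\sum_x H(x)\,V_{x,\epsilon}(t,\bs\eta)\,dt\right|>\tfrac{\delta}{2n}\right]=-\infty.
\]
Summing over the finitely many $k$ and combining with the previous two steps yields the conclusion.

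\emph{Main obstacle.} The substantive point is ensuring that the one-block and two-block estimates apply when the test function is random and adapted; the truncation-plus-discretization scheme achieves this by reducing to piecewise-constant-in-time weights that, after conditioning on $\mc F_{t_k}$, are deterministic and uniformly bounded. The uniformity of the classical estimates over bounded test functions and starting configurations then absorbs the remaining dependencies without additional work.
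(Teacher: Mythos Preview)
Your proposal is correct and follows essentially the same route as the paper: reduce from $\bb P^N_{\mu_{N,K_N}}$ to $\bb P^0_{\nu_{1/2}}$ via Schwarz/H\"older and Lemma~\ref{lem05}, then invoke the classical super-exponential estimates for SSEP. The paper's proof simply cites \cite[Theorem~10.3.1]{kl} for the case of deterministic $w$ and declares the extension to $(\bs\pi_N,\bs J_N)$-dependent $w$ ``tedious but straightforward''; your exponential-tightness/time-discretization/conditioning scheme is exactly a correct way to carry out that extension, and your observation that the Feynman--Kac spectral bound depends on the test function only through its sup norm is the key uniformity that makes the conditioning step work.
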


\begin{proof}
First, we claim that for all $\delta>0$,
\begin{equation*}
\lim_{\epsilon \to 0} \limsup_{N\to\infty} \frac 1{N^d} \log
\bb P^0_{\nu_{1/2}} \Big [ \, \Big| \int_0^T
F^{w,\Psi}_{N,\epsilon}(t, \bs \eta ) \, dt \, \Big| > \delta \, \Big] \;=\;
-\infty\;.
\end{equation*}
We refer to \cite[Theorem 10.3.1]{kl} for the proof in the case in
which $w$ does not depend on $\pi$ and $\bs J$. The arguments to
include this dependence are tedious, but straightforward and left to the
reader. The extension to the measure $\bb P^N_{\mu_{N,K_N}}$ follows
from Schwarz inequality and Lemma \ref{lem05}.
\end{proof}

Next result is a consequence of the entropy inequality,
\cite[Proposition A1.8.2]{kl}, and the previous lemma.

\begin{corollary}
\label{cor2}
Let $(\bb Q_N : N \ge 1)$ be a sequence of probability measures in
$\ms P^{N,K_N}_{\rm stat}$. Assume that there exists a finite constant
$C_0$ such that for all $S>0$,
\begin{equation*}
\limsup_{N\to\infty} \frac{1}{N^d}\,
\bb H^{(S)} \big(\, \bb Q_N \,|\, \bb P^N_{\mu_{N,K_N}}\,\big)
\;\le\; C_0\, S\;.
\end{equation*}
Then, for all $\delta>0$
\begin{equation*}
\lim_{\epsilon \to 0} \limsup_{N\to\infty} 
\bb Q_N \Big [ \, \Big| \int_0^T
F^{w,\Psi}_{N,\epsilon}(t, \bs \eta ) \, dt \, \Big|
> \delta \, \Big] \;=\; 0\;.
\end{equation*}
\end{corollary}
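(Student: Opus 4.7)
The plan is to combine Lemma~\ref{lem1} with the entropy inequality, exploiting that the super-exponential decay of the relevant probability under the stationary measure dominates the at-most-exponential growth in $N^d$ of the relative entropy assumed on $\bb Q_N$.

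First I would set
\begin{equation*}
A_{N,\epsilon} \;:=\; \Big\{\bs\eta \;:\; \Big|\int_0^T F^{w,\Psi}_{N,\epsilon}(t,\bs\eta)\,dt\Big| > \delta\Big\}\;,
\end{equation*}
and observe that $A_{N,\epsilon}$ is measurable with respect to the $\sigma$-algebra generated by $\{\bs\eta(s):s\in[0,T]\}$. Indeed, by the progressive measurability assumption on $w$ stated above \eqref{23} and the fact that, by \eqref{39}, the map $\bs\eta\mapsto \bs J_N(\bs\eta)|_{[0,t]}$ depends only on $\bs\eta|_{[0,t]}$, the integrand $F^{w,\Psi}_{N,\epsilon}(t,\bs\eta)$ is a function of $\bs\eta|_{[0,t]}$ alone; integrating in $t\in[0,T]$ therefore produces an $\sigma(\bs\eta(s):s\in[0,T])$-measurable random variable.

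Next I would apply the entropy inequality [Proposition~A1.8.2 in \cite{kl}] to the marginals of $\bb Q_N$ and $\bb P^N_{\mu_{N,K_N}}$ on $D([0,T];\Sigma_{N,K_N})$, which yields
\begin{equation*}
\bb Q_N(A_{N,\epsilon}) \;\le\; \frac{\log 2 \,+\, \bb H^{(T)}\bigl(\bb Q_N\,|\,\bb P^N_{\mu_{N,K_N}}\bigr)}{\log\bigl(1 + \bb P^N_{\mu_{N,K_N}}(A_{N,\epsilon})^{-1}\bigr)}\;.
\end{equation*}
By the standing hypothesis on the entropy, the numerator is bounded by $C_0\,T\,N^d\,(1+o(1))$ as $N\to\infty$.

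Finally, given $M>0$, I would invoke Lemma~\ref{lem1} to choose $\epsilon_0=\epsilon_0(M)$ so small that, for any $0<\epsilon\le\epsilon_0$, one has $\bb P^N_{\mu_{N,K_N}}(A_{N,\epsilon}) \le e^{-M N^d}$ for all $N$ large; the denominator above is then bounded below by $M\,N^d\,(1+o(1))$. Combining, $\limsup_{N\to\infty}\bb Q_N(A_{N,\epsilon}) \le C_0\,T/M$ for every $\epsilon\le\epsilon_0$, and letting $M\to\infty$ after the iterated limit $\limsup_{\epsilon\to 0}\limsup_{N\to\infty}$ delivers the conclusion. There is no serious obstacle here: the only point requiring some care is the measurability of $A_{N,\epsilon}$ with respect to the restricted $\sigma$-algebra, which is built into the progressive measurability of $w$ imposed in Section~\ref{sec03}.
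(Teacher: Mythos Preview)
Your proposal is correct and follows precisely the route the paper indicates: it is the entropy inequality \cite[Proposition~A1.8.2]{kl} combined with the super-exponential estimate of Lemma~\ref{lem1}, exactly as in the argument the paper spells out in detail earlier in the equi-coercivity proof (the estimate of $P[\ms B]$). Your added care about the $\sigma\{\bs\eta(s):s\in[0,T]\}$-measurability of $A_{N,\epsilon}$ is the only nontrivial check, and you handle it correctly via the progressive measurability of $w$.
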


Fix a vector field $F$ in $C^1(\bb R \times \bb T^d; \bb R^d)$ with
compact support in $(0,T) \times \bb T^d$, $a>0$, and recall the
definition of $\mc E_{a, \epsilon} (F, \bs \pi)$,
$\mc V_{a, \epsilon} (F, \bs \pi,\bs J)$, $\epsilon>0$ in \eqref{25}.

\begin{lemma}
\label{lem04}
There exist finite, positive constants $a$ and $C_0$ such that for all
vector fields $F$ in $C^1(\bb R \times \bb T^d; \bb R^d)$ with compact
support in $(0,T) \times \bb T^d$,
\begin{gather*}
\limsup_{\epsilon \to 0} \limsup_{N\to\infty} \frac 1{N^d} \,\log
\bb E^N_{\mu_{N,K_N}} \Big[ \exp 
\big\{N^d \, \mc E_{a, \epsilon} (F, \bs \pi_N) \, \big\}\, \Big]
\;\le\; C_0 \, (1+T) \;, \\
\limsup_{\epsilon \to 0} \limsup_{N\to\infty} \frac 1{N^d} \,\log \bb
E^N_{\mu_{N,K_N}} \Big[ \, \exp \big\{ N^d\,
\mc V_{a,\epsilon} (F,\bs \pi_N,\bs J_N )
\big\} \, \Big] \;\le\; C_0 \, (1+T)  \;.
\end{gather*}
\end{lemma}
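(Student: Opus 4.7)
The plan is to prove both exponential estimates via the standard Feynman--Kac approach using exponential martingales combined with Taylor expansion and the replacement Lemma~\ref{lem1}. For $\mc V_{a,\epsilon}$, the starting point is Lemma~\ref{lem01} applied with $\phi^{x,y}_s := \theta F_N(s,x,y)$, where $F_N$ is the discrete line integral of $F$. The resulting mean-one exponential martingale yields
\[
\bb E^N_{\mu_{N,K_N}}\Big[\exp\{\theta N^d \bs J_N(F) - \Gamma^\theta_N\}\Big] = 1,
\qquad \Gamma^\theta_N := \int_0^T\!\sum_{(x,y)\in\bb B_N} N^2 c_{x,y}(\bs\eta(s))\big[e^{\theta F_N}-1\big]\,ds.
\]
Since $|\theta F_N|\le \theta\|F\|_\infty/N$ is uniformly small, Taylor expanding the exponential to second order produces a cubic remainder of order $TN^{d-1}\|F\|_\infty^3$, which is $o(N^d)$. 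Using the antisymmetry $F_N(y,x)=-F_N(x,y)$, a discrete summation by parts, and $e^{\pm E_N/2}=1\pm E_N/2 + O(1/N^2)$ with $E_N=O(1/N)$, the linear part of $\Gamma^\theta_N$ identifies with $\theta N^d\int_0^T\!\langle\pi_N,\nabla\cdot F\rangle\,ds + \theta N^d \int_0^T\!\!\int\!(E\cdot F)\,\tau^{x,j}_N\,ds\,dx$ up to $O(N^{d-1})$, while the quadratic part identifies with $\theta^2 N^d\int_0^T\!\!\int\!\tau^{x,j}_N|F|^2\,ds\,dx + O(N^{d-1})$, where $\tau^{x,j}_N := (\eta_x+\eta_{x+\mf e_j}-2\eta_x\eta_{x+\mf e_j})/2$ is the microscopic observable whose Bernoulli-$\rho$ mean equals $\sigma(\rho)$.

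The next step is to replace $\tau^{x,j}_N$ by $\sigma(\bs\pi_N^\epsilon)$ inside the exponent. Lemma~\ref{lem1} furnishes the requisite super-exponential probability estimate for this replacement, and a standard H\"older/entropy inequality combined with Lemma~\ref{lem05} transfers this into an $o(N^d)$ additive error in the exponent of the exponential moment. Setting $\theta^2=a$, the identity then becomes
\[
\bb E\Big[\exp\big\{\sqrt{a}\,N^d\mc V_{a,\epsilon}(F,\bs\pi_N,\bs J_N) - \sqrt{a}\,N^d\, \mc R_N(F)\big\}\Big]\le e^{o(N^d)},
\]
where $\mc R_N(F) := \int_0^T\!\langle\pi_N,\nabla\cdot F\rangle\,ds + \int_0^T\!\!\int\!(E\cdot F)\sigma(\bs\pi_N^\epsilon)\,ds\,dx$ is linear in $F$. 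These spurious terms are absorbed via Young's inequality: pointwise, $|E\cdot F|\sigma \le \alpha\sigma|F|^2 + \|E\|_\infty^2/(4\alpha)$, and after spatial integration by parts, $|\nabla\bs\rho^\epsilon\cdot F|\le \alpha\sigma|F|^2 + |\nabla\bs\rho^\epsilon|^2/(4\alpha\sigma)$, with the Dirichlet-form residual having an $e^{C(1+T)N^d}$ exponential moment under the stationary WASEP law. Choosing $a$ large relative to $\|E\|_\infty$ and the Young parameter leaves a negative-definite quadratic in the exponent, yielding the claimed bound with $C_0$ independent of $F$. The bound for $\mc E_{a,\epsilon}$ follows by the analogous argument using the exponential Dynkin martingale for $h(t,\eta)=\theta\sum_x\psi(t,x)\eta_x$, with $\psi$ chosen via a Hodge decomposition of $F$.

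The main obstacle is the tight coefficient matching required to keep $a$ and $C_0$ independent of $F$: the quadratic compensator coefficient must exactly match $a$ times the integrand of the target, while all the linear-in-$F$ cross-terms produced by the summation-by-parts step and the drift $E$ must be absorbed into the quadratic via Young's inequality with a strict positive surplus; this forces $a$ to be chosen sufficiently large in terms of $\|E\|_\infty$, $d$, and the Dirichlet-form exponential moment bound, but independently of $F$, thereby yielding the universal constants claimed by the lemma.
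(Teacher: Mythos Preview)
Your treatment of $\mc V_{a,\epsilon}$ starts correctly with the mean-one martingale of Lemma~\ref{lem01}, but the way you close the argument is circular. After Taylor expansion and replacement you are left with the linear term $\int_0^T\!\langle\bs\pi_N,\nabla\!\cdot\! F\rangle\,ds$, and you propose to absorb it via a pointwise Young inequality producing a ``Dirichlet-form residual'' $\int|\nabla\bs\rho^\epsilon|^2/\sigma(\bs\rho^\epsilon)$ with a claimed $e^{C(1+T)N^d}$ exponential moment. But that exponential moment, read variationally over $F$, \emph{is} the first inequality of the lemma; you are invoking the $\mc E_{a,\epsilon}$ bound while presenting it as an afterthought. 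The paper's organization makes this dependence explicit: one first applies Cauchy--Schwarz to write $N^d\mc V_{a,\epsilon}=\tfrac12\big[2N^d\bs J_N(F)-\bs W_N(T)\big]+\tfrac12\big[\bs W_N(T)-2aN^d\!\int\!\sigma(\bs\pi_N^\epsilon)|F|^2\big]$, the first bracket has expectation $1$ by the martingale identity, and the second---after expanding $e^{2F_N}-1$, summing by parts, and replacing via Lemma~\ref{lem1}---is a density-only functional of the form $N^d\mc E_{a',\epsilon}(cF,\bs\pi_N)$ plus an $E$-cross-term with $F$-independent residue. So $\mc V$ genuinely reduces to $\mc E$, which must be established first.

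The real gap is your sketch for $\mc E_{a,\epsilon}$. The exponential martingale for $h(t,\eta)=\theta\sum_x\psi(t,x)\eta_x$ produces, besides the desired $\langle\bs\pi_N,\Delta\psi\rangle$ and $\sigma|\nabla\psi|^2$ terms, the time-derivative contribution $\theta N^d\!\int_0^T\!\langle\bs\pi_N,\partial_s\psi\rangle\,ds$. Since $\psi=\Delta^{-1}\nabla\!\cdot\! F$, this term is of order $N^d T\|\partial_s F\|$ and cannot be bounded uniformly in $F$; your Hodge trick does not help here. The paper instead imports the bound for the \emph{symmetric} process under $\bb P^0_{\nu_{1/2}}$ from \cite[\S3.2]{blm}, where it is proved by the Feynman--Kac inequality $\log\bb E\big[\exp\{\int_0^T\! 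V\}\big]\le\int_0^T\!\lambda(s)\,ds$ together with the variational formula for the principal eigenvalue $\lambda(s)$ in terms of the Dirichlet form---an argument that handles time-dependent potentials without producing a $\partial_s\psi$ term---and then transfers to $\bb P^N_{\mu_{N,K_N}}$ via Cauchy--Schwarz and Lemma~\ref{lem05}.
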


\begin{proof}
We claim that there exists a finite constant $a$ such that for any
$T>0$ and any $F$ in $C^1(\bb R \times \bb T^d; \bb R^d)$
with compact support in $(0,T) \times \bb T^d$,
\begin{gather*}
\limsup_{\epsilon \to 0} \limsup_{N\to\infty} \frac 1{N^d} \,\log
\bb E^0_{\nu_{1/2}} \Big[ \exp N^d \,
\big\{\, \mc E_{a, \epsilon} (F, \bs \pi_N) \, \big\}\, \Big]
\;\le\; 0 \;.
\end{gather*}
This statement is proven in \cite[Section 3.2]{blm} (see the proof of
the bound presented in the last displayed equation at page 2367). To
deduce the first assertion of the lemma from this result, it suffices
to apply Schwarz inequality, to recall the statement of Lemma
\ref{lem05}, and to observe that
$2 \mc E_{a, \epsilon} (F, \bs \pi_N) = \mc E_{a/2, \epsilon} (2F, \bs
\pi_N)$.

We turn to the second assertion of the lemma. By Lemma \ref{lem01},
\begin{equation*}
\bb E^N_{\mu_{N,K_N}} \Big[ \exp \Big\{\, 2\, N^d \, \bs J_N(F) \,-\,
\bs W_N (T)   \, \Big\}\, \Big] \;=\; 1\;,
\end{equation*}
provided
\begin{equation*}
\bs W_N (T) \;=\; N^2  \sum_{(x,y) \in \bb B_N}
\int_0^T \bs\eta_x(s)\, [1- \bs\eta_y(s)]
\, \{e^{2 \, F_N (s,x,y)} - 1\} \, ds\;.
\end{equation*}
Recalling \eqref{25}, by adding and subtracting
$(1/2 N^d) \bs W_N (T)$ and applying Schwarz inequality we get
\begin{equation*}
\begin{aligned}
& \bb E^N_{\mu_{N,K_N}} \Big[ \, \exp N^d\, \Big\{\, \bs J_N (F) \;-\; a\,
\int_{\bb R} ds \int_{\bb T^d} dx \, \sigma(\bs \pi_N^\epsilon) \,
|F|^2 \Big\}\, \Big] \\
&\quad \;\le\;
\bb E^N_{\mu_{N,K_N}} \Big[ \, \exp \Big\{\, \bs W_N (T) \;-\; 2 \, N^d\,  a\,
\int_{\bb R} ds \int_{\bb T^d} dx \, \sigma(\bs \pi_N^\epsilon) \,
|F|^2 \Big\}\, \Big]^{1/2}\;.
\end{aligned}
\end{equation*}
Expanding the exponential $\exp\{ 2\, F_N (s,x,y) \}$ which appears in
the definition of $\bs W_N (T)$, summing by parts, using Lemma
\ref{lem1}, and the first part of the proof yields the desired bound.
\end{proof}

Consider a continuous function
$w\colon [0,T] \times \bb T^d \times \mc M_{+}(\bb T^d) \times D( \bb
R; \mc H_{-p}^d) \to \bb R^d$ that is continuously differentiable in
$x$ and such that for each
$(x,\pi) \in \bb T^d \times \mc M_{+}(\bb T^d)$ and $t\in [0,T]$ the
map
$[0,t]\times D( \bb R; \mc H_{-p}^d) \ni (s,\bs J) \to w(s,x, \pi, \bs
J)$ is measurable with respect to the Borel $\sigma$-algebra on
$[0,t]\times D([0,t];\mc H^d_{-p}\big)$.  Let
$\phi^{x,y}\colon [0,T]\times D([0,t], \Sigma_N) \to \bb R$,
$(x,y) \in \bb B_N$, be given by
\begin{equation*}
\phi^{x,y}(t) \;=\; \int_x^y w(t, \cdot \, , \bs \pi_N(t),
\bs J_N ) \, \cdot  d\ell\;,
\end{equation*}
and let $\bb M^\phi_T$ be the martingale introduced in \eqref{02},

\begin{lemma}
\label{lem02}
Let $(\bb Q_N : N \ge 1)$ be a sequence of probability measures in
$\ms P^{N,K_N}_{\rm stat}$ such that
$\bb Q_N \circ (\bs \pi_N, \bs J_N)^{-1} \to P$ for some
$P\in \ms P_{\rm stat}$ satisfying \eqref{fe}. Assume that there
exists a finite constant $C_0$ such that for all $S>0$,
\begin{equation*}
\limsup_{N\to\infty} \frac{1}{N^d}\,
\bb H^{(S)} \big(\, \bb Q_N \,|\, \bb P^N_{\mu_{N,K_N}}\,\big)
\;\le\; C_0\, S\;.
\end{equation*}
Then, for each $w$ as above,
\begin{equation*}
\frac{1}{T}\, \lim_{N\to\infty} \frac 1{N^d}\, E_{\bb Q_N} 
\big[\,\log \bb M^\phi_T\, \big] \;=\; E_P \big[ \,
V_{T,w} \, \big] \;,
\end{equation*}
where $V_{T,w}$ has been introduced in \eqref{21}
\end{lemma}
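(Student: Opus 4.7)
The plan is to decompose $\log \bb M^\phi_T = A_N - B_N$, where
\begin{equation*}
A_N = \sum_{(x,y)\in\bb B_N}\!\int_0^T \phi^{x,y}(s)\,\ms N^{x,y}_{(s,s+ds]}(\bs\eta), \qquad
B_N = N^2\!\!\sum_{(x,y)\in\bb B_N}\!\int_0^T c_{x,y}(\bs\eta(s))\bigl[e^{\phi^{x,y}(s)}-1\bigr]\,ds.
\end{equation*}
Since $\phi^{x,y}(s)$ is the line integral of $w(s,\cdot,\bs\pi_N(s),\bs J_N)$ along the oriented bond, definition \eqref{ecurr} gives $N^{-d}A_N=\int_0^T\langle d\bs J_N(s),\,w(s,\cdot,\bs\pi_N(s),\bs J_N)\rangle$. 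Joint continuity of $w$, the weak convergence $\bb Q_N\circ(\bs\pi_N,\bs J_N)^{-1}\to P$, and the $L^2$ integrability \eqref{fe} of $\bs j$ under $P$ then yield $N^{-d}E_{\bb Q_N}[A_N]\to E_P[\int_0^T\!dt\int_{\bb T^d}G_w\cdot \bs j\,dx]$.

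For $B_N$, I would Taylor-expand both $e^{(1/2)E_N(x,y)}$ and $e^{\phi^{x,y}(s)}$, using $|E_N|,|\phi^{x,y}|=O(1/N)$. Pairing each bond with its reverse and invoking the antisymmetries $\phi^{y,x}=-\phi^{x,y}$, $E_N(y,x)=-E_N(x,y)$, the contribution of the unordered pair $\{x,x+\mf e_j\}$ reduces, up to a $O(N^{-3})$ remainder, to
\begin{equation*}
N^2\phi^{x,x+\mf e_j}(\eta_x-\eta_{x+\mf e_j})\;+\;\tfrac{N^2}{2}\Bigl(E_N\phi^{x,x+\mf e_j}+[\phi^{x,x+\mf e_j}]^2\Bigr)\bigl(\eta_x+\eta_{x+\mf e_j}-2\eta_x\eta_{x+\mf e_j}\bigr).
\end{equation*}

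I would then treat the three pieces in turn. The linear one, after substituting $\phi^{x,x+\mf e_j}\approx N^{-1}w_j(s,x,\bs\pi_N(s),\bs J_N)$ and a discrete summation by parts in $x$, converges under expectation to $\int_{\bb T^d}\bs\rho\,(\nabla\cdot w)\,dx=-\int_{\bb T^d}(\nabla\bs\rho)\cdot w\,dx$, using again $\bb Q_N\circ(\bs\pi_N,\bs J_N)^{-1}\to P$ and the $L^2$ bound on $\nabla\bs\rho$ from \eqref{fe}. The cross and quadratic pieces carry the local observable $\eta_x+\eta_{x+\mf e_j}-2\eta_x\eta_{x+\mf e_j}=\eta_x(1-\eta_{x+\mf e_j})+\eta_{x+\mf e_j}(1-\eta_x)$, whose equilibrium average at density $\alpha$ is $2\sigma(\alpha)$. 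Invoking the replacement Corollary~\ref{cor2} with $\Psi(\eta)=\eta_0(1-\eta_{e_j})+\eta_{e_j}(1-\eta_0)$ substitutes this observable by $2\sigma(\bs\pi_N^\epsilon)$ modulo errors vanishing as $N\to\infty$ then $\epsilon\to 0$, producing the contributions $\int\sigma(\bs\rho)\,E\cdot w\,dx$ and $\int\sigma(\bs\rho)\,|w|^2\,dx$ respectively. Assembling the three contributions, dividing by $T$, and recognising \eqref{21} yields $E_P[V_{T,w}]$ as required.

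The main obstacle is reconciling the weak convergence $\bb Q_N\circ(\bs\pi_N,\bs J_N)^{-1}\to P$—which directly handles only bounded continuous observables—with the quadratic integrands $G_w\cdot\bs j$, $(\nabla\bs\rho)\cdot w$, and $\sigma(\bs\rho)|w|^2$ appearing in the limit. This is overcome by the integrability \eqref{fe} inherited by $P$ together with boundedness of $w$, which provide the uniform integrability needed to pass to the limit, while Corollary~\ref{cor2} closes the local-equilibrium gap in the cross and quadratic terms.
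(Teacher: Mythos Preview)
Your proposal is correct and follows essentially the same route as the paper: the paper too splits $\log\bb M^\phi_T$ into the stochastic-integral part (yielding $\int G_w\cdot\bs j$) and the compensator, Taylor-expands the latter to obtain the linear term $N^d\langle\pi_N,\nabla\cdot G_w\rangle$ plus the quadratic-in-$\phi$ and cross terms carrying the cylinder function $[\eta_{x+\mf e_j}-\eta_x]^2=\eta_x+\eta_{x+\mf e_j}-2\eta_x\eta_{x+\mf e_j}$, and then invokes Corollary~\ref{cor2} and the weak convergence together with \eqref{fe} exactly as you do. Your write-up is in fact a bit more explicit about the bond-pairing computation and the uniform-integrability issue than the paper's own proof.
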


\begin{proof}
On the one hand, by definition of $\phi^{x,y}$ and of the current
$\bs J_N$, and since $\bb Q_N \circ (\bs \pi_N, \bs J_N)^{-1} \to P$
for some measure $P\in \ms P_{\rm stat}$ satisfying \eqref{fe},
\begin{equation*}
\lim_{N\to\infty} \frac 1{N^d}\, E_{\bb Q_N} 
\Big[\, \sum_{(x,y) \in \bb B_N}   \int_0^T
\phi^{x,y}_s \, \ms N^{x,y}_{(s,s+ds]} \, \Big] \;=\; E_P \Big[ \,
\int_0^T ds \int_{\bb T^d} dx \; G_w \cdot \bs j  \, \Big] \;.
\end{equation*}

On the other hand, a straightforward computation yields that
\begin{equation*}
\begin{aligned}
& N^2  \sum_{(x,y) \in \bb B_N} c_{x,y}(\eta)
\{e^{\phi^{x,y}} - 1\} \;=\; N^d\, \< \pi_N\,,\, \nabla\cdot G_w\> \\
& \quad
\;+\; \frac{1}{2} \sum_{j=1}^d \sum_{x\in\bb T^d_N} [\eta_{x+\mf e_j}
- \eta_x]^2 \,w_j(x) \,[ w_j(x) + E_j(x)] \;+\; o(N^d) \;.
\end{aligned}
\end{equation*}
Therefore, by Corollary \ref{cor2} and since
$\bb Q_N \circ (\bs \pi_N, \bs J_N)^{-1} \to P$ for some measure
$P\in \ms P_{\rm stat}$ satisfying \eqref{fe},
\begin{equation*}
\begin{aligned}
& \lim_{N\to\infty} \frac 1{N^d}\, E_{\bb Q_N} 
\Big[\, N^2  \sum_{(x,y) \in \bb B_N} \int_0^T ds\, c_{x,y}(\bs \eta(s))
\{e^{\phi^{x,y}_s} - 1\} \, \Big] \\
& \quad
\;=\;   E_P \Big[ \int_0^T dt \int_{\bb T^d}
dx \, 
\Big\{  \,  G_w \cdot \big[\, -\, \nabla \bs \rho \,  +\,
\sigma(\bs\rho) \, E \, \big]
\,  + \,   \sigma(\bs \rho ) \, |G_w|^2 \,  \Big\} \,\Big] \;.
\end{aligned}
\end{equation*}
The assertion of the lemma follows from the two previous estimates.
\end{proof}

\subsection*{Acknowledgments}
We are grateful to M.\ Mariani and L.\ Rossi for useful discussions.

\end{document}